\title{On the Profile of Singularity Formation for the Incompressible Hydrostatic Boussinesq system}
\date{\today}
\author[1]{S. Ibrahim$^{1,2}$}
\address[1]{Department of Mathematics and Statistics
University of Victoria, 3800 Finnerty Road, Victoria
BC V8P 5C2, Canada}
\address[2]{
Pacific Institute for Mathematical Sciences, 4176-2207 Main Mall, Vancouver, BC V6T 1Z4,
Canada}
\email[1]{ibrahims@uvic.ca}
\author[3]{Q. Lin$^3$}
\address[3]{School of Mathematical and Statistical Sciences, Clemson University, Clemson, SC 29634, USA}
\email[3]{quyuanl@clemson.edu}
\author[4]{L. Qian$^4$}
\address[4]{Department of Mathematics and Statistics
University of Victoria, 3800 Finnerty Road, Victoria
BC V8P 5C2, Canada}
\email[4]{ljqian@uvic.ca}
\author[5]{E. S. Titi$^{5,6,7}$}
\address[5]{Department of Applied Mathematics and Theoretical Physics, University of Cambridge, Cambridge CB3 0WA UK}
\email[5]{Edriss.Titi@maths.cam.ac.uk}
\address[6]{Department of Mathematics, Texas A\&M University, College Station, TX 77843-3368, USA}
\email[6]{titi@math.tamu.edu}
\address[7]{Department of Computer Science and Applied Mathematics, Weizmann Institute of Science, Rehovot 76100, Israel. }
\subjclass[2020]{35B44, 35Q86, 86A10}
\keywords{singularity formation, hydrostatic Boussinesq system, primitive equations, stability}
\newcommand{\stkout}[1]{\ifmmode\text{\sout{\ensuremath{#1}}}\else\sout{#1}\fi}
\numberwithin{equation}{section}
\theoremstyle{break}
\numberwithin{equation}{section}
\newcommand{\R}{\mathbb{R}}
\newcommand{\antipartial}{\partial^{-1}}
\newcommand{\N}{\mathds{N}}
\newcommand*{\rom}[1]{\expandafter\@slowromancap\romannumeral #1@}
\def\ee{\varepsilon}
\def\pa{\partial}
\def\ee{\end{equation}}
\def\be{\begin{equation}}
\newtheorem{theorem}{Theorem}
\newtheorem{definition}{Definition}
\newtheorem{lemma}{Lemma}
\newtheorem{remark}{Remark}
\newtheorem{corollary}{Corollary}
\newtheorem{proposition}{Proposition}
\begin{document}
\begin{abstract}
    The primitive equations (PEs) model {planetary} large-scale {oceanic and atmospheric dynamics}. While it has been shown that there are smooth solutions to the inviscid PEs (also called the hydrostatic Euler equations) with constant temperature (isothermal) that develop stable singularities in finite time, the effect of {non-constant} temperature {on} the singularity formation has not been established yet. This paper studies the stability of singularity formation {for non-constant temperature} in two scenarios: when there is no diffusion in the temperature, {or} when {a} vertical diffusivity is added to the {temperature dynamics}. For both {scenarios}, our results indicate that the variation of temperature {affects neither} the formation of singularity, nor its stability, in the velocity field, respectively.
\end{abstract}
\maketitle

\section{Introduction}
For {planetary} large-scale {oceanic and atmospheric dynamics models}, the primitive equations provide a useful approximation to Navier-Stokes equations, {while} retaining many of its essential key features. The approximation is given through the hydrostatic balance that neglects the vertical acceleration of the fluid motion. Such an approximation/derivation was {first} rigorously justified {for the case of incompressible fluid} in \cite{Azrad2001} in the sense that the limit of a weak solution obtained from anisotropic Navier-Stokes equations agrees with the corresponding weak solution of the primitive equations, as the aspect ratio (ratio between typical vertical and horizontal scales in the atmosphere or the ocean) approaches zero. A few years ago, following a similar rescaling, Li and Titi \cite{Li2019} improved this result to the strong convergence {and provided explicit estimate on the rate of convergence in terms of the small aspect ratio. (See also \cite{Li2022,Liu2024} for the compressible case.)}\par
The mathematical foundation of primitive equations {was} first established by Lions, Temam and Wang in the {1990s} \cite{Lions1992}. In their ground-breaking paper, Lions et al. further proved the existence of weak solution for initial datum in a subset of $L^2$. Numerous endeavors have henceforth been made to improve their results in terms of well-posedness of such equations. Notably, Cao and Titi \cite{Cao2007} showed the existence, uniqueness and continuous dependence of global strong solution to the full viscous primitive equations with  temperature, for $H^1$ data satisfying the boundary conditions. \par

The well-posedness or ill-posedness of the primitive equations appears to depend heavily on {the presence of anisotropic viscosity}. \ifthenelse{\boolean{isSimplified}}{With full or only horizontal viscosity, well-posedness results have been established in \cite{Kukavica2007,Hieber2016,Cao2020}. }{With viscosity in both horizontal and vertical directions, Kukavica et al. \cite{Kukavica2007} showed the global existence and uniqueness of strong solutions for initial condition in a subset of $H^1$ with mixed Dirichlet and Neumann boundary conditions on the top and the bottom. Hieber and Kashiwabara \cite{Hieber2016} obtained a similar well-posedness result with initial condition in a subset of $L^p$ for $p\in [\frac{6}{5},\infty]$. Similar well-posedness results (\cite{Cao2020}) have also been achieved when the viscosity is only horizontal. On the contrary, with only vertical viscosity, the hydrostatic Navier-Stokes equations are known to be ill-posed in Sobolev Spaces (\cite{Renardy2009}).}The inviscid version of the primitive equations, also known as the hydrostatic Euler equations, appears also as an important model for ideal geophysical fluids. 
 {In the absence of viscosity, it is well-known that solutions of {the hydrostatic Euler equations experience a loss of regularity. {(See, for example, \cite{Jeong2021,Constantin1986,Roberta2024}.)}} Hence, establishing a local well-posedness results in the scale of Sobolev spaces for an arbitrary initial datum is usually deemed to be impossible for the hydrostatic {Euler equation}. {For the hydrostatic Euler equations,  Renardy \cite{Renardy2009} constructed a linearly unstable parallel shear flow and then showed that the equations are ill-posed, by extending linear instability to nonlinear instability. The ill-posedness of the hydrostatic Euler equation was later strengened by Han-Kwan and Nguyen \cite{HanKwan2016}, who constructed a family of solutions for which the flow map has unbounded {Hölder} norm in arbitrarily short time.}\par 

Most of the aforementioned literature focuses exclusively on the scenario where the temperature is isothermal. That is, the temperature is imposed and remains constant over time. While such assumption reduces the complexity of the system,  the variation of temperature often {plays a key role in influencing the} solution dynamics. For instance, in {the study of }atmospheric science, temperature gradient {is observed to induce instabilities}, which is essential to the formation of wind patterns, giving rise to the chaotic and turbulent behaviors of flow patterns. \par

{It is also worth mentioning literature that considers Euler equations coupled with {fluid density} {(also known as inhomogeneous Euler equations)}, as the resulting {partial differential equations} do mathematically share a certain degree of similarity. One of the most relatable results has been recently established by Bianchini et al. \cite{Roberta2024}: They showed that the 2D incompressible Euler-Boussinesq equations in a thin periodic channel, in general, do not converge to their limiting hydrostatic version. Therefore, the hydrostatic limit process cannot be rigorously justified. {One of the key ingredients towards the invalidity is the construction of a linearly unstable stationary initial condition $(U_s(z),0,\rho_s(z))$ that violates the Miles-Howard criterion \cite{Howard1961,Miles1961}: $\frac14\leq -\frac{\rho_s'}{\abs{U_s'}^2}$---a sufficient condition of linear stability of such equations. }

{Intuitively, if one views the density as temperature, then Miles-Howard Criterion suggests that the introduction of non-constant coupling term may have a stabilizing effect. On the other hand, numerous literatures (see below) have suggested that it is possible to find self-similar profiles that lead to singularity in finite time for the primitive equations. It is natural to wonder whether such singulairty may persist as the variation of the coupling term is added to the dynamics.}

Incorporating the gravitational potential into the pressure, when {the} temperature is taken into account, the normalized {three-dimensional} incompressible inviscid external-force-free primitive equations take the form of
\begin{equation}\label{primitive equation PDE only}
    \begin{cases}
        u_t+uu_X+vu_Y+wu_Z+p_X-\Omega v=0\\
        v_t+uv_X+vv_Y+wv_Z+p_Y+\Omega u=0\\
        p_Z+\theta=0\\
        \theta_t+u \theta_X+v\theta_Y+w\theta_Z-\sigma\theta_{ZZ}=0\\
        u_X+v_Y+w_Z=0,
    \end{cases}
\end{equation}
where the velocity field $\mathbf{u}=(u,v,w)$, the pressure $p$ and the temperature $\theta$ are $(X,Y,Z,t)$-dependent variables, while  the angular velocity of the earth $\Omega$, and the {vertical diffusivity} $\sigma$ are constants. {System \eqref{primitive equation PDE only}} is {typically} supplemented with initial conditions
\begin{equation}
    (u,v,\theta)(t=0)=(u_0,v_0,\theta_0),
\end{equation}
and relevant geophysical \cite{Kukavica2011} boundary conditions for the velocity field
\begin{equation}\label{boundary condition}
    \begin{cases}
        w(X,Y,Z=0,t)=w(X,Y,Z=1,t)=0\\
        \displaystyle\hat{n}\cdot\int_0^1 (u,v)(X,Y,Z,t)\, dZ=0
    \end{cases}
    \qquad\text{for all }(X,Y)\in \mathcal \partial \mathcal M\quad\text{and  }t\geq0,
\end{equation}
where the domain is defined to be
\begin{equation}\label{boundary condition domain}
\begin{split}
  &  \mathcal{D}=\mathcal{M}\times[0,1]=\set{(X,Y,Z):(X,Y)\in \mathcal{M},\, 0\leq Z\leq 1}\\
  &\mathcal{M} {\text{ bounded and Lipschitz},}
    \end{split}
\end{equation}
and $\hat n$ is the outward unit normal vector of $\partial \mathcal M$.\par

In case where a singularity does form, it is often natural to determine its characteristics, i.e., blowup speed, location, and profile, and also investigate its stability under small perturbation of  the initial conditions. 
{One of the earliest blowup results was established in \cite{Wong2014}: For 2D hydrostatic Euler equations {in the setting of a horizontally periodic channel}, Wong showed that there exists a class of initial conditions, which are not necessarily spatially symmetric, blows up in finite time. Regarding the existence of blowup profiles of hydrostatic equation, by symmetry reduction, Cao et al. \cite{Cao2015} proved that for the 3D isothermal Coriolis-force-free primitive equation, such profile indeed exists and the blowup result immediately follows. When rotation (Coriolis force) is taken into account, a similar result was established in \cite{Ibrahim2021}.
Regarding the stability and blowup speed, Collot et al. \cite{Collot2023} showed that for isothermal 3D primitive equations, subject to the boundary conditions \eqref{boundary condition}, there exists a family of initial conditions for which the corresponding solutions experience shocks on the $x$-axis in finite time. Notably, the blowup is stable under certain perturbation and the blowup speed is explicitly given.}


Here we follow similar steps. Under the assumptions of zero Coriolis force, particular solution-symmetry, and $Y$-independent initial data, {we will show in Section \ref{reduction}, below, that} the PDEs \eqref{primitive equation PDE only} can be reduced to an evolution equation on {vertical axis $[0,1]$ of the $Z$-axis}, when {the unknown variables are }restricted on the axis $X=0$: {(For simplicity, below, we denote $\partial_Z^{-1} f=\int_0^Z f(\tilde Z)d\tilde Z$.)}

{
\begin{subequations}\label{trace-system-introduction}
\begin{align}
    & a_t - a^2 + (\partial_Z^{-1}a) a_Z  + \partial_Z^{-1} c  + \int_0^1 (2a^2 - \partial_Z^{-1} c ) dZ = 0,
    \\
    & c_t - 2ac + (\partial_Z^{-1} a) c_Z-\sigma c_{ZZ} = 0,\qquad \sigma\in\set{0,1},
\end{align}
\end{subequations}
supplemented with boundary conditions 
}
{
\begin{equation}\label{boundary conditions-introduction}
    \left.\int_0^1 a(t,Z) dZ=0\qquad\text{and}\qquad \sigma c(t,Z=0)=\sigma c(t,Z=1)=0,\right.
\end{equation}}where  $a(Z,t)=-u_X(X=0,Z,t),$ $c(Z,t)=\theta_{XX}(X=0,Z,t),$ and we have added Dirichlet boundary conditions for the diffusive case. \par
This paper shall demonstrate that the variation of temperature, in general, {affects neither} the formation of {singularity} of the velocity field, nor its stability. More precisely, we shall provide the proofs of the following two theorems, respectively addressing the scenarios where temperature is governed by a transport or an advection-diffusion equation.

\begin{theorem}[{Non-diffusive Temperature ($\sigma=0$)}]\label{theorem:critical}
Let $\phi(x)=\exp(-x)$. There exists  $\lambda^{*}_0>0$, sufficiently small, such that for all $0<\lambda_0\leq {\lambda_0^*}$, there exists $\kappa(\lambda_0)>0$  such that the following holds. If the initial conditions {to System \eqref{trace-system-introduction} are of the form of}
\be \label{smooth:id:initial:non-diffusive}
a_0(Z)=\frac{1}{\lambda_0} \phi\left(\frac{Z}{\nu_0}\right)+\tilde a_0(Z),\, \quad {c_0}(Z) \ \ \text{on } 0\leq Z \leq 1,
\ee
{satisfying the zero-average condition and spatial scaling $\nu_0$ are in the range of}:
\begin{equation}
    \int_0^1 a_0(Z)=0,\quad \frac{1}{2\log (\lambda_0^{-1})}\leq\nu_0\leq\frac{3}{2\log (\lambda_0^{-1})},\quad
\end{equation}
and the perturbations $\tilde a_0$ and $c_0$ satisfy
\begin{equation}\label{boundary-conditions-initial-data-non-diffusive}
\begin{split}
&{{\partial_Z \tilde a_0(t,Z=0)=0},\,  c_0(Z=0)=\partial_Z  c_0(Z=0)=0,}\\
        &{\norm{\tilde a_{0}}}_{C^{1,\frac{3}{4}} ([0,1])}   + \left\| c_{0}\right\|_{C^{1,\frac{4}{5}} ([0,1])}   \leq \kappa,\quad {\tilde a_0(Z), c_0(Z) \text{ analytic on }[0,1]},
\end{split}
\end{equation}
then there exists $T>0$ such that the solution $(a,c)$ to \eqref{trace-system-introduction} and \eqref{boundary conditions-introduction} with $\sigma=0$ and initial data $(a,c)(t=0)=(a_0,c_0)$ blows up at time $T>0$ according to 
\be \label{smooth:1}
\begin{split}
    a(t,Z)&=\frac{1}{\lambda(t)} \left(\phi\left(\frac{Z}{\nu(t)}\right)+\tilde a(t,Z)\right) \qquad \mbox{with}\quad \lambda(t)\approx T-t,\quad \nu(t)\approx\frac{1}{|\log(T-t)|},\\
c(t,Z)&=\frac{1}{\lambda(t)}\tilde c(t,Z)
\end{split}
\ee
where for all $t\in [0,T)$:
\be \label{smooth:2}
\begin{split}
&  \tilde a(t,Z=0)={\partial_Z \tilde a(t,Z=0)=0},\quad \tilde c(t,Z=0)=\partial_Z \tilde c(t,Z=0)=0,\\
    &\|\tilde a(t,\cdot)\|_{L^{\infty}([0,1])}=O(  |\log (T-t)|^{-\frac{2}{3}}),\quad \|\tilde c(t,\cdot)\|_{L^{\infty}([0,1])}=O(   (T-t)^{\frac{1}{4}}).
\end{split}
\ee
\end{theorem}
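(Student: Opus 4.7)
The plan is to adapt the modulation and bootstrap framework developed for the isothermal case in \cite{Collot2023} to the coupled system, with the main new work being the control of the non-diffusive temperature perturbation $c$ in a way that keeps the $a$-blowup dynamics undisturbed. First, I would introduce the self-similar unknowns by writing
\[
a(t,Z)=\frac{1}{\lambda(t)}\left(\phi(x)+\tilde a(s,x)\right),\qquad c(t,Z)=\frac{1}{\lambda(t)}\tilde c(s,x),\qquad x=\frac{Z}{\nu(t)},
\]
with rescaled time $ds/dt=1/\lambda(t)$, and substitute into \eqref{trace-system-introduction} to obtain coupled evolution equations for $(\tilde a,\tilde c)$. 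These will consist of a linearized operator $\mathcal L$ acting on $\tilde a$ (built from the exponential profile $\phi(x)=e^{-x}$ and the nonlocal term $\partial_x^{-1}$), a pure transport operator acting on $\tilde c$, coupling terms $\partial_x^{-1}\tilde c$ and $-2\phi\,\tilde c$, error terms generated by the modulation $\lambda_s/\lambda$ and $\nu_s/\nu$, and quadratic remainders.

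Next, I would set up the modulation conditions. The scaling and translation symmetries of the profile generate two neutral directions (morally $\phi+x\phi'$ and $\phi'$), and imposing two orthogonality conditions on $\tilde a$ against suitable dual directions uniquely determines the ODEs for $\lambda(t)$ and $\nu(t)$. A formal computation of these ODEs should yield, to leading order, $\dot\lambda\approx -1$ and $\dot\nu\approx -\nu^2$, which integrates to the claimed asymptotics $\lambda(t)\approx T-t$ and $\nu(t)\approx|\log(T-t)|^{-1}$. The coercivity of $\mathcal L$ on the orthogonal complement of the two unstable directions, already established in the isothermal setting, would then be invoked in a weighted Sobolev space tailored to the exponential decay of $\phi$ to obtain the $O(|\log(T-t)|^{-2/3})$ bound on $\|\tilde a\|_{L^\infty}$ via the weighted embedding.

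The key new step, and what I expect to be the principal obstacle, is the control of $\tilde c$. Because $\sigma=0$, the $c$-equation is a pure transport equation along the flow generated by $\partial_Z^{-1}a$, with reaction coefficient $2a$ that is of order $1/\lambda(t)$ near the blowup. Writing this in self-similar coordinates, the reaction becomes an $O(1)$ term driven by $\phi(x)$, and $\tilde c$ is transported by the self-similar generator. I would therefore propagate $\tilde c$ by the method of characteristics, using the analyticity and $C^{1,4/5}$ smallness $\kappa$ of $c_0$ together with the compatibility condition $c_0(0)=\partial_Z c_0(0)=0$ to obtain an $L^\infty$ bound of the form $\|\tilde c(t,\cdot)\|_{L^\infty}\lesssim\kappa\,\lambda(t)^{\alpha}$ for some $\alpha>0$; matching $\alpha$ to the regularity exponent $4/5$ (which allows the vanishing at the origin to beat the reaction growth along characteristics) produces the stated rate $O((T-t)^{1/4})$. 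Once this is in hand, the coupling $\partial_Z^{-1}\tilde c$ appearing in the $a$-equation becomes a small lower-order perturbation that does not disturb the spectral gap of $\mathcal L$, so the isothermal energy estimate and modulation ODEs close with only minor adjustments.

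Finally I would run the bootstrap: assume on a maximal interval $[0,t_*)$ the bounds on $(\tilde a,\tilde c,\lambda,\nu)$ predicted by \eqref{smooth:1}--\eqref{smooth:2}, use the energy estimates and modulation ODEs to strictly improve each bound, and conclude that $t_*=T$, at which point $\lambda(T)=0$ and the blowup \eqref{smooth:1}--\eqref{smooth:2} holds. The hardest technical points are the coercivity-preservation argument under the non-smoothing $\tilde c$ perturbation, and ensuring that the characteristic estimate for $\tilde c$ does not suffer losses that destroy the $(T-t)^{1/4}$ rate; both are handled by exploiting the vanishing of $c_0$ at $Z=0$ together with the exponential weight from $\phi$.
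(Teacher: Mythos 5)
Your overall strategy — self-similar modulation of $(a,c)$, derivation of ODEs for $(\lambda,\nu)$, and a bootstrap closing the perturbation bounds — matches the paper's framework at a high level, but several of the technical mechanisms you propose are not the ones the paper uses, and some would require additional work that is not obviously available.

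\textbf{Modulation and coercivity.} You propose to fix $(\lambda,\nu)$ by imposing orthogonality of $\tilde a$ against dual directions generated by the scaling/translation symmetries, and then to invoke spectral coercivity of the linearized operator $\mathcal L$ on the orthogonal complement. The paper instead fixes the modulation by the \emph{pointwise} conditions $\tilde a(s,0)=\partial_z\tilde a(s,0)=0$ at the origin of the self-similar variable, and there is no spectral gap argument anywhere: the linearized operator is a first-order transport operator, not self-adjoint, and the decay of the remainder is extracted directly from a weighted energy differential inequality (integration by parts yields a negative damping coefficient provided the weight exponent exceeds a numerical threshold $\alpha_0\approx 1.88$) together with a maximum-principle argument. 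Invoking ``coercivity established in the isothermal setting'' is not accurate — \cite{Collot2023} also uses the energy-plus-maximum-principle route — and it is unclear a transport operator of this type admits the spectral-gap structure you are assuming.

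\textbf{Interior/exterior splitting and the $\tilde c$ estimate.} You propose to control $\tilde c$ globally by characteristics. The paper's argument is more structured: the rescaled domain $[0,\nu^{-1}(s)]$ is split at a finite $z^*$ into an interior, where a weighted $H^1$-type energy $\mathcal I_c^2=\int_0^{z^*}z^{-\gamma}\tilde c_z^2\,dz$ is propagated by an energy estimate, and an exterior, where a maximum principle (sub/super-solutions for the transport field) gives the $L^\infty$ bound. Characteristics alone, while close in spirit to the exterior maximum-principle step, do not directly give the weighted interior control that is needed to close the $\tilde a$-equation (the coupling enters through $\partial_z^{-1}\tilde c$ and $\lambda\nu\tilde c$ weighted by $z^{-\alpha}$), nor do they explain how the rate $(T-t)^{1/4}$ arises: in the paper it comes from $h_c/2$ with the admissible parameter choice $h_c=1/2<\gamma-1$, not from matching to the $C^{1,4/5}$ exponent.

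\textbf{Two omissions that would become gaps.} (i) The weighted norms are singular at $z=0$; the paper needs the ``invariance of vanishing speed at the origin'' (Corollary~\ref{Invariance-of-vanishing speed}) to guarantee $\tilde a_z,\tilde c,\tilde c_z=o(z^{\epsilon})$ stay true for all times, so that all integrals and boundary terms in the integrations by parts make sense. Your proposal does not address this, and without it the weighted energy method is not well-posed. (ii) The theorem statement gives an initial decomposition $(a_0,c_0)=(\frac{1}{\lambda_0}\phi(\cdot/\nu_0)+\tilde a_0, c_0)$ that does not a priori satisfy the modulation/closeness conditions; the paper proves a re-decomposition lemma (Lemma~\ref{decomposition lemma}) producing new parameters $(\bar\lambda_0,\bar\nu_0)$ so the bootstrap machinery applies, and a conversion corollary to translate the self-similar conclusions back to physical time. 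This step is essential to deduce the actual blowup rates and should not be omitted.
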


{
\begin{remark}
    For non-diffusive temperature {($\sigma=0$)}, near blowup  time $T$, $a$ and $c$ can be further simplified asymptotically as
    \begin{equation}
    \begin{split}
         &a(t,Z)=(T-t)^{-1+Z}+O\left((T-t)^{-1}\abs{\log (T-t)}^{-\frac{1}{3}}\right)\\
         &c(t,Z)=\begin{cases}
             0 \quad&\text{ if }Z=0\\
             O((T-t)^{-\frac{3}{4}}) \quad &\text{ if }Z\in(0,1].\\
         \end{cases}
    \end{split}
    \end{equation}
\end{remark}
}

{This theorem indicates that, for non-diffusive temperature {($\sigma=0$)}, {the singularity forms as shock} at $Z=0$, at a speed precisely equal to $\frac{1}{T-t}$. Moreover, in the case where $c$ also blows up at locations near, but other than $Z=0$ ({since the estimate for $c$ on locations other than zero is only an upper bound, and thus whether it blows up is not known}), its blowup speed cannot be faster than that of $a$ at $Z=0$. This, however, is not the case of diffusive temperature.}

\begin{theorem}[Diffusive Temperature {($\sigma=1$)}]\label{theorem:critical:diffusive}
Let $\phi(x)=\exp(-x)$. There exists  $\lambda^{*}_0>0$, sufficiently small, such that for all $0<\lambda_0\leq {\lambda_0^*}$, there exists $\kappa(\lambda_0)>0$  such that the following holds. If the {initial conditions are of the form of }
\be \label{smooth:id:initial}
a_0(Z)=\frac{1}{\lambda_0} \phi\left(\frac{Z}{\nu_0}\right)+\tilde a_0(Z),\, \quad c_0(Z)\text{ on }  \ \ 0\leq Z \leq 1,
\ee
{satisfying} the zero-average condition and spatial scaling {$\nu_0$} are in the range of:
\begin{equation}
    \int_0^1 a_0(Z)=0,\quad \frac{1}{2\log (\lambda_0^{-1})}\leq\nu_0\leq\frac{3}{2\log (\lambda_0^{-1})},\quad
\end{equation}
and the perturbations $\tilde a_0$ and $\tilde c_0$ satisfy
\begin{equation}\label{boundary-conditions-initial-data-diffusive}
\begin{split}
    &{{\partial_Z\tilde a_0(Z=0)=0},\,c_0(Z=0)=c_0(Z=1)=0,}\\
     &{\norm{\tilde a_{0}}}_{C^{1,\frac{3}{4}} ([0,1])}   + \left\| c_{0}\right\|_{C^{0,\frac{31}{32}} ([0,1])}   \leq \kappa,\quad {\tilde a_0(Z),c_0(Z) \text{ analytic on }[0,1]},
\end{split}
\end{equation}
then there exists $T>0$ such that the solution $(a,c)$ to \eqref{trace-system-introduction} and \eqref{boundary conditions-introduction}  with $\sigma=1$ and initial data $(a,c)(t=0)=(a_0,c_0)$ blows up at time $T>0$ according to
\be \label{smooth:1}
\begin{split}
    a(t,Z)&=\frac{1}{\lambda(t)} \left(\phi\left(\frac{Z}{\nu(t)}\right)+\tilde a(t,Z)\right), \qquad \mbox{with}\quad \lambda(t)\approx T-t,\quad \nu(t)\approx\frac{1}{|\log(T-t)|},\\
c(t,Z)&=\frac{1}{\lambda(t)^2}\tilde c(t,Z),
\end{split}
\ee
where for all $t\in [0,T)$:
\be \label{smooth:2}
\begin{split}
&  \tilde a(t,Z=0)={\partial_Z \tilde a(t,Z=0)=0},\quad\tilde c(t,Z=0)=\tilde c(t,Z=1)=0,\\
    &\|\tilde a(t,\cdot)\|_{L^{\infty}([0,1])}=O(  |\log (T-t)|^{-\frac{3}{4}}),\quad \|\tilde c(t,\cdot)\|_{L^{\infty}([0,1])}=O(   (T-t)^{\frac{3}{16}}|\log (T-t)|^{\frac{3}{4}}).
\end{split}
\ee
\end{theorem}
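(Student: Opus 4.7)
The plan is to extend the modulation-theoretic approach developed for the non-diffusive case (Theorem~\ref{theorem:critical}) and to treat the diffusion $c_{ZZ}$ as a controllable perturbation in self-similar variables. First I would introduce modulation parameters $\lambda(t),\nu(t) > 0$, a rescaled time $s(t)$ with $\dot s(t) = 1/\lambda(t)$, and a rescaled spatial variable $y = Z/\nu(t)$, and write
\[
a(t,Z) = \frac{1}{\lambda(t)}\bigl(\phi(y) + \tilde a(s,y)\bigr),\qquad c(t,Z) = \frac{1}{\lambda(t)^{2}}\,\tilde c(s,y).
\]
The amplitude scaling $c \sim 1/\lambda^{2}$, in contrast to $c\sim 1/\lambda$ in Theorem~\ref{theorem:critical}, is dictated by the coupling $-2ac$ in the $c$-equation: if $a$ blows up at rate $1/\lambda$, the natural amplification of $c$ under this forcing is $1/\lambda^{2}$. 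Substituting into \eqref{trace-system-introduction}, the profile $\phi(y) = e^{-y}$ is annihilated at leading order, leaving a linearized operator $\mathcal L$ acting on $\tilde a$, while the $\tilde c$-equation becomes an advection-diffusion equation with potential $-2\phi$ and effective diffusive coefficient $\lambda/\nu^{2} \sim (T-t)|\log(T-t)|^{2}$.

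Next, I would kill the two neutral/unstable directions of $\mathcal L$ by imposing two orthogonality conditions on $\tilde a$, producing ODEs for the modulation parameters of the form
\[
\frac{\dot\lambda}{\lambda} = -1 + o(1),\qquad \frac{\dot\nu}{\nu} = -\nu + o(1),
\]
which integrate to $\lambda(t)\approx T-t$ and $\nu(t) \approx 1/|\log(T-t)|$, as asserted in the theorem. The new source $\partial_Z^{-1}c$ appearing in the $a$-equation contributes at size $\nu\,\partial_y^{-1}\tilde c$, which is small thanks to the factor $\nu$ combined with the smallness of $\tilde c$, and is absorbed into the $o(1)$ remainders of the modulation ODEs.

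With the modulation in place, I would close a bootstrap for $\tilde a$ in a weighted $C^{1,3/4}$ norm of size $O(|\log(T-t)|^{-3/4})$, exactly mirroring the non-diffusive proof, and in parallel close a bootstrap for $\tilde c$ in a weighted $C^{0,31/32}$ norm of size $O\bigl((T-t)^{3/16}|\log(T-t)|^{3/4}\bigr)$. The $\tilde c$-estimate would follow from a parabolic maximum-principle / Duhamel argument: the drift $\partial_Z^{-1}a$ pushes $\tilde c$ toward $y=0$; the potential $-2\phi$ drives exponential decay away from the origin; and the diffusion is dominated by $\lambda/\nu^{2}\to 0$. Analyticity of the initial data, propagated on $[0,T)$ by a Cauchy-Kovalevskaya argument, is used both to Taylor-expand $\tilde a,\tilde c$ at $y=0$ and to enforce compatibility with the boundary conditions.

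The hardest part will be reconciling the moving self-similar boundary $y = 1/\nu \to \infty$ with the fixed Dirichlet condition at $Z=1$. Showing that the far-boundary influence is negligible for the self-similar dynamics of $\tilde c$ uses the exponential decay of $\phi$ away from $y=0$, but quantifying this through the parabolic equation for $\tilde c$, with its time-dependent rescaling and diffusion, requires a careful weighted estimate and probably the construction of a matching sub/supersolution compatible with both Dirichlet endpoints. A related subtlety is that, although the rescaled diffusion coefficient $\lambda/\nu^{2}$ vanishes in the limit, it is of borderline size for the $C^{0,31/32}$ estimate near $y=0$, where $\phi = O(1)$; the orthogonality conditions defining $\lambda,\nu$ must therefore be tuned so that the contribution of $\tilde c_{yy}$ at the origin does not spoil the closure of the modulation ODEs, and it is in this fine balance that the exponents $3/16$ and $3/4$ in the statement of the theorem are determined.
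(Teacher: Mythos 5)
Your outline correctly reproduces the skeleton of the paper's argument for the $\tilde a$-side: the self-similar decomposition with $a\sim\lambda^{-1}$, $c\sim\lambda^{-2}$, the two orthogonality conditions fixing the modulation parameters, $\lambda_s/\lambda\to -1$ and $\nu_s/\nu\approx -\nu$, the absorption of the nonlocal source $\partial_Z^{-1}c$ into the modulation remainders, and a bootstrap built on a weighted $H^1$-type interior estimate plus an exterior maximum principle for $\tilde a$. That much matches the paper closely.

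Where the proposal goes off course is in the treatment of $\tilde c$, which is precisely the part of the diffusive theorem that requires a new idea. You propose to close a bootstrap in a weighted $C^{0,31/32}$ norm ``from a parabolic maximum-principle / Duhamel argument''. This is not what the paper does, and there are concrete obstacles you are overlooking. First, the paper's Corollary~\ref{Invariance-of-vanishing speed} establishes an asymmetry between the two cases: in the non-diffusive case both $\tilde c=o(z^{\epsilon_c})$ and $\tilde c_z=o(z^{\epsilon_c})$ are time-invariant near $z=0$, so a weighted $H^1$-energy with a singular weight $z^{-\gamma}$ is admissible; in the diffusive case only $\tilde c=o(z^{\epsilon_c})$ is time-invariant, so the weighted $H^1$ energy may cease to be well-defined, and the entire $\mathcal I_c$-framework used for $\sigma=0$ collapses. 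This is the structural reason the paper abandons the $H^1$-style estimate for $\tilde c$, a constraint your plan never confronts. Second, the paper's replacement is a weighted $L^{2\eta}$ energy $\mathcal T_{k\eta,2\eta}^{2\eta}=\int_0^{\nu^{-1}} z^{-k\eta}\tilde c^{2\eta}\,dz$ estimated over the \emph{whole} domain (not just the interior), combined with the weighted Hardy inequality (Lemma~\ref{differential inequality of c-nc} and the corollary \eqref{Corollary of Hardy's}) to show that the diffusion term $\int z^{-k\eta}\tilde c^{2\eta-1}\tilde c_{zz}\,dz$ contributes with a favorable sign precisely when $k\le 2-1/\eta$, and this relies essentially on the Dirichlet conditions at both $z=0$ and $z=\nu^{-1}$. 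Finally, one takes $\eta\to\infty$ in the weighted $L^{2\eta}$ bound to extract the $L^\infty$ estimate; that limit argument (with the $s^{\xi}$ correction coming from the weight $z^{-(\alpha+1)/4}$) is where the exponents $3/16$ and $3/4$ actually arise, not from tuning the orthogonality conditions for $\lambda,\nu$ as you suggest --- those conditions involve only $\tilde a$, and the $\tilde c_{zz}$ term never enters the modulation equations.

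A maximum-principle construction for the second-order operator $\partial_s - (\lambda/\nu^2)\partial_{zz} + \text{drift} + \text{potential}$ with singular weights at the origin and a moving Dirichlet endpoint would require super/sub-solutions compatible with a degenerate time-dependent diffusion and a weight blowing up at $z=0$; the paper's authors sidestep this entirely with the Hardy-inequality sign argument. Similarly, the Dirichlet endpoints are not the ``hardest part to reconcile'' but rather the mechanism that makes the $L^{2\eta}$ closure possible. Absent the Hardy-inequality mechanism and the invariance-of-vanishing-speed analysis, your plan does not close.
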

{
\begin{remark}
    For {diffusive} temperature {($\sigma=1$)}, near {the} blowup time $T$, $a$ and $c$ can be further simplified asymptotically as
    \begin{equation}
    \begin{split}
        &a(t,Z)=(T-t)^{-1+Z}+O\left((T-t)^{-1}\abs{\log (T-t)}^{-\frac{1}{2}}\right)\\
         &c(t,Z)=\begin{cases}
         0\quad &\text{ if } Z=0,{1}\\
             O((T-t)^{-\frac{29}{16}}|\log (T-t)|^{\frac{3}{4}})\quad&\text{ if }Z\in{(0,1)}.\\
         \end{cases}
         \end{split}
    \end{equation}
\end{remark}

{
\begin{remark}
    Notice that some of the exponents that appear in both theorems are not unique. In fact, we have determined a range of these exponents for which both theorems hold, corresponding to the non-diffusive (Subsection \ref{choice of parameters non-diffusive}) and the diffusive case (Subsection \ref{choice of parameters diffusive}).
\end{remark}
}
}

{For diffusive temperature, similarly, a shock forms for the velocity field at the same speed at $Z=0$. However, for locations { not equal to zero}, if $c$ blows up as well, the blow-up speed can no longer be comparable to the speed of $a$ at $Z=0$, as we have chosen a different rescaling for $c$. {Regardless}, as there is no universal rule to compare {$a$ and $c$}, we conclude that, near $Z=0$, the blow-up speed of $\abs{c}^\frac{1}{2}$ cannot be faster than that of $a$ at $Z=0$.}

Concerning the stability of the blowup, for both types of temperatures, we observe that if the initial condition is sufficiently large (and hence sufficiently close to blowup), then blowup is stable under a particular way of perturbation. That is, any analytic small perturbations, with vanishing value and derivative at $Z=0$, for which the initial condition still satisfies {the respective boundary conditions. (\eqref{boundary-conditions-initial-data-non-diffusive} or \eqref{boundary-conditions-initial-data-diffusive}})

{The explicit profile $\phi=\exp(-x)$ we adopt in Theorem \ref{theorem:critical} and Theorem \ref{theorem:critical:diffusive} traces its origin back to \cite{Collot2023}: the study of isothermal case. By imposing a self-similar Ansatz at the blowup time $T$: $a(t,Z)\approx\frac{1}{T-t}\phi_\beta (\frac{Z}{(T-t)^\beta})$ {(Each $\phi_\beta$  is a self-similar profile and $\beta$ is related to its regularity on $\mathds{R}^+$.)} and neglecting the lower order term appearing in \eqref{trace-system-introduction}, they derived and analyzed the ODEs of the profile equation in self-similar coordinates $z=\frac{Z}{(T-t)^\beta}$, for $\beta\geq 0$. For $\beta=0$, the profile can be explicitly solved as $\exp(-x)$, while for $\beta>0$, the profiles are not smooth and do not have an explicit form. This effectively introduces {more computations} later on for the non-smooth case, while both types of profiles achieve the same desired blowup results in the end. To avoid unnecessary and tedious computations, especially when the temperature is no longer constant, we {shift} our attention entirely to the smooth {case}. We observe that the ``profile'' we refer to does not satisfy the definition of ``self-similar profile'' in the traditional sense. In fact, it is actually an ``approximate profile'', as there is still a remainder term from the solution, which takes into account the previously ignored {terms}.}\par

By imposing {the following} self-similar Ansatz on the solution structure

\begin{equation}\label{decomposition-introduction}
    \begin{split}
        a(s,z)&=a(t,Z)=\frac{1}{\lambda(s(t))}\left(\phi\left(\frac{Z}{\nu(s(t))}\right)+\tilde{a}\left(s(t),\frac{Z}{\nu(s(t))}\right)\right)=\frac{1}{\lambda(s)}(\phi(z)+\tilde{a}(s,z)),\\
        c(s,z)&=c(t,Z)=\frac{1}{\lambda(s)^{1+\sigma}}\tilde{c}(s,z),\,{(\text{Recall that } \sigma\in\set{0,1} \text{ is the diffusivity.})}
    \end{split}
\end{equation}
as well as introducing a new time and spatial-rescaling 
\begin{equation}
     z=\frac{Z}{\nu(t)},\,\frac{ds}{dt}=\frac{1}{\lambda(t)},\,s(0)=s_0,\, \quad \nu,\lambda>0\in C^1,\,\quad z\in[0,\nu^{-1}(s)],
\end{equation}
the PDEs {governing} the perturbation {terms} $(\tilde a,\tilde c)$ and the modulation parameters $(\lambda,\nu)$ are readily derived in new coordinate system. {The strategies we shall deploy are outlined as follows. To begin with, we introduce a tool to measure the significance of the perturbation with respect to the profile. The notion of initial closeness and trapped solution \cite{Collot2023}, depends on the partition of the whole domain and requires some weighted norms of the perturbations to be small and comparable to the self-similar time $s$ on some interval $[s_0,s_1]$. We shall show that for any initial perturbation that satisfies the definition of initial closeness, remains trapped for all time $[s_0,\infty)$, as long as the framework parameters in the definitions above are chosen correctly. As directly establishing such result can be quite challenging, a bootstrap argument is {then} implemented. We shall find a decomposition of the spatial domain into an interior and exterior regions, such that energy method is applied in the interior to obtain differential inequalities, while maximum principle is applied to obtain better upper bounds in the exterior. The last step involves establishing a link between conclusions made in {the self-similar coordinates and  the original physical coordinates}. We shall show that initial conditions in Theorem \ref{theorem:critical} and Theorem \ref{theorem:critical:diffusive} can be re-decomposed such that the resulting initial perturbation in self-similar coordinates satisfies the definition of initial closeness and thus remains trapped for all time. Finally, both theorems are concluded by unwinding or translating back the results into {the original} physical coordinates.}\par

In essence, this paper can be considered as a continuation or generalization of the results in \cite{Collot2023} to non-isothermal temperatures. Nevertheless, when variation of temperature is taken into account, the methodology needs to be modified accordingly, especially when the temperatures {are either advective or diffusive}. In our case, this difference is first reflected in the choices of scaling of the temperature. For non-diffusive temperature {($\sigma=0$)}, the fact that the temperature is governed by a transport equation, whose speed is a function of velocity field, suggests that the same scaling would be appropriate, as we expect {that the variable} $a$ to blow up in finite time, whereas for non-diffusive temperature {($\sigma=1$)}, we choose the scaling of heat equation.\par

The other major difference, which directly affects the proof strategies {for the two different cases of }temperatures, is manifested in the bootstrap arguments. As some of the weighted integral norms may involve singularity at $z=0$, this effectively restricts the range of initial perturbations into a family of functions that vanish sufficiently fast compared to the weight. For these weighted-norms to remain well-defined for all time, {one needs to first verify whether certain vanishing-speed conditions at $z=0$ are time-invariant to the PDEs of perturbations}. Moreover, there are a few instances when we integrate by parts and require some quantities to vanish. For non-diffusive temperature, as the temperature perturbation $\tilde c_z(s,z)=o(z^{1-\epsilon})$ (for any $\epsilon>0$) can be imposed as a time-invariant {condition at $z=0$}, results in a substantial simplification of the methodology that shares similarity with \cite{Collot2023}. On the other hand, for diffusive temperature, we are only able to show the temperature perturbation $\tilde c(s,z)=o(z^{1-\epsilon})$ is time-invariant, and thus some weighted-norm needs to be modified. We replace the weighted $H^1$-norm by a weighted $L^{2\eta_0}$-norm (for some $\eta_0 \in \N$) and add $\eta_0$ into the list of framework parameters. Thanks to the weighted Hardy's inequality and Dirichlet boundary conditions, to deal with the higher order derivative $\tilde c_{zz}$, we apply the energy method in the whole region to conclude that the influence of this higher order term can be bounded above by zero. Once we find all the framework parameters to close the bootstrap argument, we take $\eta_0\leq\eta\rightarrow \infty$ to obtain the $L^\infty$ estimate.\par

This paper is structured as follows. In {Section \ref{reduction},} we provide a detailed reduction from the 3D inviscid primitive equation \eqref{primitive equation PDE only} to the system of our focus \eqref{trace-system-introduction}, as well as explain how the boundary conditions \eqref{boundary conditions-introduction} are derived. In the first three subsections of Section 3, we apply the self-similar Ansatz to the solution of \eqref{trace-system-introduction}, and derive the PDEs of perturbations, as well as the ODEs of modular parameters, for both diffusive and non-diffusive case. The last subsection corresponds to the study of time-invariant vanishing-speed-related boundary conditions, for which can be further imposed. Section 4 and Section 5 focus on the bootstrap arguments for non-diffusive and diffusive temperature, respectively, each of which contains a definition subsection, where we define the initial closeness and trapped solution with respect to framework parameters, subsections that involve the derivation of integral inequalities (energy method), a subsection that solves the inequalities, a subsection that studies the maximum principle, and a subsection that give the range of framework parameters for which the bootstrap argument works. Finally, in Section 6, we relate the conclusions we have arrived in self-similar coordinates to the original PDE {System} \eqref{trace-system-introduction}. In the first subsection, we show that for certain initial conditions, they can be re-decomposed in a different way such that they satisfy the definition of initial closeness. In the last subsection, we translate our results in physical time and derive the blowup speed.

\section{Derivation and Reduction}\label{reduction}
\ifthenelse{\boolean{isSimplified}}{{Following the footsteps of \cite{Cao2015,Ibrahim2021}}, we assume that the solutions of \eqref{primitive equation PDE only} and \eqref{boundary conditions-introduction} are smooth and do not depend on $Y$:
{
{
\begin{equation}
\resizebox{.95 \textwidth}{!}{$
    u(X,Y,Z,t=0)=u_0(X,Z),\,v(X,Y,Z,t=0)=v_0(X,Z)=0,\,\theta(X,Y,Z,t=0)=\theta_0(X,Z),\, w(X,Y,Z,t=0)=w_0(X,Z).
    $}
\end{equation}
}
}
Upon setting $\Omega=0$, we obtain the 2D inviscid primitive equations 

{
\begin{subequations}\label{PE-system-intro-2d}
\begin{align}
    &u_t + u\, u_X + w u_Z +p_X = 0 , \label{EQ2-1}
    \\
    &p_Z +\theta=0 ,   \label{EQ2-3}  
    \\
    &u_X + w_Z =0,   \label{EQ2-4}
    \\
    &\theta_t + u\theta_X + w\theta_Z-\sigma \theta_{ZZ} = 0, \label{EQ2-5}
\end{align}
\end{subequations}
where {$v(X,Y,Z,t)\equiv 0$ is the solution and hence can be excluded from the system above.}
}

We consider the domain to be 
{
\begin{equation}
    \mathcal U=\{(X,Z):X\in[-L,L], Z\in[0,1]\},
\end{equation}
}
and that {the boundary conditions from \eqref{boundary condition} become:
\begin{equation}\label{condition on w}
\begin{split}
 &w(X,Z=0,t)=w(X,Z=1,t)=0\\ 
 &\int_0^1 u(X=-L,Z,t) dZ=\int_0^1 u(X=L,Z,t) dZ=0.
\end{split}
\end{equation}
}

To further reduce the system, we impose symmetry conditions on the solution 
\begin{enumerate}
    \item {$u$ is odd in $X\implies$ $w$ is even in $X$},
    \item $\theta$ is even in $X$.
\end{enumerate}}{Consider the 3D inviscid primitive equations 
\begin{equation}
    \begin{cases}
        u_t+uu_x+vu_y+wu_Z+p_x-\Omega v=0\\
        v_t+uv_x+vv_y+wv_Z+p_y+\Omega u=0\\
        p_Z+\theta=0\\
        \theta_t+u \theta_x+v\theta_y+w\theta_Z-\sigma\theta_{ZZ}=0\\
        u_x+v_y+w_Z=0,
    \end{cases}
\end{equation}
where $\sigma\in\set{0,1}$, corresponding to the cases where there the temperature $\theta$ is non-diffusive or not.
Assuming the initial conditions and solutions are smooth and do not depend on $y$:
\begin{equation}
    u(x,y,Z,t=0)=u_0(x,Z),\,v(x,y,Z,t=0)=v_0(x,Z),\,\theta(x,y,Z,t=0)=\theta_0(x,Z),
\end{equation}
we obtain the 2D inviscid PEs 
\begin{subequations}\label{PE-system-intro-2d}
\begin{align}
    &u_t + u\, u_X + w u_Z +p_X -\Omega v= 0 , \label{EQ2-1}
    \\
    &v_t + uv_X + wv_Z + \Omega u = 0 \label{EQ2-2}
    \\
    &p_Z +\theta=0 ,   \label{EQ2-3}  
    \\
    &u_X + w_Z =0,   \label{EQ2-4}
    \\
    &\theta_t + u\theta_X + w\theta_Z-\sigma \theta_{ZZ} = 0, \label{EQ2-5}
\end{align}
\end{subequations}
where the domain satisfies that $X\in[-L,L]$ is periodic and $Z\in[0,1]$. To further reduce the system, we consider the cases where 
\begin{enumerate}
    \item { $u$ is odd in $X\implies$ $w$ is even in $X$},
    \item $\theta$ is even in $x$,
    \item $w(X,Z=0,t)=w(X,Z=1,t)=0$,
    \item $\Omega=0$.
\end{enumerate}
}
{Notice that such symmetry conditions are invariant under the dynamics of system \eqref{PE-system-intro-2d}.}
{Combining \eqref{EQ2-4} and \eqref{condition on w},} we immediately arrive at the compatibility condition
\begin{eqnarray}\label{compatibility-preliminary}
&&\hskip-.38in
\frac d{dX}\int_0^1 udZ=\int_0^1 u_X(X,Z)dZ=0. \label{com}
\end{eqnarray}
The pressure term $p$ in \eqref{PE-system-intro-2d} can be explicitly solved using $u,v$ and $\theta$. Indeed, we first notice that  since $u$ is odd, so {$\int_0^1 u(X=0,Z)dZ = 0$}. Next, Integrating (\ref{EQ2-1}) with respect to $Z$ over $(0,1)$ and by integration by parts, one has:
\ifthenelse{\boolean{isSimplified}}{\begin{eqnarray}
	&&\hskip-.58in
 \int_0^1 \Big((u^2)_X(X,Z) + p_X(X,Z)\Big) dZ = 0 \implies \int_0^1 p_X(X,Z)dZ =  - \int_0^1 2uu_X(X,Z) dZ.\label{P-1}
\end{eqnarray}}
{\begin{eqnarray}
	&&\hskip-.58in
 \int_0^1 \Big((u^2)_x(X,Z) + p_X(X,Z)\Big) dZ = 0\label{c-1}
\end{eqnarray}
Thus,
\begin{eqnarray}
&&\hskip-.58in
\int_0^1 p_X(X,Z)dZ =  - \int_0^1 2uu_X(X,Z) dZ. \label{P-1}
\end{eqnarray}}
Next, from \eqref{EQ2-3}, we have 
\begin{eqnarray}
&&\hskip-.58in
p(X,Z)=p_s(X)-\int_0^Z\theta(X,s)ds,  \label{P-2}
\end{eqnarray}
where $p_s(X) = p(X,0)$ is the pressure at $Z=0$. By differentiating (\ref{P-2}) with respect to $X$, and integrating respect to $Z$ over $(0,1)$, by virtue of (\ref{P-1}), we have
\begin{eqnarray}
&&\hskip-.8in
(p_s)_{X}(X)=\int_0^1 \Big[\int_0^{Z} \theta_X(X,\tilde Z)d\tilde Z-2uu_X(X,Z)\Big]dZ. \label{P-3}
\end{eqnarray}
Therefore, by differentiating (\ref{P-2}) with respect to $X$, and using (\ref{P-3}), we obtain
\begin{eqnarray}\label{expression-of-pressure}
&&\hskip-.8in
p_{X}(X,Z)= -\int_0^Z\theta_X(X,\tilde Z)d\tilde Z 
+\int_0^1 \Big[\int_0^{Z} \theta_X(X,\tilde Z)d\tilde Z  -2uu_X(X,Z)\Big]dZ. \label{px}
\end{eqnarray}

{Taking the $X$ derivative of \eqref{EQ2-1} and taking twice $X$ derivative of $\eqref{EQ2-5}$, we have
\begin{subequations}\label{equation of u-v-theta}
\begin{align}
    & u_{Xt} + uu_{XX} + u_X^2 + wu_{XZ} + w_X u_Z + p_{XX} = 0,
    \\
    & \theta_{XXt} + u_{XX} \theta_X + 2u_X \theta_{XX} + u\theta_{XXX} + w_{XX} \theta_Z + 2w_X \theta_{XZ} + w\theta_{XXZ}-\sigma \theta_{XXZZ}  = 0.
\end{align}
\end{subequations}
}
Setting $X=0$ in the last equation of \eqref{PE-system-intro-2d}, we obtain
\begin{equation}\label{theta at Zero}
    \theta_t(X=0,Z,t)+w(X=0,Z,t) \theta_Z(X=0,Z,t) -\sigma\theta_{ZZ}(X=0,Z,t)=0.
\end{equation}\par
To convert the system that can be solely expressed by the variables $u_X\rvert_{X=0},v_X\rvert_{X=0}$ and $\theta_{XX}\rvert_{X=0}$, we now need to impose some {specific initial conditions onto $\theta\rvert_{X=0}$ and boundary conditions} for the non-diffusive and diffusive cases, respectively. For non-diffusive case ($\sigma=0$), if we assume that 
\begin{equation}\label{vanishing theta initial}
  \text  {$\theta_0 = c$ is a constant on $X=0$,}
\end{equation} then since on $X=0$ we have \ifthenelse{\boolean{isSimplified}}{$
 \partial_t \theta(X=0) =- w(X=0) \partial_Z \theta(X=0),
$}{$$
 \partial_t \theta(x=0) =- w(x=0) \partial_Z \theta(x=0),
$$} $\theta=c$ remains a constant on $X=0$ at any later time, i.e., the condition $\partial_Z \theta(X=0)=0$ is invariant in time. If $\sigma=1$, we need to further impose the Dirichlet boundary condition
\begin{equation}
    \theta(X,Z=0,t)=\theta(X,Z=1,t)=0 \qquad\forall X\in[-L,L],\, t\geq0,
\end{equation}
to account for the extra derivative. \ifthenelse{\boolean{isSimplified}}{Since $\theta\rvert_{X=0}$ satisfies the PDE
\begin{equation*}
    \theta_t(X=0,Z,t)+w(X=0,Z,t) \theta_Z(X=0,Z,t) -\theta_{ZZ}(X=0,Z,t)=0,
\end{equation*}}{We look for solutions that satisfies
\begin{equation}
    \theta(x,Z,t)=\theta(x,Z,t)\equiv0 \qquad\forall x\in[-L,L],\, t\geq0,\, Z\geq 0.
\end{equation}
This, in particular, implies that
\begin{equation}\label{boundary-condition-for-diffusive-case}
    \theta\rvert_{x=0} (Z=0,t)=\theta\rvert_{x=0} (Z=1,t)=0.
\end{equation}
We recall that at $x=0$, $\theta\rvert_{x=0}$ satisfies
\begin{equation*}
    \theta_t(x=0,Z,t)+w(x=0,Z,t) \theta_Z(x=0,Z,t) -\theta_{ZZ}(x=0,Z,t)=0.
\end{equation*}} 
by uniqueness of solution with  \ifthenelse{\boolean{isSimplified}}{Dirichlet boundary condition $\theta|_{X=0}(Z=0,t)=\theta|_{X=0}(Z=1,t)=0$}{boundary condition \eqref{boundary-condition-for-diffusive-case}},
 \ifthenelse{\boolean{isSimplified}}{\begin{equation}\label{vanishing theta}
     \theta\rvert_{X=0}(Z,t=0)=0\implies \theta\rvert_{X=0}(Z,t),\,\theta_Z\rvert_{X=0}(Z,t)\equiv 0\text{ for all time.}
 \end{equation}
 }{\begin{equation}\label{vanishing theta}
    \theta\rvert_{X=0}(Z,t)\equiv 0\quad \forall Z\in[0,1],\, t\geq0,
\end{equation}
as well as 
\begin{equation}\label{vanishing theta Z}
    \theta_Z\rvert_{X=0}(Z,t)\equiv 0\quad \forall Z\in[0,1],\, t\geq0
\end{equation}
are time-invariant.}\par

 \ifthenelse{\boolean{isSimplified}}{ For both cases, by introducing the traces at $x=0$:
{
\begin{equation}
    \begin{split}
        a(Z,t)&=-u_X(X=0,Z,t)\\
        c(Z,t)&=\theta_{XX}(X=0,Z,t),
    \end{split}
\end{equation}
}and combining the boundary/initial conditions for non-diffusive \eqref{vanishing theta initial} and diffusive case \eqref{vanishing theta}, expression of pressure \eqref{expression-of-pressure}, oddness/evenness of {$u,v$,} \eqref{equation of u-v-theta} becomes {($\partial_Z^{-1}f:=\int_0^Z f(\tilde Z)d\tilde Z$ below)}}{For both cases, by introducing the traces at $X=0$:
\begin{equation}
    \begin{split}
        a(Z,t)&=-u_X(X=0,Z,t)\\
        b(Z,t)&=-v_X(X=0,Z,t)\\
        c(Z,t)&=\theta_{XX}(X=0,Z,t),
    \end{split}
\end{equation} and combining the boundary/initial conditions for non-diffusive \eqref{vanishing theta initial} and diffusive case \eqref{vanishing theta}\eqref{vanishing theta Z}, expression of pressure \eqref{expression-of-pressure}, and oddness/evenness of $u,v,w$, we obtain
\begin{align*}
    & a_t - a^2 + \partial_Z^{-1}a a_Z  + \partial_Z^{-1} c  + \int_0^1 (2a^2 - \partial_Z^{-1} c ) dZ = 0,
    \\
    & b_t - ab + \partial_Z^{-1}a b_Z  = 0,
    \\
    & c_t - 2ac + \partial_Z^{-1} a c_Z-\sigma c_{ZZ} = 0,
\end{align*}
and we notice that the system of equations of $a$ and $c$ are independent of $b$ and thus from now on, we shall only focus on}
{
\begin{subequations}\label{trace-system}
\begin{align}
    & a_t - a^2 +( \partial_Z^{-1}a) a_Z  + \partial_Z^{-1} c  + \int_0^1 (2a^2 - \partial_Z^{-1} c ) dZ = 0,
    \\
    & c_t - 2ac + (\partial_Z^{-1} a) c_Z-\sigma c_{ZZ} = 0,
\end{align}
\end{subequations}
}
and the {compatibility condition \eqref{compatibility-preliminary} now} takes the form of
\begin{equation}\label{compatibility condition}
    \int_0^1 a(t,Z) dZ=0.
\end{equation}
For {the} diffusive case ($\sigma=1$), \ifthenelse{\boolean{isSimplified}}{in addition, we also have the Dirichlet boundary conditions
\begin{equation}\label{raw boundary condition of c-nc}
    c(t,Z=0)=0=c(t,Z=1)\quad t\geq 0.
\end{equation}
} {since $\theta(x,Z=0,t)=\theta(x,Z=1,t)=0$, the boundary condition of $c$ becomes 
\begin{equation}\label{raw boundary condition of c-nc}
    c(t,Z=0)=\theta_{xx}\rvert_{x=0}(Z=1,t)=0=c(t,Z=1)=\theta_{xx}\rvert_{x=0}(Z=1,t)\quad t\geq 0.
\end{equation}
}

\section{Temporal and Spatial Rescaling---Self-similar Ansatz}
\ifthenelse{\boolean{isSimplified}}{}{
\subsection{Blowup Profile}
In \cite{Collot2023}, the authors have conducted formal and rigorous analysis on the  the hydrostatic Euler equations:
\begin{equation*}
\begin{split}
     &a_t - a^2 + \partial_Z^{-1}a a_Z +\int_0^1 2a^2 dZ = 0\\
     &\int_0^1 a(t,Z) dZ=0.
\end{split}
\end{equation*}
while assuming and formally arguing $\int_0^1 2a^2 dZ$ has lower order:
\begin{equation*}
    a_t - a^2 + \partial_Z^{-1}a a_Z\approx 0,
\end{equation*}
and $a$ almost blows up at some time $T$:
\begin{equation*}
    a(t,Z)\approx \frac{1}{T-t}\phi_\beta(\frac{Z}{(T-t)^\beta}),\quad z\approx\frac{Z}{(T-t)^\beta},\quad \beta\geq 0,
\end{equation*}
they were able to show obtain the blowup profile $\phi$ in self-similar coordinate $z$ explicitly when $\beta=0$:
\begin{equation}
    \phi_0(z)=\exp(-z).
\end{equation}
Following their footsteps, we adopt this profile $\phi_0$ but choose different temporal rescalings depending on whether the temperature $c$ is diffusive, while keeping the spatial rescaling the same.}
\subsection{System of Equations and {Vanishing-Speed Condition at Origin for Non-diffusive Case ($\sigma=0$)} }
For non-diffusive case {($\sigma=0$)}, the equation of temperature is a transport equation of {the variable $c$}. This suggests that the blowup is purely driven by {the variable} $a$, and we thus choose the scaling of $c$ to be identical to {that of} $a$. \par
Consider the {rescaling for the spatial variable $Z$ by $\nu$ and for the time by $\lambda$,} two positive $C^1$ functions of time:
\begin{equation} \label{id:self-similarvariables}
    z = \frac{Z}{\nu(t)}, \quad \frac{ds}{dt} = \frac{1}{\lambda(t)}, \quad s(0)=s_0.
\end{equation}
The following computations are done for $\phi(Z)=e^{-Z}$. We write the solution {$a(t,Z)$, $c(t,Z)$} of system \eqref{trace-system} as
\begin{equation}\label{decomp}
\begin{split}
    &a(t,Z) = \frac{1}{\lambda(s(t))}\Big(\phi\Big(\frac Z{\nu(s(t))}\Big) + \tilde{a}\Big(s(t),\frac Z{\nu(s(t))}\Big)\Big)= \frac{1}{\lambda(s)}\Big(\phi (z) + \tilde{a} (s,z)\Big),
    \\
    &c(t,Z) = \frac{1}{\lambda(s(t))}\tilde{c}\Big(s(t),\frac Z{\nu(s(t))}\Big).
\end{split}
\end{equation}
{An explicit computation gives ($\partial_z^{-1}f(s,z):=\int_0^z f(s,\tilde z) d\tilde z$)}:
\begin{equation*}
\begin{split}
    &a_t = \frac{1}{\lambda^2} \Big( -\frac{\lambda_s}{\lambda}\phi - \phi' \frac{\nu_s}{\nu} z - \frac{\lambda_s}{\lambda} \tilde{a} + \tilde{a}_s - \tilde{a}_z \frac{\nu_s}{\nu} z\Big),
    \\
    &a^2 = \frac{1}{\lambda^2} \Big( \phi^2 + \tilde{a}^2 + 2\phi \tilde{a} \Big),
    \\
    &{\Big(\int_0^Z a^2(t,\tilde{Z})d\tilde{Z}\Big) a_Z = \frac{1}{\lambda^2} \Big( (\partial_z^{-1} \phi) \phi' + (\partial_z^{-1} \phi) \tilde{a}_z + (\partial_z^{-1} \tilde{a}) \phi' + (\partial_z^{-1} \tilde{a}) \tilde{a}_z \Big),}
    \\
    &\int_0^Z c(t,\tilde{Z})d\tilde{Z} = \frac\nu\lambda \partial_z^{-1} \tilde{c},
    \\
    & \int_0^1 \left(2a^2(Z)  - \int_0^Z c(t,\tilde{Z})d\tilde{Z}\right) dZ =  \int_0^{\frac{1}{\nu}}\left(\frac{2\nu}{\lambda^2} ( \phi+ \tilde{a})^2(z)   - \frac{\nu^2}\lambda \partial_z^{-1} \tilde{c}(z)\right)dz.
\end{split}
\end{equation*}
{All the other components appearing in \eqref{trace-system} can be computed in a similar manner and are thus omitted. Combining the previous computation with the identity $(\partial_{z}^{-1}\phi) \phi'-\phi^2+\phi=0$, we arrive at the evolution equations of $\tilde a$ and $\tilde c$.}

\begin{subequations}\label{equation:epsilon}
\begin{align}
    \begin{split}
        &\tilde{a}_s - \frac{\lambda_s}{\lambda}\tilde{a} - \frac{\nu_s}{\nu} z \tilde{a}_z - 2\phi \tilde{a} + (\partial_z^{-1} \phi) \tilde{a}_z + (\partial_z^{-1} \tilde{a}) \phi'  - \tilde{a}^2 + (\partial_z^{-1} \tilde{a}) \tilde{a}_z \label{equation:epsilon-a}
        \\
        &= (\frac{\lambda_s}{\lambda} +1)\phi +  \frac{\nu_s}{\nu}z\phi' - 2\nu \int_0^{\frac{1}{\nu}} (\phi+ \tilde{a})^2(z) dz
        \\
         &\qquad  + \lambda\left(- \nu\partial_z^{-1} \tilde{c}  + \int_0^{\frac{1}{\nu}} \nu^2 \partial_z^{-1} \tilde{c}(z) dz\right),
    \end{split}
    \\
    \begin{split}\label{equation:epsilon-c}
     &\tilde{c}_s - \frac{\lambda_s}{\lambda}\tilde{c} - \frac{\nu_s}{\nu} z \tilde{c}_z   - 2\tilde{a}\tilde{c} - 2\phi\tilde{c} + (\partial_z^{-1} \tilde{a}) \tilde{c}_z + (\partial_z^{-1} \phi) \tilde{c}_z
        = 0,
    \end{split}
    \\
    &\int_0^{\frac1\nu} (\phi + \tilde{a})(z) dz = 0. \label{equation:epsilon-bc}
\end{align}
\end{subequations}
By taking the $z$ derivative of $\tilde a$ we compute
\begin{subequations}\label{derivative of tilde a}
\begin{align}
    \begin{split}
        &\tilde{a}_{zs} - \frac{\lambda_s}{\lambda}\tilde{a}_z - \frac{\nu_s}{\nu} \tilde a_z - \frac{\nu_s}{\nu} z \tilde{a}_{zz} -\phi' \tilde{a} - \phi \tilde{a}_z + (\partial_z^{-1} \phi) \tilde{a}_{zz} + (\partial_z^{-1} \tilde{a}) \phi''  - \tilde{a} \tilde a_z + (\partial_z^{-1} \tilde{a}) \tilde{a}_{zz} 
        \\
        &= (\frac{\lambda_s}{\lambda} +1)\phi' +  \frac{\nu_s}{\nu}\phi' +  \frac{\nu_s}{\nu}z\phi''  - \lambda\nu \tilde c.
    \end{split}
\end{align}
\end{subequations}

The modulation parameters $\lambda$ and $\nu$ are determined by imposing the following vanishing {coefficients in the Taylor expansion} for the expansion of $\tilde a$, an orthogonality-like condition:

\be \label{smooth:orthogonality}
\begin{cases}
     \tilde a(s,z=0)=0 \\
 \pa_z \tilde a(s,z=0) =0
\end{cases}
\iff \tilde a(s,z)=o(z) \text{ as }z\rightarrow 0.
\ee
{
Combining the first condition of \eqref{smooth:orthogonality} and \eqref{equation:epsilon-a} gives 
\be
\begin{split}
    &\frac{\lambda_s}\lambda +1 = 2\nu \int_0^{\frac{1}{\nu}} (\phi+ \tilde{a})^2(z) dz - \lambda \int_0^{\frac{1}{\nu}} \nu^2 \partial_z^{-1} \tilde{c}(z) dz,
\end{split}
\ee
while using both conditions of \eqref{smooth:orthogonality} and \eqref{derivative of tilde a} gives
\begin{equation}\label{lambda-and-nu-relation}
    -(\frac{\lambda_s}{\lambda}+1)-\frac{\nu_s}{\nu}-\lambda \nu \tilde{c}(z=0)=0.
\end{equation}
}

{Next, }setting $z=0$ in {\eqref{equation:epsilon-c}} we obtain that 
\begin{equation}\label{equations-at-zero}
    \begin{split}
        &\tilde{c}_s\rvert_{z=0}-\frac{\lambda_s}{\lambda}\tilde{c}\rvert_{z=0}-2\tilde{c}\rvert_{z=0}=0,
    \end{split}
\end{equation}
and it implies that
\begin{equation}\label{sigma-zero}
    \tilde c(s,z=0)=0 \text{ is time-invariant if }\sigma=0.
\end{equation}
Combining \eqref{equations-at-zero}, \eqref{sigma-zero} and \eqref{lambda-and-nu-relation}, the conditions on $\lambda$ and $\nu$ are reduced into
\begin{subequations}\label{smoothmodulationequations:initial}
    \begin{align}
        &\frac{\lambda_s}\lambda +1 = 2\nu \int_0^{\frac{1}{\nu}} (\phi+ \tilde{a})^2(z) dz - \lambda \int_0^{\frac{1}{\nu}} \nu^2 \partial_z^{-1} \tilde{c}(z) dz\label{condiiton-on-lambda}\\
        &\frac{\lambda_s}{\lambda}+\frac{\nu_s}{\nu}=-1\label{condiiton-on-lambda-nu}.
    \end{align}
\end{subequations}
{
Thanks to \eqref{smoothmodulationequations:initial} and the fact that $\phi' = -\phi$, $\phi''=\phi$, the equation of $\tilde a_z$ is simplified to
\begin{subequations}\label{equations:dz}
\begin{align}
    \begin{split}\label{equation:az}
        &\tilde{a}_{zs} + (1-\phi) \tilde a_z +(\partial_z^{-1} \phi - \frac{\nu_s}{\nu} z) \tilde{a}_{zz} +\phi \tilde{a}  + (\partial_z^{-1} \tilde{a}) \phi  - \tilde{a} \tilde a_z + (\partial_z^{-1} \tilde{a}) \tilde{a}_{zz} 
        \\
        &=  \frac{\nu_s}{\nu}z\phi - \lambda\nu \tilde c.
    \end{split}
\end{align}
\end{subequations}
}

\subsection{System of Equations and {Vanishing-Speed condition at Origin for Diffusive Case ($\sigma=1$)}}
For diffusive case, the temperature $c$ is of the type of Convection-diffusion equation and this suggests that the rescaling of heat equation would be appropriate for $c$, we thus introduce the rescalings
\begin{equation}
    z=\frac{Z}{\nu(t)},\,\frac{ds}{dt}=\frac{1}{\lambda(t)},\,s(0)=s_0,
\end{equation}
and {fix the same} approximating profile $\phi(z)=e^{-z}$. {With that being said, }we decompose
\begin{equation}\label{decomposition-nc}
    \begin{split}
        a(s,z)&=a(t,Z)=\frac{1}{\lambda(s(t))}\left(\phi(\frac{Z}{\nu(s(t))})+\Tilde{a}(s(t),\frac{Z}{\nu(s(t))})\right)=\frac{1}{\lambda(s)}(\phi(z)+\tilde{a}(s,z))\\
        c(s,z)&=c(t,Z)=\frac{1}{\lambda(s)^2}\tilde{c}(s,z).
    \end{split}
\end{equation}
Using {$(\antipartial_z \phi) \phi'-\phi^2+\phi=0$ (Similarly, we define $\partial_z^{-1}f(s,z):=\int_0^z f(s,\tilde z) d\tilde z$.)}, we compute
{
\begin{equation}\label{raw-nc}
    \begin{split}
        &\tilde{a}_s-\frac{\lambda_s}{\lambda}\tilde{a}-\frac{\nu_s}{\nu}z \tilde{a}_z-2\phi \tilde{a}+(\antipartial_z\phi)\tilde{a}_z+(\antipartial_z\tilde{a})\phi'-\tilde{a}^2+(\antipartial_z\tilde{a})\tilde{a}_z\\
        &=(\frac{\lambda_s}{\lambda}+1)\phi+\frac{\nu_s}{\nu}z\phi'-2\nu \int_0^{\nu^{-1}}(\phi+\tilde{a})^2(z)dz\\
        &\qquad+\lambda\left(-\nu \frac{1}{\lambda}\antipartial_z \tilde{c}+\int_0^{\nu^{-1}} \nu^2 \frac{1}{\lambda}\antipartial_z\tilde{c}(z)dz\right)\\
        &\tilde{c}_s-2\frac{\lambda_s}{\lambda}\tilde{c}-\frac{\nu_s}{\nu}z\tilde{c}_z-2\tilde{a}\tilde{c}-2\phi \tilde{c}+(\antipartial_z\tilde{a})\tilde{c}_z+(\antipartial_z\phi)\tilde{c}_z-\frac{\lambda}{\nu^2}\tilde{c}_{zz}=0.\\
        &\int_0^{\frac1\nu} (\phi + \tilde{a})(z) dz = 0.
    \end{split}
\end{equation}
}
{Imposing the same orthogonal-like boundary condition on $\tilde a$}:

\be \label{smooth:orthogonality-nc}
\begin{cases}
     \tilde a(s,z=0)=0 \\
 \pa_z \tilde a(s,z=0) =0
\end{cases}
\iff \tilde a(s,z)=o(z) \text{ as }z\rightarrow 0,
\ee
{similarly to the non-diffusive case, the conditions on $\lambda$ and $\nu$ are reduced into }
\begin{subequations}\label{smoothmodulationequations:initial-nc}
    \begin{align}
        &\frac{\lambda_s}\lambda +1 = 2\nu \int_0^{\frac{1}{\nu}} (\phi+ \tilde{a})^2(z) dz - \lambda  \int_0^{\frac{1}{\nu}} \frac{\nu^2}{\lambda} \partial_z^{-1} \tilde{c}(z) dz\label{condiiton-on-lambda}\\
        &\frac{\lambda_s}{\lambda}+\frac{\nu_s}{\nu}=-1\label{condiiton-on-lambda-nu-nc},
    \end{align}
\end{subequations}
{after computing the derivative of $\tilde a$ from \eqref{raw-nc}. Consequently, with the help of \eqref{smoothmodulationequations:initial-nc}, the equation of $\tilde a_z$ is simplified into 
{
\begin{subequations}\label{equations:dz-nc}
\begin{align}
    \begin{split}\label{equation:az-nc}
        &\tilde{a}_{zs} + (1-\phi) \tilde a_z +(\partial_z^{-1} \phi - \frac{\nu_s}{\nu} z) \tilde{a}_{zz} +\phi \tilde{a}  + (\partial_z^{-1} \tilde{a}) \phi  - \tilde{a} \tilde a_z + (\partial_z^{-1} \tilde{a}) \tilde{a}_{zz} 
        \\
        &=  \frac{\nu_s}{\nu}z\phi  - \nu \tilde c.
    \end{split}
\end{align}
\end{subequations}}}

What differs from the non-diffusive case is that
\begin{equation}\label{equations-at-zero-nc}
    \begin{split}
        &\tilde{c}_s\rvert_{z=0}-2\frac{\lambda_s}{\lambda}\tilde{c}\rvert_{z=0}-2\tilde{c}\rvert_{z=0}-\sigma\frac{\lambda}{\nu^2}\tilde{c}_{zz}\rvert_{z=0}=0,
    \end{split}
\end{equation}
{which is effectively obtained by taking the trace of \eqref{raw-nc}.}
{Since $\tilde c$ inherits the Dirichlet boundary condition of $c$ \eqref{raw boundary condition of c-nc} by the choice of rescaling \eqref{decomposition-nc}, } {\eqref{equations-at-zero-nc} implies} that the boundary condition we imposed earlier becomes
\begin{equation}\label{sigma-one-nc}
\begin{split}
    & \tilde c(s,z=0)=0=\tilde c(s,z={\frac{1}{\nu(s)}}),\,\tilde c_{zz}(s,z=0)=0\text{ for all $s\geq s_0$ }.
\end{split}
\end{equation}

\subsection{{More Invariant Vanishing Speed at Origin}}
It turns out that {at origin ($z=0$),}  we can impose more time-invariant {conditions, for which the variables $\tilde a$ and $\tilde c$} vanish strictly faster than the order of constant. This would be crucial to the well-definedness of some weighted energies, as we shall see later in our analysis. \par
\begin{corollary}[Invariance of Vanishing Speed at Zero]\label{Invariance-of-vanishing speed}
    Let $0<\epsilon_a\leq \epsilon_c<1$ and {$s\geq s_0\gg 1$}. Suppose that { $\tilde a\in C_s^1C_z^2([s_0,\infty];\R^+)$ and $\tilde c\in C_s^1C_z^3([s_0,\infty];\R^+)$}, and orthogonality-like condition on $\tilde a$: \ifthenelse{\boolean{isSimplified}}{$ \tilde a(s,0)=\partial_z\tilde a(s,0)=0 $}{\begin{equation}
        \tilde a(s,0)=\partial_z\tilde a(s,0)=0 
    \end{equation}}   
    are imposed for all time. \ifthenelse{\boolean{isSimplified}}{Assume also that {$\sigma\tilde c(s,0)=0=\sigma\tilde c(s,\nu^{-1}(s))$}}{For diffusive case $\sigma=1$, assume also the Dirichlet boundary condition on $\tilde c$: \begin{equation}
        \tilde c(s,0)=0=\tilde c(s,\nu^{-1}(s)) 
    \end{equation}
    } are imposed for each $s\geq s_0$. The following boundary conditions are time-invariant: 
    {
    \begin{enumerate}
        \item For diffusive case ($\sigma=1$), if initially $\tilde a_z(s_0,z)=o(z^{\epsilon_a}),\, \tilde c(s_0,z)=o(z^{\epsilon_c}) \text{ as }z\rightarrow 0^+$, then for each $s\geq s_0$,  \ifthenelse{\boolean{isSimplified}}{$\tilde a_z(s,z)=o(z^{\epsilon_a}),\, \tilde c(s,z)=o(z^{\epsilon_c}) \text{ as }z\rightarrow 0^+$.}{ \begin{equation}
            \begin{split}
                \tilde a_z(s,z)=o(z^{\epsilon_a}),\, \tilde c(s,z)=o(z^{\epsilon_c}) \text{ as }z\rightarrow 0^+.
            \end{split}
        \end{equation}}
        \item For non-diffusive case ($\sigma=0$), if initially $\tilde a_z(s_0,z)=o(z^{\epsilon_a}),\, \tilde c(s_0,z)=o(z^{\epsilon_c}),\, c_z(s_0,z)=o(z^{\epsilon_c}) \text{ as }z\rightarrow 0^+$, then for each $s\geq s_0$,   \ifthenelse{\boolean{isSimplified}}{$\tilde a_z(s,z)=o(z^{\epsilon_a}),\, \tilde c(s,z)=o(z^{\epsilon_c}) ,\, \tilde c_z(s,z)=o(z^{\epsilon_c})\text{ as }z\rightarrow 0^+.$}{\begin{equation}
            \begin{split}
                \tilde a_z(s,z)=o(z^{\epsilon_a}),\, \tilde c(s,z)=o(z^{\epsilon_c}) ,\, \tilde c_z(s,z)=o(z^{\epsilon_c})\text{ as }z\rightarrow 0^+.
            \end{split}
            \end{equation}}
    \end{enumerate}
    }
\end{corollary}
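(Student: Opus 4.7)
The proof is structured as a time-invariance argument for Dirichlet-type conditions at $z=0$. Given the smoothness hypotheses $\tilde a\in C_s^1 C_z^2$, $\tilde c\in C_s^1 C_z^3$ and the fact that both exponents $\epsilon_a,\epsilon_c$ lie strictly below $1$, each of the little-$o$ statements in the conclusion reduces to a pointwise vanishing at $z=0$: indeed, any $C_z^1$ function $f$ with $f(s,0)=0$ admits $f(s,z)=\int_0^z f_z(s,\tilde z)\,d\tilde z=O(z)=o(z^{\epsilon})$ for every $\epsilon<1$. Thus the task is to establish the time-invariance of $\tilde a_z(s,0)=0$ (both cases), $\tilde c(s,0)=0$ (both cases), and $\tilde c_z(s,0)=0$ (non-diffusive only).

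Three of these four identities are already available. The orthogonality-like conditions $\tilde a(s,0)=\tilde a_z(s,0)=0$ are precisely the hypotheses \eqref{smooth:orthogonality} and \eqref{smooth:orthogonality-nc} listed in the corollary and used in Section~3 to close the modulation equations. The identity $\tilde c(s,0)=0$ is supplied by \eqref{sigma-zero} in the non-diffusive case and by the imposed Dirichlet boundary condition \eqref{sigma-one-nc} in the diffusive case. The only new piece is therefore the invariance of $\tilde c_z(s,0)=0$ in the non-diffusive setting.

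To establish this last identity, I would differentiate \eqref{equation:epsilon-c} once in $z$ and collect terms, producing
\begin{equation*}
\tilde c_{zs}+V(s,z)\,\tilde c_{zz}-\left(\frac{\lambda_s}{\lambda}+\frac{\nu_s}{\nu}+\tilde a+\phi\right)\tilde c_z-2(\tilde a_z+\phi')\tilde c=0,
\end{equation*}
where $V(s,z):=\partial_z^{-1}\phi+\partial_z^{-1}\tilde a-\frac{\nu_s}{\nu}z$ is the common transport velocity. Evaluating at $z=0$ and inserting $V(s,0)=0$, $\tilde a(s,0)=\tilde a_z(s,0)=\tilde c(s,0)=0$, $\phi(0)=1$, together with the modulation relation $\frac{\lambda_s}{\lambda}+\frac{\nu_s}{\nu}=-1$ from \eqref{smoothmodulationequations:initial}, collapses the equation to $\tilde c_{zs}(s,0)=0$. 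Hence $\tilde c_z(s,0)$ is constant in $s$, and the assumption $\tilde c_z(s_0,z)=o(z^{\epsilon_c})$ forces $\tilde c_z(s_0,0)=0$ by continuity, so $\tilde c_z(s,0)=0$ persists for all $s\ge s_0$; combined with $\tilde c\in C_s^1 C_z^3$ this upgrades to $\tilde c_z(s,z)=O(z)=o(z^{\epsilon_c})$.

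The main obstacle is the algebraic cancellation in the last step: careless bookkeeping when differentiating the nonlinear transport operator would leave a nonzero coefficient in front of $\tilde c_z$ at $z=0$ and destroy the invariance. The cancellation hinges on the modulation identity $\frac{\lambda_s}{\lambda}+\frac{\nu_s}{\nu}+1=0$, itself imposed precisely to maintain $\partial_z\tilde a(s,0)=0$, together with the profile normalization $\phi(0)=1$; both are intrinsic to the self-similar Ansatz \eqref{decomp}. No such subtlety is needed in the diffusive case, where $\tilde c_z(s,0)$ is not among the invariants claimed and $\tilde c(s,0)=0$ comes for free from the imposed Dirichlet data; it is also the reason $\epsilon_a\le\epsilon_c$ is assumed (so that the stronger vanishing of $\tilde c$ absorbs into the $\tilde a_z$ analysis), though that ordering plays no role in the corollary itself, only in its downstream use.
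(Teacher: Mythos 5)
Your proposal is correct, and it takes a genuinely cleaner route than the paper's own proof. The paper works directly with the weighted quantities $\tilde c/z^\kappa$, $\tilde c_z/z^\kappa$, $\tilde a_z/z^\kappa$ and derives, term by term, that each satisfies $\partial_s(\,\cdot\,/z^\kappa)=(\text{coeff})\cdot(\,\cdot\,/z^\kappa)+o(1)$ as $z\to0^+$, then argues invariance from the resulting ODE structure. You instead observe that, under the stated regularity $C_z^2$ (resp.\ $C_z^3$) down to $z=0$ and the restriction $\epsilon_a,\epsilon_c<1$, the $o(z^\epsilon)$ bound is implied by pointwise vanishing at the origin: $\tilde a_z(s,0)=0$ comes from the orthogonality-like conditions \eqref{smooth:orthogonality}, \eqref{smooth:orthogonality-nc}; $\tilde c(s,0)=0$ from the Dirichlet data (diffusive) or from \eqref{sigma-zero} combined with the initial vanishing (non-diffusive); and $\tilde c_z(s,0)=0$ (non-diffusive only) from the differentiated equation evaluated at the origin, where the $\tilde c_z$-coefficient $\frac{\lambda_s}{\lambda}+\frac{\nu_s}{\nu}+\tilde a(s,0)+\phi(0)=-1+0+1$ cancels --- the same algebraic cancellation the paper also exploits, just packaged differently. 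Your observation that the present corollary does not actually use $\epsilon_a\le\epsilon_c$ is also correct: you bypass the PDE for $\tilde a_z$ entirely, whereas the paper's Step 4 divides by $z^\kappa$ and needs $\tilde c=o(z^{\kappa_2})$ with $\kappa_2\ge\kappa$ to control the source $-\lambda^{1-\sigma}\nu\,\tilde c/z^\kappa$, which is where the ordering enters in their route. The trade-off is that the paper's weighted-quantity computation is reusable machinery (the same coefficients reappear verbatim in the interior energy estimates of Sections 4--5), while yours is a one-shot Taylor-at-the-origin argument tailored to establishing the corollary with a minimum of bookkeeping.
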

\begin{proof}
 \ifthenelse{\boolean{isSimplified}}{}{Suppose that $\tilde a\in C_s^1([s_0,\infty]) C_z^2(\R^+)$, and $\tilde c\in C_s^1([s_0,\infty]) C_z^3(\R^+)$, we rewrite equations \eqref{raw-nc} and \eqref{equation:epsilon-c} as
\begin{subequations}
    \begin{align*}
        &\tilde{c}_s=  ((\sigma+1)\frac{\lambda_s}{\lambda}+2\tilde a+2\phi)\tilde{c} + \frac{\nu_s}{\nu} z \tilde{c}_z    - \partial_z^{-1} \tilde{a} \tilde{c}_z - \partial_z^{-1} \phi \tilde{c}_z+\sigma\frac{\lambda}{\nu^2} \tilde{c}_{zz}.
    \end{align*}
\end{subequations}}
\noindent \textbf{Step 1}. \emph{Equations of $\tilde c$ near $z=0$. }Let $\kappa<1$, we divide equations \eqref{equation:epsilon-c} and \eqref{raw-nc} by $z^\kappa$ and {thus both cases can be written compactly: {For each $s\geq s_0$,}} 
\begin{equation}\label{vanishing-speed-analysis}
\begin{split}
    &\partial_s\left(\frac{\tilde{c}}{z^\kappa}\right)=  ((\sigma+1)\frac{\lambda_s}{\lambda}+2\tilde a+2\phi)\frac{\tilde{c}}{z^\kappa} + \frac{\nu_s}{\nu} z^{1-\kappa} \tilde{c}_z - \frac{\partial_z^{-1} \phi}{z^\kappa} \tilde{c}_{z}   - \frac{\partial_z^{-1} \tilde a}{z^\kappa} \tilde{c}_{z}+\sigma\frac{\lambda}{\nu^2} \frac{\tilde{c}_{zz}}{z^\kappa}.
    \end{split}
\end{equation}
Using L'hopital's rule and \eqref{smooth:orthogonality} and \eqref{smooth:orthogonality-nc}, we check
\begin{equation*}
    \frac{\partial_z^{-1} \tilde a}{z}\simeq {\tilde a}=o(z),\,\partial_z^{-1} \phi=1-e^{-z}\simeq z \implies  \frac{\partial_z^{-1} \tilde a}{z^\kappa}=o(1)= \frac{\partial_z^{-1} \phi}{z^\kappa} \text{ as }z\rightarrow 0^+\text{ if }{0\leq \kappa<1}.
\end{equation*}
Thus, equation \eqref{vanishing-speed-analysis} is simplified to
\begin{equation}\label{vanishing-speed-analysis-simplified}
    \begin{split}
    &\partial_s\left(\frac{\tilde{c}}{z^\kappa}\right)=  ((\sigma+1)\frac{\lambda_s}{\lambda}+2\tilde a+2\phi)\frac{\tilde{c}}{z^\kappa} +\sigma\frac{\lambda}{\nu^2} \frac{\tilde{c}_{zz}}{z^\kappa}+o(1), \text{ as }z\rightarrow 0^+\text{ if }{0\leq \kappa<1},
    \end{split}
\end{equation}
{for each $s\geq s_0$.}

\textbf{Step 2}. \emph{The vanishing speeds of $\tilde c$ for non-diffusive case. } {For $\sigma=0$, from \eqref{vanishing-speed-analysis-simplified}, we immediately conclude that $\tilde c=o(z^\kappa)$ is time-invariant for each {$0\leq \kappa<1$}.}
\ifthenelse{\boolean{isSimplified}}{Moreover, by differentiating the PDE of $\tilde c$ {\eqref{vanishing-speed-analysis-simplified}} and dividing both sides by $z^\kappa$, we see that {for each $s\geq s_0$,}
\begin{equation}
\begin{split}
      &\tilde{c}_{zs}=(\underbrace{\frac{\lambda_s}{\lambda}+\frac{\nu_s}{\nu}}_{-1}+\tilde a+\phi)\tilde{c}_z+(\frac{\nu_s}{\nu}z-\pa^{-1}_z\tilde{a}-\pa^{-1}_z{\phi})\tilde{c}_{zz}+(2\tilde{a}_z-2\phi)\tilde{c}\\
       &\partial_s(\frac{\tilde{c}_{z}}{z^\kappa})=\frac{-1+\tilde a+\phi}{z^\kappa}\tilde{c}_z+\frac{(\frac{\nu_s}{\nu}z-\pa^{-1}_z\tilde{a}-\pa^{-1}_z{\phi})}{z^\kappa}\tilde{c}_{zz}+(2\tilde{a}_z-2\phi)\frac{\tilde{c}}{z^\kappa}.
\end{split}
\end{equation}
}{Moreover, for non-diffusive case, differentiating the PDE of $\tilde c$, we see
\begin{equation}
    \tilde{c}_{zs}=(\underbrace{\frac{\lambda_s}{\lambda}+\frac{\nu_s}{\nu}}_{-1}+\tilde a+\phi)\tilde{c}_z+(\frac{\nu_s}{\nu}z-\pa^{-1}_z\tilde{a}-\pa^{-1}_z{\phi})\tilde{c}_{zz}+(2\tilde{a}_z-2\phi)\tilde{c},
\end{equation}
by utilizing the identities involving $\phi=e^{-z}$ and $\lambda_s/\lambda+\nu_s/\nu=-1$. Dividing $z^k$ on both sides gives
\begin{equation}
    \partial_s(\frac{\tilde{c}_{z}}{z^k})=\frac{-1+\tilde a+\phi}{z^k}\tilde{c}_z+\frac{(\frac{\nu_s}{\nu}z-\pa^{-1}_z\tilde{a}-\pa^{-1}_z{\phi})}{z^k}\tilde{c}_{zz}+(2\tilde{a}_z-2\phi)\frac{\tilde{c}}{z^k}.
\end{equation}
}
\ifthenelse{\boolean{isSimplified}}{Similarly, assuming that $ \tilde c=o(z^{\kappa})$ is imposed, we compute that 
\begin{equation*}
    (-1+\phi)+\tilde a\simeq -z,\, (\frac{\nu_s}{\nu}z-\pa^{-1}_z\tilde{a}-\pa^{-1}_z{\phi})\approx z,\, \text{as }z\rightarrow 0^+.
\end{equation*}
}{we compute (recall \eqref{identity-vanishing-speed} and $\tilde a=o(z)$)
\begin{equation}
    \begin{split}
        &(-1+\phi)+\tilde a\simeq -z\\
        &(\frac{\nu_s}{\nu}z-\pa^{-1}_z\tilde{a}-\pa^{-1}_z{\phi})\approx z\\
        & \tilde c=o(z^{k})\qquad\text{ (if imposed)},
    \end{split}
\end{equation}
where $z\rightarrow 0^+$.}
Combining the computations above, we observe that for non-diffusive case
\begin{equation}
    \partial_s(\frac{\tilde{c}_{z}}{z^\kappa})=o(1)\implies \tilde c_z=o(z^\kappa) \text{ is time invariant for }{0\leq \kappa<1,} \qquad\text{ as }z\rightarrow 0^+,
\end{equation}
{for each $s\geq s_0$.}

\textbf{Step 3}. \emph{The vanishing speeds of $\tilde c$ for diffusive case. }For diffusive case ($\sigma=1$), since $\tilde c\in C^3_z([0,\nu^{-1}])$, using Peano remainder with order 3, we expand at $z=0$ along the $z-$axis for each $s\geq s_0$. 
\ifthenelse{\boolean{isSimplified}}{Upon applying the boundary condition on $\tilde c$ \eqref{sigma-one-nc} and using L'hopital's rule, we obtain
\begin{equation}\label{Taylor-second-derivative-on-tilde-c:diffusive}
    \tilde c_{zz}(s,z)=c_{zzz}(s,0)z+o(z),\text{ as }z\rightarrow 0^+.
\end{equation}
Injecting
\eqref{Taylor-second-derivative-on-tilde-c:diffusive} into \eqref{vanishing-speed-analysis-simplified}, gives
\begin{equation*}
\begin{split}
     \partial_s\left(\frac{\tilde{c}}{z^\kappa}\right)
     &=(\frac{\lambda_s}{\lambda}+2\tilde a+2\phi)\frac{\tilde{c}}{z^\kappa}+o(1), \text{ as }z\rightarrow 0^+\text{ if }\kappa<1.
\end{split}
\end{equation*}}{Upon applying the boundary condition on $\tilde c$ \eqref{sigma-one-nc}, we see \begin{equation}
    \begin{split}
        \tilde c(s,z)&=c(s,0)+c_z(s,0)z+\frac{c_{zz}(s,0)}{2!}z^2+\frac{c_{zzz}(s,0)}{3!}z^3+o(z^3)\\
        &=c_z(s,0)z+\frac{c_{zzz}(s,0)}{3!}z^3+o(z^3),\text{ as }z\rightarrow 0^+.
    \end{split}
\end{equation}
Differentiating twice gives
\begin{equation}\label{Taylor-second-derivative-on-tilde-c:diffusive}
    \tilde c_{zz}(s,z)=c_{zzz}(s,0)z+o(z),\text{ as }z\rightarrow 0.
\end{equation}
where we repeatedly applied L'hopital's rule on $o(z^3)$ to conclude $\partial_z o(z^3)=o(z^2)$ and $\partial_z o(z^2)=o(z)$. Injecting
\eqref{Taylor-second-derivative-on-tilde-c:diffusive} into \eqref{vanishing-speed-analysis-simplified}, gives
\begin{equation*}
\begin{split}
     \partial_s\left(\frac{\tilde{c}}{z^\kappa}\right)&=(\frac{\lambda_s}{\lambda}+2\tilde a+2\phi)\frac{\tilde{c}}{z^\kappa}+\frac{\lambda}{\nu^2}\tilde c_{zzz}(s,0)z^{1-\kappa}+o(z^{1-\kappa})+o(1)\\
     &=(\frac{\lambda_s}{\lambda}+2\tilde a+2\phi)\frac{\tilde{c}}{z^\kappa}+o(1), \text{ as }z\rightarrow 0^+\text{ if  }{0\leq \kappa<1},
\end{split}
\end{equation*}}{for each $s\geq s_0$.} Thus, for diffusive case, $\tilde c=o(z^\kappa)$ is also time-invariant.

\textbf{Step 4}. \emph{The vanishing speeds of $\tilde a$. }\ifthenelse{\boolean{isSimplified}}{ Assuming that time-invariant condition $\tilde c=o(z^{\kappa_2})$ ($\kappa_2<1$) is imposed, dividing the PDE by $z^\kappa$, we see {for each $s\geq s_0$,}\begin{equation}
    \partial_s\left(\frac{\tilde{a}_{z}}{z^\kappa}\right)   
        =-\frac{(1-\phi-\tilde a)}{z^\kappa} \tilde a_z-\frac{(\partial_z^{-1} \phi - \frac{\nu_s}{\nu} z+\partial_z^{-1} \tilde{a})}{z^\kappa} \tilde{a}_{zz}-\phi \frac{\tilde{a}}{z^\kappa}  - \frac{\partial_z^{-1} \tilde{a}}{z^\kappa} \phi+ \frac{\nu_s}{\nu}z^{1-\kappa}\phi - \lambda^{1-\sigma}\nu \frac{\tilde c}{z^\kappa}.
\end{equation} In a similar fashion, we compute that for $\kappa\leq \kappa_2<1$
\begin{equation*}
     -\frac{(1-\phi-\tilde a)}{z^\kappa},\,\frac{(\partial_z^{-1} \phi - \frac{\nu_s}{\nu} z+\partial_z^{-1} \tilde{a})}{z^\kappa},\,\frac{\tilde{a}}{z^\kappa},\,\frac{\partial_z^{-1} \tilde{a}}{z^\kappa},\, z^{1-\kappa}\phi,\,\frac{\tilde c}{z^\kappa}=o(1)\text{ as }z\rightarrow 0^+,
\end{equation*}
and thus the equation of $a_z$ is reduced to
\begin{equation}
    \partial_s\left(\frac{\tilde{a}_{z}}{z^\kappa}\right)=o(1) \text{ as }z\rightarrow 0^+\text{ if }\kappa\leq \kappa_2<1,
\end{equation}{for each $s\geq s_0$.}}{The PDE of $\tilde a_z$ can be rewritten as
\begin{equation*}
\begin{split}
     &\partial_s\tilde{a}_{z} + (1-\phi-\tilde a) \tilde a_z +(\partial_z^{-1} \phi - \frac{\nu_s}{\nu} z+\partial_z^{-1} \tilde{a}) \tilde{a}_{zz} +\phi \tilde{a}  + \partial_z^{-1} \tilde{a} \phi 
        \\
        &=  \frac{\nu_s}{\nu}z\phi - \lambda^{1-\sigma}\nu \tilde c,
\end{split}  
\end{equation*}
if we know that as $z\rightarrow 0^+$.
\begin{equation*}
    \tilde c=o(z^{\kappa_2}) \text{ for }\kappa_2<1 \text{ is time-invariant,}
\end{equation*}
setting $\kappa\leq \kappa_2<1$ and dividing the previous equation by $z^\kappa$ gives
\begin{equation}
    \partial_s\left(\frac{\tilde{a}_{z}}{z^\kappa}\right)   
        =-\frac{(1-\phi-\tilde a)}{z^\kappa} \tilde a_z-\frac{(\partial_z^{-1} \phi - \frac{\nu_s}{\nu} z+\partial_z^{-1} \tilde{a})}{z^\kappa} \tilde{a}_{zz}-\phi \frac{\tilde{a}}{z^\kappa}  - \frac{\partial_z^{-1} \tilde{a}}{z^\kappa} \phi+ \frac{\nu_s}{\nu}z^{1-\kappa}\phi - \lambda^{1-\sigma}\nu \frac{\tilde c}{z^\kappa},
\end{equation}
we compute, using $\tilde a=o(z)$ and $\antipartial_z\tilde a=o(z^2)$ that
\begin{equation*}
    \begin{split}
       & 1-\phi-\tilde a\simeq z\implies -\frac{(1-\phi-\tilde a)}{z^\kappa}=o(1)\\
       & (\partial_z^{-1} \phi - \frac{\nu_s}{\nu} z+\partial_z^{-1} \tilde{a})\approx z \implies   \frac{(\partial_z^{-1} \phi - \frac{\nu_s}{\nu} z+\partial_z^{-1} \tilde{a})}{z^\kappa}=o(1)\\
       & \frac{\tilde{a}}{z^\kappa}=o(z^{1-\kappa})\\
    &\frac{\partial_z^{-1} \tilde{a}}{z^\kappa}=o(z^{2-\kappa})\\
    & z^{1-\kappa}\phi \simeq z^{1-\kappa}\\
    &\frac{\tilde c}{z^\kappa}=o(z^{\kappa_2-\kappa}),
    \end{split}
\end{equation*}
thus the equation of $a_z$ is reduced to
\begin{equation}
    \partial_s\left(\frac{\tilde{a}_{z}}{z^\kappa}\right)=o(1) \text{ as }z\rightarrow 0^+{ if 0\leq \kappa\leq \kappa_2<1}.
\end{equation}} 
\end{proof}
\ifthenelse{\boolean{isSimplified}}{}{Now we split our analysis into non-diffusive and diffusive cases and treat the vanishing speeds $\epsilon_a, \epsilon_c$ as parameters.}

\section{Bootstrap Argument for the case of Non-Diffusive Temperature}
\ifthenelse{\boolean{isSimplified}}{In this section, we perform a bootstrap argument on the non-diffusive case ($\sigma=0$). Recall that earlier we have imposed the orthogonality-like condition
\begin{equation*}
    \tilde a(s,z)=o(z)\text{ as }z\rightarrow 0^+.
\end{equation*}
In light of Corollary \ref{Invariance-of-vanishing speed}, we shall further impose the {time-invariant vanishing-speed condition at $z=0$: That is, for each $s\geq s_0$ ($s_0$ is the start time defined below),}
\begin{equation}\label{decay-speed-condition}
    \tilde a_z(s,z)=o(z^{\epsilon_a}),\, \tilde c=o(z^{\epsilon_c}), \, \tilde c_z=o(z^{\epsilon_c})\text{ as }z\rightarrow 0^+,\,\text{ for } 0<\epsilon_a\leq \epsilon_c<1.
\end{equation}
\begin{remark}
    Notice that, upon using L'hopital's rule, in particular, \label{decay-speed-condition} implies \begin{equation}\label{vanishing speed sum}
        \tilde c=o(z^{1+\epsilon_c})\text{ as }z\rightarrow 0^+ \text{ for }0<\epsilon_c<1 \text{ holds, and it is also time-invariant. }
    \end{equation} 
\end{remark}}{In this section, we study the case $\sigma=0$ and initially impose the conditions that all the weights
\begin{equation}\label{initial-parameter-weights}
    \text{$z^{-\alpha},$ and $z^{-k}$},\quad 0<\alpha,\gamma<3,
\end{equation}
as well as further impose the time-invariant vanishing-speed boundary conditions (Recall Corollary \ref{Invariance-of-vanishing speed})
\begin{equation}\label{decay-speed-condition}
    \tilde a_z(s,z)=o(z^{\epsilon_a}),\, \tilde c=o(z^{\epsilon_c}), \, \tilde c_z=o(z^{\epsilon_c})\text{ as }z\rightarrow 0^+,\,\text{ for } 0<\epsilon_a\leq \epsilon_c<1.
\end{equation}
Recall that earlier we have imposed the orthogonality-like condition
\begin{equation*}
    \tilde a(s,z)=o(z)\text{ as }z\rightarrow 0^+.
\end{equation*}
We shall refine the range of these parameters by the end of this section.
\begin{remark}
    Notice that this, upon using L'hopital's rule, in particular, implies \begin{equation}\label{vanishing speed sum}
        \tilde c=o(z^{1+\epsilon_c})\text{ as }z\rightarrow 0^+ \text{ for }0<\epsilon_c<1 \text{ holds, and it is also time-invariant. }
    \end{equation} 
\end{remark}
}
{For notational simplicity, we will be writing $\partial_z^{-1} f(s,z)=\int_0^z f(s,\tilde z)d\tilde z$ and $\partial_z^{-1}f g=(\partial_z^{-1}f) g$.}

\subsection{Definitions}

\ifthenelse{\boolean{isSimplified}}{In the following two definitions, we impose that all the weights satisfy
\begin{equation}\label{initial-parameter-weights}
    \text{$z^{-\alpha},$ and $z^{-\gamma}$},\quad 0<\alpha,\gamma<3.
\end{equation}
{All the parameters are shared in the following two definitions.}}{}
\begin{definition}[Initial closeness]\label{def:ini crit-beta=0:diffusive}
Let $\lambda_0^*>0$ be fixed. We say that {$(a_0,c_0)$} is initially close to the blowup profile if there exists {$\lambda_0^*>\lambda_0>0$} and $\nu_0>0$ such that the decomposition \eqref{decomp} satisfies:
\begin{itemize}
\item[(i)] \emph{Initial values of the modulation parameters} (note that this fixes the value of $s_0$):
\begin{eqnarray}
\label{bd:parametersini-beta=0}
 \lambda_0 = s_0e^{-s_0}, \ \  \frac{1}{N_0 s_0}\leq \nu_0 \leq  \frac{N_0}{s_0} {\text{ for some }s_0>0}.
\end{eqnarray} 
\item[(ii)] \emph{Compatibility condition for the initial perturbation}. $\tilde a_0\in C^2 ([0,\frac1{\nu_0}))$ satisfies the boundary conditions \eqref{smooth:orthogonality} and the integral condition \eqref{equation:epsilon-bc}.
\item[(iii)] \emph{Initial size of the remainder in the self-similar variables}. 
\begin{eqnarray}\label{bd:eini-beta=0:diffusive}
&\mathcal I_a^2(s_0) = \int_0^{z^*} z^{-\alpha}(\pa_z\tilde a_0)^2\;dz  < \delta^2 s_0^{-h_a},
\ \ \mathcal E_a^2(s_0) =\sup_{z^*\leq z\leq\frac1{\nu_0}}|\tilde a_0|^2 < \frac{1}{16}s_0^{-h_a}
\\
&\mathcal I_c^2(s_0) = \int_0^{z^*} z^{-\gamma}(\pa_z\tilde c_0)^2\;dz <  \delta^2 e^{-h_c s_0}, \ \ \mathcal E_c^2(s_0) =\sup_{z^*\leq z\leq\frac1{\nu_0}}|\tilde c_0|^2 < \frac{1}{16} e^{-h_c s_0},
\\
\end{eqnarray} 
{for some small number $\delta > 0$, $z^*>0 $ and $h_a,h_c>0$.}
\end{itemize}
\end{definition}

\begin{definition}[Trapped on $\left(s_0,s_1\right)$ ]\label{def:crit-beta=0:diffusive}
 We say that a solution $\tilde{a}(s,z)$ is trapped on $[s_0,s_1]$ with $s_0<s_1\leq \infty$, if it satisfies the properties of Definition \ref{def:ini crit-beta=0:diffusive} at time $s_0$ and if for all $s\in [s_0,s_1]$, $a(s,z)$ can be decomposed as in \eqref{decomp} with:
\begin{itemize}
\item[(i)] \emph{Values of the modulation parameters:}
\begin{eqnarray}\label{bd:parameterstrap-critical:diffusive}
\frac 1{M} se^{- s}< \lambda < M se^{- s}, \ \ \frac {1}{Ns} 
<\nu < \frac{N}{s}.
\end{eqnarray} 
{ for some $M,N>0$.}
\item[(ii)] \emph{Decay in time of the remainder in the self-similar variables:}
\begin{equation}\label{smooth:bd:etrap:diffusive}
    \begin{split}
    &\mathcal I_a^2(s) = \int_0^{z^*}z^{-\alpha}\tilde a_z^2\;dz  < s^{-h_a}, \qquad \mathcal E_a^2(s) = \sup_{z^*\leq z\leq\frac1{\nu(s)}}|\tilde a|^2 < s^{-h_a}
    \\
    &\mathcal I_c^2(s) = \int_0^{z^*} z^{-\gamma}(\pa_z\tilde c)^2\;dz <  e^{-h_c s}, \ \ \mathcal E_c^2(s) =\sup_{z^*\leq z\leq\frac1{\nu(s)}}|\tilde c|^2 < e^{-h_c s},
\end{split}
\end{equation}
\end{itemize}
\end{definition}
{These two definitions essentially encapsulate the bootstrap assumptions.}
{
\begin{remark}
    Notice that $\mathcal I_a^2$, $\mathcal I_c^2$ {may} have a singularity at zero. To remove such singularity, one type of sufficient conditions to impose is that as $z\rightarrow 0^+$,
    \begin{equation}\label{singularity-at-definition}
        \begin{split}
            &z^{-\alpha}\tilde a_z^2=o(z^{-1+\epsilon_1})\iff \tilde a_z=o(z^{\frac{\alpha-1+\epsilon_1}{2}})\impliedby \tilde a_z=o(z^{\frac{\alpha-1}{2}+\epsilon_1})\\
             &z^{-\gamma}\tilde c^{2}_z=o(z^{-1+\epsilon_3}) \iff \tilde c_z=o(z^{\frac{\gamma-1+\epsilon_3}{2}})\impliedby \tilde c_z=o(z^{\frac{\gamma-1}{2}+\epsilon_3}),
        \end{split}
    \end{equation}
    for some $\epsilon_1,\epsilon_2,\epsilon_3>0$, by our initial choice of parameters \eqref{initial-parameter-weights}.
\end{remark}
}\ifthenelse{\boolean{isSimplified}}{We make the following assumptions to keep computation simple initially:
    \begin{alignat}{2}\label{initial-parameter-other}
        & 1<h_a\leq 2,\qquad && h_c>0 \nonumber \\
        &M=2,\qquad &&N=4>N_0=3\geq 2\\
        &0<\frac{\alpha-1}{2}<\epsilon_a\leq \epsilon_c,\qquad &&0<\frac{\gamma-1}{2}< \epsilon_c <1 \nonumber .
    \end{alignat}
}{Initially, we make the following assumptions to keep computation simple and keep the structure similar to \cite{Collot2023}
\begin{subequations}\label{initial-parameter-other}
\begin{align}
    &1<h_a\leq 2\\
    &h_c>0\\
    &M=2,N>N_0\geq 2\\
    & \frac{\alpha-1}{2}<\epsilon_a\leq \epsilon_c\\
    &\frac{\gamma-1}{2}< \epsilon_c <1
\end{align} 
\end{subequations}
whereas all the additional parameters shall be chosen as our analysis progresses.}

\subsection{A-priori Estimates}
{In this subsection, we derive some a-priori estimates implied by the bootstrap assumptions we defined above, which will be useful when we improve their corresponding bounds later. The next Lemma gives $L^\infty$ estimates in terms of the energy bounds we defined in the previous subsection.}
\begin{lemma}\label{lemma:prelim-est:diffusive}

Assume that \eqref{initial-parameter-weights} and \eqref{initial-parameter-other} are satisfied. For any $\delta>0,z^*\geq 1,\epsilon>0$, { there exists an} $s_0^*$ large enough, {such that} if $a$ is trapped on $[s_0,s_1]$, then for all $s_0^*\leq s_0\leq s \leq s_1$:
\be \label{smooth:bd:varepsilonLinfty:diffusive}
\begin{split}
    &\| \tilde a (s) \|_{L^\infty([0,\nu^{-1}])} \leq C(z^*) s^{-\frac{h_a}{2}},\\
    &\,\| \tilde c (s) \|_{L^\infty([0,\nu^{-1}])} \leq C(z^*) e^{-\frac{h_c}{2}s},\\
    &\, \norm{\tilde c(s,\cdot) (\cdot)^{-\frac{\gamma+1}{2}}}_{L^\infty(0,z^*)}\leq C \mathcal I_c(s) \leq C e^{-\frac{h_c}{2}s}\\
\end{split}
\ee
and
\be \label{smooth:bd:integral:diffusive}
\begin{split}
  &\nu  \int_0^{\frac{1}{\nu}} (\phi + \tilde a)^2 (z) dz  \leq  s^{-1} N,
 \quad \left|\lambda\int_0^{\frac{1}{\nu}} \nu^2 \partial_z^{-1} \tilde{c}(z) dz\right|\leq e^{(-1-h_c/2+\epsilon)s}.
\end{split}
\ee
\end{lemma}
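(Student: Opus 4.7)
The plan is to split the interval $[0,\nu^{-1}]$ at $z=z^*$ and derive the pointwise bounds by combining the exterior energies $\mathcal E_a,\mathcal E_c$ with a fundamental-theorem-of-calculus argument anchored at the vanishing boundary data at $z=0$. Concretely, since $\tilde a(s,0)=0$ (from \eqref{smooth:orthogonality}) and $\tilde c(s,0)=0$ (by \eqref{sigma-zero}), we write
\begin{equation*}
\tilde a(s,z)=\int_0^z \tilde a_z(s,\tilde z)\,d\tilde z,\qquad \tilde c(s,z)=\int_0^z \tilde c_z(s,\tilde z)\,d\tilde z,
\end{equation*}
and apply Cauchy--Schwarz with the weights $\tilde z^{\pm\alpha}$ and $\tilde z^{\pm\gamma}$ respectively. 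The vanishing-speed conditions \eqref{decay-speed-condition} together with the ranges $\frac{\alpha-1}{2}<\epsilon_a$ and $\frac{\gamma-1}{2}<\epsilon_c$ in \eqref{initial-parameter-other} guarantee that the weighted integrands are integrable near $z=0$, giving
\begin{equation*}
|\tilde a(s,z)|\leq \tfrac{z^{(\alpha+1)/2}}{\sqrt{\alpha+1}}\,\mathcal I_a(s),\qquad |\tilde c(s,z)|\leq \tfrac{z^{(\gamma+1)/2}}{\sqrt{\gamma+1}}\,\mathcal I_c(s).
\end{equation*}
The third estimate in \eqref{smooth:bd:varepsilonLinfty:diffusive} is then immediate from the second inequality. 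Taking $z=z^*$ and combining with the $\mathcal E_a$ and $\mathcal E_c$ bounds on $[z^*,\nu^{-1}]$ yields the first two estimates, with constants $C(z^*)$ absorbing the factor $(z^*)^{(\alpha+1)/2}$.

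For the first integral estimate in \eqref{smooth:bd:integral:diffusive}, I would expand $(\phi+\tilde a)^2=\phi^2+2\phi\tilde a+\tilde a^2$. Since $\phi(z)=e^{-z}$, we have $\int_0^{1/\nu}\phi^2\,dz\leq \frac12$, so $\nu\int_0^{1/\nu}\phi^2\,dz\leq \nu/2$. The cross term and $\tilde a^2$ term are controlled using the pointwise bound $\|\tilde a\|_{L^\infty}\leq C(z^*)s^{-h_a/2}$ together with $\nu\int_0^{1/\nu}\phi\,dz\leq \nu$ and $\nu\int_0^{1/\nu}1\,dz=1$. Since $h_a>1$, all these contributions are $o(\nu)$ as $s\to\infty$, and using $\nu<N/s$ from the trapped regime, the entire expression is bounded by $(1/2+o(1))N/s\leq N/s$ once $s_0^*$ is chosen sufficiently large. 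For the second integral, Fubini gives
\begin{equation*}
\int_0^{1/\nu}\partial_z^{-1}\tilde c(z)\,dz=\int_0^{1/\nu}\tilde c(\tilde z)\bigl(\tfrac1\nu-\tilde z\bigr)\,d\tilde z \leq \tfrac{1}{\nu^2}\|\tilde c\|_{L^\infty([0,1/\nu])},
\end{equation*}
so the contribution is dominated by $\lambda\|\tilde c\|_{L^\infty}\leq Mse^{-s}\cdot C(z^*)e^{-h_c s/2}$, which is $\leq e^{(-1-h_c/2+\epsilon)s}$ after absorbing the polynomial prefactor $Ms\cdot C(z^*)$ into $e^{\epsilon s}$ for large $s_0^*$.

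The routine steps are the Cauchy--Schwarz and Fubini computations; the only delicate point is tracking constants so that the prefactor in the first integral estimate closes at $N$ (not $CN$) --- this is why the estimate is stated in terms of the same $N$ appearing in the bootstrap assumption $\nu<N/s$, and it forces the choice of $s_0^*$ large enough to absorb the $O(s^{-h_a/2})$ corrections. The verification that the weighted integrals $\mathcal I_a,\mathcal I_c$ are finite thanks to the invariant vanishing speeds from Corollary \ref{Invariance-of-vanishing speed} is what makes the Cauchy--Schwarz step legitimate, and it is the one structural ingredient that distinguishes this proof from a purely soft Sobolev embedding argument.
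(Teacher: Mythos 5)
Your proposal is correct and follows essentially the same route as the paper's proof: Cauchy--Schwarz anchored at $z=0$ to get the weighted pointwise bounds on $[0,z^*]$, the exterior energies on $[z^*,\nu^{-1}]$, and then direct expansion for the two integral estimates. The only cosmetic difference is that you use a Fubini swap to bound $\int_0^{1/\nu}\partial_z^{-1}\tilde c\,dz$, whereas the paper passes through $\|\partial_z^{-1}\tilde c\|_{L^\infty}\leq\|\tilde c\|_{L^\infty}\nu^{-1}$; both give the same $\lambda\|\tilde c\|_{L^\infty}$ bound.
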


\begin{proof}
From the vanishing boundary condition, Cauchy-Schwarz, and \eqref{smooth:bd:etrap:diffusive}, for $0<z\leq z^*$:
\be \label{smooth:bd:varepsiloninterior:diffusive}
\resizebox{.95 \textwidth}{!}{$
\begin{split}
    &|\tilde a(z)|=\left|\int_0^z \pa_z \tilde a d\tilde z\right|\leq \mathcal I_a \sqrt{\int_0^z z^\alpha d\tilde z}= \mathcal I_a(s) (\frac{z^{\alpha+1}}{\alpha+1})^\frac{1}{2},\,|\tilde c(z)|=\left|\int_0^z \pa_z \tilde c d\tilde z\right|\leq \mathcal I_c \sqrt{\int_0^z z^\gamma d\tilde z}=\mathcal  I_c(s) (\frac{z^{\gamma+1}}{\gamma+1})^\frac{1}{2}\\
    &|\pa^{-1}_z\tilde a (z)|=\left|\int_0^z \tilde a d\tilde z\right| \leq \mathcal  I_a(s) \frac{2}{(\alpha+3)\sqrt{\alpha+1}}z^{\frac{\alpha+3}{2}},\,|\pa^{-1}_z\tilde c (z)|=\left|\int_0^z \tilde c d\tilde z\right| \leq \mathcal I_c(s) \frac{2}{(\gamma+3)\sqrt{\gamma+1}}z^{\frac{\gamma+3}{2}}\\
\end{split}$}
\ee
and thus we have 
\begin{equation}
\resizebox{.92 \textwidth}{!}{$
     \norm{\tilde a(s)}_{L^\infty(0,z^*)}\leq C(z^*) s^{-\frac{h_a}{2}},\, \norm{\tilde c(s)}_{L^\infty(0,z^*)}\leq C(z^*) e^{-\frac{h_c}{2}s},\, \norm{\tilde c(s,z) z^{-\frac{\gamma+1}{2}}}_{L^\infty(0,z^*)}\leq C \mathcal I_c(s)\leq C e^{-\frac{h_c}{2}s}.
     $}
\end{equation}
This, combined with the bounds of $\mathcal E_a$ and $\mathcal E_c$ shows equation of \eqref{smooth:bd:varepsilonLinfty:diffusive}.
{Now, to show \eqref{smooth:bd:integral:diffusive}, we use \eqref{bd:parameterstrap-critical:diffusive}--\eqref{smooth:bd:varepsilonLinfty:diffusive} to obtain}
\begin{equation*}
    \resizebox{.95 \textwidth}{!}{$\begin{split}
\nu\int_0^{\frac{1}{\nu}} (\phi + \tilde a)^2 (z) dz&= \nu\left( \int_0^\infty \phi^2 (z)dz -  \int_{\nu^{-1}}^\infty \phi^2 (z)dz+ 2 \int_0^{\frac{1}{\nu}} \phi  \tilde a (z) dz +\int_0^{\nu^{-1}}  \tilde a^2 dz \right)
\\
&= \nu\left( \frac12 +O(\| \tilde a \|_{L^\infty([0,\nu^{-1}])})+O(\nu^{-1}\| \tilde a \|_{L^\infty([0,\nu^{-1}])}^2) \right) 
\\
&\leq {Ns^{-1}\left(\frac12  +O(C(z^*)s^{-\frac{h_a}{2}}\bigvee C(z^*)s^{-{h_a}+1})\right)=\frac{N}{2} s^{-1}+O(C(z^*)s^{-\frac{h_a}{2}-1}\bigvee C(z^*)s^{-{h_a}})} .
\end{split}$}
\end{equation*}
{Next, }using the first equation \eqref{smooth:bd:varepsilonLinfty:diffusive} and the definition of trapped solution Definition \eqref{def:crit-beta=0:diffusive}, we have
    \begin{equation}\label{estimate-on-primitive-of-c-diffusive-1}
            \abs{\partial_z^{-1} \tilde c}\leq \int_0^z \abs{\tilde c} d\tilde z \leq \norm{c}_{L^\infty[0,\nu^{-1}]} \nu^{-1}(s)\leq C(z^*) s e^{-\frac{h_c s}{2}}  \\
    \end{equation}
    Thus, for $s$ large enough, we have
    \begin{equation}\label{estimate-on-primitive-of-c-diffusive-2}
        \left|\lambda\int_0^{\frac{1}{\nu}} \nu^2 \partial_z^{-1} \tilde{c}(z) dz\right|\leq \lambda\nu^2 \frac{1}{\nu} \norm{\partial_z^{-1} \tilde c}_{L^\infty(0,\nu^{-1})}\leq C(z^*) s e^{(-\frac{h_c}{2}-1)s} \leq e^{(-1-h_c/2+\epsilon)s}
    \end{equation}
\end{proof}

\begin{remark}
    Notice that {Lemma \ref{lemma:prelim-est:diffusive} implies that }{for each $s_1\geq s\geq s_0$,} $\tilde c=O(z^\frac{\gamma+1}{2}),$ as $z\rightarrow 0^+$, which can also be implied by our {choice of parameters} and vanishing speed boundary condition $\tilde c=o(z^{1+\epsilon_c})$ and $\epsilon_c>\frac{\gamma-1}{2}$.
\end{remark}

{The following Lemma gives estimates on the modular variables.}
\begin{lemma}[Modulation Equations] \label{lemma:smoothmodulation:diffusion}
Assume that \eqref{initial-parameter-weights} and \eqref{initial-parameter-other} are satisfied. For any $z^*\geq 1$, $\epsilon>0$, and $ \delta>0$, there exists a large self-similar time $s_0^*$ such that for any $s_0\geq s_0^*$, for any solution which is trapped on $[s_0,s_1]$, we have for $s\in [s_0,s_1]$:
\begin{eqnarray}\label{smoothmodulationequations:diffusive}
\left| \frac{\lambda_s}{\lambda}+1\right|\leq C s^{-1}, \qquad \left| \frac{\nu_s}{\nu} \right|\leq C s^{-1},\qquad \frac{\lambda_s}{\lambda}=-1+O(C(z^*)s^{-1})
\end{eqnarray}
for $C>0$ independent of the bootstrap constants, and
\begin{eqnarray}\label{smooth:bd:boostrap improved parameters:diffusive}
\frac{1}{1+\epsilon} se^{-s}\leq \lambda \leq  (1+\epsilon) se^{-s}, \qquad \frac{1}{(N_0+\epsilon)s} \leq \nu \leq\frac{N_0+\epsilon}{s}.
\end{eqnarray}
Moreover, if $s_1=\infty$ then there exists a constant $\tilde \lambda_\infty>0$ such that
\begin{eqnarray}\label{smooth:bd:boostrap improved parameters:diffusive}
\lambda= s e^{-s}\big(\tilde \lambda_\infty + O(s^{-h_a+1}) \big) ,\quad
\nu=\frac{1}{s}+O(s^{-h_a}).
\end{eqnarray}
\end{lemma}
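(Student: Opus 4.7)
The starting point is the pair of modulation identities \eqref{condiiton-on-lambda}--\eqref{condiiton-on-lambda-nu}. Substituting the integral bounds \eqref{smooth:bd:integral:diffusive} from Lemma \ref{lemma:prelim-est:diffusive} directly into \eqref{condiiton-on-lambda} yields $|\lambda_s/\lambda + 1| \leq C s^{-1}$, and then \eqref{condiiton-on-lambda-nu} transfers this bound immediately to $|\nu_s/\nu|\leq C s^{-1}$, giving the first two assertions of \eqref{smoothmodulationequations:diffusive}. For the sharper third assertion, I would refine the integral expansion used in the proof of Lemma \ref{lemma:prelim-est:diffusive}: using $\int_0^\infty\phi^2\,dz=1/2$ together with $\int_{1/\nu}^\infty\phi^2\,dz=O(e^{-2/\nu})$ and the $L^\infty$ bound \eqref{smooth:bd:varepsilonLinfty:diffusive} on $\tilde a$, one gets
\begin{equation*}
2\nu\int_0^{1/\nu}(\phi+\tilde a)^2\,dz = \nu + O(\nu\,\|\tilde a\|_{L^\infty}) + O(\nu^{-1}\|\tilde a\|_{L^\infty}^2) = \nu + O(s^{-h_a}),
\end{equation*}
so that $\lambda_s/\lambda+1 = \nu + O(s^{-h_a}) + O(e^{-(1+h_c/2-\epsilon)s})$, whence $\lambda_s/\lambda=-1+O(C(z^*)s^{-1})$.

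To improve the trapped bounds $M=2$ and $N$ down to $1+\epsilon$ and $N_0+\epsilon$, respectively, I would introduce $w := 1/\nu$. Combining the refined expansion above with \eqref{condiiton-on-lambda-nu} gives $\nu_s/\nu = -\nu + O(s^{-h_a})$, hence
\begin{equation*}
w_s = -\frac{\nu_s}{\nu^2} = 1 + O\bigl(w\cdot s^{-h_a}\bigr) = 1 + O\bigl(s^{1-h_a}\bigr),
\end{equation*}
where in the second equality I used the bootstrap bound $w \leq N s$. Since $h_a>1$, integrating from $s_0$ with the initial value $w(s_0)=1/\nu_0\in[s_0/N_0, N_0 s_0]$ yields $w(s)=s+w(s_0)-s_0+O(s^{2-h_a}+s_0^{2-h_a})$; choosing $s_0^*$ large enough depending on $\epsilon, N_0$ forces $w(s)\in[s/(N_0+\epsilon),(N_0+\epsilon)s]$, which is the second half of the strict improvement. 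For $\lambda$, observe that $\log(\lambda e^s)$ is an antiderivative of $\lambda_s/\lambda+1=\nu+O(s^{-h_a})$; integrating from $s_0$ and using $\nu=1/s+O(s^{-h_a})$ (from the previous step) together with the initial normalization $\lambda_0=s_0e^{-s_0}$ yields
\begin{equation*}
\log\!\Big(\frac{\lambda(s)}{s e^{-s}}\Big)=\int_{s_0}^s O(\tau^{-h_a})\,d\tau = O(1),
\end{equation*}
with the $O(1)$ arbitrarily small for $s_0^*$ large; this gives $\lambda\in[\frac{1}{1+\epsilon}se^{-s},(1+\epsilon)se^{-s}]$.

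In the case $s_1=\infty$, the key observation is that since $h_a>1$, the error $\int_{s_0}^\infty O(\tau^{-h_a})\,d\tau$ is absolutely convergent, so the antiderivative argument above produces a genuine limit: set
\begin{equation*}
\log\tilde\lambda_\infty := \log\!\Big(\frac{\lambda_0 e^{s_0}}{s_0}\Big) + \int_{s_0}^\infty\Big(\nu(\tau)-\frac{1}{\tau}\Big)d\tau + \int_{s_0}^\infty O(\tau^{-h_a})\,d\tau,
\end{equation*}
which exists and is finite; bounding the tails by $O(s^{1-h_a})$ recovers the asymptotic $\lambda=s e^{-s}(\tilde\lambda_\infty+O(s^{-h_a+1}))$. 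The analogous tail estimate for $w_s=1+O(\tau^{1-h_a})$ gives $w(s)=s+\tilde w_\infty+O(s^{2-h_a})$ for some constant $\tilde w_\infty$, and inverting yields $\nu=1/s+O(s^{-h_a})$ as claimed.

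\textbf{Main obstacle.} The delicate point is the quantitative improvement of the constants: the error terms supplied by Lemma \ref{lemma:prelim-est:diffusive} come with multiplicative constants depending on $z^*$, $N$, and $\delta$, so one must track these carefully and choose $s_0^*$ sufficiently large \emph{after} fixing $\epsilon$, $z^*$, and $\delta$, in order to genuinely strict-improve $M=2\mapsto 1+\epsilon$ and $N\mapsto N_0+\epsilon$. A secondary technical point is justifying the integration of the asymptotic ODE for $w$ over $[s_0,\infty)$, which is a standard consequence of absolute integrability once $h_a>1$ is exploited.
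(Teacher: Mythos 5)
Your proof follows essentially the same route as the paper's: substitute the a priori integral estimates \eqref{smooth:bd:integral:diffusive} into the modulation equations \eqref{smoothmodulationequations:initial} to get the first-order bounds, integrate $-\nu_s/\nu^2 = 1 + O(s^{-h_a+1})$ to obtain $\nu^{-1} = \nu_0^{-1} + (s-s_0) + O(\cdot)$, and integrate $\frac{d}{ds}\log(\lambda e^s/s) = O(s^{-h_a})$ (with the initial normalization $\lambda_0 = s_0e^{-s_0}$) to control $\lambda$ and construct $\tilde\lambda_\infty$. Your logarithmic reformulation and the introduction of $w=1/\nu$ are cosmetic variants of the paper's computation with $e^s\lambda/s$ and $1/\nu$, and your definition of $\tilde\lambda_\infty$ as a convergent integral is the same absolute-integrability argument (valid since $h_a>1$) as in the paper's Step 3.
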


\begin{proof}
\noindent \textbf{Step 1}.  \eqref{smoothmodulationequations:diffusive} is an immediate consequence of Lemma \ref{lemma:prelim-est:diffusive} by recalling \eqref{smoothmodulationequations:initial}.

\noindent \textbf{Step 2}. \emph{Equation for $\nu$}. From the proof of Lemma \ref{lemma:prelim-est:diffusive}, we know that
\be \label{smooth:modulation:inter:diffusive}
\begin{split}
  &2\int_0^{\frac{1}{\nu}} (\phi + \tilde a)^2 (z) dz =1+O(C(z^*)s^{-h_a+1}) ,
  \\
  &\lambda\int_0^{\frac{1}{\nu}} \nu \partial_z^{-1} \tilde{c}(z) dz = O(C(z^*) s e^{(-\frac{h_c}{2}-1)s}).
\end{split}
\ee
Injecting \eqref{smooth:modulation:inter:diffusive} in \eqref{smoothmodulationequations:initial} gives:
\be \label{smooth:modulation:inter2:diffusive}
-\frac{\nu_s}{\nu^2} = \frac1\nu (\frac{\lambda_s}\lambda +1)= 1 +O(C(z^*)s^{-{h_a}+1})
\ee
when $s_0$ is large enough.
Integrating \eqref{smooth:modulation:inter2:diffusive} with time, we find:
\be \label{smooth:modulation:inter3}
\frac{1}{\nu}=\frac{1}{\nu_0}+(s-s_0)+O(C(z^*)(s^{-h_a+1+1}-s_0^{-h_a+1+1}))=\frac{1}{\nu_0}+(s-s_0)+O(C(z^*)s^{-h_a+1}(s-s_0)).
\ee
Therefore, since $\nu_0^{-1}\leq N_0 s_0$ from \eqref{bd:parametersini-beta=0} we infer for $s_0^*$ large enough depending on $z^*$:
\be \label{smooth:modulation:inter1:diffusive}
\frac{1}{\nu}\leq N_0 s_0+(s-s_0)+O(C(z^*)s^{-h_a+1}(s-s_0))= s+(N_0-1)s_0+O(C(z^*)s^{-h_a+1}(s-s_0))\leq (N_0+\epsilon)s.
\ee
One finds similarly using $s_0/N_0\leq \nu_0^{-1}$ from \eqref{bd:parametersini-beta=0} that $\frac{1}{\nu}\geq s/(N_0+\epsilon)$. This and \eqref{smooth:modulation:inter1:diffusive} imply the second inequality in \eqref{smooth:bd:boostrap improved parameters:diffusive}. Finally, if $s_1=\infty$ then \eqref{smooth:modulation:inter3} implies $\nu^{-1}=s+O(s^{-h_a+2})$ and the second inequality in \eqref{smooth:bd:boostrap improved parameters:diffusive} follows.\\

\noindent \textbf{Step 3}. \emph{Equation for $\lambda$}. Injecting \eqref{smooth:modulation:inter:diffusive} in \eqref{smoothmodulationequations:initial} one finds:
\begin{equation}\label{asy on lambda derivative}
    \frac{\lambda_s}{\lambda}+1 = \frac{1}{s}+O(C(z^*)s^{-h_a})\implies \frac{\lambda_s}{\lambda}=-1+O(C(z^*)s^{-1})
\end{equation}
Since $\lambda = O(se^{- s})$ from \eqref{bd:parameterstrap-critical:diffusive}, one has
\begin{equation*}
    \frac{d}{ds}(\frac{e^s\lambda}{s}) =O(s^{-h_a}).
\end{equation*}
We integrate with time the above equation using $\lambda_0 = s_0 e^{-s_0}$ and find
$$
\lambda(s) = se^{-s}(1+\int_{s_0}^sO(s^{-h_a})ds)=se^{-s}(1+O(s_0^{-h_a+1}))
$$
This implies the first inequality in \eqref{smooth:bd:boostrap improved parameters:diffusive} for $s_0$ large enough. If $s_1=\infty$ then we set $\tilde\lambda_\infty=1+\int_{s_0}^\infty O(s^{-h_a})ds$ and rewrite the above equality as:
$$
\lambda(s) = se^{-s}(1+\int_{s_0}^\infty O(s^{-h_a})ds-\int_{s}^\infty O(s^{-h_a})ds )= se^{-s}(\tilde\lambda_\infty+O(s^{-h_a+1}) ),
$$
which gives the first inequality in \eqref{smooth:bd:boostrap improved parameters:diffusive}.
\end{proof}

\subsection{Interior Estimates}\label{differentiational-estimates}
{In this subsection, we derive two differential inequalities on the energies in the interior ($[0,z^*]$) of the domain ($[0,\nu^{-1}]$). These inequalities shall be later solved after we correctly refine the parameters in the subsection that follows.}

Since $\gamma>1$ and $\alpha<3$, we notice that
\begin{equation}\label{interior-parameter-a}
        \gamma>\alpha-2 \text{ is automatically satisfied.}
\end{equation}
\ifthenelse{\boolean{isSimplified}}{}{By our choice of parameter earlier \eqref{initial-parameter-other}, we see
\begin{equation}\label{vanishing-assumptions-on-a-and-b}
    \tilde a_z(s,z)=o(z^\frac{\alpha-1}{2}) \text{ as }z\rightarrow 0^+,
\end{equation}
and this fact will be useful for the following lemma.} 

\begin{lemma}[Interior estimate for $\tilde a$]\label{smooth:lemma:interior:new idea}
Assume that \eqref{initial-parameter-weights}, \eqref{initial-parameter-other} and  \eqref{interior-parameter-a}are satisfied. For any $z^*\geq 1$, $\epsilon>0$, and $ \delta>0$, there exists a large self-similar time $s_0^*$ such that for any $s_0\geq s_0^*$, for any solution which is trapped on $[s_0,s_1]$, we have for $s\in [s_0,s_1]$:
\begin{equation}\label{interior-estimate-of-a-diffusive}
\begin{split}
     \frac{d}{ds}\mathcal I_a^2&\leq \mathcal I_a^2 \left( {-\alpha+1}+O(C(z^*) s^{-\frac{h_a}{2}})+\frac{1}{\sqrt{\alpha+1}}+\frac{4}{(\alpha+3)}\sqrt{\frac{3}{8(\alpha+1)}}\right)\\
     &\quad+C(z^*)\mathcal I_a(s^{-1}+ e^{(-1-\frac{h_c}{2}+\epsilon) s}+ e^{(-1-\frac{h_c}{2})s}).\\
\end{split}
\end{equation}

\end{lemma}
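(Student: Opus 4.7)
The strategy is a standard weighted energy estimate: differentiate $\mathcal I_a^2(s)=\int_0^{z^*}z^{-\alpha}\tilde a_z^2\,dz$ in self-similar time, substitute the evolution equation \eqref{equation:az} for $\tilde a_z$, integrate by parts on the $\tilde a_{zz}$-terms, and estimate the resulting integrals using the pointwise bounds \eqref{smooth:bd:varepsilonLinfty:diffusive}--\eqref{smooth:bd:varepsiloninterior:diffusive} of Lemma \ref{lemma:prelim-est:diffusive} together with the modulation control of Lemma \ref{lemma:smoothmodulation:diffusion}.

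First I would write $\tfrac{d}{ds}\mathcal I_a^2 = 2\int_0^{z^*}z^{-\alpha}\tilde a_z\tilde a_{zs}\,dz$ and split the integral according to the right-hand side of \eqref{equation:az}. The $\tilde a_{zz}$-terms are handled by integration by parts, using $2\tilde a_z\tilde a_{zz}=(\tilde a_z^2)_z$ and differentiating the weight $z^{-\alpha}(\partial_z^{-1}\phi-\tfrac{\nu_s}{\nu}z)$. Exploiting the exact identity $\partial_z^{-1}\phi=1-\phi$ and the asymptotic $(1-\phi)/z \to 1$ as $z\to 0^+$, the piece $-\alpha\int z^{-\alpha-1}(1-\phi)\tilde a_z^2\,dz$ yields the leading contribution $-\alpha\mathcal I_a^2$ up to corrections controlled by the $L^\infty$ bound on $\tilde a$; combined with $-2\int z^{-\alpha}(1-\phi)\tilde a_z^2\,dz$ and the residual $\int z^{-\alpha}\phi\tilde a_z^2\,dz\leq \mathcal I_a^2$, this yields the target coefficient $-\alpha+1$ in front of $\mathcal I_a^2$. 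The boundary term at $z=0$ arising from the IBP vanishes thanks to the time-invariant vanishing-speed condition $\tilde a_z=o(z^{\epsilon_a})$ with $\epsilon_a>(\alpha-1)/2$, guaranteed by \eqref{decay-speed-condition} and Corollary \ref{Invariance-of-vanishing speed}, while the one at $z=z^*$ has a favorable sign and is dropped. The linear forcing terms $-\phi\tilde a$ and $-(\partial_z^{-1}\tilde a)\phi$ are treated via Cauchy--Schwarz with the sharp pointwise bounds $|\tilde a(z)|\leq \mathcal I_a z^{(\alpha+1)/2}/\sqrt{\alpha+1}$ and $|\partial_z^{-1}\tilde a(z)|\leq \tfrac{2}{(\alpha+3)\sqrt{\alpha+1}}\mathcal I_a z^{(\alpha+3)/2}$ from \eqref{smooth:bd:varepsiloninterior:diffusive}; using the Gamma-function identities $\int_0^\infty z\phi^2\,dz=1/4$ and $\int_0^\infty z^3\phi^2\,dz=3/8$, these produce precisely the numerical constants $\tfrac{1}{\sqrt{\alpha+1}}$ and $\tfrac{4}{\alpha+3}\sqrt{\tfrac{3}{8(\alpha+1)}}$ stated in the target inequality.

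For the quadratic contributions $2\tilde a\tilde a_z^2$ and $-2(\partial_z^{-1}\tilde a)\tilde a_z\tilde a_{zz}$, the $L^\infty$ bound $\|\tilde a\|_{L^\infty}\leq C(z^*)s^{-h_a/2}$ from \eqref{smooth:bd:varepsilonLinfty:diffusive} absorbs them into the $O(C(z^*)s^{-h_a/2})\mathcal I_a^2$ error; the second term requires one additional IBP whose boundary contributions at $z=0$ again vanish by the vanishing-speed hypothesis, and the assumption $\gamma>\alpha-2$ from \eqref{interior-parameter-a} is used to ensure the ensuing weighted integrals remain finite. Finally, the modulation source $\tfrac{\nu_s}{\nu}z\phi$ produces the $C(z^*)\mathcal I_a s^{-1}$ term after Cauchy--Schwarz using $|\tfrac{\nu_s}{\nu}|\leq Cs^{-1}$ from \eqref{smoothmodulationequations:diffusive}, while the temperature source $-\lambda\nu\tilde c$ yields the $\mathcal I_a e^{(-1-h_c/2)s}$ contribution via $\lambda\nu=O(s^{-1}e^{-s})$ together with $|\tilde c|\leq \mathcal I_c z^{(\gamma+1)/2}/\sqrt{\gamma+1}$; an analogous estimate on $\lambda\int_0^{1/\nu}\nu^2\partial_z^{-1}\tilde c\,dz$ gives the remaining $e^{(-1-h_c/2+\epsilon)s}$ piece.

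The main technical obstacle will be the sharp extraction of the leading coefficient $-\alpha+1$, requiring a careful asymptotic analysis of $(1-\phi)/z$ near the origin rather than the coarse bound $(1-\phi)/z\leq 1$, together with precise bookkeeping of the constants produced by each Cauchy--Schwarz step. A secondary but essential concern is rigorously verifying that all boundary terms from the two successive integrations by parts truly vanish at $z=0$; this is exactly why the parameter constraints $\epsilon_a>(\alpha-1)/2$ and $\gamma>\alpha-2$ are imposed, and why Corollary \ref{Invariance-of-vanishing speed} is needed to propagate these vanishing conditions through all times $s\in[s_0,s_1]$.
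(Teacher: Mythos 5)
Your overall architecture is sound and matches the paper's: differentiate the weighted $L^2$ norm, substitute the evolution equation \eqref{equation:az} for $\tilde a_z$, integrate by parts the $\tilde a_{zz}$-terms (with the boundary contribution at $z=0$ killed by the vanishing-speed hypothesis and the one at $z^*$ of favorable sign), estimate the linear nonlocal terms via Cauchy--Schwarz with the sharp pointwise bounds from \eqref{smooth:bd:varepsiloninterior:diffusive}, absorb the quadratic terms into $O(C(z^*)s^{-h_a/2})\mathcal I_a^2$ via the $L^\infty$ bound, and control the source terms through the modulation estimates and the $\tilde c$-bounds. Your Gamma-function bookkeeping $\int_0^\infty ze^{-2z}=1/4$, $\int_0^\infty z^3 e^{-2z}=3/8$ does reproduce the exact constants $\frac{1}{\sqrt{\alpha+1}}$ and $\frac{4}{\alpha+3}\sqrt{\frac{3}{8(\alpha+1)}}$ appearing in the statement, and your identification of where $\gamma>\alpha-2$ enters (finiteness of $\int_0^{z^*}z^{\gamma+1-\alpha}dz$ in the $\lambda\nu\int w\tilde a_z\tilde c$ bound) is correct.

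However, there is a genuine gap precisely at the point you yourself flag as the main obstacle: your route to the coefficient $-\alpha+1$ is unsound. After IBP the relevant coefficient of $z^{-\alpha}\tilde a_z^2$ is, up to $O(C(z^*)s^{-h_a/2})$,
\begin{equation*}
    -2 + 3\phi - \alpha\,\frac{1-\phi}{z} \;=\; -\alpha\,\frac{1-\phi}{z} \;-\; 2(1-\phi) \;+\; \phi,
\end{equation*}
which is the three-piece decomposition you propose. The second piece is indeed $\leq 0$ and the third is $\leq 1$. But your bound on the first piece, $-\alpha\int_0^{z^*}z^{-\alpha-1}(1-\phi)\tilde a_z^2\,dz \leq -\alpha\,\mathcal I_a^2$, would require the pointwise inequality $(1-\phi)/z \geq 1$, which is false: $(1-e^{-z})/z$ decreases strictly from $1$ to $0$ on $(0,\infty)$, so $-\alpha(1-\phi)/z > -\alpha$ and the first piece is bounded \emph{below}, not above, by $-\alpha\mathcal I_a^2$. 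Treating $(1-\phi)/z\to 1$ as an ``asymptotic near the origin'' cannot be promoted to a uniform upper bound over $[0,z^*]$, and the ``corrections'' are $z$-dependent, not controlled by $\|\tilde a\|_{L^\infty}$. The missing ingredient is the global elementary inequality $\frac{1-e^{-z}}{z}\geq e^{-z}$ (equivalently $e^z\geq 1+z$), used \emph{without} splitting: it gives $-\alpha\frac{1-\phi}{z}\leq -\alpha\phi$ pointwise, hence the full coefficient is $\leq -2 + (3-\alpha)\phi \leq -2 + (3-\alpha) = 1-\alpha$ for $0<\alpha<3$. Without this observation the argument does not close. (A minor unrelated slip: $\lambda\nu = O(e^{-s})$, not $O(s^{-1}e^{-s})$, from $\lambda\approx se^{-s}$ and $\nu\approx s^{-1}$; the conclusion is unaffected.)
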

\ifthenelse{\boolean{isSimplified}}{}{\begin{remark}
    Notice ${-\alpha+1}+\frac{1}{\sqrt{\alpha+1}}+\frac{4}{(\alpha+3)}\sqrt{\frac{3}{8(\alpha+1)}}<0\iff\alpha>1.88415$.
\end{remark}}

\begin{proof}

\ifthenelse{\boolean{isSimplified}}{Let $w=z^{-\alpha}$, calculating the derivative of $\mathcal I_a$, we obtain that \begin{equation} \label{smooth:interior:id:expression2}\resizebox{.92 \textwidth}{!}{$
\begin{split}
    &\frac{1}{2}\frac{d}{ds} \int_0^{z^*}  w \tilde a_z ^2 dz=   \int_0^{z^*} ( -1  + \phi +\tilde a)   w \tilde a_z^2 dz 
     - \int_0^{z^*} (\partial_z^{-1} \phi - \frac{\nu_s}{\nu} z + \partial_z^{-1} \tilde a) \tilde a_{zz}  w \tilde a_z dz - \lambda \nu \int_0^{z^*} w\tilde a_z \tilde cdz
     \\
     &-\int_0^{z^*} \phi \tilde a  w \tilde a_z dz - 
     \int_0^{z^*} \partial_z^{-1} \tilde a   w \tilde a_z \phi dz 
     -\left[2\nu \int_0^{\frac{1}{\nu}} (\phi+ \tilde{a})^2(z) dz - \lambda \left(\int_0^{\frac{1}{\nu}} \nu^2 \partial_z^{-1} \tilde{c}(z) dz\right) \right]\int_0^{z^*} z\phi  w \tilde a_z dz,
\end{split}$}
\end{equation}
{where we have used the identities \eqref{smoothmodulationequations:initial-nc} and the PDE of $\tilde a_z$ \eqref{equation:az}.}
}{Calculating the derivative of $I_a$, one obtain
\begin{equation} \label{smooth:interior:id:expression2}
\begin{split} 
    &\frac{1}{2}\frac{d}{ds} \int_0^{z^*}  w \tilde a_z ^2 dz=   \int_0^{z^*} ( -1  + \phi +\tilde a)   w \tilde a_z^2 dz 
     - \int_0^{z^*} (\partial_z^{-1} \phi - \frac{\nu_s}{\nu} z + \partial_z^{-1} \tilde a) \tilde a_{zz}  w \tilde a_z dz
     \\
     &-\int_0^{z^*} \phi \tilde a  w \tilde a_z dz - 
     \int_0^{z^*} \partial_z^{-1} \tilde a   w \tilde a_z \phi dz \\
     &-\left[2\nu \int_0^{\frac{1}{\nu}} (\phi+ \tilde{a})^2(z) dz - \lambda \left(\int_0^{\frac{1}{\nu}} \nu^2 \partial_z^{-1} \tilde{c}(z) dz\right) \right]\int_0^{z^*} z\phi  w \tilde a_z dz
     \\
     & - \lambda \nu \int_0^{z^*} w\tilde a_z \tilde cdz,
\end{split}
\end{equation}
by utilizing the identities involving $\phi=e^{-z}$,{ the identities \eqref{smoothmodulationequations:initial-nc} and the PDE of $\tilde a_z$ \eqref{equation:az}}, where $w=z^{-\alpha}$.}

\noindent \underline{Potential and transport terms}.
\ifthenelse{\boolean{isSimplified}}{Notice that since $\partial_z^{-1} \phi\simeq z, \antipartial_z \tilde a=o(z^2)$, by  {using \eqref{vanishing speed sum} and \eqref{initial-parameter-other}, we have that}\begin{equation}
     \tilde a_z=o(z^{\frac{\alpha-1}{2}}) \text{ as }z\xrightarrow{}0^+ \implies  (\partial_z^{-1} \phi - \frac{\nu_s}{\nu} z + \partial_z^{-1} \tilde a)   z^{-\alpha} \tilde a_z^2=o(1) \text{ as }z\xrightarrow{}0^+.
\end{equation}}{}
{Consequently, }integrating by parts yields
\begin{equation}  \label{smooth:interior:id:expression}
    \begin{split}
        &  \int_0^{z^*} ( -1  + \phi +\tilde a)   w \tilde a_z^2 dz -\int_0^{z^*} (\partial_z^{-1} \phi - \frac{\nu_s}{\nu} z + \partial_z^{-1} \tilde a) \tilde a_{zz}  w \tilde a_z dz
        \\
        = & \Big(-\int_0^{z^*} (\phi+\tilde a)(\tilde{z})d\tilde{z}+ \frac{\nu_s}{\nu}z^* \Big) \frac{1}{2}w(z^*) \tilde a_z^2(z^*) \\
        &+\int_0^{z^*} \left( \frac{3}{2} \phi+\frac 32 \tilde a-1+\frac{\alpha-1}{2}\frac{\nu_s}{\nu}-\frac{\alpha}{2}z^{-1}\left(\pa_z^{-1}\phi+\pa_z^{-1}\tilde a \right)\right) w\tilde a_z^2dz .
    \end{split}
\end{equation}
For the boundary term, we know that $\|\tilde a\|_{L^\infty} \leq C(z^*) s^{-\frac{h_a}{2}}$ from \eqref{smooth:bd:varepsilonLinfty:diffusive}, and thus using \eqref{smoothmodulationequations:diffusive} gives:
\begin{equation}\label{condition-critical:s0-3}
\resizebox{.93 \textwidth}{!}{$\displaystyle
      -\int_0^{z^*} (\phi+\tilde a)(\tilde{z})d\tilde{z}+ \frac{\nu_s}{\nu}z^* \leq -(1-e^{-z^*}) + \|\tilde a\|_{L^\infty} z^*+O(C(z^*)s^{-1}) \leq -(1-e^{-z^*}) +C(z^*)s^{-\frac{h_a}{2}} \leq 0$,}
\end{equation} 
when $s_0$ is large enough. 
\ifthenelse{\boolean{isSimplified}}{}{This requires
\begin{equation}\label{first-vanishing-condition}
    (\partial_z^{-1} \phi - \frac{\nu_s}{\nu} z + \partial_z^{-1} \tilde a)   z^{-\alpha} \tilde a_z^2=o(1) \text{ as }z\xrightarrow{}0^+.
\end{equation}
Since $\partial_z^{-1} \phi\simeq z, \antipartial_z \tilde a=o(z^2)$, and thus 
\begin{equation}\label{identity-vanishing-speed}
    (\partial_z^{-1} \phi - \frac{\nu_s}{\nu} z + \partial_z^{-1} \tilde a)\approx z \text{ as }z\xrightarrow{}0^+,
\end{equation}
this condition is equivalent to
\begin{equation}
        \tilde a_z=o(z^{\frac{\alpha-1}{2}}) \text{ as }z\xrightarrow{}0^+.
\end{equation}}
Since the profile satisfies $\phi=e^{-z}$, \eqref{smoothmodulationequations:diffusive} and \eqref{smooth:bd:varepsilonLinfty:diffusive} yield:
\begin{equation}  \label{smooth:interior:inter1}
\begin{split}
    &\frac{3}{2} \phi+\frac 32 \tilde a-1+\frac{\alpha-1}{2}\frac{\nu_s}{\nu}-\frac{\alpha}{2}z^{-1}\left(\pa_z^{-1}\phi+\pa_z^{-1}\tilde a \right)
    \\
    &=\left(\frac{3}{2} e^{-z}+O(C(z^*) s^{-\frac{h_a}{2}})-1+O(s^{-1})-\frac{\alpha}{2}z^{-1}\left(1-e^{-z} +O(s^{-\frac {h_a}{2}}z) \right)\right)
    \\
&   =-1+\frac{3}{2}e^{-z}-\frac{\alpha}{2}\frac{1-e^{-z}}{z}+O(C(z^*)s^{-\frac{h_a}{2}})\\
& \leq -1+(\frac 32-\frac \alpha2)e^{-z}+O(C(z^*)s^{-\frac{h_a}{2}})\\
&\leq \begin{cases}
    -1+O(C(z^*)s^{-\frac{h_a}{2}}) \text{ if }\alpha \geq 3\\
    \frac{-\alpha+1}{2}+O(C(z^*) s^{-\frac{h_a}{2}}) \text{ if otherwise}
\end{cases}
\end{split}
\end{equation}
For {$0<\alpha<3$,} the previous argument shows:
\begin{equation}  \label{smooth:interior:bd:transport-a}
\resizebox{.93 \textwidth}{!}{$\displaystyle
\int_0^{z^*} ( -1  + \phi +\tilde a)   w \tilde a_z^2 dz 
     - \int_0^{z^*} (\partial_z^{-1} \phi - \frac{\nu_s}{\nu} z + \partial_z^{-1} \tilde a) \tilde a_{zz}  w \tilde a_z dz\leq  \mathcal I_a^2(s)
    \left(\frac{-\alpha+1}{2}+O(C(z^*) s^{-\frac{h_a}{2}})\right).$}
\end{equation}

\noindent \underline{The nonlocal terms}. \ifthenelse{\boolean{isSimplified}}{By using Cauchy-Schwarz and equation \eqref{smooth:bd:varepsiloninterior:diffusive} we see}{ By direct computations, using that Cauchy-Schwarz one gets
\begin{equation}
    \begin{split}
        &|\tilde a (z)|\leq \int_0^{z}  \abs{\tilde a_z} \sqrt{w} \sqrt{w^{-1}}dx \leq \mathcal I_a(s) \frac{z^{\frac{\alpha+1}{2}}}{(\alpha+1)^\frac12}\\
        &|\pa^{-1}_z\tilde a (z)| \leq \mathcal I_a(s) \frac{2}{(\alpha+3)\sqrt{\alpha+1}}z^{\frac{\alpha+3}{2}}
    \end{split}
\end{equation} for $z\in[0,z^*]$. Thus, we conclude, by using Hölder again}
\begin{equation*}
\begin{split}
    \int_0^{z^*} \Big| \phi \tilde a  w \tilde a_z \Big| dz=\int_0^{z^*} \Big| \phi \tilde a  \sqrt{w}\cdot \sqrt{w}\tilde a_z \Big| dz &\leq \frac{1}{\sqrt{\alpha+1}} \mathcal I_a^2(s) (\int_0^\infty e^{-2z}z)^\frac{1}{2}= \frac{1}{2\sqrt{\alpha+1}}\mathcal I_a^2(s),
\end{split}
\end{equation*}
\begin{equation*}
\begin{split}
    \int_0^{z^*} \Big| \partial_z^{-1} \tilde a   w \tilde a_z \phi \Big| dz &=  \int_0^{z^*} \Big| \phi\partial_z^{-1} \tilde a   \sqrt{w}\cdot\sqrt{w} \tilde a_z  \Big| dz \\ 
    &\leq \frac{2}{(\alpha+3)\sqrt{\alpha+1}}\mathcal I_a^2(s)\Big(\int_0^{\infty}  z^3e^{-2z}dz \Big)^{\frac{1}{2}} = \frac{2}{(\alpha+3)}\mathcal I_a^2(s) \sqrt{\frac{3}{8(\alpha+1)}} ,
\end{split}
\end{equation*}

\noindent \underline{The source term}. Using \eqref{smooth:bd:etrap:diffusive}, \eqref{smooth:bd:integral:diffusive} and Cauchy-Schwarz:
$$
\left|2 \nu \Big(\int_0^{\frac{1}{\nu}} (\phi + \tilde a)^2 (z) dz\Big) \int_0^{z^*} z\phi  w \tilde a_z dz \right| \leq N s^{-1} \mathcal I_a \sqrt{\int_0^{z^*} \underbrace{z^2 \phi^2 w}_{z^{2-\alpha}e^{-2z}}dz} \leq C(z^*) s^{-1}\mathcal I_a(s),
$$

\begin{equation*}
    \begin{split}
        &\left|\lambda\nu^2 \int_0^{\frac1\nu} \pa_z^{-1} \tilde c(z) dz \int_0^{z^*} z\phi  w \tilde a_z dz \right|\leq e^{(-h_c/2-1+\epsilon) s} \mathcal I_a \sqrt{\int_0^{z^*} z^2\phi^2wdz} \leq C(z^*) e^{(-h_c/2-1+\epsilon) s}\mathcal I_a(s).
    \end{split}
\end{equation*}
Notice that { the }integrability is guaranteed when $\alpha<3$. Next, using \eqref{smooth:bd:varepsilonLinfty:diffusive}, \eqref{interior-parameter-a} and Hölder inequality, we estimate
\begin{equation*}
     \begin{split}
       \left| \lambda \nu \int_0^{z^*} w\tilde a_z \tilde cdz \right| &\leq Ce^{-s} \int_0^{z^*} \abs{ \tilde{a}_z z^{-\alpha/2}}\cdot \abs{z^{-\frac{\gamma+1}{2}} \tilde c} \cdot \abs{ z^{\frac{\gamma+1-\alpha}{2}}}\\
       &\leq Ce^{-s} \cdot C \mathcal I_c \cdot \int_0^{z^*}\abs{ \tilde{a}_z z^{-\alpha/2}}\cdot  \abs{ z^{\frac{\gamma+1-\alpha}{2}}} dz \\
       &\leq C e^{-s} \mathcal I_c \mathcal I_a(\int_0^{z^*}  z^{{\gamma+1-\alpha}} dz)^\frac{1}{2}\leq C(z^*) e^{-s} \mathcal I_c \mathcal I_a
    \end{split}
\end{equation*}
Finally, $\mathcal I_a$ satisfies
\ifthenelse{\boolean{isSimplified}}{\begin{equation}
\begin{split}
     \frac{1}{2}\frac{d}{ds}\mathcal I_a^2&\leq \mathcal I_a^2 ( \frac{-\alpha+1}{2}+O(C(z^*)s^{-\frac{h_a}{2}})+\frac{1}{2\sqrt{\alpha+1}}+\frac{2}{(\alpha+3)}\sqrt{\frac{3}{8(\alpha+1)}})\\
     &\quad+\mathcal I_a(C(z^*) s^{-1}+C(z^*) e^{(-h_c/2-1+\epsilon) s})+\mathcal I_a \mathcal I_c C(z^*) e^{-s},\\
\end{split}
\end{equation}}{\begin{equation}
\begin{split}
     \frac{1}{2}\frac{d}{ds}\mathcal I_a^2&\leq \mathcal  I_a^2 ( \frac{-\alpha+1}{2}+O(C(z^*)s^{-\frac{h_a}{2}})+\frac{1}{2\sqrt{\alpha+1}}+\frac{2}{(\alpha+3)}\sqrt{\frac{3}{8(\alpha+1)}})\\
     &\quad+\mathcal I_a(C(z^*) s^{-1}+C(z^*) e^{(-h_c/2-1+\epsilon) s})\\
     &\quad+\mathcal I_a\mathcal I_c C(z^*) e^{-s},
\end{split}
\end{equation}}
and we finish the proof by injecting the bootstrap assumptions \eqref{smooth:bd:etrap:diffusive}.
\end{proof}

For the following lemma, we further assume {that}
\begin{equation}\label{paramer-choice-c}
    \epsilon_c\geq  \frac{\gamma}{2}-1>0,
\end{equation}
{This, combined with} \eqref{vanishing speed sum}, implies that
\begin{equation}\label{tilde c vanishing corollary}
    \tilde c=o(z^\frac{\gamma}{2}),\text{ as }z\rightarrow 0^+.
\end{equation}
\begin{lemma}[Estimate on $\mathcal {I}_c$]\label{estimate on c_z}
Assume that \eqref{initial-parameter-weights}, \eqref{initial-parameter-other}, \eqref{interior-parameter-a} and \eqref{paramer-choice-c} are satisfied. For any $z^*\geq 1$, there exists a large self-similar time $s_0^*$ such that for any $s_0\geq s_0^*$, for any solution which is trapped on $[s_0,s_1]$, we have for $s\in [s_0,s_1]$:

\begin{equation}\label{I_c-bound:non-diffusive}
\frac{d}{ds} \mathcal I_c^2\leq(-\gamma+1+O(C(z^*)s^{-\frac{h_a}{2}}))\mathcal I_c^2
\end{equation}
\end{lemma}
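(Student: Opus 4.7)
The plan is to mimic the energy-method proof of Lemma \ref{smooth:lemma:interior:new idea}, now applied to the equation for $\tilde c_z$ obtained by differentiating \eqref{equation:epsilon-c} in $z$ (as already used in Step 2 of the proof of Corollary \ref{Invariance-of-vanishing speed}):
\[
\tilde c_{zs}=(-1+\phi+\tilde a)\,\tilde c_z+\Bigl(\tfrac{\nu_s}{\nu}z-\partial_z^{-1}\phi-\partial_z^{-1}\tilde a\Bigr)\,\tilde c_{zz}+(2\tilde a_z-2\phi)\,\tilde c.
\]
First I would multiply this by $w\tilde c_z$ with $w(z)=z^{-\gamma}$ and integrate over $[0,z^*]$ so that $\tfrac12 \tfrac{d}{ds}\mathcal I_c^2$ appears on the left, leaving three pieces on the right: a potential-type term $\int(-1+\phi+\tilde a)w\tilde c_z^2$, a transport-type term $\int(\tfrac{\nu_s}{\nu}z-\partial_z^{-1}\phi-\partial_z^{-1}\tilde a)w\tilde c_z\tilde c_{zz}$, and a source $\int(2\tilde a_z-2\phi)w\tilde c_z\tilde c$.

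The transport term is integrated by parts. The boundary contribution at $z=z^*$ is handled exactly as in \eqref{condition-critical:s0-3}: it equals $\tfrac12 w(z^*)\tilde c_z^2(z^*)$ times $\tfrac{\nu_s}{\nu}z^*-\int_0^{z^*}(\phi+\tilde a)\,d\tilde z$, which by \eqref{smoothmodulationequations:diffusive} and \eqref{smooth:bd:varepsilonLinfty:diffusive} reduces to $-(1-e^{-z^*})+O(C(z^*)s^{-h_a/2})\leq 0$ for $s_0$ large. The boundary at $z=0$ vanishes because $\partial_z^{-1}\phi\simeq z$, $\partial_z^{-1}\tilde a=o(z^2)$, together with the invariant condition $\tilde c_z=o(z^{\epsilon_c})$, $\epsilon_c>(\gamma-1)/2$ coming from \eqref{decay-speed-condition} and \eqref{initial-parameter-other}. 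After absorbing the remaining bulk into the potential coefficient, the integrand becomes $w\tilde c_z^2$ times
\[
-1+\tfrac{3}{2}e^{-z}-\tfrac{\gamma}{2}\,\tfrac{1-e^{-z}}{z}+O\!\bigl(C(z^*)s^{-h_a/2}\bigr),
\]
where I use $\tfrac{\nu_s}{\nu}=O(s^{-1})$ and $\|\tilde a\|_{L^\infty},\,z^{-1}\partial_z^{-1}\tilde a=O(C(z^*)s^{-h_a/2})$ from Lemmas \ref{lemma:prelim-est:diffusive} and \ref{lemma:smoothmodulation:diffusion}. The key algebraic step is $\tfrac{1-e^{-z}}{z}=\int_0^1 e^{-tz}\,dt\geq e^{-z}$, which together with the standing assumption $\gamma<3$ from \eqref{initial-parameter-weights} yields the pointwise upper bound $\tfrac{1-\gamma}{2}+O(C(z^*)s^{-h_a/2})$, with the maximum attained at $z=0$.

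For the source term I split into the $\phi$-piece and the $\tilde a_z$-piece. The $-2\phi$-piece I integrate by parts: $(\phi w)_z=-e^{-z}(z^{-\gamma}+\gamma z^{-\gamma-1})\leq 0$, the boundary at $z^*$ contributes $-\phi(z^*)w(z^*)\tilde c^2(z^*)\leq 0$, and the boundary at $z=0$ vanishes by $\tilde c=o(z^{\gamma/2})$---this is precisely where the combination of \eqref{vanishing speed sum} and \eqref{paramer-choice-c} enters---so this piece is non-positive. For the $2\tilde a_z$-piece I apply Cauchy-Schwarz together with the pointwise bound $|\tilde c(z)|\leq \mathcal I_c z^{(\gamma+1)/2}/\sqrt{\gamma+1}$ from \eqref{smooth:bd:varepsiloninterior:diffusive}, which yields $\bigl|\int_0^{z^*}2\tilde a_z w\tilde c_z\tilde c\,dz\bigr|\leq C(z^*)\mathcal I_a\mathcal I_c^2\leq C(z^*)s^{-h_a/2}\mathcal I_c^2$ after using $\alpha+1\geq 0$ to pull out a factor of $(z^*)^{(\alpha+1)/2}$. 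Assembling the three pieces and multiplying by $2$ delivers \eqref{I_c-bound:non-diffusive}.

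The main obstacle I anticipate is the careful bookkeeping of each boundary contribution at $z=0$: every integration by parts requires that the relevant product $w\tilde c_z^2$ or $w\tilde c^2$ (multiplied by the correct coefficient) vanishes there, and each verification pins down a different constraint on the invariant vanishing-speed exponents from Corollary \ref{Invariance-of-vanishing speed}---in particular, the IBP on $-2\phi w\tilde c_z\tilde c$ is exactly why the condition $\epsilon_c\geq \gamma/2-1$ is imposed. The algebraic identity $\frac{1-e^{-z}}{z}\geq e^{-z}$ is then the clean mechanism that produces the sharp coefficient $(1-\gamma)/2$ matching the statement, and also explains the role of the constraint $\gamma<3$.
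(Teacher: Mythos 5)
Your proof follows the paper's argument essentially verbatim: same equation for $\tilde c_z$, same weighted $L^2$ energy estimate on $[0,z^*]$ with $w=z^{-\gamma}$, same integration by parts producing the bulk coefficient $\frac{1-\gamma}{2}+O(C(z^*)s^{-h_a/2})$ (via $1-e^{-z}\geq ze^{-z}$), the same sign analysis at both boundaries using \eqref{condition-critical:s0-3}, the vanishing-speed invariants, and \eqref{paramer-choice-c}, and the same splitting of the source into the non-positive $-2\phi$-piece (via IBP with $\tilde c=o(z^{\gamma/2})$) and the $2\tilde a_z$-piece controlled by $C(z^*)\mathcal I_a\mathcal I_c^2$. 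Correct, no gap.
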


\begin{proof}
\ifthenelse{\boolean{isSimplified}}{Let $w=z^{-\gamma}$, the derivative of $\mathcal I_c$ satisfies \begin{equation} \label{smooth:interior:id:expression2}
\frac{1}{2}\frac{d}{ds} \int_0^{z^*}  w \tilde c_z ^2 dz=  \int_0^{z^*} ( -1  + \phi +\tilde a)   w \tilde c_z^2 dz 
     + \int_0^{z^*} (-\partial_z^{-1} \phi + \frac{\nu_s}{\nu} z - \partial_z^{-1} \tilde a) \tilde c_{zz}  w \tilde c_z dz
     +\int_0^{z^*}(2\tilde a_z-2\phi)\tilde c_{z}  w \tilde c dz.
\end{equation}}{Using the PDE of $\tilde c_z$:
\begin{equation}
    \tilde{c}_{zs}=(\underbrace{\frac{\lambda_s}{\lambda}+\frac{\nu_s}{\nu}}_{-1}+\tilde a+\phi)\tilde{c}_z+(\frac{\nu_s}{\nu}z-\pa^{-1}_z\tilde{a}-\pa^{-1}_z\tilde{\phi})\tilde{c}_{zz}+(2\tilde{a}_z-2\phi)\tilde{c}.
\end{equation}
we compute the derivative of $I_c$,
\begin{equation} \label{smooth:interior:id:expression2}
\begin{split}
    &\frac{1}{2}\frac{d}{ds} \int_0^{z^*}  w \tilde c_z ^2 dz=  \int_0^{z^*} ( -1  + \phi +\tilde a)   w \tilde c_z^2 dz 
     + \int_0^{z^*} (-\partial_z^{-1} \phi + \frac{\nu_s}{\nu} z - \partial_z^{-1} \tilde a) \tilde c_{zz}  w \tilde c_z dz
     \\
     &+\int_0^{z^*}(2\tilde a_z-2\phi)\tilde c_{z}  w \tilde c dz \\
\end{split}
\end{equation}
where $w=z^{-\gamma}$.}
{
Integrating by parts yields
\begin{equation}  \label{smooth:interior:id:expression}
\resizebox{.93 \textwidth}{!}{$\displaystyle
    \begin{split}
        & \int_0^{z^*} ( -1  + \phi +\tilde a)   w \tilde c_z^2 dz 
     + \int_0^{z^*} (-\partial_z^{-1} \phi + \frac{\nu_s}{\nu} z - \partial_z^{-1} \tilde a) \tilde c_{zz}  w \tilde c_z dz
        \\
        &=\left((-\antipartial_z \phi+\frac{\nu_s}{\nu}z-\antipartial_z \tilde a)\tilde c_z^2 w\right)_{z=0}^{z=z^*}+\int_0^{z^*} \left( \frac{3}{2} \phi+\frac 32 \tilde a-1+\frac{\gamma-1}{2}\frac{\nu_s}{\nu}-\frac{\gamma}{2}z^{-1}\left(\pa_z^{-1}\phi+\pa_z^{-1}\tilde a \right)\right) w\tilde c_z^2dz.
    \end{split}
    $}
\end{equation}
Reapplying \eqref{condition-critical:s0-3} and using \eqref{initial-parameter-other} respectively, the boundary terms can be bounded above:
\begin{equation}
    \resizebox{.93 \textwidth}{!}{$\displaystyle \left(-\int_0^{z^*} (\phi+\tilde a)(\tilde{z})d\tilde{z}+ \frac{\nu_s}{\nu}z^*\right)w(z^*)\tilde c_z^2(s,z^*)\leq 0\quad\text{and}\quad \left(-\antipartial_z \phi+\frac{\nu_s}{\nu}z-\antipartial_z \tilde a\right)\tilde c_z^2 w=o(1)\text{ as }z\rightarrow 0^+,$}
\end{equation}
for $s$ large enough.
}\ifthenelse{\boolean{isSimplified}}{}{This requires
\begin{equation*}
    (\partial_z^{-1} \phi - \frac{\nu_s}{\nu} z + \partial_z^{-1} \tilde a)   z^{-\gamma} \tilde c^2_z=o(1) \text{ as }z\xrightarrow{}0^+,
\end{equation*}
and this condition is equivalent to
\begin{equation}
        \tilde c_z=o(z^{\frac{\gamma-1}{2}}) \text{ as }z\xrightarrow{}0^+,
\end{equation}
which is satisfied by \eqref{initial-parameter-other}.}
\ifthenelse{\boolean{isSimplified}}{Similarly to Lemma \ref{smooth:lemma:interior:new idea}, we see {\begin{equation}  \label{smooth:interior:inter1}
\begin{split}
    &\left( \frac{3}{2} \phi+\frac 32 \tilde a-1+\frac{\gamma-1}{2}\frac{\nu_s}{\nu}-\frac{\gamma}{2}z^{-1}\left(\pa_z^{-1}\phi+\pa_z^{-1}\tilde a \right)\right)
    \\
    &=\left(\frac{3}{2} e^{-z}+O(C(z^*)s^{-\frac{h_a}{2}})-1+O(s^{-1})-\frac{\gamma}{2}z^{-1}\left(1-e^{-z} +O(s^{-\frac {h_a}{2}}z) \right)\right)
    \\&\leq O(C(z^*)s^{-h_a/2})+
  \begin{cases}
    \frac{1-\gamma}{2}\qquad\text{ if }0<\gamma<3 \\
    -1\qquad\text{ if }\gamma>3.\\
\end{cases}
\end{split}
\end{equation}}}{One uses $\phi=e^{-z}$, \eqref{smoothmodulationequations:diffusive} and \eqref{smooth:bd:varepsilonLinfty:diffusive} to see:
\begin{equation}  \label{smooth:interior:inter1}
\begin{split}
    &\left( \frac{3}{2} \phi+\frac 32 \tilde a-1+\frac{\gamma-1}{2}\frac{\nu_s}{\nu}-\frac{\gamma}{2}z^{-1}\left(\pa_z^{-1}\phi+\pa_z^{-1}\tilde a \right)\right)
    \\
    &=\left(\frac{3}{2} e^{-z}+O(C(z^*)s^{-\frac{h_a}{2}})-1+O(s^{-1})-\frac{\gamma}{2}z^{-1}\left(1-e^{-z} +O(s^{-\frac {h_a}{2}}z) \right)\right)
    \\&\leq O(C(z^*)s^{-h_a/2})+
  \begin{cases}
    \frac{1-\gamma}{2}\qquad\text{ if }0<\gamma<3 \\
    -1\qquad\text{ if }\gamma>3\\
\end{cases}
\end{split}
\end{equation}
where we have used the fact that $1-e^{-z} \geq ze^{-z} $.}
Next, using Hölder's inequality and \eqref{smooth:bd:varepsilonLinfty:diffusive}, we estimate
{
\begin{equation}
\begin{split}
     \abs{\int_0^{z^*}2\tilde a_z\tilde c_{z}  \underbrace{w}_{z^{-\gamma}} \tilde c dz} &\leq 2\int_0^{z^*} \abs{\tilde a_z z^{-\frac{\alpha}{2}}}\cdot \abs{\tilde c_z z^{-\frac{\gamma}{2}}}\cdot \abs{\tilde c z^{-\frac{\gamma+1}{2}}} \cdot  \abs{z^\frac{\alpha+1}{2}}  dz \\
     &= C \mathcal I_c\cdot C(z^*)\cdot \int_0^{z^*}  \abs{\tilde a_z z^{-\frac{\alpha}{2}}}\cdot \abs{\tilde c_z z^{-\frac{\gamma}{2}}}  dz\leq C(z^*) \mathcal I_c^2 \mathcal I_a.\\
\end{split}
\end{equation}
}
\ifthenelse{\boolean{isSimplified}}{Combining with integration by parts, \eqref{tilde c vanishing corollary} and $\phi=e^{-z} \geq 0$, we compute {\begin{equation}
     \int_0^{z^*}-2\phi\tilde c_{z}  w \tilde c dz=-e^{-z^*}{z^*}^{-\gamma}\tilde c^2(s,z^*)+\int_0^{z^*} -\phi z^{-\gamma}\tilde c^2-\gamma\phi  z^{-\gamma-1}\tilde c^2 dz
    \leq 0.
\end{equation}}} {Using integration by parts, we estimate
\begin{equation}
\begin{split}
    \int_0^{z^*}-2\phi\tilde c_{z}  w \tilde c dz&=-e^{-z^*}{z^*}^{-\gamma}\tilde c^2(s,z^*)-(\lim_{z\rightarrow 0^+}-e^{-z}{z}^{-\gamma}\tilde c^2(s,z))+\int_0^{z^*} -\phi z^{-\gamma}c^2-\gamma\phi  z^{-\gamma-1}c^2 dz\\
    &\leq (\lim_{z\rightarrow 0^+}e^{-z}{z}^{-\gamma}\tilde c^2(s,z))=0.\\
\end{split}   
\end{equation}
This requires that
\begin{equation}\label{H_c-vanishing-on-c:non-diffusive}
    \tilde c=o(z^{\frac{\gamma}{2}}),
\end{equation}
which is satisfied by \eqref{paramer-choice-c}.}
{
Finally, we conclude that
\begin{equation}
    \frac{d}{ds} \mathcal I_c^2\leq(-\gamma+1+C(z^*) \mathcal I_a+O(s^{-\frac{h_a}{2}})) \mathcal I_c^2,
\end{equation}
and use the trappedness definition on $\mathcal I_a$ (Definition \ref{def:crit-beta=0:diffusive}) gives us the desired result.
}
\end{proof}

\subsection{Solution to the Interior Differential Inequalities-Choice of Parameters}\label{choice of parameters non-diffusive}

{The differential inequality in \eqref{interior-estimate-of-a-diffusive} can be further reduces to \begin{equation}
         \frac{d}{ds}\mathcal I_a^2+\mathcal I_a^2({\alpha-1}-\frac{1}{\sqrt{\alpha+1}}-\frac{4}{(\alpha+3)}\sqrt{\frac{3}{8(\alpha+1)}}-\kappa)\leq C(z^*)s^{-2},
    \end{equation}
    upon taking sufficiently large $s$ and applying Young's inequality. Gronwall's inequality suggests that the source term dominates and $\mathcal I_a^2$ decays, if the coefficient before the linear term $\mathcal I_a^2$ is strictly positive. More precisely, in this scenario the energy $\mathcal I_a^2$  shall decay at a speed close to $s^{-2}$. Therefore, in view of the bootstrap assumption $\mathcal{I}_a^2\lesssim s^{-h_a}$ \eqref{smooth:bd:etrap:diffusive}, in order for the bootstrap argument to work, we need $(s^{-1})^2=o(s^{-h_a})$. Symbolically, this suggests the parameters shall satisfy
    \begin{equation}\label{additional-requirement-on-alpha}
\begin{split}
   &{-\alpha+1}+\frac{1}{\sqrt{\alpha+1}}+\frac{4}{(\alpha+3)}\sqrt{\frac{3}{8(\alpha+1)}}<0 \iff \alpha>\alpha_0,\\
    & h_a<2,
\end{split}
\end{equation}
    where, we can check that $\alpha_0\approx 1.88415$ is the unique positive solution of
\begin{equation}\label{definition-of_alpha_knot}
     {-\alpha_0+1}+\frac{1}{\sqrt{\alpha_0+1}}+\frac{4}{(\alpha_0+3)}\sqrt{\frac{3}{8(\alpha_0+1)}}=0.
\end{equation}}
Similarly, in light of \eqref{I_c-bound:non-diffusive} and Gronwall's inequality, {$\mathcal I_c^2$ decays at most as fast as $e^{(-\gamma+1)s}$ (if we impose $\gamma>1$). Combining with bootstrap assumption $\mathcal I_c^2\lesssim e^{-h_cs}$, in order for the bootstrap argument \eqref{smooth:bd:etrap:diffusive} to work, we need to further impose}
\begin{equation}\label{additional-requirement-on-gamma}
    e^{(-\gamma+1)s}=o(e^{-h_cs})\iff h_c<\gamma-1.
\end{equation}

Summarizing all the conditions \eqref{initial-parameter-weights}, \eqref{initial-parameter-other}, \eqref{interior-parameter-a}, \eqref{paramer-choice-c},  \eqref{additional-requirement-on-alpha} and \eqref{additional-requirement-on-gamma}  on our parameters, we obtain the conditions on parameters
\ifthenelse{\boolean{isSimplified}}{\begin{alignat}{2}\label{final-parameter-conditions-diffusive}
    &\alpha_0<\alpha<3 \qquad && 2<\gamma<3\nonumber\\
        & 1<h_a<2\qquad && 0<h_c< \gamma-1\\
        & \frac{\alpha-1}{2}<\epsilon_a\leq \epsilon_c \qquad&&\frac{\gamma-1}{2}<\epsilon_c<1,\nonumber
\end{alignat}
which is a non-empty set since, {for example,}
\begin{equation*}
    (\alpha,\gamma,h_a,h_c,\epsilon_a,\epsilon_c)=(2,9/4,4/3,1/2,3/5,3/4)
\end{equation*}
satisfies the conditions above.}
{\begin{subequations}\label{final-parameter-conditions-diffusive}
    \begin{align}
        &\alpha_0<\alpha<3\\
        &1<\gamma<3\\
        & 1<h_a<2\\
        & 0<h_c< \gamma-1\\
        & \frac{\alpha-1}{2}<\epsilon_a\leq \epsilon_c\\
        & \frac{\gamma-1}{2}<\epsilon_c<1,
    \end{align}
\end{subequations}
where $\alpha_0\approx 1.88415$ satisfies \eqref{definition-of_alpha_knot}. One can check that 
\begin{equation*}
    (\alpha,\gamma,h_a,h_c,\epsilon_a,\epsilon_c)=(2,\frac{9}{4},4/3,1/2,3/5,3/4)
\end{equation*}
satisfies the conditions above. We now rigorously prove the previous observations, as well as partially finish the bootstrap argument.}

\begin{lemma}[{Solutions to interior differential inequalities}]\label{smooth:lemma:interior:diffusive}

Let $(\alpha,\gamma,h_a,h_c,\epsilon_a,\epsilon_c)$ be in the range of \eqref{final-parameter-conditions-diffusive}. For any $z^*\geq 1$ and $ \delta>0$, there exists a large self-similar time $s_0^*$ such that for any $s_0\geq s_0^*$, for any solution which is trapped on $[s_0,s_1]$, we have for $s\in [s_0,s_1]$:
\begin{eqnarray}\label{smooth:bd:int e1:diffusive}
\mathcal I_a^2(s)  \leq 2\delta^2 s^{-h_a}, \quad \mathcal I_c^2(s) \leq \delta^2 e^{-h_c s}.
\end{eqnarray}

\end{lemma}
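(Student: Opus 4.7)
The plan is to solve the two differential inequalities established in Lemma~\ref{smooth:lemma:interior:new idea} and Lemma~\ref{estimate on c_z} by absorbing their cross/source terms via Young's inequality and then integrating explicitly. The parameter window~\eqref{final-parameter-conditions-diffusive} was chosen so that, in each case, the linear dissipation rate strictly beats the target decay rate and the Duhamel remainder is of lower order than the bootstrap bound; this is where the strict inequalities $\alpha>\alpha_0$, $h_a<2$, and $h_c<\gamma-1$ are used.

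\textbf{Estimate on $\mathcal{I}_a^2$.} I would set $\kappa_\alpha := \alpha - 1 - \frac{1}{\sqrt{\alpha+1}} - \frac{4}{(\alpha+3)}\sqrt{\frac{3}{8(\alpha+1)}}$, which is strictly positive by $\alpha>\alpha_0$ and~\eqref{definition-of_alpha_knot}. Starting from Lemma~\ref{smooth:lemma:interior:new idea}, I apply Young's inequality with weight $\kappa_\alpha/4$ to the term $C(z^*)\mathcal I_a\bigl(s^{-1}+e^{(-1-h_c/2+\epsilon)s}+e^{(-1-h_c/2)s}\bigr)$, and choose $s_0^*$ large enough that $O(C(z^*)s^{-h_a/2})$ is absorbed into $\kappa_\alpha/4$. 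This should give
\[
\frac{d}{ds}\mathcal{I}_a^2 \leq -\tfrac{\kappa_\alpha}{2}\,\mathcal{I}_a^2 + C_1(z^*,\alpha)\,s^{-2}.
\]
Multiplying by the integrating factor $e^{(\kappa_\alpha/2)s}$, integrating, and noting $\int_{s_0}^s \tilde s^{-2} e^{(\kappa_\alpha/2)\tilde s}\,d\tilde s \leq C\,s^{-2} e^{(\kappa_\alpha/2)s}$ for $s_0$ large (integration by parts), I arrive at
\[
\mathcal{I}_a^2(s) \leq \mathcal{I}_a^2(s_0)\,e^{-(\kappa_\alpha/2)(s-s_0)} + C_2(z^*,\alpha)\,s^{-2}.
\]
The initial bound $\mathcal{I}_a^2(s_0) < \delta^2 s_0^{-h_a}$ from Definition~\ref{def:ini crit-beta=0:diffusive} together with monotonicity of $s\mapsto(s/s_0)^{h_a}e^{-(\kappa_\alpha/2)(s-s_0)}$ for $s_0\geq 2h_a/\kappa_\alpha$ controls the homogeneous term by $\delta^2 s^{-h_a}$. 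For the source, since $h_a<2$ by~\eqref{final-parameter-conditions-diffusive}, choosing $s_0^*\geq (C_2/\delta^2)^{1/(2-h_a)}$ forces $C_2 s^{-2}\leq \delta^2 s^{-h_a}$ for all $s\geq s_0$, whence $\mathcal{I}_a^2(s)\leq 2\delta^2 s^{-h_a}$.

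\textbf{Estimate on $\mathcal{I}_c^2$.} The inequality from Lemma~\ref{estimate on c_z} is already separable in $\mathcal{I}_c^2$; dividing by $\mathcal{I}_c^2$ and integrating will give
\[
\mathcal{I}_c^2(s) \leq \mathcal{I}_c^2(s_0)\,\exp\!\left(-(\gamma - 1)(s-s_0) + \tfrac{C(z^*)}{1-h_a/2}\bigl(s^{1-h_a/2} - s_0^{1-h_a/2}\bigr)\right).
\]
Combined with $\mathcal{I}_c^2(s_0) < \delta^2 e^{-h_c s_0}$, the target $\mathcal{I}_c^2(s) \leq \delta^2 e^{-h_c s}$ reduces to
\[
(\gamma - 1 - h_c)(s - s_0) \geq \tfrac{C(z^*)}{1-h_a/2}\bigl(s^{1-h_a/2} - s_0^{1-h_a/2}\bigr).
\]
Both sides vanish at $s=s_0$; the left-hand side has constant slope $\gamma-1-h_c>0$ by~\eqref{final-parameter-conditions-diffusive}, while the right-hand side has slope $C(z^*)\tilde s^{-h_a/2}$, strictly smaller for all $\tilde s\geq s_0$ once $s_0\geq (C(z^*)/(\gamma-1-h_c))^{2/h_a}$. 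Taking $s_0^*$ above this threshold closes the estimate.

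\textbf{Main obstacle.} The delicate step will be the $\mathcal{I}_c^2$ bound: integrating the $O(s^{-h_a/2})$ correction carried over from the trappedness of $\tilde a$ produces a sublinear but unbounded additive exponent of size $\sim s^{1-h_a/2}$, so I cannot just compare asymptotic rates but must perform a pointwise comparison with the linear margin $(\gamma-1-h_c)(s-s_0)$ for every $s\geq s_0$. The strict inequality $h_c < \gamma-1$ together with $h_a>0$ and a sufficiently large $s_0^*$ depending on $z^*$ and $\delta$ is exactly what makes this pointwise comparison succeed, and this is also the place where the requirement $h_a<2$ (needed for the $\mathcal{I}_a^2$ source term) cannot be traded away.
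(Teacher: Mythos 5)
Your proof is correct and follows essentially the same Gronwall-based route as the paper. For $\mathcal{I}_a^2$, your Young's-inequality absorption followed by explicit integration reproduces exactly what the paper's cited Gronwall-type lemma with polynomial source would deliver; for $\mathcal{I}_c^2$, the paper first bounds the $O(C(z^*)s^{-h_a/2})$ correction by a small constant $\kappa$ (taking $s_0$ large) and then closes with constant-coefficient Gronwall under $-\gamma+1+\kappa<-h_c$, whereas you integrate the time-dependent coefficient directly into a sublinear exponent and compare slopes with the linear margin $(\gamma-1-h_c)(s-s_0)$ --- a minor variant that uses precisely the same parameter constraints $h_c<\gamma-1$, $h_a<2$, $\alpha>\alpha_0$.
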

\ifthenelse{\boolean{isSimplified}}{\begin{proof}
    Let $0<\kappa\ll 1$ be fixed, for which whose value will be chosen later. In light of \eqref{interior-estimate-of-a-diffusive} and Young's inequality, we observe that when $s$ is sufficiently large, $O(C(z^*)s^{-\frac{h_a}{2}})<\frac{\kappa}{2}$ and
    \begin{equation}\label{trivial-condition}
     C(z^*) \mathcal I_a(s^{-1}+ e^{(-1-\frac{h_c}{2}+\epsilon) s}+ C(z^*)e^{(-1-\frac{h_c}{2})s})\leq C(z^*)s^{-1}\mathcal I_a\leq C(z^*)s^{-2}+\kappa/2 \mathcal I_a^2.
    \end{equation}
   Now \eqref{interior-estimate-of-a-diffusive} is reduced to
    \begin{equation}
         \frac{d}{ds}\mathcal I_a^2+\mathcal I_a^2({\alpha-1}-\frac{1}{\sqrt{\alpha+1}}-\frac{4}{(\alpha+3)}\sqrt{\frac{3}{8(\alpha+1)}}-\kappa)\leq C(z^*)s^{-2},
    \end{equation}
    from which Gronwall-type lemma with source can be applied, in conjunction with Definition \ref{def:ini crit-beta=0:diffusive}, provided that $\kappa$ is small enough such that ${\alpha-1}-\frac{1}{\sqrt{\alpha+1}}-\frac{4}{(\alpha+3)}\sqrt{\frac{3}{8(\alpha+1)}}-\kappa>0$. (Recall also the range of $\alpha$: \eqref{final-parameter-conditions-diffusive}.)
    For the second inequality, again, we let $0<\kappa\ll 1$ be fixed and choose $s_0$ large enough so that $O(C(z^*)s^{-\frac{h_a}{2}})<\frac{\kappa}{2}$. For $s$ large enough, thanks to Definition \ref{def:ini crit-beta=0:diffusive}), with choice of \eqref{final-parameter-conditions-diffusive}, \eqref{I_c-bound:non-diffusive}  reads
    \begin{equation}
       \quad\frac{d}{ds}\mathcal I_{c}^2 \leq \mathcal I_c^2(-\gamma+1+\kappa),
    \end{equation}
    from which the desired result follows, provided that $-\gamma+1+\kappa<-h_c$.
\end{proof}}{
\begin{proof}
    Let $0<\kappa\ll 1$ be fixed, for which whose value will be chosen later. Choose $s_0$ large enough so that $O(s^{-\frac{h_a}{2}})<\frac{\kappa}{2}$. By Young's inequality, we see the linear source term in \eqref{interior-estimate-of-a-diffusive} can be bounded:
    \begin{equation}
    \begin{split}
          &C(z^*) I_a(s^{-1}+ e^{(-1-\frac{h_c}{2}+\epsilon) s}+ C(z^*)e^{(-1-\frac{h_c}{2})s})\\
          &\qquad\lesssim C(z^*)s^{-1}\\
          &\qquad\leq C(z^*) (\frac{s^{-2}}{2\frac{\kappa}{C(z^*)}}+\frac{\frac{\kappa}{C(z^*)}}{2}I_a^2)\\
          &\qquad\leq C(z^*)s^{-2}+\kappa/2 I_a^2,
    \end{split}
    \end{equation}
    for $s_0$ large enough. Now \eqref{interior-estimate-of-a-diffusive} is reduced to
    \begin{equation}
         \frac{d}{ds}\mathcal I_a^2+\mathcal I_a^2({\alpha-1}-\frac{1}{\sqrt{\alpha+1}}-\frac{4}{(\alpha+3)}\sqrt{\frac{3}{8(\alpha+1)}}-\kappa)\leq C(z^*)s^{-2},
    \end{equation}
    from which Gronwall-type \eqref{Gronwall-with-source} of lemma can be applied (Recall \eqref{final-parameter-conditions-diffusive}), provided that $\kappa$ is small enough such that ${\alpha-1}-\frac{1}{\sqrt{\alpha+1}}-\frac{4}{(\alpha+3)}\sqrt{\frac{3}{8(\alpha+1)}}-\kappa>0$.

    For the second inequality, again, we let $0<\kappa\ll 1$ be fixed and choose $s_0$ large enough so that $O(C(z^*)s^{-\frac{h_a}{2}})<\frac{\kappa}{2}$. Now, for $s$ large enough, with choice of \eqref{final-parameter-conditions-diffusive}, \eqref{I_c-bound:non-diffusive}  reads
    \begin{equation}
       \quad\frac{d}{ds}\mathcal I_{c}^2 \leq \mathcal I_c^2(-\gamma+1+\kappa) 
    \end{equation}
    and  Gronwall's lemma \eqref{Gronwall-with-source} gives
    \begin{equation}
    \begin{split}
        & \mathcal I_c^2(s)\leq \mathcal I_c^2(s_0)e^{(-\gamma+1+\kappa)(s-s_0)}\leq \delta^2 e^{-h_c s_0} e^{-h_c(s-s_0)}=\delta^2 e^{-h_c s}\\
    \end{split}
    \end{equation}
    provided that $-\gamma+1+\kappa<-h_c$.
\end{proof}}

\subsection{Exterior Estimates}
{The following estimate improve the bounds of energies $\mathcal E_a^2$ and $\mathcal E_c^2$ in the exterior $[z^*,\nu^{-1}]$ of the domain. We shall use maximum principle to prove the following lemma.}
\begin{lemma}[Exterior Estimate-Exterior Bootstrap Argument]\label{smooth:lemma:main-critical:diffusive}
Let $(\alpha,\gamma,h_a,h_c,\epsilon_a,\epsilon_c)$ be in the range of \eqref{final-parameter-conditions-diffusive}. There exists $\bar z^*\geq 1$, and for any $z^*\geq \bar z^*$, a $\delta^*>0$ {exists} such that for $0<\delta\leq \delta^*$ the following holds true. There exists $s_0^*$ large enough such that if a solution is trapped on $[s_0,s_1]$ with $s_0\geq s_0^*$, for any time $s_0\leq s\leq s_1$ we have

\begin{eqnarray}\label{bd:int e-critical2:diffusive}
\mathcal E_a^2(s)\leq \frac14 s^{-{h_a}}, \quad \mathcal E_c^2(s)\leq \frac14 e^{-h_cs}.
\end{eqnarray}

\end{lemma}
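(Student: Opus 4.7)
The plan is to apply a maximum principle argument for $|\tilde a|$ and $|\tilde c|$ separately in the exterior region $[z^\ast, 1/\nu(s)]$, exploiting three facts: (i) $\phi(z)=e^{-z}$ and $\phi'(z)=-e^{-z}$ are uniformly bounded by $e^{-z^\ast}$ on the exterior, so they can be made arbitrarily small by taking $\bar z^\ast$ large; (ii) the modulation identity $\lambda_s/\lambda=-1+O(s^{-1})$ from Lemma \ref{lemma:smoothmodulation:diffusion} provides linear dissipation with rate essentially $1$ for each variable; (iii) at the left endpoint $z=z^\ast$ the interior bound \eqref{smooth:bd:varepsiloninterior:diffusive} together with Lemma \ref{smooth:lemma:interior:diffusive} controls the values via $\mathcal I_a,\mathcal I_c$, so one can close the constants by shrinking $\delta$.

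\textbf{Step 1 (interior maxima for $\tilde a$).} Let $z_\ast(s)$ attain $\mathcal E_a(s)=\sup_{[z^\ast,1/\nu]}|\tilde a|$. When $z_\ast(s)\in (z^\ast,1/\nu(s))$, $\tilde a_z(s,z_\ast)=0$, so \eqref{equation:epsilon-a} collapses to a pointwise ODE. Using the modulation equation \eqref{smoothmodulationequations:initial} the source simplifies to $(\lambda_s/\lambda+1)(\phi-1)+\frac{\nu_s}{\nu}z\phi'-\lambda\nu\partial_z^{-1}\tilde c$, whose exterior sup is $O(s^{-1})+O(z^\ast e^{-z^\ast})+O(se^{-(1+h_c/2)s})=O(s^{-1})$. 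The compatibility condition \eqref{equation:epsilon-bc} together with the interior estimate yields the crucial uniform bound $|\partial_z^{-1}\tilde a(z_\ast)|\leq 1+C(z^\ast)\mathcal I_a+\mathcal E_a/\nu\leq 2$, so the nonlocal term $(\partial_z^{-1}\tilde a)\phi'$ contributes only $O(e^{-z^\ast})$. Differentiating $\mathcal E_a^2$ and applying Young's inequality gives
\begin{equation*}
\frac{d}{ds}\mathcal E_a^2 \leq \bigl(-2+O(s^{-1})+O(e^{-z^\ast})\bigr)\mathcal E_a^2+C\,s^{-2}+C\,e^{-2z^\ast}.
\end{equation*}
Choosing $\bar z^\ast$ large and $s_0^\ast$ large, Gronwall yields $\mathcal E_a^2(s)\lesssim s^{-2}+e^{-2z^\ast}+\mathcal E_a^2(s_0)e^{-(s-s_0)}\leq \tfrac14 s^{-h_a}$ since $h_a<2$.

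\textbf{Step 2 (boundary cases and $\tilde c$).} If $z_\ast(s)=z^\ast$, the interior bound gives $|\tilde a(s,z^\ast)|^2\leq\frac{(z^\ast)^{\alpha+1}}{\alpha+1}\mathcal I_a^2(s)\leq \frac{2\delta^2(z^\ast)^{\alpha+1}}{\alpha+1}s^{-h_a}$, which is $\leq\frac14 s^{-h_a}$ upon setting $(\delta^\ast)^2\leq (\alpha+1)/(8(z^\ast)^{\alpha+1})$. If $z_\ast(s)=1/\nu(s)$, a direct calculation shows that the transport velocity $\partial_z^{-1}\phi+\partial_z^{-1}\tilde a-\frac{\nu_s}{\nu}z$ matches the boundary velocity $-\nu_s/\nu^2$ (using $\int_0^{1/\nu}(\phi+\tilde a)\,dz=0$), so $\tfrac{d}{ds}\tilde a(s,1/\nu(s))$ satisfies the same ODE as at an interior maximum and Step 1 applies. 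The argument for $\mathcal E_c$ is analogous and in fact easier: at an interior (or moving-boundary) maximum \eqref{equation:epsilon-c} reduces to $\tilde c_s=(\lambda_s/\lambda+2\tilde a+2\phi)\tilde c$, yielding
\begin{equation*}
\frac{d}{ds}\mathcal E_c^2\leq \bigl(-2+O(s^{-h_a/2})+O(e^{-z^\ast})\bigr)\mathcal E_c^2,
\end{equation*}
so $\mathcal E_c^2(s)\leq\mathcal E_c^2(s_0)e^{-(2-o(1))(s-s_0)}\leq \tfrac{1}{16}e^{-h_c s_0}e^{-(2-o(1))(s-s_0)}\leq \tfrac14 e^{-h_c s}$ because $h_c<\gamma-1<2$. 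At $z=z^\ast$ the inequality $|\tilde c(s,z^\ast)|^2\leq \frac{(z^\ast)^{\gamma+1}}{\gamma+1}\mathcal I_c^2(s)$ is closed by further shrinking $\delta^\ast$.

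\textbf{Main obstacle.} The delicate point is the nonlocal term $(\partial_z^{-1}\tilde a)\phi'$ appearing in the equation for $\tilde a$: a naive estimate $|\partial_z^{-1}\tilde a|\leq \|\tilde a\|_\infty/\nu$ would give a coefficient of order $se^{-z^\ast}$ in front of $\mathcal E_a^2$, which cannot be absorbed for large $s$ with fixed $z^\ast$. The resolution, already indicated, is that the compatibility condition forces $\partial_z^{-1}\tilde a$ to be uniformly bounded in the exterior by a constant close to $1$, so the pointwise product $(\partial_z^{-1}\tilde a)\phi'$ is of order $e^{-z^\ast}$ uniformly in $s$. The only other subtlety is justifying the differentiation of the sup; this is handled by Rademacher's theorem (or by working with a smooth approximation) together with the observation that the envelope inequality holds almost everywhere at points of maximum.
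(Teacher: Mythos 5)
Your proof takes a genuine maximum-principle approach, but there is a concrete error in how you handle what you correctly identify as the main obstacle: the nonlocal term $(\partial_z^{-1}\tilde a)\phi'$.

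You claim that the compatibility condition $\int_0^{1/\nu}(\phi+\tilde a)\,dz=0$ forces $\partial_z^{-1}\tilde a$ to be uniformly bounded in the exterior by a constant close to~$1$, and you write the chain $|\partial_z^{-1}\tilde a(z_*)|\leq 1+C(z^*)\mathcal I_a+\mathcal E_a/\nu\leq 2$. This is false. The compatibility condition only pins down the value $\partial_z^{-1}\tilde a(1/\nu)\approx -1$ at the \emph{right endpoint}; at an interior point $z_*$ one has $\partial_z^{-1}\tilde a(z_*)=\partial_z^{-1}\tilde a(1/\nu)-\int_{z_*}^{1/\nu}\tilde a$, and the second term can be as large as $\mathcal E_a\cdot(1/\nu-z_*)\lesssim s^{-h_a/2}\cdot Ns=Ns^{1-h_a/2}$, which diverges since $h_a<2$. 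Indeed the very middle term in your own displayed bound, $\mathcal E_a/\nu$, already violates the claimed $\leq 2$, so the chain of inequalities does not close. The correct resolution, which is what the paper uses, is different: bound $|\partial_z^{-1}\tilde a(z)|\leq z\,\|\tilde a\|_{L^\infty}$ trivially, and observe that $z\,|\phi'(z)|=ze^{-z}$ is \emph{decreasing} on $z\geq 1$, so for $z\geq z^*$ one gets $|\partial_z^{-1}\tilde a\,\phi'|\leq z^*e^{-z^*}\,\|\tilde a\|_{L^\infty}\leq\kappa\,s^{-h_a/2}$ after choosing $z^*$ large. The gain comes from the exponential decay of $\phi'$ beating the linear growth of the primitive, not from a uniform bound on the primitive.

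Beyond this, your route differs structurally from the paper's in a way worth flagging. You differentiate $\mathcal E_a^2(s)=\sup_{[z^*,1/\nu]}|\tilde a|^2$ directly and invoke Rademacher's theorem / envelope inequalities; the paper instead builds explicit comparison functions $f^\pm=\pm(\tfrac12 s^{-h_a/2}-\tilde a)$ and $g^\pm=\pm(\tfrac12 e^{-h_c s/2}-\tilde c)$, verifies $(\partial_s+\mathcal L_a)f\geq\tfrac14 s^{-h_a/2}$ and $(\partial_s+\mathcal L_c)g\geq\tfrac{1-h_c/2}{4}e^{-h_cs/2}$, checks sign conditions at $z=z^*$, $z=1/\nu(s)$, and $s=s_0$, and concludes by the maximum principle for the transport operators. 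The supersolution route avoids the measure-theoretic subtleties of differentiating a sup that you acknowledge, and it makes the absorption of the $-2$ dissipation rate (from $-\lambda_s/\lambda\approx 1$ and $-2\phi$ small) transparent without any Gronwall step. Your observation that the transport speed at $z=1/\nu(s)$ matches $-\nu_s/\nu^2$ so that the boundary is characteristic is exactly the paper's \eqref{partical-stay}, and your handling of the $z=z^*$ endpoint via the interior estimate and a small choice of $\delta^*$ is also correct. With the nonlocal-term estimate repaired as above, the argument would be a valid (if slightly more delicate) alternative to the supersolution construction.
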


\begin{proof}

The proof relies on the maximum principle. We rewrite \eqref{equation:epsilon-a} and {\eqref{equation:epsilon-c}} as
\begin{equation}\label{smooth:equation:epsilon-2}
    \begin{split}
        &\tilde a_s +\mathcal L_a \tilde a    = F_a, \qquad  \tilde c_s +\mathcal L_c \tilde c    = 0.
    \end{split}
    \end{equation}
where the transport operator $\mathcal L_a$ and $\mathcal L_c$ (note that $\mathcal L_a$ has nonlinear parts) and the source term $F_a$ are:
\begin{align*}
&\mathcal L_a v=  - \frac{\lambda_s}{\lambda}v - \frac{\nu_s}{\nu} z v_z - 2\phi v + \partial_z^{-1} \phi v_z    - \tilde a v + \partial_z^{-1} \tilde a v_z ,\\
&\mathcal L_c v =   - \frac{\lambda_s}{\lambda}v - \frac{\nu_s}{\nu} z v_z - 2\phi v + \partial_z^{-1} \phi v_z    - 2\tilde a v + \partial_z^{-1} \tilde a v_z,\\
&F_a = -\partial_z^{-1} \tilde a \phi' +(\frac{\lambda_s}{\lambda} +1)\phi +  \frac{\nu_s}{\nu}z\phi' - 2\nu \int_0^{\frac{1}{\nu}} (\phi+ \tilde{a})^2(z) dz
        \\
         &\qquad  + \lambda\left( - \nu\partial_z^{-1} \tilde{c}  + \int_0^{\frac{1}{\nu}} \nu^2 \partial_z^{-1} \tilde{c}(z) dz\right).
\end{align*}

\noindent \textbf{Step 1}. \emph{{Supersolutions} for $\pa_s+\mathcal L_a$ and $\pa_s+\mathcal L_c$ on $[z^*,\nu^{-1}]$}. We introduce
$$
f(s,z)= \frac 12 s^{-h_a/2}, \qquad g(s,z) = \frac 12 e^{-h_c s/2}
$$
and claim that there exists $z^*$ large enough such that for $s_0$ large enough {and $z\geq z^*$}, for all $ s_0\leq s \leq s_1$ :
\be \label{smooth:exterior:id:supersolution:diffusive}
(\pa_s +\mathcal L_a) f\geq \frac{1}{4} s^{-h_a/2}, \qquad (\pa_s +\mathcal L_c) g\geq \frac{1-\frac{h_c}{2}}{4} e^{-h_c s/2}.
\ee
To prove \eqref{smooth:exterior:id:supersolution:diffusive}, we compute using \eqref{smoothmodulationequations:diffusive} and \eqref{asy on lambda derivative}:
\be
\begin{split}
    (\pa_s +\mathcal L_a) f &= (-\frac{h_a}{4}s^{-h_a/2-1})+(-\frac{\lambda_s}{\lambda}-2e^{-z}-\tilde{a})\frac{1}{2}s^{-h_a/2}\\
    &\geq (-\frac{h_a}{2}s^{-1}+1-Cs^{-1}-2e^{-z^*}-C(z^*)s^{-h_a/2})\frac{1}{2}s^{-h_a/2}\\
    &\geq \frac{1}{4} s^{-h_a/2}
\end{split}
\ee
and
\be
\begin{split}
    (\pa_s +\mathcal L_c) g &= (-\frac{h_c}{4}e^{-h_c s/2})+(-\frac{\lambda_s}{\lambda}-2e^{-z}-2\tilde{a})\frac{1}{2}e^{-h_cs/2}\\
    &\geq (-\frac{h_c}{2}+1-Cs^{-1}-2e^{-z^*}-C(z^*)s^{-h_a/2})\frac{1}{2}e^{-h_cs/2}\\
    &\geq \frac{1-\frac{h_c}{2}}{4} e^{-h_cs/2} (> 0 \text{ since } h_c<\gamma-1<3-1=2.)
\end{split}
\ee
which implies \eqref{smooth:exterior:id:supersolution:diffusive} upon first taking $z^*$ large enough and then $s_0^*$ large enough. \\

\noindent \textbf{Step 2}. \emph{Estimate for the source term}. For any $\kappa>0$, we claim that for $z^*$ large enough and then for $\delta$ small enough, for all $s_0\leq s \leq s_1$ and  $z\in [z^*,\nu^{-1}]$:
\be \label{smooth:exterior:bd:source:diffusive}
|F_a(s,z)|\leq \kappa{s^{-\frac{h_a}{2}}}.
\ee
Let $0<\kappa$ be fixed. We inject the improved bootstrap bound \eqref{smooth:bd:int e1:diffusive} in the computation \eqref{smooth:bd:varepsiloninterior:diffusive} and get:
\be \label{smooth:exterior:bd:interiorvarepsilon:diffusive}
\begin{split}
    |\tilde a (z)|&\leq \frac{z^\frac{\alpha+1}{2}}{(\alpha+1)^\frac{1}{2}} \mathcal I_a(s)\leq C \delta s^{-h_a/2} {z^*}^\frac{\alpha+1}{2} \quad \mbox{for }z\in [0,z^*],\\
    |\tilde c (z)|&\leq \mathcal I_c(s)  \frac{z^\frac{\gamma+1}{2}}{(\gamma+1)^\frac{1}{2}}\leq C \delta e^{-h_c/2} {z^*}^\frac{\alpha+1}{2} \quad \mbox{for }z\in [0,z^*],\\
\end{split}
\ee
where we have taken $s$ (may depend on $z^*$ and $\delta$) large enough.
Combining with 
\begin{equation}\label{a-b-exterior:diffusive}
\begin{split}
    |\tilde a(z)|&\leq s^{-\frac{h_a}{2}} \qquad\text{for $z\in [z^*,\nu^{-1}]$}
\end{split}  
\end{equation}
using $\phi(z)=e^{-z}$:
\begin{equation} \label{smooth:exterior:bd:inter1:diffusive}
\begin{split}
    |\partial_z^{-1} \tilde a \phi'| &\leq (\|\tilde a\|_{L^\infty[0,z^*]}+\|\tilde a\|_{L^\infty[z^*,\nu^{-1}]}) z |\phi'(z)|\\
    &\leq \Big(({z^*}^\frac{\alpha+1}{2}) \delta s^{-\frac {h_a}{2}}+s^{-\frac {h_a}{2}} \Big)z^*e^{-z^*} \leq  \kappa s^{-\frac{h_a}{2}} \qquad\text{for $z\in [z^*,\nu^{-1}]$},
\end{split}
\end{equation}
where $\kappa>0$ and we have taken $z^*$ large enough such that all of $z^*e^{-z^*}<\kappa/2$, and that $z^{1}e^{-z}$ is decreasing for $z>z^*$ are satisfied, and $\delta$ small enough such that $\delta {z^*}^\frac{\alpha+1}{2}<1$. Next, using \eqref{smooth:bd:integral:diffusive}, \eqref{estimate-on-primitive-of-c-diffusive-1}, \eqref{smoothmodulationequations:diffusive}, and \eqref{a-b-exterior:diffusive}, for all $z\geq z^*$ and $z^*$ large enough:
\be \label{smooth:exterior:bd:inter2:diffusive}
\begin{split}
   &\left|(\frac{\lambda_s}\lambda +1)\phi \right|  \leq Cs^{-1}\leq \kappa s^{-\frac{h_a}{2}},
   \\
   &\left|\frac{\nu_s}\nu z\phi'  \right|\leq Cs^{-1}\cdot1 \leq Cs^{-1}\leq \kappa s^{-\frac{h_a}{2}},
   \\
   &\left|- 2\nu\Big(\int_0^{\frac{1}{\nu}} (\phi+\tilde a)^2(z)dz\Big)\right|\leq Cs^{-1}\leq \kappa s^{-\frac{h_a}{2}},\\
   &\left|\lambda\nu \pa_z^{-1} \tilde c\right|\leq Ce^{-s}s^{1}e^{-\frac{h_c}{2}s}  \leq Cs^{-1}\leq \kappa s^{-\frac{h_a}{2}},
   \\
   &\left|\lambda\int_0^{\frac{1}{\nu}} \nu^2 \partial_z^{-1} \tilde{c}(z) dz\right|\leq e^{(-1-\frac{h_c}{2}+\epsilon) s} \leq Cs^{-1}\leq \kappa s^{-\frac{h_a}{2}},
   \\
\end{split}
\ee
where we have used $h_c>0$ and $s_0^*$ is taken to be large enough that may depend on $z^*$.
Combining \eqref{smooth:exterior:bd:inter1:diffusive} and \eqref{smooth:exterior:bd:inter2:diffusive} shows  \eqref{smooth:exterior:bd:source:diffusive}.\\

\noindent \textbf{Step 3}. \emph{End of the proof}. We introduce
\be \label{smoothexterior:def:fpm:diffusive}
{f^\pm= \pm  f-\tilde a , \quad g^\pm = \pm g-\tilde c},
\ee
and try to apply maximum principle on them.
Using \eqref{smooth:equation:epsilon-2}, \eqref{smooth:exterior:id:supersolution:diffusive} and \eqref{smooth:exterior:bd:source:diffusive} one obtains that for $s_0\leq s\leq s_1$ and $z\in [z^*,\nu^{-1}]$:
\be \label{smooth:exterior:inter1:diffusive}
\begin{split}
  &(\pa_s+\mathcal L_a)f^+= (\pa_s+\mathcal L_a)f - F_a\geq (\frac{1}{4}-\kappa) s^{-\frac{h_a}{2}}\geq 0 \quad \mbox{and similarly} \quad (\pa_s+\mathcal L_a)f^-\leq 0,
  \\
  &(\pa_s+\mathcal L_c)g^+ = (\pa_s+\mathcal L_c)g \geq \frac{1-\frac{h_c}{2}}{4} e^{-h_cs/2} \geq 0\quad\mbox{and similarly} \quad (\pa_s+\mathcal L_c)g^-\leq 0,
\end{split}
\ee
provided that $\kappa \leq 1/4$ and $h_c\leq 2$.
Thanks to \eqref{smooth:modulation:inter2:diffusive}, one has { the following estimate for transport speed:}
\begin{equation}
\begin{split}
    -\frac{\nu_s}{\nu}z^* + \int_0^{z^*} (\phi+\tilde a)(\tilde z) d\tilde z &\geq -\frac{\nu_s}{\nu^2}z^* \nu + 1-e^{-z^*}-\norm{\tilde{a}}_{L^\infty[0,\nu^{-1}]}z^*\\
    &\geq (1-C(z^*)s^{-h_a+1})z^*(Cs^{-1})+1-e^{-z^*}-C(z^*)z^*s^{-h_a/2}\\
    &\geq 1-e^{-z^*}-(Cs^{-1}-Cs^{-1} C(z^*) s^{-h_a+1}-C(z^*) s^{-h_a/2})z^*\\
    &\geq 1-e^{-z^*}-Cs^{-1}z^*\geq 0,
\end{split}  
\end{equation}
where we have first taken $z^*$ sufficiently large, then $s$ sufficiently large. From this we know that the particles are always moving from region $0\leq z\leq z^*$ to $z^*\leq z \leq \frac1\nu$.
At the boundary $z=z^*$ one has using \eqref{smooth:exterior:bd:interiorvarepsilon:diffusive} that:
\be \label{smooth:exterior:inter2}
\begin{split}
   &f^+(s,z^*)= \frac{s^{-\frac{h_a}{2}}}{2}-\tilde a (s,z^*)\geq (\frac{1}{2}-C \delta  {z^*}^\frac{\alpha+1}{2}) s^{-h_a/2}\geq 0 \quad \mbox{and similarly} \quad f^-(s,z^*)\leq 0, 
   \\
   &g^+(s,z^*)= \frac{e^{-\frac{h_c}{2}s}}{2}-\tilde c (s,z^*)\geq (\frac{1}{2}-C\delta  {z^*}^{\frac{\gamma+1}{2}}  )e^{-\frac{h_c}{2}s} \geq 0 \quad \mbox{and similarly} \quad g^-(s,z^*)\leq 0
\end{split}
\ee
provided $\delta$ is small enough depending on $z^*$. At {the} initial time $s=s_0$, we have using \eqref{bd:eini-beta=0:diffusive} that for all $z\in [z^*,\nu_0^{-1}]$:
\begin{equation}\label{smooth:exterior:inter3:diffusive}
\begin{split}
&f^+(s_0,z)\geq \frac{s_0^{-\frac{h_a}{2}}}{2}-\| \tilde a_0\|_{L^\infty [z^*,\nu_0^{-1}]}\geq \frac{s_0^{-\frac{h_a}{2}}}{2}-\frac{s_0^{-\frac{h_a}{2}}}{4}\geq 0,  \quad \mbox{and similarly} \quad f^-(s_0,z)\leq 0,
\\
&g^+(s_0,z)\geq \frac{e^{-\frac{h_c}{2}s_0}}{2}-\| \tilde c_0\|_{L^\infty [z^*,\nu_0^{-1}]}\geq \frac{e^{-\frac{h_c}{2}s_0}}{2}-\frac{e^{-\frac{h_b}{2}s_0}}{4}\geq 0,  \quad \mbox{and similarly} \quad g^-(s_0,z)\leq 0.
\end{split}
\end{equation}
At the point $z(s)=\frac1{\nu(s)}$, thanks to the boundary condition \eqref{equation:epsilon-bc}, the characteristics of the full transport field stay on
the boundary (or the characteristics curves passing any points in the interior of the domain cannot be issued from the boundary $z=\nu^{-1}(s)$, so it can only be issued from $(s_0,\nu^{-1}(s_0))$) since
\begin{equation}\label{partical-stay}
    \frac{d}{ds}\frac1{\nu(s)}= -\frac{\nu_s}{\nu^2} = -\frac{\nu_s}{\nu}\nu^{-1} + \underbrace{\int_0^{\frac1\nu} (\phi+\varepsilon) dz}_{0}=\text{transport coefficient}.
\end{equation}
This together with \eqref{smooth:exterior:inter3:diffusive} imply that $f^+(s,\frac1\nu), g^+(s,\frac1\nu) \geq 0$ and $f^-(s,\frac1\nu), g^-(s,\frac1\nu)\leq 0$.
Therefore, in view of \eqref{smooth:exterior:bd:inter1:diffusive}, \eqref{smooth:exterior:bd:inter2:diffusive} and \eqref{smooth:exterior:inter3:diffusive} one can apply the maximum principle and obtain that $f^+(s,z), g^+(s,z)\geq 0$ and $f^-(s,z), g^-(s,z)\leq 0$ for all $s_0\leq s \leq s_1$ and $z^*\leq z \leq \nu^{-1}$. By the definition \eqref{smoothexterior:def:fpm:diffusive}  of $f^\pm$ and $g^\pm$ this implies the desired estimate \eqref{bd:int e-critical2:diffusive} and completes the proof of the Lemma.

\end{proof}

\subsection{Conclusion}
In view of Lemma \ref{lemma:smoothmodulation:diffusion}, Lemma \ref{smooth:lemma:interior:diffusive} and Lemma \ref{smooth:lemma:main-critical:diffusive}, Definition \ref{def:crit-beta=0:diffusive} and Definition \ref{def:ini crit-beta=0:diffusive}, the bootstrap argument is completed.

\begin{proposition} \label{smooth:pr:bootstrap:diffusive}

Let $(\alpha,\gamma,h_a,h_c,\epsilon_a,\epsilon_c)$ be in the range of \eqref{final-parameter-conditions-diffusive}. There exist universal constants $z^*\geq 1$, $\delta(z^*)>0$ and $s_0^*(z^*,\delta)\geq 0$ such that the following holds true. For all $s_0\geq s_0^*$, any solution of \eqref{equation:epsilon} which is initially close to the blowup profile in the sense of Definition \ref{def:ini crit-beta=0:diffusive} is trapped on $[s_0,+\infty)$ in the sense of Definition \ref{def:crit-beta=0:diffusive}.

\end{proposition}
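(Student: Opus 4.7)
The plan is to close the bootstrap in the standard fashion: define
\[
s_1^* = \sup\bigl\{ s_1 \geq s_0 : \text{the solution is trapped on } [s_0,s_1] \text{ in the sense of Definition \ref{def:crit-beta=0:diffusive}}\bigr\},
\]
and show $s_1^* = +\infty$. Since the initial data of Definition \ref{def:ini crit-beta=0:diffusive} satisfies the trapping inequalities of Definition \ref{def:crit-beta=0:diffusive} \emph{strictly} (the initial $\mathcal E$- and $\mathcal I$-norms are bounded by $\delta$ or $\tfrac14$ of the trapping thresholds, while $\lambda_0 = s_0 e^{-s_0}$ and $\nu_0 \in [(N_0 s_0)^{-1}, N_0 s_0^{-1}]$ lie strictly inside the windows $[M^{-1} s e^{-s}, M s e^{-s}]$ and $[(Ns)^{-1}, N s^{-1}]$ when $M > 1$ and $N > N_0$), continuity in $s$ of $(\tilde a, \tilde c, \lambda, \nu)$ yields $s_1^* > s_0$.

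Assume for contradiction $s_1^* < +\infty$. By continuity, at $s = s_1^*$ at least one of the six trapping inequalities must be saturated. I would rule out each possibility in turn using the three lemmas already established. Lemma \ref{lemma:smoothmodulation:diffusion} tightens the windows for $\lambda$ and $\nu$ to $[(1+\epsilon)^{-1} s e^{-s}, (1+\epsilon) s e^{-s}]$ and $[(N_0+\epsilon)^{-1} s^{-1}, (N_0+\epsilon) s^{-1}]$ for arbitrarily small $\epsilon > 0$; Lemma \ref{smooth:lemma:interior:diffusive} sharpens the interior bounds to $\mathcal I_a^2(s) \leq 2\delta^2 s^{-h_a}$ and $\mathcal I_c^2(s) \leq \delta^2 e^{-h_c s}$; and Lemma \ref{smooth:lemma:main-critical:diffusive} sharpens the exterior bounds to $\mathcal E_a^2(s) \leq \tfrac14 s^{-h_a}$ and $\mathcal E_c^2(s) \leq \tfrac14 e^{-h_c s}$. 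Provided $M$, $N$, $\delta$ are chosen with $M > 1 + \epsilon$, $N > N_0 + \epsilon$, and $\delta < 1/\sqrt{2}$, each improvement is strict, so no bound can be saturated, contradicting the definition of $s_1^*$.

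The only real subtlety is the order of quantifiers: the constants $\bar z^*$, $\delta^*(z^*)$, $s_0^*(z^*, \delta)$ produced by the three lemmas depend on prior choices, so they must be fixed in a consistent order. Following the hypotheses of Lemma \ref{smooth:lemma:main-critical:diffusive}, I would first pick $z^* \geq \bar z^*$ large enough that the supersolutions $f, g$ satisfy \eqref{smooth:exterior:id:supersolution:diffusive}; next fix $\epsilon > 0$ small with $1 + \epsilon < M = 2$ and $N_0 + \epsilon < N = 4$; then select $\delta \leq \delta^*(z^*)$ with $2\delta^2 < 1$ and small enough for the interior contribution to the exterior source term in \eqref{smooth:exterior:bd:interiorvarepsilon:diffusive}--\eqref{smooth:exterior:bd:inter1:diffusive} to be absorbed; and finally take $s_0 \geq s_0^*(z^*, \delta)$ large enough that all $O(C(z^*) s^{-h_a/2})$ remainders throughout Lemmas \ref{lemma:smoothmodulation:diffusion}, \ref{smooth:lemma:interior:diffusive}, \ref{smooth:lemma:main-critical:diffusive} are strictly dominated by the leading terms. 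No genuinely hard step remains, since the technical difficulty has already been absorbed in proving the three lemmas; this proposition is purely an assembly argument.
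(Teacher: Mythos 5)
Your proof is correct and is essentially the same argument the paper gives: a continuity/saturation bootstrap in which Lemmas \ref{lemma:smoothmodulation:diffusion}, \ref{smooth:lemma:interior:diffusive} and \ref{smooth:lemma:main-critical:diffusive} strictly improve every trapping inequality, so none can be saturated at a finite $s^*$. The paper states this in one short paragraph; you spell out the strictness of the initial bounds, the parameter margins ($M=2>1+\epsilon$, $N=4>N_0+\epsilon$, $2\delta^2<1$), and the order in which $z^*$, $\delta$, $s_0^*$ are fixed, which is a faithful unpacking of what the paper leaves implicit.
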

\begin{proof}
       Let $s^*=\sup \set{s_1\geq s_0: \text{The solution is trapped in }[s_0,s_1]}$. If $s^*=\infty$, then we are done. Otherwise, at $s=s^*<\infty$, by Lemma \ref{lemma:smoothmodulation:diffusion}, Lemma \ref{smooth:lemma:interior:diffusive} and Lemma \ref{smooth:lemma:main-critical:diffusive}, all the inequalities in Definition \ref{def:crit-beta=0:diffusive} are strictly satisfied, and we arrive at a contradiction.
\end{proof}

\section{Bootstrap Argument for the case of Diffusive Temperature}
\ifthenelse{\boolean{isSimplified}}{In this section, we perform a bootstrap argument on the diffusive case ($\sigma=1$). Recall that earlier we have imposed the orthogonality-like condition
\begin{equation*}
    \tilde a(s,z)=o(z)\text{ as }z\rightarrow 0^+.
\end{equation*}
In light of Corollary \ref{Invariance-of-vanishing speed}, we further impose  {time-invariant vanishing-speed condition at $z=0$: That is, for each $s\geq s_0$ ($s_0$ is the start time defined below),}
\begin{equation}\label{decay speed condition nc}
    \tilde a_z(s,z)=o(z^{\epsilon_a}),\, \tilde c=o(z^{\epsilon_c}),\text{ as }z\rightarrow 0^+,\,\text{ for } 0<\epsilon_a\leq \epsilon_c<1,
\end{equation}
{where the range of $\epsilon_a$ and $\epsilon_c$ shall be refined as our analysis progresses.}}{In this section, we study the case $\sigma=1$ and introduce the weights whose parameters are in the range of
\begin{equation}\label{initial-parameter-weights-nc}
    \text{$z^{-\alpha}$ and $z^{-k\eta_0}$},\quad 0<\alpha<3,\,0<k<2,\text{ and } \eta_0\geq 2,\, \eta_0\in \N,
\end{equation}
as well as further impose the vanishing-speed boundary conditions
\begin{equation}\label{decay speed condition nc}
    \tilde a_z(s,z)=o(z^{\epsilon_a}),\, \tilde c=o(z^{\epsilon_c}),\text{ as }z\rightarrow 0^+,\,\text{ for } 0<\epsilon_a\leq \epsilon_c<1.
\end{equation}
Recall that earlier we have imposed the orthogonality-like condition
\begin{equation*}
    \tilde a(s,z)=o(z)\text{ as }z\rightarrow 0^+.
\end{equation*}
We shall refine the range of these parameters by the end of this section.}

{As in the last section, we will be writing $\partial_z^{-1} f(s,z)=\int_0^z f(s,\tilde z)d\tilde z$ and $\partial_z^{-1}f g=(\partial_z^{-1}f) g$.}

\subsection{Definitions}
\ifthenelse{\boolean{isSimplified}}{Initially, the weights appear in the following definitions satisfy
\begin{equation}\label{initial-parameter-weights-nc}
    \text{$z^{-\alpha}$ and $z^{-k\eta_0}$},\quad 0<\alpha<3,\,0<k<2,\text{ and } \eta_0\geq 2,\, \eta_0\in \N.
\end{equation}}{}
\begin{definition}[Initial closeness]\label{def:ini crit-beta=0:diffusive-nc}
Let $\lambda_0^*>0$ and $\delta>0$. We say that $a_0$ is initially close to the blowup profile if there exists $0<\lambda_0<{\lambda_0^*}$ and $\nu_0>0$ such that the decomposition \eqref{decomposition-nc} satisfies:
\begin{itemize}
\item[(i)] \emph{Initial values of the modulation parameters} (note that this fixes the value of $s_0$):
\begin{eqnarray}
\label{bd:parametersini-beta=0-nc}
 \lambda_0 = s_0e^{-s_0}, \ \  \frac{1}{N_0 s_0}\leq \nu_0 \leq  \frac{N_0}{s_0}.
\end{eqnarray} 
\item[(ii)] \emph{Compatibility condition for the initial perturbation}. $\tilde a_0\in C^2 ([0,\frac1{\nu_0}))$ satisfies the boundary conditions \eqref{smooth:orthogonality-nc} and the integral condition \eqref{equation:epsilon-bc}.
\item[(iii)] \emph{Initial size of the remainder in the self-similar variables}. For some small number $\delta > 0$:
\begin{eqnarray}\label{bd:eini-beta=0:diffusive-nc}
&\mathcal I_a^2(s_0) = \int_0^{z^*} z^{-\alpha}(\pa_z\tilde a_0)^2\;dz  < \delta^2 s_0^{-h_a},
\ \ \mathcal E_a^2(s_0) =\sup_{z^*\leq z\leq\frac1{\nu_0}}|\tilde a_0|^2 < \frac{1}{16}s_0^{-h_a}
\\
&\mathcal T_{k\eta_0,2\eta_0}^{2\eta_0}(s_0)=\int_0^{\nu^{-1}(s_0)} z^{-k\eta_0}\tilde c^{2\eta_0}<\frac{1}{4}e^{-\eta_0 l s_0}
\end{eqnarray} 
\end{itemize}
\end{definition}

\begin{definition}[Trapped on $\left(s_0,s_1\right)$ ]\label{def:crit-beta=0:diffusive-nc}
Let $z^*\geq 1$ and $\delta>0$. We say that a solution $\tilde{a}(s,z)$ is trapped on $[s_0,s_1]$ with $s_0<s_1\leq \infty$, if it satisfies the properties of Definition \ref{def:ini crit-beta=0:diffusive-nc} at time $s_0$ and if for all $s\in [s_0,s_1]$, $a(s,z)$ can be decomposed as in \eqref{decomposition-nc} with:
\begin{itemize}
\item[(i)] \emph{Values of the modulation parameters:}
\begin{eqnarray}\label{bd:parameterstrap-critical:diffusive-nc}
\frac 1{M} se^{- s}< \lambda < M se^{- s}, \ \ \frac {1}{Ns} 
<\nu < \frac{N}{s}.
\end{eqnarray} 
\item[(ii)] \emph{Decay in time of the remainder in the self-similar variables:}
\begin{equation}\label{smooth:bd:etrap:diffusive-nc}
    \begin{split}
    &\mathcal I_a^2(s) = \int_0^{z^*}z^{-\alpha}\tilde a_z^2\;dz  < s^{-h_a}, \qquad \mathcal E_a^2(s) = \sup_{z^*\leq z\leq\frac1{\nu(s)}}|\tilde a|^2 < s^{-h_a}
    \\
    &\mathcal T_{k\eta_0,2\eta_0}^{2\eta_0}(s)=\int_0^{\nu^{-1}(s)} z^{-k\eta_0}\tilde c^{2\eta_0}<e^{-\eta_0 l s}
\end{split}
\end{equation}
\end{itemize}
\end{definition}
{
\begin{remark}
    Notice that the choice of $\eta_0\in \N$ ensures that $\mathcal T_{k\eta_0,2\eta_0}(s)$ is differentiable.
\end{remark}
}
\begin{remark}
    Notice that {the weights appearing in $\mathcal I_a$ and $\mathcal T_{k\eta_0,2\eta_0}$ may} have a singularity at zero. To remove such singularity, one type of sufficient conditions to impose is that {as $z\rightarrow 0^+$,}
    \begin{equation}\label{singularity-at-definition-nc}
        \begin{split}
            &z^{-\alpha}\tilde a_z^2=o(z^{-1+\epsilon_1})\iff \tilde a_z=o(z^{\frac{\alpha-1+\epsilon_1}{2}})\impliedby \tilde a_z=o(z^{\frac{\alpha-1}{2}+\epsilon_1})\\
             &z^{-k\eta_0}\tilde c^{2\eta_0}=o(z^{-1+\epsilon_3}) \iff \tilde c^{2\eta_0}=o(z^{\frac{k\eta_0-1+\epsilon_3}{2\eta_0}})\impliedby \tilde c=o(z^{\frac{k\eta_0-1}{2\eta_0}+\epsilon_3}),
        \end{split}
    \end{equation}
    for some $\epsilon_1,\epsilon_2,\epsilon_3>0$, by our initial choice of parameters \eqref{initial-parameter-weights-nc}.
\end{remark}
Initially, we further make the following assumptions to keep computation simple 
\ifthenelse{\boolean{isSimplified}}{\begin{alignat}{2}\label{initial-parameter-other-nc}
    &1<h_a\leq 2\qquad &&0<l<2\nonumber\\
    &M=2\qquad &&N=4>N_0=3\geq 2\\
    &0<\frac{\alpha-1}{2}<\epsilon_a\leq \epsilon_c\qquad&& 0<\frac{k}{2}-\frac{1}{2\eta_0}<\epsilon_c<1.\nonumber.
\end{alignat}
{As usual, these parameters shall be further refined by the end of this section.}}{\begin{subequations}\label{initial-parameter-other-nc}
\begin{align}
    &1<h_a\leq 2\\
    &0<l<2\\
    &M=2,N>N_0\geq 2\\
    &\frac{\alpha-1}{2}<\epsilon_a\leq \epsilon_c,\, \frac{k}{2}-\frac{1}{2\eta_0}<\epsilon_c<1.
\end{align}
\end{subequations}
whereas all the additional parameters shall be chosen as our analysis progresses.
\begin{remark}
    Notice that the choice of $\eta$ ensures $T_{k\eta_0,2\eta_0}(s)$ is differentiable and positive.
\end{remark}
}

\subsection{A-priori Estimates}
{The following a-priori estimates are derived from the bootstrap assumptions.}
\begin{lemma}\label{lemma:prelim-est:diffusive-nc}
Assume that \eqref{initial-parameter-weights-nc} and \eqref{initial-parameter-other-nc} are satisfied. For any $\delta>0,z^*\geq 1,\epsilon>0$, for $s_0^*$ large enough, if $a$ is trapped on $[s_0,s_1]$ then for all $s_0^*\leq s_0\leq s \leq s_1$:
\be \label{smooth:bd:varepsilonLinfty:diffusive-nc}
\begin{split}
&|\tilde a(z)|\leq \mathcal I_a(s) (\frac{z^{\alpha+1}}{\alpha+1})^\frac{1}{2},\, |\pa^{-1}_z\tilde a (z)| \leq \mathcal I_a(s) \frac{2}{(\alpha+3)\sqrt{\alpha+1}}z^{\frac{\alpha+3}{2}} \text{ for }0\leq z\leq z^*\\
&\| \tilde a (s) \|_{L^\infty([0,\nu^{-1}])} \leq C({z^*}^{\frac{\alpha+1}{2}}) s^{-\frac{h_a}{2}},\\
\end{split}
\ee
and
\be \label{smooth:bd:integral:diffusive-nc}
\begin{split}
  &\nu  \int_0^{\frac{1}{\nu}} (\phi + \tilde a)^2 (z) dz  \leq  s^{-1} N,
  \quad \left|\int_0^{\frac{1}{\nu}} \nu^2 \partial_z^{-1} \tilde{c}(z) dz\right|\leq e^{(-l/2+\epsilon)s}.
\end{split}
\ee
\end{lemma}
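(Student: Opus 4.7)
The proposed proof of Lemma \ref{lemma:prelim-est:diffusive-nc} will follow the same template as Lemma \ref{lemma:prelim-est:diffusive}, but the estimate on $\tilde c$-integrals must be recast, since in the diffusive case the bootstrap assumptions only provide a weighted $L^{2\eta_0}$ control on $\tilde c$ rather than a weighted $H^1$ bound. First, to obtain the interior pointwise estimates on $\tilde a$ and $\partial_z^{-1}\tilde a$, I would use $\tilde a(s,0)=0$ (imposed in \eqref{smooth:orthogonality-nc}) together with Cauchy--Schwarz against the weight $z^{-\alpha}$:
\[
|\tilde a(z)|\le \int_0^z |\partial_z\tilde a|\,d\tilde z \le \left(\int_0^z \tilde z^{-\alpha}(\partial_z\tilde a)^2\,d\tilde z\right)^{1/2}\left(\int_0^z \tilde z^{\alpha}\,d\tilde z\right)^{1/2}\le \mathcal I_a(s)\Big(\tfrac{z^{\alpha+1}}{\alpha+1}\Big)^{1/2}.
\]
A further integration yields the stated bound on $\partial_z^{-1}\tilde a$. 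Combining with the exterior bootstrap bound $\mathcal E_a\le s^{-h_a/2}$ from Definition~\ref{def:crit-beta=0:diffusive-nc} gives the $L^\infty$-estimate on $[0,\nu^{-1}]$.

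Next, for the first inequality in \eqref{smooth:bd:integral:diffusive-nc}, I would repeat verbatim the computation in Lemma~\ref{lemma:prelim-est:diffusive}: expanding $(\phi+\tilde a)^2 = \phi^2 + 2\phi\tilde a + \tilde a^2$, using $\int_0^\infty\phi^2 dz=\tfrac12$ and the exponential decay of $\phi$ to absorb the tail $\int_{\nu^{-1}}^\infty \phi^2 dz$, and then applying the $L^\infty$ estimate on $\tilde a$ together with the modulation bound $\nu\le N/s$ from \eqref{bd:parameterstrap-critical:diffusive-nc}. This produces $\nu\int_0^{1/\nu}(\phi+\tilde a)^2 dz \le \tfrac12\nu + O(C(z^*)s^{-1-h_a/2})\le Ns^{-1}$ for $s_0^*$ sufficiently large.

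The main difficulty, and the step which is genuinely different from the non-diffusive case, is the bound on $|\int_0^{1/\nu}\nu^2\partial_z^{-1}\tilde c\, dz|$, since we no longer have an $L^\infty$ bound on $\tilde c$ from the bootstrap. My plan is to first exchange the order of integration via Fubini,
\[
\int_0^{1/\nu}\nu^2\partial_z^{-1}\tilde c(z)\,dz = \nu^2\int_0^{1/\nu}(\tfrac{1}{\nu}-\tilde z)\tilde c(\tilde z)\,d\tilde z,
\]
so that $|\int_0^{1/\nu}\nu^2\partial_z^{-1}\tilde c\,dz|\le \nu\int_0^{1/\nu}|\tilde c|\,dz$. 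Then apply Hölder's inequality with the weight $z^{-k/2}\cdot z^{k/2}$ and conjugate exponents $(2\eta_0, \frac{2\eta_0}{2\eta_0-1})$:
\[
\int_0^{1/\nu}|\tilde c|\,dz \le \mathcal T_{k\eta_0,2\eta_0}(s)\left(\int_0^{1/\nu} z^{\frac{k\eta_0}{2\eta_0-1}}\,dz\right)^{\frac{2\eta_0-1}{2\eta_0}}\le C\, e^{-ls/2}\,\nu^{-(\frac{k}{2}+\frac{2\eta_0-1}{2\eta_0})}.
\]
Since $\nu^{-1}\lesssim s$ by \eqref{bd:parameterstrap-critical:diffusive-nc}, the polynomial factor $s^{k/2+(2\eta_0-1)/(2\eta_0)}$ is absorbed into $e^{\epsilon s}$ for any $\epsilon>0$ provided $s_0^*$ is large, yielding the desired bound $|\int_0^{1/\nu}\nu^2\partial_z^{-1}\tilde c\,dz|\le e^{(-l/2+\epsilon)s}$.

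Throughout, one must verify that the weight $z^{kq/2}$ with $q=\tfrac{2\eta_0}{2\eta_0-1}$ is integrable at $0$, which is immediate since $k>0$, and that the singularity at $z=0$ in $\mathcal T_{k\eta_0,2\eta_0}$ itself is harmless thanks to the vanishing speed condition \eqref{decay speed condition nc} propagated by Corollary~\ref{Invariance-of-vanishing speed}. No new ingredients beyond Hölder, Fubini, and the bootstrap assumptions are needed; the only subtle point is matching exponents in $\eta_0$ and $k$ to keep the polynomial growth in $s$ below an arbitrary exponential loss $e^{\epsilon s}$, which constrains the eventual choice of framework parameters but does not obstruct this particular a priori estimate.
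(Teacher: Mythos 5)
Your proposal is correct and follows essentially the same route as the paper. The interior Cauchy--Schwarz estimates on $\tilde a$, $\partial_z^{-1}\tilde a$ and the treatment of $\nu\int(\phi+\tilde a)^2$ are verbatim the same template as Lemma~\ref{lemma:prelim-est:diffusive}. For the $\tilde c$-term, the paper bounds the pointwise quantity $|\partial_z^{-1}\tilde c|$ directly, extending the Hölder integral to $[0,\nu^{-1}]$ with the same weight split $z^{-k/2}\cdot z^{k/2}$ and exponents $(2\eta_0,\tfrac{2\eta_0}{2\eta_0-1})$, then multiplies by $\nu^2\cdot\nu^{-1}$; you instead first exchange the order of integration via Fubini to arrive at $\nu\int_0^{1/\nu}|\tilde c|\,dz$ and then apply the identical Hölder split. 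The two routes differ only cosmetically — your Fubini step saves a factor $\nu^{-1}$, yielding the marginally sharper polynomial prefactor $s^{k/2-1/(2\eta_0)}$ rather than the paper's $s$ — but both are swallowed into $e^{\epsilon s}$ for $s_0^*$ large, so the conclusions coincide. Your observations about integrability of the weight at $z=0$ and reliance on the vanishing-speed condition \eqref{decay speed condition nc} match the paper's setup.
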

\begin{proof}
\ifthenelse{\boolean{isSimplified}}{The proof of Equation \eqref{smooth:bd:varepsilonLinfty:diffusive-nc} and the first equation of \eqref{smooth:bd:integral:diffusive-nc} is similar to its corresponding non-diffusive case. As for the second equation of \eqref{smooth:bd:integral:diffusive-nc}, Hölder's inequality and \eqref{initial-parameter-weights-nc} yield}{From the vanishing boundary condition, Cauchy-Schwarz, and \eqref{smooth:bd:etrap:diffusive}, for $0<z\leq z^*$:
\be \label{smooth:bd:varepsiloninterior:diffusive-nc}
\begin{split}
    &|\tilde a(z)|=\left|\int_0^z \pa_z \tilde a d\tilde z\right|\leq \mathcal I_a \sqrt{\int_0^z z^\alpha d\tilde z}=I_a(s) (\frac{z^{\alpha+1}}{\alpha+1})^\frac{1}{2},\\
    &|\pa^{-1}_z\tilde a (z)|=\left|\int_0^z \tilde a d\tilde z\right| \leq I_a(s) \frac{2}{(\alpha+3)\sqrt{\alpha+1}}z^{\frac{\alpha+3}{2}}\\
\end{split}
\ee
and thus we have
\begin{equation}
     \norm{\tilde a(z)}_{L^\infty(0,z^*)}\leq C(z^*) s^{-\frac{h_a}{2}}
\end{equation}
This, combined with the bounds of $\mathcal E_a$ shows the first equation of \eqref{smooth:bd:varepsilonLinfty:diffusive}.
Thanks to \eqref{bd:parameterstrap-critical:diffusive-nc}--\eqref{smooth:bd:varepsilonLinfty:diffusive-nc}, we have
\begin{align*}
\nu\int_0^{\frac{1}{\nu}} (\phi + \tilde a)^2 (z) dz&= \nu\left( \int_0^\infty \phi^2 (z)dz -  \int_{\nu^{-1}}^\infty \phi^2 (z)dz+ 2 \int_0^{\frac{1}{\nu}} \phi  \tilde a (z) dz +\int_0^{\nu^{-1}}  \tilde a^2 dz \right)
\\
&= \nu\left( \frac12 +O(\| \tilde a \|_{L^\infty([0,\nu^{-1}])})+O(\nu^{-1}\| \tilde a \|_{L^\infty([0,\nu^{-1}])}^2) \right) 
\\
&= Ns^{-1}\left(\frac12  +O(C(z^*)s^{-\frac{h_a}{2}},C(z^*)s^{-{h_a}+1})\right)=\frac{N}{2} s^{-1}+O(C(z^*)s^{-\frac{h_a}{2}-1},C(z^*)s^{-{h_a}}) ,
\end{align*}Using Hölder's inequality and \eqref{initial-parameter-weights-nc}, we have }
    \begin{equation}\label{estimate-on-primitive-of-c-diffusive-1-nc}
        \begin{split}
            \abs{\partial_z^{-1} \tilde c}&\leq \int_0^z \abs{\tilde c z^{-\frac{k}{2}} z^{\frac{k}{2}}}\\
            &\leq (\int_0^{\nu^{-1}} \tilde c^{2{\eta_0}} z^{-k{\eta_0}})^\frac{1}{2{\eta_0}} (\int_0^{\nu^{-1}} z^{\frac{2k{\eta_0}}{4{\eta_0}-2}})^\frac{2{\eta_0}-1}{2{\eta_0}}\\
            &\leq \mathcal T_{k{\eta_0},2{\eta_0}} (\nu^{-1})^{(\frac{2k{\eta_0}}{4{\eta_0}-2}+1)\frac{2{\eta_0}-1}{2{\eta_0}}} (\frac{1}{\frac{2k{\eta_0}}{4{\eta_0}-2}+1})^{1-\frac{1}{2{\eta_0}}}\\
            &=1^{1-\frac{1}{2{\eta_0}}} {(Cs)}^{\frac{(k+2){\eta_0}-1}{2{\eta_0}}}e^{-\frac{l}{2}s}\\
            &\leq C^{\frac{k}{2}+(1-\frac{1}{2{\eta_0}})} s^{\frac{k+2}{2}}e^{-\frac{l}{2}s}\leq C s^2 e^{-\frac{l}{2}s}
        \end{split}
    \end{equation}
    Thus, for $s$ large enough, we have
    \begin{equation}\label{estimate-on-primitive-of-c-diffusive-2-nc}
        \left|\int_0^{\frac{1}{\nu}} \nu^2 \partial_z^{-1} \tilde{c}(z) dz\right|\leq \nu^2 \frac{1}{\nu} \norm{\partial_z^{-1} \tilde c}_{L^\infty(0,\nu^{-1})}\leq C s^{1}e^{-\frac{l}{2}s}\leq e^{(-l/2+\epsilon)s}
    \end{equation}
\end{proof}

\begin{lemma}[Modulation Equations] \label{lemma:smoothmodulation:diffusion-nc}
Assume that \eqref{initial-parameter-weights-nc} and \eqref{initial-parameter-other-nc} are satisfied. For any $z^*\geq 1$, $\epsilon>0$, and $ \delta>0$, there exists a large self-similar time $s_0^*$ such that for any $s_0\geq s_0^*$, for any solution which is trapped on $[s_0,s_1]$, we have for $s\in [s_0,s_1]$:
\begin{eqnarray}\label{smoothmodulationequations:diffusive-nc}
\left| \frac{\lambda_s}{\lambda}+1\right|\leq C s^{-1}, \qquad \left| \frac{\nu_s}{\nu} \right|\leq C s^{-1},\qquad \frac{\lambda_s}{\lambda}=-1+O(C(z^*)s^{-1})
\end{eqnarray}
for $C>0$ independent of the bootstrap constants, and
\begin{eqnarray}\label{smooth:bd:boostrap improved parameters:diffusive-nc}
\frac{1}{1+\epsilon} se^{-s}\leq \lambda \leq  (1+\epsilon) se^{-s}, \qquad \frac{1}{(N_0+\epsilon)s} \leq \nu \leq\frac{N_0+\epsilon}{s}.
\end{eqnarray}
Moreover, if $s_1=\infty$ then there exists a constant $\tilde \lambda_\infty>0$ such that
\begin{eqnarray}\label{smooth:bd:boostrap improved parameters:diffusive-nc}
\lambda= s e^{-s}\big(\tilde \lambda_\infty + O(s^{-h_a+1}) \big) ,\quad
\nu=\frac{1}{s}+O(s^{-h_a}).
\end{eqnarray}
\end{lemma}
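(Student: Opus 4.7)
The strategy is to follow verbatim the three-step structure of Lemma~\ref{lemma:smoothmodulation:diffusion}, exploiting the fact that the modulation equations \eqref{smoothmodulationequations:initial-nc} in the diffusive case differ from those of the non-diffusive case \eqref{smoothmodulationequations:initial} only by the form of the temperature contribution, which in both cases is exponentially small in $s$ and hence harmless for this lemma.

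\textbf{Step 1 (coarse bounds).} The identities \eqref{smoothmodulationequations:initial-nc} read
\[
\frac{\lambda_s}{\lambda}+1 \;=\; 2\nu\!\int_0^{1/\nu}\!(\phi+\tilde a)^2\,dz \;-\;\int_0^{1/\nu}\!\nu^2\partial_z^{-1}\tilde c(z)\,dz,\qquad \frac{\lambda_s}{\lambda}+\frac{\nu_s}{\nu}=-1.
\]
Lemma~\ref{lemma:prelim-est:diffusive-nc} bounds the first integral by $Ns^{-1}$ and the second by $e^{(-l/2+\epsilon)s}$, the latter being $O(s^{-1})$ for $s_0$ large. This immediately gives $|\lambda_s/\lambda+1|\leq Cs^{-1}$, and then the constraint equation yields $|\nu_s/\nu|\leq Cs^{-1}$, from which $\lambda_s/\lambda=-1+O(s^{-1})$ follows.

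\textbf{Step 2 (refinement and integration of $\nu$).} I will sharpen the first identity using the bootstrap bounds of Definition~\ref{def:crit-beta=0:diffusive-nc}. Expanding $(\phi+\tilde a)^2=\phi^2+2\phi\tilde a+\tilde a^2$ and using the $L^\infty$ estimate on $\tilde a$ from \eqref{smooth:bd:varepsilonLinfty:diffusive-nc} together with $\int_0^\infty\phi^2\,dz=\tfrac12$ gives
\[
2\nu\!\int_0^{1/\nu}(\phi+\tilde a)^2\,dz = \nu\bigl(1+O(C(z^*)s^{-h_a+1})\bigr),
\]
while the $\tilde c$-integral is $O(e^{(-l/2+\epsilon)s})$, subdominant to any polynomial power of $s^{-1}$. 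Substituting into $\lambda_s/\lambda+1=(\lambda_s/\lambda+\nu_s/\nu)+1-\nu_s/\nu=-\nu_s/\nu$ (via the constraint) yields
\[
-\frac{\nu_s}{\nu^2}=1+O(C(z^*)s^{-h_a+1}),
\]
which I integrate from $s_0$ to $s$, using $\nu_0^{-1}\in[s_0/N_0,N_0s_0]$, to obtain
\[
\tfrac1\nu = \tfrac1{\nu_0}+(s-s_0)+O\!\bigl(C(z^*)s^{-h_a+1}(s-s_0)\bigr).
\]
This gives the bounds on $\nu$ in \eqref{smooth:bd:boostrap improved parameters:diffusive-nc} once $s_0^*$ is chosen large (depending on $z^*$), and the refined asymptotics $\nu^{-1}=s+O(s^{-h_a+2})$ when $s_1=\infty$.

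\textbf{Step 3 (integration of $\lambda$).} Substituting the sharpened value of $\nu$ back into the first identity gives
\[
\frac{\lambda_s}{\lambda}+1 = \frac{1}{s}+O(C(z^*)s^{-h_a}),
\]
so $\frac{d}{ds}\bigl(e^s\lambda/s\bigr)=O(s^{-h_a})$. Integrating from $s_0$ and using $\lambda_0=s_0e^{-s_0}$,
\[
\lambda(s)=se^{-s}\Bigl(1+\!\int_{s_0}^{s}\!O(s^{-h_a})\,ds\Bigr)=se^{-s}\bigl(1+O(s_0^{-h_a+1})\bigr),
\]
which delivers the $\lambda$-bound in \eqref{smooth:bd:boostrap improved parameters:diffusive-nc}. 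For $s_1=\infty$ one sets $\tilde\lambda_\infty:=1+\int_{s_0}^{\infty}O(s^{-h_a})\,ds$ (finite since $h_a>1$) and writes the remainder as $\int_s^\infty O(\tau^{-h_a})\,d\tau=O(s^{-h_a+1})$, giving the advertised asymptotic.

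\textbf{Main obstacle.} None of the steps is delicate: the only point that differs substantively from Lemma~\ref{lemma:smoothmodulation:diffusion} is the $\tilde c$-term, whose control is already supplied by the second bound of \eqref{smooth:bd:integral:diffusive-nc}. The mild novelty is that Lemma~\ref{lemma:prelim-est:diffusive-nc} estimates this term via the weighted $L^{2\eta_0}$-norm $\mathcal T_{k\eta_0,2\eta_0}$ (rather than the weighted $H^1$-norm used in the non-diffusive case), but since the resulting bound $e^{(-l/2+\epsilon)s}$ is exponentially small, it is completely absorbed as a negligible perturbation in Steps~1--3.
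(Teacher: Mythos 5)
Your proposal is correct and follows exactly the same three-step structure (coarse bounds from the a-priori estimates, ODE for $\nu$ via the constraint $\lambda_s/\lambda+\nu_s/\nu=-1$, then integration of the sharpened ODE for $\lambda$) that the paper uses for the non-diffusive analogue, Lemma~\ref{lemma:smoothmodulation:diffusion}, which the paper itself cites as the template for this proof; the only modification — bounding the $\tilde c$-contribution via $\mathcal T_{k\eta_0,2\eta_0}$ rather than a weighted $H^1$-norm, producing an exponentially small remainder — is correctly identified and handled.
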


\ifthenelse{\boolean{isSimplified}}{\begin{proof}
    This proof is similar to its non-diffusive version: Lemma \ref{lemma:smoothmodulation:diffusion}.
\end{proof}}{\begin{proof}
\noindent \textbf{Step 1}.  \eqref{smoothmodulationequations:diffusive-nc} is an immediate consequence of Lemma \ref{lemma:prelim-est:diffusive-nc} by recalling \eqref{smoothmodulationequations:initial-nc}.

\noindent \textbf{Step 2}. \emph{Equation for $\nu$}. From the proof of Lemma \ref{lemma:prelim-est:diffusive}, we know that
\be \label{smooth:modulation:inter:diffusive-nc}
\begin{split}
  &2\int_0^{\frac{1}{\nu}} (\phi + \tilde a)^2 (z) dz =1+O(C(z^*)s^{-h_a+1}) ,
  \\
  &\int_0^{\frac{1}{\nu}} \nu \partial_z^{-1} \tilde{c}(z) dz = O(s^{1}e^{-\frac{l}{2}s}).
\end{split}
\ee
Injecting \eqref{smooth:modulation:inter:diffusive-nc} in \eqref{smoothmodulationequations:initial-nc} gives:
\be \label{smooth:modulation:inter2:diffusive-nc}
-\frac{\nu_s}{\nu^2} = \frac1\nu (\frac{\lambda_s}\lambda +1)= 1 +O(C(z^*)s^{-{h_a}+1})
\ee
when $s_0$ is large enough.
Integrating \eqref{smooth:modulation:inter2:diffusive-nc} with time, we find:
\be \label{smooth:modulation:inter3-nc}
\frac{1}{\nu}=\frac{1}{\nu_0}+(s-s_0)+O(C(z^*)(s^{-h_a+1+1}-s_0^{-h_a+1+1}))=\frac{1}{\nu_0}+(s-s_0)+O(C(z^*)s^{-h_a+1}(s-s_0)).
\ee
Therefore, since $\nu_0^{-1}\leq N_0 s_0$ from \eqref{bd:parametersini-beta=0-nc} we infer for $s_0^*$ large enough depending on $z^*$:
\be \label{smooth:modulation:inter1:diffusive-nc}
\frac{1}{\nu}\leq N_0 s_0+(s-s_0)+O(C(z^*)s^{-h_a+1}(s-s_0))= s+(N_0-1)s_0+O(C(z^*)s^{-h_a+1}(s-s_0))\leq (N_0+\epsilon)s.
\ee
One finds similarly using $s_0/N_0\leq \nu_0^{-1}$ from \eqref{bd:parametersini-beta=0-nc} that $\frac{1}{\nu}\geq s/(N_0+\epsilon)$. This and \eqref{smooth:modulation:inter1:diffusive-nc} imply the second inequality in \eqref{smooth:bd:boostrap improved parameters:diffusive-nc}. Finally, if $s_1=\infty$ then \eqref{smooth:modulation:inter3-nc} implies $\nu^{-1}=s+O(s^{-h_a+2})$ and the second inequality in \eqref{smooth:bd:boostrap improved parameters:diffusive-nc} follows.\\

\noindent \textbf{Step 3}. \emph{Equation for $\lambda$}. Injecting \eqref{smooth:modulation:inter:diffusive-nc} in \eqref{smoothmodulationequations:initial-nc} one finds:
\begin{equation}\label{asy on lambda derivative-nc}
    \frac{\lambda_s}{\lambda}+1 = \frac{1}{s^1}+O(C(z^*)s^{-h_a})\implies \frac{\lambda_s}{\lambda}=-1+O(C(z^*)s^{-1})
\end{equation}
Since $\lambda = O(se^{-1 s})$ from \eqref{bd:parameterstrap-critical:diffusive-nc}, one has
\begin{equation*}
    \frac{d}{ds}(\frac{e^s\lambda}{s}) =O(s^{-h_a}).
\end{equation*}
We integrate with time the above equation using $\lambda_0 = s_0 e^{-s_0}$ and find
$$
\lambda(s) = se^{-s}(1+\int_{s_0}^sO(s^{-h_a})ds)=se^{-s}(1+O(s_0^{-h_a+1}))
$$
This implies the first inequality in \eqref{smooth:bd:boostrap improved parameters:diffusive-nc} for $s_0$ large enough. If $s_1=\infty$ then we set $\tilde\lambda_\infty=1+\int_{s_0}^\infty O(s^{-h_a})ds$ and rewrite the above equality as:
$$
\lambda(s) = se^{-s}(1+\int_{s_0}^\infty O(s^{-h_a})ds-\int_{s}^\infty O(s^{-h_a})ds )= se^{-s}(\tilde\lambda_\infty+O(s^{-h_a+1}) ),
$$
which gives the first inequality in \eqref{smooth:bd:boostrap improved parameters:diffusive-nc}.
\end{proof}}

\subsection{Interior Estimate}
{The following interior differential inequality for $\mathcal I_a^2$ is similar to its counterpart in the non-diffusive case. However, we do not have an interior estimate for $\mathcal T_{k\eta_0,2\eta_0}^{2\eta_0}$ as it is defined in the whole domain.}

For the following lemma, we further assume 
\begin{equation}\label{interior-parameter-a-nc}
        (-\alpha+k)\frac{\eta_0}{\eta_0-1}>-1.
\end{equation}
\ifthenelse{\boolean{isSimplified}}{}{Notice that this condition implies
\begin{equation}
      k>\alpha-1+\frac{1}{\eta_0}\implies k>\alpha-1+\frac{1}{\eta}\quad\forall \eta\geq \eta_0,\, \eta\in \N.
\end{equation}}
\begin{lemma}[Interior estimate for $\tilde a$]\label{smooth:lemma:interior:new idea:nc}
Assume that \eqref{initial-parameter-weights-nc}, \eqref{initial-parameter-other-nc},  \eqref{interior-parameter-a-nc} are satisfied. For any $z^*\geq 1$, $\epsilon>0$, and $ \delta>0$, there exists a large self-similar time $s_0^*$ such that for any $s_0\geq s_0^*$, for any solution which is trapped on $[s_0,s_1]$, we have for $s\in [s_0,s_1]$:
\begin{equation}\label{interior-estimate-of-a-diffusive-nc}
\begin{split}
     \frac{d}{ds}\mathcal I_a^2&\leq \mathcal I_a^2 \left( {-\alpha+1}+O(C(z^*) s^{-\frac{h_a}{2}})+\frac{1}{\sqrt{\alpha+1}}+\frac{4}{(\alpha+3)}\sqrt{\frac{3}{8(\alpha+1)}}\right)\\
     &\quad{+\mathcal I_a C(z^*)(s^{-1}+ e^{(-\frac{l}{2}+\epsilon) s}+e^{-\frac{l}{2}s}s^{-1})}.\\
\end{split}
\end{equation}

\end{lemma}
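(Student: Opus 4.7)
My plan is to mimic closely the strategy used in Lemma~\ref{smooth:lemma:interior:new idea} (the non-diffusive interior estimate), the main new ingredient being the control of the temperature source term using the weighted $L^{2\eta_0}$ bound $\mathcal T_{k\eta_0,2\eta_0}$ from Definition~\ref{def:crit-beta=0:diffusive-nc} rather than the weighted $H^1$-bound $\mathcal I_c$ that was available in the non-diffusive case. Concretely, I will differentiate $\mathcal I_a^2=\int_0^{z^*}w\,\tilde a_z^2\,dz$ (with $w(z)=z^{-\alpha}$) in $s$ and substitute the equation \eqref{equation:az-nc} for $\tilde a_{zs}$, together with the modulation relations \eqref{smoothmodulationequations:initial-nc}. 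This produces three groups of terms: a ``potential/transport'' block, ``nonlocal'' terms, and a ``source'' block containing the contribution from $\tilde c$.

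For the potential and transport block
$\int_0^{z^*}(-1+\phi+\tilde a)w\tilde a_z^2\,dz-\int_0^{z^*}(\partial_z^{-1}\phi-\tfrac{\nu_s}{\nu}z+\partial_z^{-1}\tilde a)\tilde a_{zz}w\tilde a_z\,dz$, I would integrate by parts in the second integral. The resulting boundary contribution at $z=0$ vanishes thanks to the time-invariant vanishing-speed condition $\tilde a_z=o(z^{(\alpha-1)/2})$ guaranteed by \eqref{decay speed condition nc} (combined with Corollary~\ref{Invariance-of-vanishing speed} and \eqref{initial-parameter-other-nc}); the boundary term at $z=z^*$ is non-positive for $s_0^*$ large, exactly as in \eqref{condition-critical:s0-3}. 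The bulk term is then handled verbatim by the computation \eqref{smooth:interior:inter1}, yielding the leading coefficient $\tfrac{-\alpha+1}{2}+O(C(z^*)s^{-h_a/2})$. The nonlocal terms $\int\phi\tilde a\,w\tilde a_z\,dz$ and $\int\partial_z^{-1}\tilde a\,w\tilde a_z\phi\,dz$ are handled by Cauchy--Schwarz combined with \eqref{smooth:bd:varepsilonLinfty:diffusive-nc}, giving the constants $\tfrac{1}{2\sqrt{\alpha+1}}$ and $\tfrac{2}{(\alpha+3)}\sqrt{\tfrac{3}{8(\alpha+1)}}$ after doubling.

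The genuinely new step is the source term $-\nu\int_0^{z^*}w\tilde a_z\tilde c\,dz$, which in the non-diffusive case enjoyed an extra factor of $\lambda\sim se^{-s}$. Since the diffusive decomposition \eqref{decomposition-nc} only produces $-\nu\tilde c$ in \eqref{equation:az-nc}, I have to absorb the $\tilde c$ directly. The natural Hölder triple is $\bigl(2,\,2\eta_0,\,\tfrac{2\eta_0}{\eta_0-1}\bigr)$, writing
\begin{equation*}
\Bigl|\nu\int_0^{z^*}z^{-\alpha}\tilde a_z\tilde c\,dz\Bigr|
=\nu\Bigl|\int_0^{z^*}\bigl(z^{-\alpha/2}\tilde a_z\bigr)\bigl(z^{-k/2}\tilde c\bigr)\,z^{(-\alpha+k)/2}\,dz\Bigr|
\leq \nu\,\mathcal I_a\,\mathcal T_{k\eta_0,2\eta_0}\Bigl(\int_0^{z^*}z^{(-\alpha+k)\frac{\eta_0}{\eta_0-1}}dz\Bigr)^{\!\frac{\eta_0-1}{2\eta_0}},
\end{equation*}
where the last integral is finite exactly because of the hypothesis \eqref{interior-parameter-a-nc}. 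Using $\nu\leq N/s$ from \eqref{bd:parameterstrap-critical:diffusive-nc} and $\mathcal T_{k\eta_0,2\eta_0}\leq e^{-ls/2}$ from \eqref{smooth:bd:etrap:diffusive-nc}, this contribution is bounded by $C(z^*)\,s^{-1}e^{-ls/2}\mathcal I_a$, which accounts for the last term in \eqref{interior-estimate-of-a-diffusive-nc}.

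The remaining source pieces, namely $(\tfrac{\lambda_s}{\lambda}+1)\phi+\tfrac{\nu_s}{\nu}z\phi'$ and the two bracketed integral expressions coming from \eqref{smoothmodulationequations:initial-nc}, are estimated exactly as in the non-diffusive case, using \eqref{smoothmodulationequations:diffusive-nc} for the $s^{-1}\mathcal I_a$ term and \eqref{smooth:bd:integral:diffusive-nc} (in particular the bound on $\lambda\int\nu^2\partial_z^{-1}\tilde c$, which in the diffusive scaling becomes $\int\nu^2\partial_z^{-1}\tilde c=O(e^{(-l/2+\epsilon)s})$ after \eqref{estimate-on-primitive-of-c-diffusive-2-nc}) for the $e^{(-l/2+\epsilon)s}\mathcal I_a$ term; both are paired with $\int_0^{z^*}z\phi\,w\tilde a_z\,dz$ via Cauchy--Schwarz, whose integrability $\int z^{2-\alpha}e^{-2z}\,dz<\infty$ requires $\alpha<3$ as in \eqref{initial-parameter-weights-nc}. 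The main obstacle I expect is simply verifying that \eqref{interior-parameter-a-nc} is the sharp condition making the temperature cross-term integrable; once this is in place the proof closes in complete analogy with Lemma~\ref{smooth:lemma:interior:new idea}.
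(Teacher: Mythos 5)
Your proposal is correct and follows essentially the same route as the paper: differentiate $\mathcal I_a^2$ using \eqref{equation:az-nc} and \eqref{smoothmodulationequations:initial-nc}, integrate by parts in the transport block (with the boundary term at $z=0$ vanishing via the time-invariant condition $\tilde a_z=o(z^{(\alpha-1)/2})$), handle the nonlocal terms by Cauchy--Schwarz, bound the bracketed sources via \eqref{smooth:bd:integral:diffusive-nc}, and control the temperature cross-term with a weighted Hölder estimate whose finiteness is exactly \eqref{interior-parameter-a-nc}. The only cosmetic difference is that the paper applies Cauchy--Schwarz first (producing $\mathcal I_a\,(\int_0^{z^*}z^{-\alpha}\tilde c^2)^{1/2}$) and then a two-factor Hölder with exponents $(\eta_0,\tfrac{\eta_0}{\eta_0-1})$, whereas you apply a single triple Hölder with exponents $(2,2\eta_0,\tfrac{2\eta_0}{\eta_0-1})$; these are mathematically identical and yield the same bound $C(z^*)s^{-1}\mathcal T_{k\eta_0,2\eta_0}\mathcal I_a$.
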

\ifthenelse{\boolean{isSimplified}}{}{
\begin{remark}
    Notice ${-\alpha+1}+\frac{1}{\sqrt{\alpha+1}}+\frac{4}{(\alpha+3)}\sqrt{\frac{3}{8(\alpha+1)}}<0\iff\alpha>1.88415$.
\end{remark}
}

\begin{proof}

\ifthenelse{\boolean{isSimplified}}{Let $w=z^{-\alpha}$, differentiating $\mathcal I_a$ gives \begin{equation} \label{smooth:interior:id:expression2}
\resizebox{.95 \textwidth}{!}{$\begin{split}
    &\frac{1}{2}\frac{d}{ds} \int_0^{z^*}  w \tilde a_z ^2 dz=   \int_0^{z^*} ( -1  + \phi +\tilde a)   w \tilde a_z^2 dz 
     - \int_0^{z^*} (\partial_z^{-1} \phi - \frac{\nu_s}{\nu} z + \partial_z^{-1} \tilde a) \tilde a_{zz}  w \tilde a_z dz - \nu \int_0^{z^*} w\tilde a_z \tilde cdz
     \\
     &-\int_0^{z^*} \phi \tilde a  w \tilde a_z dz - 
     \int_0^{z^*} \partial_z^{-1} \tilde a   w \tilde a_z \phi dz -\left[2\nu \int_0^{\frac{1}{\nu}} (\phi+ \tilde{a})^2(z) dz - \lambda \left(\int_0^{\frac{1}{\nu}} \frac{\nu^2}{\lambda} \partial_z^{-1} \tilde{c}(z) dz\right) \right]\int_0^{z^*} z\phi  w \tilde a_z dz.\\
\end{split}$}
\end{equation}}
{Calculating the derivative of $I_a$, one obtains that \begin{equation} \label{smooth:interior:id:expression2}
\begin{split}
    &\frac{1}{2}\frac{d}{ds} \int_0^{z^*}  w \tilde a_z ^2 dz=   \int_0^{z^*} ( -1  + \phi +\tilde a)   w \tilde a_z^2 dz 
     - \int_0^{z^*} (\partial_z^{-1} \phi - \frac{\nu_s}{\nu} z + \partial_z^{-1} \tilde a) \tilde a_{zz}  w \tilde a_z dz
     \\
     &-\int_0^{z^*} \phi \tilde a  w \tilde a_z dz - 
     \int_0^{z^*} \partial_z^{-1} \tilde a   w \tilde a_z \phi dz \\
     &-\left[2\nu \int_0^{\frac{1}{\nu}} (\phi+ \tilde{a})^2(z) dz - \lambda \left(\int_0^{\frac{1}{\nu}} \frac{\nu^2}{\lambda} \partial_z^{-1} \tilde{c}(z) dz\right) \right]\int_0^{z^*} z\phi  w \tilde a_z dz
     \\
     & - \nu \int_0^{z^*} w\tilde a_z \tilde cdz,
\end{split}
\end{equation}
by utilizing the identities involving $\phi=e^{-z}$ and $\lambda_s/\lambda+\nu_s/\nu=-1$, where $w=z^{-\alpha}$.}

\noindent \underline{Potential and transport terms}. \ifthenelse{\boolean{isSimplified}}{Since $ \tilde a_z=o(z^{\frac{\alpha-1}{2}})$, by mimicking the proof of Lemma \ref{smooth:lemma:interior:new idea}, we can show that \begin{equation}  \label{smooth:interior:bd:transport-a-nc} \resizebox{.93 \textwidth}{!}{$
\displaystyle\int_0^{z^*} ( -1  + \phi +\tilde a)   w \tilde a_z^2 dz 
     - \int_0^{z^*} (\partial_z^{-1} \phi - \frac{\nu_s}{\nu} z + \partial_z^{-1} \tilde a) \tilde a_{zz}  w \tilde a_z dz\leq \mathcal I_a^2(s)
    \left(\frac{-\alpha+1}{2}+O(C(z^*)s^{-\frac{h_a}{2}})\right).$}
\end{equation}}{Integrating by parts yields
\begin{equation}  \label{smooth:interior:id:expression-nc}
    \begin{split}
        &  \int_0^{z^*} ( -1  + \phi +\tilde a)   w \tilde a_z^2 dz -\int_0^{z^*} (\partial_z^{-1} \phi - \frac{\nu_s}{\nu} z + \partial_z^{-1} \tilde a) \tilde a_{zz}  w \tilde a_z dz
        \\
        = & \Big(-\int_0^{z^*} (\phi+\tilde a)(\tilde{z})d\tilde{z}+ \frac{\nu_s}{\nu}z^* \Big) \frac{1}{2}w(z^*) \tilde a_z^2(z^*) \\
        &+\int_0^{z^*} \left( \frac{3}{2} \phi+\frac 32 \tilde a-1+\frac{\alpha-1}{2}\frac{\nu_s}{\nu}-\frac{\alpha}{2}z^{-1}\left(\pa_z^{-1}\phi+\pa_z^{-1}\tilde a \right)\right) w\tilde a_z^2dz 
    \end{split}
\end{equation}
For the boundary term, we know that $\|\tilde a\|_{L^\infty} \leq C(z^*) s^{-\frac{h_a}{2}}$ from \eqref{smooth:bd:varepsilonLinfty:diffusive-nc} and thus using \eqref{smoothmodulationequations:diffusive-nc}:
\begin{equation}\label{condition-critical:s0-3-nc}
      -\int_0^{z^*} (\phi+\tilde a)(\tilde{z})d\tilde{z}+ \frac{\nu_s}{\nu}z^* \leq -(1-e^{-z^*}) + \|\tilde a\|_{L^\infty} z^*+O(C(z^*)s^{-1}) \leq -(1-e^{-z^*}) +C(z^*)s^{-\frac{h_a}{2}} \leq 0
\end{equation} 
when $s_0$ is large enough.

Notice that this requires
\begin{equation}\label{first-vanishing-condition-nc}
    (\partial_z^{-1} \phi - \frac{\nu_s}{\nu} z + \partial_z^{-1} \tilde a)   z^{-\alpha} \tilde a_z^2=o(1) \text{ as }z\xrightarrow{}0^+.
\end{equation}
Since $\partial_z^{-1} \phi\simeq z, \antipartial_z \tilde a=o(z^2)$, and thus 
\begin{equation}\label{identity-vanishing-speed-nc}
    (\partial_z^{-1} \phi - \frac{\nu_s}{\nu} z + \partial_z^{-1} \tilde a)\approx z \text{ as }z\xrightarrow{}0^+,
\end{equation}
this condition is equivalent to
\begin{equation}
        \tilde a_z=o(z^{\frac{\alpha-1}{2}}) \text{ as }z\xrightarrow{}0^+,
\end{equation}
which is satisfied by \eqref{initial-parameter-other-nc}.
One uses $\phi=e^{-z}$, \eqref{smoothmodulationequations:diffusive-nc} and \eqref{smooth:bd:varepsilonLinfty:diffusive-nc} to see:
\begin{equation}  \label{smooth:interior:inter1-nc}
\begin{split}
    &\frac{3}{2} \phi+\frac 32 \tilde a-1+\frac{\alpha-1}{2}\frac{\nu_s}{\nu}-\frac{\alpha}{2}z^{-1}\left(\pa_z^{-1}\phi+\pa_z^{-1}\tilde a \right)
    \\
    &=\left(\frac{3}{2} e^{-z}+O(s^{-\frac{h_a}{2}})-1+O(s^{-1})-\frac{\alpha}{2}z^{-1}\left(1-e^{-z} +O(s^{-\frac {h_a}{2}}z) \right)\right)
    \\
&   =-1+\frac{3}{2}e^{-z}-\frac{\alpha}{2}\frac{1-e^{-z}}{z}+O(C(z^*)s^{-\frac{h_a}{2}})\\
& \leq -1+(\frac 32-\frac \alpha2)e^{-z}+O(C(z^*)s^{-\frac{h_a}{2}})\\
&\leq \begin{cases}
    -1+O(C(z^*)s^{-\frac{h_a}{2}}) \text{ if }\alpha \geq 3\\
    \frac{-\alpha+1}{2}+O(C(z^*)s^{-\frac{h_a}{2}}) \text{ if otherwise}
\end{cases}
\end{split}
\end{equation}
\begin{equation}  \label{smooth:interior:bd:transport-a-nc}
\int_0^{z^*} ( -1  + \phi +\tilde a)   w \tilde a_z^2 dz 
     - \int_0^{z^*} (\partial_z^{-1} \phi - \frac{\nu_s}{\nu} z + \partial_z^{-1} \tilde a) \tilde a_{zz}  w \tilde a_z dz\leq \mathcal I_a^2(s)
    \left(\frac{-\alpha+1}{2}+O(C(z^*)s^{-\frac{h_a}{2}})\right).
\end{equation}}
   
\ifthenelse{\boolean{isSimplified}}{\noindent \underline{The nonlocal terms and source terms}  Following the same strategy of Lemma \ref{smooth:lemma:interior:new idea}, we estimate using Hölder inequality and Lemma \ref{lemma:prelim-est:diffusive-nc}:
\begin{equation}
    \begin{split}
        &\int_0^{z^*} \Big| \phi \tilde a  w \tilde a_z \Big| dz\leq\frac{1}{2\sqrt{\alpha+1}}\mathcal I_a^2(s)\\
        &\int_0^{z^*} \Big| \partial_z^{-1} \tilde a   w \tilde a_z \phi \Big| dz \leq \frac{2}{(\alpha+3)} \sqrt{\frac{3}{8(\alpha+1)}}\mathcal I_a^2(s)\\
        &\left|2 \nu \Big(\int_0^{\frac{1}{\nu}} (\phi + \tilde a)^2 (z) dz\Big) \int_0^{z^*} z\phi  w \tilde a_z dz \right| \leq C(z^*) s^{-1}\mathcal I_a(s)\\
        &\left|\nu^2 \int_0^{\frac1\nu} \pa_z^{-1} \tilde c(z) dz \int_0^{z^*} z\phi  w \tilde a_z dz \right|\leq C(z^*) e^{(-l/2+\epsilon) s}\mathcal I_a(s),
    \end{split}
\end{equation}
{where, for the last inequality, we have used \eqref{estimate-on-primitive-of-c-diffusive-2-nc} to bound the term involving $\tilde c$.}}{\noindent \underline{The nonlocal terms}. By direct computations, using that Cauchy-Schwarz one gets
\begin{equation}
    \begin{split}
        &|\tilde a (z)|\leq \int_0^{z}  \abs{\tilde a_z} \sqrt{w} \sqrt{w^{-1}}dx \leq I_a(s) \frac{z^{\frac{\alpha+1}{2}}}{(\alpha+1)^\frac12}\\
        &|\pa^{-1}_z\tilde a (z)| \leq I_a(s) \frac{2}{(\alpha+3)\sqrt{\alpha+1}}z^{\frac{\alpha+3}{2}}
    \end{split}
\end{equation} for $z\in[0,z^*]$. Thus, we conclude, by using Hölder again
\begin{equation*}
\begin{split}
    \int_0^{z^*} \Big| \phi \tilde a  w \tilde a_z \Big| dz=\int_0^{z^*} \Big| \phi \tilde a  \sqrt{w}\cdot \sqrt{w}\tilde a_z \Big| dz &\leq \frac{1}{\sqrt{\alpha+1}}\mathcal I_a^2(s) (\int_0^\infty e^{-2z}z)^\frac{1}{2}= \frac{1}{2\sqrt{\alpha+1}}\mathcal I_a^2(s),
\end{split}
\end{equation*}
\begin{equation*}
\begin{split}
    \int_0^{z^*} \Big| \partial_z^{-1} \tilde a   w \tilde a_z \phi \Big| dz &=  \int_0^{z^*} \Big| \phi\partial_z^{-1} \tilde a   \sqrt{w}\cdot\sqrt{w} \tilde a_z  \Big| dz \\ 
    &\leq \frac{2}{(\alpha+3)\sqrt{\alpha+1}}I_a^2(s)\Big(\int_0^{\infty}  z^3e^{-2z}dz \Big)^{\frac{1}{2}} = \frac{2}{(\alpha+3)}I_a^2(s) \sqrt{\frac{3}{8(\alpha+1)}} ,
\end{split}
\end{equation*}

\noindent \underline{The source term}. Using \eqref{smooth:bd:etrap:diffusive-nc}, \eqref{smooth:bd:integral:diffusive-nc} and Cauchy-Schwarz:
$$
\left|2 \nu \Big(\int_0^{\frac{1}{\nu}} (\phi + \tilde a)^2 (z) dz\Big) \int_0^{z^*} z\phi  w \tilde a_z dz \right| \leq N s^{-1} \mathcal I_a \sqrt{\int_0^{z^*} \underbrace{z^2 \phi^2 w}_{z^{2-\alpha}e^{-2z}}dz} \leq C(z^*) s^{-1}\mathcal I_a(s),
$$

\begin{equation*}
    \begin{split}
        &\left|\nu^2 \int_0^{\frac1\nu} \pa_z^{-1} \tilde c(z) dz \int_0^{z^*} z\phi  w \tilde a_z dz \right|\leq e^{(-l/2+\epsilon) s} I_a \sqrt{\int_0^{z^*} z^2\phi^2wdz} \leq C(z^*) e^{(-l/2+\epsilon) s}\mathcal I_a(s).
    \end{split}
\end{equation*}
Notice that integrability is guaranteed when $\alpha<3$. }
\ifthenelse{\boolean{isSimplified}}{Next, using Cauchy-Schwarz and Hölder's inequality, we estimate 
{
\begin{equation}
    \begin{split}
        \left| \nu \int_0^{z^*} w\tilde a_z \tilde cdz \right| &\leq C s^{-1}\int_0^{z^*} \abs{ \tilde{a}_z z^{-\alpha/2}}\cdot \abs{z^{-\alpha/2} \tilde c} \,dz\\
        &\leq Cs^{-1} \mathcal I_a \left(\int_0^{z^*} z^{-\alpha}\tilde{c}^2 dz\right)^\frac{1}{2}\\
        &\leq Cs^{-1} \mathcal I_a \left(\left(\int_0^{\nu^{-1}} (z^{-k} \tilde c^2)^{\eta_0}dz\right)^\frac{1}{\eta_0}\left(\int_0^{z^*} (z^{-\alpha+k})^{\frac{\eta_0}{\eta_0-1}}dz\right)^{\frac{\eta_0-1}{\eta_0}}\right)^\frac{1}{2} \\
        &\leq \mathcal T_{k\eta_0,2\eta_0} \mathcal I_a C(z^*,\eta_0,\alpha,k)s^{-1},\\ 
    \end{split}
\end{equation}
where, for the last integral in the second last line can be bounded above by $C(z^*,\eta_0,\alpha,k)$, as the choice of parameters \eqref{interior-parameter-a-nc} holds.}}{Next, using Cauchy-Schwarz we compute
\begin{equation*}
     \begin{split}
       \left| \nu \int_0^{z^*} w\tilde a_z \tilde cdz \right| &\leq C s^{-1}\int_0^{z^*} \abs{ \tilde{a}_z z^{-\alpha/2}}\cdot \abs{z^{-\alpha/2} \tilde c}\\
       &\leq Cs^{-1}I_a (\int_0^{z^*} z^{-\alpha}\tilde{c}^2)^\frac{1}{2}.\\
    \end{split}
\end{equation*}
Using \eqref{smooth:bd:etrap:diffusive-nc} and Hölder's inequality, we can further estimate
\begin{equation}
    \begin{split}
        \int_0^{z^*} z^{-\alpha}\tilde{c}^2&=\int_0^{z^*} z^{-k}\tilde{c}^2\cdot z^{-\alpha+k}\\
        &\leq(\int_0^{\nu^{-1}} (z^{-k} \tilde c^2)^{\eta_0})^\frac{1}{\eta_0}(\int_0^{z^*} (z^{-\alpha+k})^{\frac{\eta_0}{\eta_0-1}})^{\frac{\eta_0-1}{\eta_0}}\\
        &\leq T_{k\eta_0,2\eta_0}^2 C(z^*,\eta_0,\alpha,k).
    \end{split}
\end{equation}
where we have used the condition 
\begin{equation*}
    (-\alpha+k)\frac{\eta_0}{\eta_0-1}>-1,\qquad \eta_0>1.
\end{equation*}
Thus, this term can be bounded by
\begin{equation*}
     \left|  \nu \int_0^{z^*} w\tilde a_z \tilde cdz \right| \leq C(z^*) s^{-1} I_a T_{k\eta_0, 2\eta_0}
\end{equation*}}

Summarizing all the estimates yields
{
\begin{equation}
\begin{split}
     \frac{1}{2}\frac{d}{ds}\mathcal I_a^2&\leq \mathcal I_a^2 ( \frac{-\alpha+1}{2}+O(C(z^*) s^{-\frac{h_a}{2}})+\frac{1}{2\sqrt{\alpha+1}}+\frac{2}{(\alpha+3)}\sqrt{\frac{3}{8(\alpha+1)}})\\
     &\quad+\mathcal I_a(C(z^*) s^{-1}+C(z^*) e^{(-l/2+\epsilon) s})+\mathcal  I_a \mathcal T_{k\eta_0,2\eta_0} C(z^*) s^{-1},
\end{split}
\end{equation}
}
and we finish the proof by injecting the bootstrap assumptions \eqref{smooth:bd:etrap:diffusive-nc}.
\end{proof}

\subsection{Total Estimate}
{In this subsection, we derive a differential inequality in $T_{k\eta,2\eta}$ for all $\eta \geq \eta_0$, in the entire domain. We are not only interested in the case where $\eta=\eta_0$. As usual, the parameter $\eta_0$ is contained in the bootstrap argument, however, once the bootstrap argument is finished, we will be taking $\eta\rightarrow\infty$ to derive an $L^\infty$ estimate on $\tilde c$.
}

{To establish such differential inequality, we will be using a weighted energy estimate on $H_0^1$, which is a direct consequence of weighted Hardy's inequality \cite{WEDESTIG2003}.}
\begin{theorem}[Weighted Hardy's Inequality \cite{WEDESTIG2003}]
Let $f$ be non-negative and measurable, and let $\epsilon<p-1$ and $p>1$. The weighted Hardy's inequality states that
\ifthenelse{\boolean{isSimplified}}{{\begin{equation}
    \begin{split}
        &\int_0^\infty (\frac{1}{x}\int_0^x f(t) dt)^p x^\epsilon dx\leq (\frac{p}{p-1-\epsilon})^p \int_0^\infty f^p(x) x^\epsilon dx.\\
    \end{split}
\end{equation}}}{
\begin{equation}
    \begin{split}
        &\int_0^\infty (\frac{1}{x}\int_0^x f(t) dt)^p x^\epsilon dx\leq (\frac{p}{p-1-\epsilon})^p \int_0^\infty f^p(x) x^\epsilon dx,\text{ for } p>1\text{ and } \epsilon<p-1,\\
        &\int_0^\infty (\frac{1}{x}\int_x^\infty f(t) dt)^p x^\epsilon dx\leq (\frac{p}{\epsilon+1-p})^p \int_0^\infty f^p(x) x^\epsilon dx,\text{ for } p>1\text{ and } \epsilon>p-1.
    \end{split}
    \end{equation}
    }
\end{theorem}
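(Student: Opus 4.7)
The plan is to prove the classical weighted Hardy inequality by the standard integration-by-parts/Hölder method. Let me denote the Hardy averaging operator by $F(x) := \frac{1}{x}\int_0^x f(t)\, dt$, so that $xF(x) = \int_0^x f\,dt$ and differentiating yields the pointwise identity $xF'(x) = f(x) - F(x)$. By homogeneity we may assume $f \geq 0$, and by a standard truncation/monotone convergence argument we may further assume $f$ is smooth with compact support in $(0,\infty)$, so that all integrals are finite and boundary terms at $0$ and $\infty$ behave nicely (at the end we pass to the limit via monotone convergence). The goal will then be to show that $J := \int_0^\infty F(x)^p x^\epsilon\, dx$ is bounded by $\left(\frac{p}{p-1-\epsilon}\right)^p \int_0^\infty f(x)^p x^\epsilon\,dx$.

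First I would integrate by parts in $J$ with $u = F^p$ and $dv = x^\epsilon dx$ (where $v = \frac{x^{\epsilon+1}}{\epsilon+1}$ is integrable near $0$ provided $\epsilon > -1$, a condition implicit in the hypothesis $\epsilon < p-1$ with $p > 1$ when combined with the positivity requirement needed for integrability of $f^p x^\epsilon$). The boundary terms vanish because $F$ is bounded near $0$ (since $f$ is compactly supported away from $0$) and because $x^{\epsilon+1} F^p \to 0$ at infinity by compact support. This yields
\begin{equation*}
J \;=\; -\frac{p}{\epsilon+1}\int_0^\infty F^{p-1}(xF') \, x^\epsilon\,dx.
\end{equation*}
Substituting $xF' = f - F$ and moving the $F^p$ piece to the left-hand side, I obtain
\begin{equation*}
\left(1 - \tfrac{p}{\epsilon+1}\right) J \;=\; -\tfrac{p}{\epsilon+1}\int_0^\infty F^{p-1} f\, x^\epsilon\,dx,
\end{equation*}
which rearranges into $J = \frac{p}{p-1-\epsilon}\int_0^\infty F^{p-1} f\, x^\epsilon\,dx$. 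The hypothesis $\epsilon < p-1$ is precisely what guarantees the positivity of the coefficient on the right.

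Next I would apply Hölder's inequality with conjugate exponents $\frac{p}{p-1}$ and $p$, splitting the weight as $x^\epsilon = x^{\epsilon(p-1)/p}\cdot x^{\epsilon/p}$:
\begin{equation*}
\int_0^\infty F^{p-1} f\, x^\epsilon\,dx \;\leq\; \left(\int_0^\infty F^p x^\epsilon\,dx\right)^{\frac{p-1}{p}}\left(\int_0^\infty f^p x^\epsilon\,dx\right)^{\frac{1}{p}} \;=\; J^{(p-1)/p}\,\|f\|_{p,\epsilon},
\end{equation*}
where I have written $\|f\|_{p,\epsilon}^p := \int_0^\infty f^p x^\epsilon\,dx$. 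Combining with the previous identity and dividing both sides by $J^{(p-1)/p}$ (which is positive in the smooth truncated case, else both sides are zero and the inequality is trivial), I get $J^{1/p} \leq \frac{p}{p-1-\epsilon}\|f\|_{p,\epsilon}$, and raising to the $p$-th power yields the claimed bound. Finally, passing from smooth compactly supported $f$ to general non-negative measurable $f$ by monotone convergence completes the proof.

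The only real subtlety, and the step I would double check, is the vanishing of boundary terms in the integration by parts for general $f \in L^p(x^\epsilon\,dx)$: one must either work throughout with a smooth dense class and pass to the limit, or directly verify that the condition $\epsilon < p-1$ forces $x^{\epsilon+1}F(x)^p \to 0$ at both endpoints (at $0$ this uses a local estimate $F(x)^p \lesssim x^{-1}\int_0^x f^p t^\epsilon\,dt \cdot x^{-\epsilon}$ via Hölder, and at $\infty$ a similar tail argument). Everything else is an algebraic manipulation.
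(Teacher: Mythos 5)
The paper itself does not prove this theorem; it cites it to \cite{WEDESTIG2003} and only proves the subsequent $H^1_0$ corollary. Your proof is the classical integration-by-parts/H\"older argument for the one-sided weighted Hardy inequality and it is essentially correct: the identity $xF'=f-F$, the pass to $J=\tfrac{p}{p-1-\epsilon}\int_0^\infty F^{p-1}f\,x^\epsilon\,dx$, and the H\"older step with weight split $x^\epsilon = x^{\epsilon(p-1)/p}\cdot x^{\epsilon/p}$ are all standard and sound, and the sign of $p-1-\epsilon$ is exactly what the hypothesis $\epsilon<p-1$ provides.

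One small inaccuracy: you assert that $\epsilon>-1$ is ``implicit'' in the hypotheses $p>1$, $\epsilon<p-1$. It is not — e.g.\ $p=2$, $\epsilon=-5$ satisfies both, and the theorem is still claimed there. Fortunately this does not affect your argument: once you reduce to $f$ smooth and compactly supported in $(0,\infty)$, $F$ vanishes identically near $0$, so the boundary term at the origin is zero for every $\epsilon\neq -1$ regardless of sign, and the antiderivative $v=\tfrac{x^{\epsilon+1}}{\epsilon+1}$ is valid whenever $\epsilon\neq-1$. The only genuine corner case is $\epsilon=-1$ (still allowed since $p-1>0>-1$), where one should either replace the antiderivative by $\log x$ or obtain the conclusion by continuity in $\epsilon$. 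I would drop the incorrect ``implicit'' justification and instead say directly that the reduction to compactly supported $f$ kills the boundary term at $0$, and treat $\epsilon=-1$ by a limiting argument.
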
Consequently, we have the following corollary.

\begin{corollary}[Corollary of Hardy's inequality in $H_0^1$]
Let $k>0$ {and $f\in H_0^1([0,a])$}. The following inequality holds
\begin{equation}\label{Corollary of Hardy's}
    \int_0^a  f^2 x^{-k-2}dx \leq (\frac{2}{1+k})^2\int_0^a f'^2 x^{-k} dx.
\end{equation}
\end{corollary}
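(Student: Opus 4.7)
The plan is to obtain the corollary as a direct consequence of the first form of the weighted Hardy inequality quoted just above, applied with $p=2$ and $\epsilon=-k$. The starting observation is that, since $f\in H_0^1([0,a])$, we have $f(0)=0$, and hence by the fundamental theorem of calculus $f(x)=\int_0^x f'(t)\,dt$ for $x\in[0,a]$; rewriting the left-hand side accordingly gives
\begin{equation*}
\int_0^a f^2(x)\, x^{-k-2}\,dx \;=\; \int_0^a \Bigl(\frac{1}{x}\int_0^x f'(t)\,dt\Bigr)^{\!2} x^{-k}\,dx,
\end{equation*}
which already has the precise shape of the left-hand side of the weighted Hardy inequality.

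Next I would extend $f'$ to all of $[0,\infty)$ by zero outside $[0,a]$, calling the extension $g$. Here the other part of the $H_0^1$ boundary condition, $f(a)=0$, plays a role: the antiderivative of $g$ agrees with $f$ on $[0,a]$ and is identically zero on $[a,\infty)$, so the weighted Hardy integral over $[0,\infty)$ collapses to exactly the expression in the previous display, with no extra contribution from $x>a$. One then applies the stated weighted Hardy inequality to $g$ with the chosen parameters, checks that the admissibility condition $\epsilon<p-1$ becomes $-k<1$ (automatic since $k>0$), and reads off the constant $\bigl(p/(p-1-\epsilon)\bigr)^{p}=\bigl(2/(1+k)\bigr)^{2}$, while the right-hand side reduces to $\bigl(2/(1+k)\bigr)^{2}\int_0^a (f')^2 x^{-k}\,dx$ thanks to the same zero-extension.

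The argument is essentially routine and no genuine obstacle is expected; the only care required is to match the two boundary conditions of $H_0^1([0,a])$ to the two roles they must play, namely representing $f$ as a primitive of $f'$ at the singular endpoint $0$, and making the zero-extension of $f'$ globally valid at the regular endpoint $a$, and then to verify that the constant comes out exactly as stated after substituting $p=2$ and $\epsilon=-k$.
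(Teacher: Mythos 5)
Your argument is essentially the same as the paper's: rewrite $f^2 x^{-k-2}$ as $\bigl(\tfrac1x\int_0^x f'\bigr)^2 x^{-k}$ via the fundamental theorem of calculus, extend $f'$ by zero to $[0,\infty)$ using $f(a)=0$, and invoke the first weighted Hardy inequality with $p=2$, $\epsilon=-k$. One small point you gloss over: the Hardy inequality as quoted is stated only for \emph{non-negative} integrands, but your zero-extension $g$ of $f'$ need not be non-negative. You therefore cannot "apply the stated weighted Hardy inequality to $g$" directly; you must first bound $\bigl|\int_0^x g\bigr|\le\int_0^x|g|$, use monotonicity of $t\mapsto t^2$ on $[0,\infty)$, and then apply Hardy to $|g|$. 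The paper makes this explicit (applying the inequality to $|f'\mathds 1_{[0,a]}|$), and you should too; the constant and conclusion are otherwise unaffected.
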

\ifthenelse{\boolean{isSimplified}}{\begin{proof}
     Use fundamental theorem of calculus, {vanishing boundary condition, integral absolute value inequality}, and the fact that $x\mapsto x^2$ is increasing for non-negative numbers to reduce the target quantity until which the previous theorem  can be applied with $\epsilon=-k,p=2,f\xleftarrow{} \abs{f'\mathds{1}_{[0,a]}}$. {Concretely, we compute \begin{equation*}
    \begin{split}
          \int_0^a f^2 x^{-k-2}dx&=\int_0^\infty \abs{f(x)\mathds{1}_{[0,a]}(x)\cdot x^{-1}}^2\cdot x^{-k} dx\\
          &=\int_0^\infty \abs{x^{-1}\int_0^x f'(t)\mathds{1}_{[0,a]}(t) dt}^2 x^{-k} dx\\
          &\leq \int_0^\infty \left(x^{-1}\int_0^x \abs{f'(t)\mathds{1}_{[0,a]}(t) } dt\right)^2 x^{-k} dx\\
          &\leq (\frac{2}{2-1-(-k)})^2 \int_0^\infty \abs{f'(x)\mathds{1}_{[0,a]}(x)}^2 x^{-k}dx=(\frac{2}{1+k})^2 \int_0^a \abs{f'(x)}^2 x^{-k}dx.
    \end{split}
    \end{equation*}}
\end{proof}}{\begin{proof}
    Combining the previous theorem with $\epsilon=-k,p=2,f\xleftarrow{} \abs{f'\mathds{1}_{[0,a]}}$ and fundamental theorem of calculus, we compute
    \begin{equation*}
    \begin{split}
          \int_0^a f^2 x^{-k-2}dx&=\int_0^\infty \abs{f(x)\mathds{1}_{[0,a]}(x)\cdot x^{-1}}^2\cdot x^{-k} dx\\
          &=\int_0^\infty \abs{x^{-1}\int_0^x f'(t)\mathds{1}_{[0,a]}(t) dt}^2 x^{-k} dx\\
          &\leq \int_0^\infty \left(x^{-1}\int_0^x \abs{f'(t)\mathds{1}_{[0,a]}(t) } dt\right)^2 x^{-k} dx\\
          &\leq (\frac{2}{2-1-(-k)})^2 \int_0^\infty \abs{f'(x)\mathds{1}_{[0,a]}(x)}^2 x^{-k}dx=(\frac{2}{1+k})^2 \int_0^a \abs{f'(x)}^2 x^{-k}dx,
    \end{split}
    \end{equation*}
    where we have used that $f(0)=0$ and $f(a)=0$ in the second line to simplify
    \begin{equation*}
        \int_0^x f'(t)\mathds{1}_{[0,a]}(t) dt=\begin{cases}
            f(x)-f(0) \qquad\text{ if }x\leq a\\
            f(a)+\int_a^{x} f'(t)\cdot 0\, dt\qquad\text{ if }x> a
        \end{cases}
        =f(x)\mathds{1}_{[0,a]}(x).
    \end{equation*}
\end{proof}}
For the following lemma, we assume that $k$ and $\eta_0$ satisfy {the additional conditions}
\begin{equation}\label{condition on k:diffusive-nc}
    k< 2-\frac{1}{\eta_0},\, \eta_0\geq2,\, \eta_0\in\mathds{N},\, \epsilon_c\geq\frac{k}{2}+\frac{1}{2\eta_0}.
\end{equation}
Notice that this ensures that
\begin{equation}\label{vanishing-assumptions-on-c-nc}
    \tilde{c}=o(z^\frac{k\eta_0+1}{2\eta_0})=o(z^{\frac{k}{2}+\frac{1}{2\eta_0}}),
\end{equation}
and it implies that for $\eta\geq \eta_0$:
\begin{equation}
    k<2-\frac{1}{\eta},\,\eta\geq \eta_0\geq 2,\,\tilde c=o(z^{\frac{k}{2}+\frac{1}{2\eta}})
\end{equation}

\begin{lemma}[Energy estimate on $\mathcal T_{k\eta,2\eta}$]\label{differential inequality of c-nc}
Assume that \eqref{initial-parameter-weights-nc}, \eqref{initial-parameter-other-nc}, and \eqref{condition on k:diffusive-nc} are satisfied. For each $\eta\geq \eta_0$, for any $z^*\geq 1$ and $ \delta>0$, there exists a large self-similar time $s_0^*$ such that for any $s_0\geq s_0^*$, for any solution which is trapped on $[s_0,s_1]$, we have for $s\in [s_0,s_1]$:
\begin{equation}\label{smooth:lemma:interior:c:new idea:generalized-nc}
    \frac{d}{ds}\mathcal T_{k\eta,2\eta}^{2\eta} \leq  \mathcal T_{k\eta,2\eta}^{2\eta} (O(\eta C(z^*) s^{-\frac{h_a}{2}})-k\eta+1).
\end{equation}
Or, equivalently,
\begin{equation}
      \frac{d}{ds}\mathcal T_{k\eta,2\eta}\leq  \mathcal T_{k\eta,2\eta} (O( C(z^*) s^{-\frac{h_a}{2}})-\frac{k}{2}+\frac{1}{2\eta}).
\end{equation}
\end{lemma}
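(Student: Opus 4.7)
The plan is to differentiate $\mathcal T_{k\eta,2\eta}^{2\eta}(s) = \int_0^{\nu^{-1}} z^{-k\eta}\tilde c^{2\eta}\,dz$ in $s$, substitute the PDE \eqref{raw-nc} for $\tilde c_s$, then integrate by parts on the transport and diffusion terms to absorb derivatives of $\tilde c$ into the measure $z^{-k\eta}\tilde c^{2\eta}\,dz$, and finally bound the resulting pointwise multiplier by $-k\eta+1+O(\eta C(z^*)s^{-h_a/2})$. First, by the Leibniz rule the endpoint contribution from $\nu^{-1}(s)$ vanishes thanks to the Dirichlet condition $\tilde c(s,\nu^{-1})=0$ in \eqref{sigma-one-nc}, so $\tfrac{d}{ds}\mathcal T^{2\eta}=2\eta\int_0^{\nu^{-1}}z^{-k\eta}\tilde c^{2\eta-1}\tilde c_s\,dz$.

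Second, I rewrite the transport contribution by observing $\tilde c^{2\eta-1}\tilde c_z=\tfrac{1}{2\eta}(\tilde c^{2\eta})_z$, which yields $\int z^{-k\eta}\bigl[\tfrac{\nu_s}{\nu}z-\partial_z^{-1}\tilde a-\partial_z^{-1}\phi\bigr](\tilde c^{2\eta})_z\,dz$; one integration by parts moves the derivative onto the weight and the transport coefficients. The boundary contributions at $\nu^{-1}$ vanish by the Dirichlet condition, and those at $0$ vanish because $\tilde c=o(z^{k/2+1/(2\eta)})$ (a consequence of $\epsilon_c\geq k/2+1/(2\eta_0)\geq k/2+1/(2\eta)$ for $\eta\geq\eta_0$, cf.\ \eqref{vanishing-assumptions-on-c-nc}) together with $\partial_z^{-1}\tilde a=O(z^2)$ and $\partial_z^{-1}\phi=O(z)$. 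Combining with the pointwise potential terms $4\eta\tfrac{\lambda_s}{\lambda}+4\eta\tilde a+4\eta\phi$, and inserting Lemma \ref{lemma:smoothmodulation:diffusion-nc} ($\tfrac{\lambda_s}{\lambda}=-1+O(s^{-1})$, $\tfrac{\nu_s}{\nu}=O(s^{-1})$) together with $\|\tilde a\|_{L^\infty}\!=\!O(C(z^*)s^{-h_a/2})$ and the pointwise bound $|\partial_z^{-1}\tilde a|\leq C(z^*)s^{-h_a/2}z$ from Lemma \ref{lemma:prelim-est:diffusive-nc}, the net multiplier in the integrand reduces (up to an error $O(\eta C(z^*)s^{-h_a/2})$) to
\[
g(z):=-4\eta(1-e^{-z})+e^{-z}-k\eta\,\frac{1-e^{-z}}{z}.
\]
The key algebraic step is then to show $g(z)\leq 1-k\eta$ for all $z>0$. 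Rearranging reduces this to $k\eta\bigl[1-\tfrac{1-e^{-z}}{z}\bigr]\leq(1-e^{-z})(4\eta+1)$, and since $k<2<4+1/\eta$ it suffices (after using $k\eta\leq 4\eta+1$) to check $e^{-z}-\tfrac{1-e^{-z}}{z}\leq 0$, which is equivalent to $(z+1)e^{-z}\leq 1$ — a standard inequality verified by noting equality at $z=0$ and monotonicity from $\tfrac{d}{dz}[(z+1)e^{-z}]=-ze^{-z}\leq 0$.

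Third, for the diffusive contribution $2\eta\frac{\lambda}{\nu^2}\int z^{-k\eta}\tilde c^{2\eta-1}\tilde c_{zz}\,dz$, I integrate by parts once, writing $\tilde c^{2\eta-1}\tilde c_{zz}=(\tilde c^{2\eta-1}\tilde c_z)_z-(2\eta-1)\tilde c^{2\eta-2}\tilde c_z^2$, then perform a second integration by parts on the first piece after noting $\tilde c^{2\eta-1}\tilde c_z=\tfrac{1}{2\eta}(\tilde c^{2\eta})_z$; the vanishing-order conditions \eqref{vanishing-assumptions-on-c-nc} together with Dirichlet at $\nu^{-1}$ again remove all boundary terms. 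This yields
\[
\frac{\lambda}{\nu^2}\Bigl[k\eta(k\eta+1)\!\!\int_0^{\nu^{-1}}\!\!z^{-k\eta-2}\tilde c^{2\eta}dz-2\eta(2\eta-1)\!\!\int_0^{\nu^{-1}}\!\!z^{-k\eta}\tilde c^{2\eta-2}\tilde c_z^2\,dz\Bigr].
\]
Applying the weighted Hardy inequality \eqref{Corollary of Hardy's} to $f=\tilde c^{\eta}\in H^1_0([0,\nu^{-1}])$ with weight exponent $k\eta$ gives $\int \tilde c^{2\eta}z^{-k\eta-2}\,dz\leq \tfrac{4\eta^2}{(1+k\eta)^2}\int\tilde c^{2\eta-2}\tilde c_z^2 z^{-k\eta}\,dz$. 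Substituting, the bracket above is bounded by $\frac{\lambda}{\nu^2}\int z^{-k\eta-2}\tilde c^{2\eta}\,dz\cdot\frac{1+k\eta}{2\eta}\bigl[\eta(k-2)+1\bigr]$, and the factor $\eta(k-2)+1$ is strictly negative by the hypothesis $k<2-1/\eta$ from \eqref{condition on k:diffusive-nc}. Hence the entire diffusion contribution is non-positive and can be discarded. Putting everything together yields \eqref{smooth:lemma:interior:c:new idea:generalized-nc}, and the equivalent form for $\mathcal T_{k\eta,2\eta}$ follows by dividing by $2\eta\mathcal T^{2\eta-1}$.

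The main obstacle is twofold: verifying that the diffusive piece, after Hardy's inequality, collapses to a non-positive multiple of $\int z^{-k\eta-2}\tilde c^{2\eta}$ (which relies crucially on the sharp constant in \eqref{Corollary of Hardy's} together with the hypothesis $k<2-1/\eta$), and confirming the pointwise inequality $g(z)\leq 1-k\eta$, which pins the rate at exactly the threshold appearing in the statement. The bookkeeping on vanishing-speed conditions to justify the boundary contributions in each integration by parts also requires some care but follows directly from \eqref{condition on k:diffusive-nc} and the fact that $\eta\geq \eta_0$.
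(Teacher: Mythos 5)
Your proposal is correct and follows essentially the same route as the paper: differentiate $\mathcal T_{k\eta,2\eta}^{2\eta}$, integrate the transport term by parts (boundary terms vanish by the Dirichlet condition at $\nu^{-1}$ and the vanishing-speed condition $\tilde c=o(z^{k/2+1/(2\eta)})$), bound the resulting pointwise multiplier using $(z+1)e^{-z}\leq 1$, and dispose of the diffusive piece via two integrations by parts plus the sharp weighted Hardy inequality under $k<2-1/\eta$. The only cosmetic differences from the paper are that you carry the overall factor $2\eta$ inside the multiplier (your $g(z)$ equals $2\eta$ times the paper's $f(z,s)$), and you express the post-Hardy bracket in terms of $\int z^{-k\eta-2}\tilde c^{2\eta}$ rather than $\int z^{-k\eta}(\partial_z\tilde c^\eta)^2$; both reduce to the same sign condition $\eta(k-2)+1<0$.
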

\begin{proof}
    First, let us observe that as long as $k< 2-\frac{1}{\eta}$ and $\eta\geq2,\, \eta\in\mathds{N}$ , there exists some $0<\epsilon_0\ll 1$ such that for each $0<\epsilon\leq\epsilon_0$,
    \begin{equation*}
        \begin{split}
            \frac{k\eta+1}{2\eta}>\frac{k\eta}{2\eta-1}>\frac{k\eta-1}{2\eta-2}+\epsilon>\frac{k\eta-1}{2\eta-2}>\frac{k\eta-1}{2\eta}+\epsilon>\frac{k\eta-1}{2\eta},
        \end{split}
    \end{equation*}
    and consequently $\text{ as }z\rightarrow 0^+$,
    \begin{equation}\label{diffusive-power-fact-nc}
        o(z^\frac{k\eta+1}{2\eta})\implies o(z^\frac{k\eta}{2\eta-1})\implies o(z^{\frac{k\eta-1}{2\eta-2}+\epsilon})\implies o(z^\frac{k\eta-1}{2\eta-2})\implies o(z^{\frac{k\eta-1}{2\eta}+\epsilon})\implies o(z^\frac{k\eta-1}{2\eta}),
    \end{equation}
    and the left side is given by assumption \eqref{vanishing-assumptions-on-c-nc}.
    This fact will be useful later.
    
    \ifthenelse{\boolean{isSimplified}}{Combining with the condition $ \tilde c=o(z^{\frac{k\eta-1}{2\eta}}) \text{ as }z\xrightarrow{}0^+$ and the Dirichlet boundary condition at $z=\nu^{-1}$ \eqref{equation:epsilon-bc}, differentiating $\mathcal T_{k\eta,2\eta}^{2\eta}(s)$ and integrate by parts gives
    \begin{equation} \label{smooth:interior:id:expression2-c:generalized-nc}
\begin{split}
    \frac{1}{2\eta}\frac{d}{ds} \int_0^{\frac1\nu}  z^{-k\eta}\tilde c ^{2\eta} dz   =&\frac{\lambda}{\nu^2}\int_0^{\frac1\nu}  z^{-k\eta}\tilde c^{2\eta-1} \tilde c_{zz} dz+\int_0^{\nu^{-1}}\tilde{c}^{2\eta}z^{-k\eta} f(z,s) dz,
\end{split}
\end{equation}
where $f(z,s)$ satisfies $$f(z,s)=2\frac{\lambda_s}{\lambda}+(\frac{k}{2}-\frac{1}{2\eta}) \frac{\nu_s}{\nu} +(2+\frac{1}{2\eta})\tilde{a}+(2+\frac{1}{2\eta})\phi-\frac{k}{2}z^{-1}\pa_z^{-1}\phi-\frac{k}{2}z^{-1}\pa_z^{-1}\tilde{a}.$$
    }{Differentiating $T_{k\eta,2\eta}(s)$ gives
    \begin{equation} \label{smooth:interior:id:expression2-c:generalized-nc}
\begin{split}
    &\frac{1}{2\eta}\frac{d}{ds} \int_0^{\frac1\nu}  z^{-k\eta}\tilde c ^{2\eta} dz   =\frac{1}{2\eta}\frac{-\nu_s}{\nu^2}\nu^{k\eta}\tilde c(s,\nu^{-1}(s))^{2\eta}+\int_0^{\frac1\nu} (2\frac{\lambda_s}{\lambda} + 2\tilde a + 2\phi) z^{-k\eta}\tilde c^{2\eta} dz\\
    &\qquad+ \int_0^{\frac1\nu} (-\partial_z^{-1} \phi +\frac{\nu_s}{\nu} z -\partial_z^{-1} \tilde a) \tilde   z^{-k\eta} \tilde c^{2\eta-1} \tilde c_z dz+\sigma\frac{\lambda}{\nu^2}\int_0^{\frac1\nu}  z^{-k\eta}\tilde c^{2\eta-1} \tilde c_{zz} dz.
\end{split}
\end{equation}
 Integrating by parts gives
\begin{equation}
\begin{split}
    &\frac{1}{2\eta}\frac{-\nu_s}{\nu^2}\nu^{k\eta}\tilde c(s,\nu^{-1}(s))^{2\eta}+\int_0^{\frac1\nu} (-\partial_z^{-1} \phi +\frac{\nu_s}{\nu} z -\partial_z^{-1} \tilde a) \tilde   z^{-k\eta} \tilde c^{2\eta-1} \tilde c_z dz\\
    &=-\frac{1}{2\eta}{\nu_s}\nu^{k\eta-2}\tilde c(s,\nu^{-1}(s))^{2\eta}+\frac{1}{2\eta}\int_0^{\frac1\nu} (-\partial_z^{-1} \phi +\frac{\nu_s}{\nu} z -\partial_z^{-1} \tilde a) \tilde  z^{-k\eta} \partial_z (\tilde c^{2\eta}) dz\\
    &=-\frac{1}{2\eta}{\nu_s}\nu^{k\eta-2}\tilde c(s,\nu^{-1}(s))^{2\eta}+\left(\frac{1}{2\eta}(-\partial_z^{-1} \phi +\frac{\nu_s}{\nu} z -\partial_z^{-1} \tilde a) \tilde  z^{-k\eta} \tilde c^{2\eta}\right)_{z=0}^{z=\frac{1}{\nu}}\\
    &\qquad-\frac{1}{2\eta} \int_0^{\nu^{-1}}\tilde c^{2\eta}\partial_z\left(z^{-k\eta} (-\partial_z^{-1} \phi +\frac{\nu_s}{\nu} z -\partial_z^{-1} \tilde a)\right) dz\\
    &=-\left(\frac{1}{2\eta}(-\partial_z^{-1} \phi +\frac{\nu_s}{\nu} z -\partial_z^{-1} \tilde a) \tilde  z^{-k\eta} \tilde c^{2\eta}\right)\rvert_{z=0}\\
    &\qquad-\frac{1}{2\eta} \int_0^{\nu^{-1}} \tilde c^{2\eta} z^{-k\eta} (\frac{\nu_s}{\nu}-\tilde a-\phi)+\frac{k}{2}\int_0^{\nu^{-1}} \tilde c^{2\eta} z^{-k\eta}(\frac{\nu_s}{\nu}-\antipartial_z\tilde a/z-\antipartial_z\phi/z),
\end{split}
\end{equation}
where we have used the compatibility condition \eqref{equation:epsilon-bc}, and simplified the following term using the boundary condition \eqref{sigma-one-nc} at $z=\nu^{-1}$:
\begin{equation*}
\begin{split}
    &\,\,-\frac{1}{2\eta}{\nu_s}\nu^{k\eta-2}\tilde c(s,\nu^{-1}(s))^{2\eta}+\left(\frac{1}{2\eta}(-\partial_z^{-1} \phi +\frac{\nu_s}{\nu} z -\partial_z^{-1} \tilde a) \tilde  z^{-k\eta} \tilde c^{2\eta}\right)\rvert_{z=\nu^{-1}(s)}\\
    &=-\frac{1}{2\eta}{\nu_s}\nu^{k\eta-2}\tilde c(s,\nu^{-1}(s))^{2\eta}+\frac{1}{2\eta}(\frac{\nu_s}{\nu^2}-\int_0^{\nu^{-1}} (\phi+\tilde a) dz)\nu^{k\eta}\tilde c(s,z=\frac{1}{\nu(s)})^{2\eta}\\
    &=-\frac{1}{2\eta}{\nu_s}\nu^{k\eta-2}\tilde c(s,\nu^{-1}(s))^{2\eta}+\frac{1}{2\eta}{\nu_s}\nu^{k\eta-2}\tilde c(s,\nu^{-1}(s))^{2\eta}=0.
\end{split}
\end{equation*}
Alternatively, if $\sigma=1$, then we simply use the boundary condition at $z=\nu^{-1}$ to eliminate these terms.
If, in addition, we have that
\begin{equation}
    (\partial_z^{-1} \phi - \frac{\nu_s}{\nu} z + \partial_z^{-1} \tilde a)   z^{-k\eta} \tilde c^{2\eta}=o(1) \text{ as }z\xrightarrow{}0^+,
\end{equation}
which is equivalent to
\begin{equation}
        \tilde c=o(z^{\frac{k}{2}-\frac{1}{2\eta}})=o(z^{\frac{k\eta-1}{2\eta}}) \text{ as }z\xrightarrow{}0^+,
\end{equation}
and the boundary term at $z=0$ indeed vanishes (thanks to \eqref{vanishing-assumptions-on-c-nc} and \eqref{diffusive-power-fact-nc}). We  now can further simplify the potential and boundary terms in \eqref{smooth:interior:id:expression2-c:generalized-nc}:
\begin{equation}
\begin{split}
    &\frac{1}{2\eta}\frac{-\nu_s}{\nu^2}\nu^{k\eta}\tilde c(s,\nu^{-1}(s))^{2\eta}+\int_0^{\frac1\nu} (2\frac{\lambda_s}{\lambda} + 2\tilde a + 2\phi) z^{-k\eta}\tilde c^{2\eta} dz
    + \int_0^{\frac1\nu} (-\partial_z^{-1} \phi +\frac{\nu_s}{\nu} z -\partial_z^{-1} \tilde a) \tilde   z^{-k\eta} \tilde c^{2\eta-1} \tilde c_z dz\\
    &=\int_0^{\nu^{-1}}\tilde{c}^{2\eta}z^{-k\eta} \left(2\frac{\lambda_s}{\lambda}+(\frac{k}{2}-\frac{1}{2\eta}) \frac{\nu_s}{\nu} +(2+\frac{1}{2\eta})\tilde{a}+(2+\frac{1}{2\eta})\phi-\frac{k}{2}z^{-1}\pa_z^{-1}\phi-\frac{k}{2}z^{-1}\pa_z^{-1}\tilde{a}\right),\\
\end{split}
\end{equation}}
Using $\frac{\lambda_s}{\lambda}+\frac{\nu_s}{\nu}=-1$ and \eqref{smoothmodulationequations:diffusive-nc}, we rewrite
\begin{equation*}
\begin{split}
   2\frac{\lambda_s}{\lambda}+(\frac{k}{2}-\frac{1}{2\eta}) \frac{\nu_s}{\nu}= 2(\frac{\lambda_s}{\lambda}+ \frac{\nu_s}{\nu})+(\frac{k}{2}-\frac{1}{2\eta}-2) \frac{\nu_s}{\nu}\leq 2(\frac{\lambda_s}{\lambda}+\frac{\nu_s}{\nu})+\abs{\frac{k}{2}-\frac{1}{2\eta}-2} \abs{\frac{\nu_s}{\nu}}\leq -2+Cs^{-1}.\\
\end{split}
\end{equation*}
Similar to the computations we did for $I_a$, we use \eqref{smoothmodulationequations:diffusive-nc} and \eqref{smooth:bd:varepsiloninterior:diffusive} to see
\begin{equation}\label{pot-estimate}
\begin{split}
     &\left(2\frac{\lambda_s}{\lambda}+(\frac{k}{2}-\frac{1}{2\eta}) \frac{\nu_s}{\nu} +(2+\frac{1}{2\eta})\tilde{a}+(2+\frac{1}{2\eta})\phi-\frac{k}{2}z^{-1}\pa_z^{-1}\phi-\frac{k}{2}z^{-1}\pa_z^{-1}\tilde{a}\right)\\
     &\leq -2+ Cs^{-1}+O(3 C(z^*) s^{-\frac{h_a}{2}})+(2+\frac{1}{2\eta})e^{-z}-\frac{k}{2}(\frac{1-e^{-z}}{z})+O(s^{-\frac{h_a}{2}})\\
     &\leq -2+(2+\frac{1}{2\eta}-\frac{k}{2})e^{-z}+O(C(z^*)s^\frac{-h_a}{2})\\
     &\leq O(C(z^*)s^\frac{-h_a}{2})+\begin{cases}
         -2\qquad\text{ if }k\geq 4+\frac{1}{\eta}\\
         \frac{1}{2\eta}-\frac{k}{2}\qquad\text{ if }k< 4+\frac{1}{\eta}
     \end{cases}
\end{split}
\end{equation}
where we have used that $1-e^{-x}\geq xe^{-x}$. 

For the higher order term, we integrate by parts repeatedly to see
\ifthenelse{\boolean{isSimplified}}{\begin{equation}\label{diffusive:integration-by-parts-nc}
\begin{split}
    \int_0^{\frac1\nu}  z^{-k\eta}\tilde c^{2\eta-1} \tilde c_{zz} dz&=\left(z^{-k\eta}\tilde c^{2\eta-1} \tilde c_z\right)_{z=0}^{z=\nu^{-1}}-\int_0^{\nu^{-1}}\tilde c_z \partial_z(z^{-k\eta}\tilde c^{2\eta-1})\\
    &=\left(\frac{k}{2}z^{-k\eta-1}\tilde c^{2\eta} \right)_{z=0}^{z=\nu^{-1}}-\frac{k}{2}\int_0^{\nu^{-1}}(-k\eta-1)z^{-k\eta-2}\tilde c^{2\eta}-\frac{2\eta-1}{\eta^2}\int_0^{\nu^{-1}} (\partial_z(c^\eta))^2\\
    &=\frac{k(k\eta+1)}{2}\int_0^{\nu^{-1}} (\tilde c^\eta)^2 z^{-k\eta-2}-\frac{2\eta-1}{\eta^2}\int_0^{\nu^{-1}} (\partial_z(\tilde c^\eta))^2z^{-k\eta},
\end{split}
\end{equation}
where we have used the vanishing boundary condition at $z=\nu^{-1}$ \eqref{sigma-one-nc} and \eqref{diffusive-power-fact-nc} to deduce the boundary terms vanish.
}{\begin{equation}\label{diffusive:integration-by-parts-nc}
\begin{split}
    \int_0^{\frac1\nu}  z^{-k\eta}\tilde c^{2\eta-1} \tilde c_{zz} dz&=\int_0^{\frac1\nu}  z^{-k\eta}\tilde c^{2\eta-1} \partial_z\tilde c_{z} dz\\
    &=\left(z^{-k\eta}\tilde c^{2\eta-1} \tilde c_z\right)_{z=0}^{z=\nu^{-1}}-\int_0^{\nu^{-1}}\tilde c_z \partial_z(z^{-k\eta}\tilde c^{2\eta-1})\\
    &=k\eta \int_0^{\nu^{-1}} z^{-k\eta-1}\tilde c^{2\eta-1}\tilde c_z -(2\eta-1)\int_0^{\nu^{-1}} \tilde c^{2\eta-2} z^{-k\eta}\tilde c_z^2\\
    &=\frac{k}{2} \int_0^{\nu^{-1}} z^{-k\eta-1} \partial_z (\tilde c^{2\eta}) -\frac{2\eta-1}{\eta^2}\int_0^{\nu^{-1}} (\partial_z(c^\eta))^2\\
    &=\left(\frac{k}{2}z^{-k\eta-1}\tilde c^{2\eta} \right)_{z=0}^{z=\nu^{-1}}-\frac{k}{2}\int_0^{\nu^{-1}}(-k\eta-1)z^{-k\eta-2}\tilde c^{2\eta}-\frac{2\eta-1}{\eta^2}\int_0^{\nu^{-1}} (\partial_z(c^\eta))^2\\
    &=\frac{k(k\eta+1)}{2}\int_0^{\nu^{-1}} (\tilde c^\eta)^2 z^{-k\eta-2}-\frac{2\eta-1}{\eta^2}\int_0^{\nu^{-1}} (\partial_z(\tilde c^\eta))^2z^{-k\eta},
\end{split}
\end{equation}
where we have used the vanishing boundary condition at $z=\nu^{-1}$ \eqref{sigma-one-nc} for $\sigma=1$ and we need

\begin{equation}\label{vanishing-boundary-condition-on-c:diffusive-nc}
   \tilde c_z z^{-k\eta}\tilde c^{2\eta-1}, \tilde c^{2\eta} z^{-k\eta-1}=o(1) \text{ as }z\xrightarrow{}0^+, \text{ if }\sigma=1,
\end{equation}
at the second and the last line respectively. Notice that this condition is weaker than
\begin{equation*}
\begin{split}
     &z^{-k\eta}\tilde c^{2\eta-1}, \tilde c^{2\eta} z^{-k\eta-1}=o(1) \\
     &\qquad\iff \tilde c=o(z^\frac{k\eta}{2\eta-1}) \text{ and } \tilde c=o(z^\frac{k\eta+1}{2\eta}) \text{ as }z\xrightarrow{}0^+, \text{ if }\sigma=1\\
     &\qquad \impliedby \tilde c=o(z^\frac{k\eta+1}{2\eta})\text{ if }\sigma=1\text{ and }k< 2-\frac{1}{\eta},
\end{split}
\end{equation*}
which is satisfied using \eqref{diffusive-power-fact-nc}.

Before applying Hardy's inequality, let us check whether the term
$$\int_0^{\nu^{-1}} (\partial_z(\tilde c^\eta))^2z^{-k\eta}$$
is indeed finite for each $s\geq s_0$. We observe that this condition can be satisfied via
\begin{equation}
    (\partial_z(\tilde c^\eta))^2 z^{-k\eta}\approx \tilde c_z^2 \tilde c^{2\eta-2} z^{-k\eta}=o(z^{-1+\epsilon})\text{ as }z\rightarrow 0^+,\,\text{for }0<\epsilon\ll 1,
\end{equation}
and is weaker than
\begin{equation}
     \tilde c=o(z^{\frac{k\eta-1}{2\eta-2}+\epsilon})\text{ as }z\rightarrow 0^+,\,\text{for some arbitrary small } \epsilon \text{ such that }0<\epsilon\ll 1,
\end{equation}
which is indeed satisfied by \eqref{diffusive-power-fact-nc}. Hardy's inequality \eqref{Corollary of Hardy's} says that
\begin{equation*}
    0\leq\int_0^{\nu^{-1}} (\tilde c^\eta)^2 z^{-k\eta-2}\leq (\frac{2}{1+k\eta})^2 \int_0^{\nu^{-1}} (\partial_z(\tilde c^\eta))^2z^{-k\eta}<4 \int_0^{\nu^{-1}} (\partial_z(\tilde c^\eta))^2z^{-k\eta}<\infty.
\end{equation*}
}

Thanks to the boundary condition for $\sigma=1$ ( recall \eqref{sigma-one-nc} ), we can now {apply the corollary of} Hardy's inequality \eqref{Corollary of Hardy's} on $\tilde c^{\eta}$, with parameter power $k\eta$ and $a=\nu^{-1}(s)$, to figure out the coefficient:
\ifthenelse{\boolean{isSimplified}}{\begin{equation*}
\begin{split}
    \int_0^{\frac1\nu}  z^{-k\eta}\tilde c^{2\eta-1} \tilde c_{zz} dz&=\frac{k(k\eta+1)}{2}\int_0^{\nu^{-1}} (\tilde c^\eta)^2 z^{-k\eta-2}-\frac{2\eta-1}{\eta^2}\int_0^{\nu^{-1}} (\partial_z(\tilde c^\eta))^2z^{-k\eta}\\
    &\leq (\frac{k(k\eta+1)}{2} (\frac{2}{1+k\eta})^2-\frac{2\eta-1}{\eta^2})  \int_0^{\nu^{-1}} (\partial_z(\tilde c^\eta))^2z^{-k\eta}\\
    &=\left(\frac{2k}{k\eta+1}-\frac{2\eta-1}{\eta^2}\right)\int_0^{\nu^{-1}} (\partial_z(\tilde c^\eta))^2z^{-k\eta}\leq 0\qquad\text{ if and only if }k\leq 2-\frac{1}{\eta}.
\end{split}
\end{equation*}
}{\begin{equation}
\begin{split}
    \int_0^{\frac1\nu}  z^{-k\eta}\tilde c^{2\eta-1} \tilde c_{zz} dz&=\frac{k(k\eta+1)}{2}\int_0^{\nu^{-1}} (\tilde c^\eta)^2 z^{-k\eta-2}-\frac{2\eta-1}{\eta^2}\int_0^{\nu^{-1}} (\partial_z(\tilde c^\eta))^2z^{-k\eta}\\
    &\leq (\frac{k(k\eta+1)}{2} (\frac{2}{1+k\eta})^2-\frac{2\eta-1}{\eta^2})  \int_0^{\nu^{-1}} (\partial_z(\tilde c^\eta))^2z^{-k\eta}\\
    &=\left(\frac{2k}{k\eta+1}-\frac{2\eta-1}{\eta^2}\right)\int_0^{\nu^{-1}} (\partial_z(\tilde c^\eta))^2z^{-k\eta}\leq 0\qquad\text{ if and only if }k\leq 2-\frac{1}{\eta},
\end{split}
\end{equation}
where we have observed that this coefficient can be bounded above by zero if and only if $0\leq k\leq 2-\frac{1}{\eta}$:
\begin{equation*}
    0\leq k\leq 2-\frac{1}{\eta} \iff \frac{1}{\eta}k=(2-\frac{2\eta-1}{2\eta})k\leq \frac{2\eta-1}{\eta^2}\iff 2k\leq \frac{2\eta-1}{\eta^2}(k\eta+1)\iff \frac{2k}{k\eta+1}\leq \frac{2\eta-1}{\eta^2}.
\end{equation*}}
Finally, we conclude that if $k<2-\frac{1}{\eta}$, then
\begin{equation*}
    \frac{d}{ds}\mathcal T_{k\eta,2\eta}^{2\eta} \leq  \mathcal T_{k\eta,2\eta}^{2\eta} (O(\eta C(z^*) s^{-\frac{h_a}{2}})-k\eta+1). 
\end{equation*}
\end{proof}


\subsection{Soltuions to Differential Inequalities-Choice of Parameters}\label{choice of parameters diffusive}
\ifthenelse{\boolean{isSimplified}}{Since the estimate of \eqref{interior-estimate-of-a-diffusive-nc} is similar to the non-diffusive case \eqref{interior-estimate-of-a-diffusive}, we need to further impose \begin{equation}\label{additional-requirement-on-alpha-nc}
\begin{split}
   \alpha>\alpha_0,\,h_a<2,
\end{split}
\end{equation}
where $\alpha_0\approx 1.88415$ satisfies \eqref{definition-of_alpha_knot}.}{Combining with the estimate of \eqref{interior-estimate-of-a-diffusive-nc}, the definition of trapedness \eqref{smooth:bd:etrap:diffusive-nc}, and the fact that all the linear terms \eqref{interior-estimate-of-a-diffusive-nc}  can be absorbed into the quadratic terms (recall the choice of parameters \eqref{initial-parameter-other-nc} and Young's inequality), this lemma suggests that, in order for the bootstrap argument to work, we need to further impose that
\begin{equation}\label{additional-requirement-on-alpha-nc}
\begin{split}
   &{-\alpha+1}+\frac{1}{\sqrt{\alpha+1}}+\frac{4}{(\alpha+3)}\sqrt{\frac{3}{8(\alpha+1)}}<0 \iff \alpha>\alpha_0,\\
    & (s^{-1})^2=o(s^{-h_a}) \iff h_a<2,
\end{split}
\end{equation}
where $\alpha_0\approx 1.88415$ is the unique positive solution of
\begin{equation}\label{definition-of_alpha_knot-nc}
     {-\alpha_0+1}+\frac{1}{\sqrt{\alpha_0+1}}+\frac{4}{(\alpha_0+3)}\sqrt{\frac{3}{8(\alpha_0+1)}}=0.
\end{equation}}
Similarly, in light of \eqref{smooth:lemma:interior:c:new idea:generalized-nc}, $T_{k\eta_0,2\eta_0}$ decays at most as fast as $e^{(-k/2+1/(2\eta_0))s}$ and, in order for the bootstrap argument \eqref{smooth:bd:etrap:diffusive-nc} to work, we need to further impose
\begin{equation}\label{additional-requirement-on-l-nc}
    e^{(-\frac{k}{2}+\frac{1}{2\eta_0})s}=o(e^{-ls/2})\iff l<k-\frac{1}{\eta_0},\, k>\frac{1}{\eta_0}.
\end{equation}

Summarizing all the conditions \eqref{initial-parameter-weights-nc}, \eqref{initial-parameter-other-nc}, \eqref{interior-parameter-a-nc}, \eqref{condition on k:diffusive-nc}, \eqref{additional-requirement-on-alpha-nc} and \eqref{additional-requirement-on-l-nc} on our parameters, we obtain the conditions
\ifthenelse{\boolean{isSimplified}}{\begin{alignat}{2}\label{final-parameter-conditions-diffusive-nc}
    &\alpha_0<\alpha<3-\frac{2}{\eta_0}\qquad
        && \begin{cases}
            \eta_0\geq 2,\, \eta\in \mathds{N}\\
            \alpha-1+\frac{1}{\eta_0}<k<2-\frac{1}{\eta_0}
        \end{cases}\nonumber\\
        & 1<h_a<2\qquad
        &&0<l< k-\frac{1}{\eta_0}\\
        & \frac{\alpha-1}{2}<\epsilon_a\leq \epsilon_c\qquad
        && \frac{k}{2}+\frac{1}{2\eta_0}<\epsilon_c<1\nonumber.
\end{alignat}}{\begin{subequations}\label{final-parameter-conditions-diffusive-nc}
    \begin{align}
        &\alpha_0<\alpha<3-\frac{2}{\eta_0}\\
        & \begin{cases}
            \eta_0\geq 2,\, \eta\in \mathds{N}\\
            \alpha-1+\frac{1}{\eta_0}<k<2-\frac{1}{\eta_0}
        \end{cases}\\
        & 1<h_a<2\\
        &0<l< k-\frac{1}{\eta_0},\\
        & \frac{\alpha-1}{2}<\epsilon_a\leq \epsilon_c\\
        & \frac{k}{2}+\frac{1}{2\eta_0}<\epsilon_c<1
    \end{align}
\end{subequations}
where $\alpha_0\approx 1.88415$ satisfies \eqref{definition-of_alpha_knot}.}
 One can check that
\begin{equation*}
    (\alpha,\eta_0,k,h_a,l,\epsilon_a,\epsilon_c)=(2,4,3/2,4/3,1,3/4,9/10),
\end{equation*}
satisfies the conditions above. We omit the following proof since it is similar to the non-diffusive case: Lemma \ref{smooth:lemma:interior:diffusive}.

\ifthenelse{\boolean{isSimplified}}{}{We now rigorously prove the previous observations, as well as partially finish the bootstrap argument.\begin{lemma}[Gronwall-type Differential Inequality with Source]\label{Gronwall-type Differential Inequality with Source}
    Let $L$ be a positive function and let $C_1,C_2,h,\delta,\epsilon>0, K\in \R$.
    If $L^2$ satisfies
    \begin{equation}\label{Gronwall-with-source}
        \frac{d}{ds} L^2 +C_1 L^2\leq C_2 s^{-h},\qquad L^2(s_0)\leq \delta^2 s_0^{-h+\epsilon},
    \end{equation}
    for $s\geq s_0$, then for $s_0$ large enough we have
    \begin{equation}
        L^2\leq 2\delta^2 s^{-h+\epsilon}.
    \end{equation}
\end{lemma}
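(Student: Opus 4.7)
The inequality is linear with constant (in $s$) damping rate $C_1>0$ and a polynomially decaying source, so the natural tool is an integrating factor. I would first multiply the inequality by $e^{C_1 s}$ to obtain $\frac{d}{ds}\bigl(e^{C_1 s}L^2\bigr)\leq C_2\, e^{C_1 s}s^{-h}$, then integrate on $[s_0,s]$ to deduce the pointwise bound
\begin{equation*}
L^2(s)\;\leq\; L^2(s_0)\,e^{-C_1(s-s_0)}\;+\;C_2\int_{s_0}^{s} e^{-C_1(s-\tau)}\,\tau^{-h}\,d\tau.
\end{equation*}
From here the proof splits into controlling the \emph{homogeneous} piece (the first term) and the \emph{Duhamel} piece (the second term) separately, and showing each is bounded by $\delta^{2}s^{-h+\epsilon}$ for $s_0$ large enough.

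For the homogeneous piece, the assumed initial bound gives $L^2(s_0)e^{-C_1(s-s_0)}\leq \delta^{2}s_0^{-h+\epsilon}\,e^{-C_1(s-s_0)}$. Writing $s_0^{-h+\epsilon}=s^{-h+\epsilon}(s/s_0)^{h-\epsilon}$, I would check that $(s/s_0)^{h-\epsilon}e^{-C_1(s-s_0)}\leq 1$ for all $s\geq s_0$ as soon as $s_0\geq (h-\epsilon)/C_1$; this follows from $\log(1+u)\leq u$ applied to $u=s/s_0-1$. Hence the homogeneous contribution is at most $\delta^{2}s^{-h+\epsilon}$.

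For the Duhamel piece, the key observation is that $\tau\mapsto \tau^{-h}$ varies slowly compared to $e^{C_1\tau}$, so one integration by parts (with $u=\tau^{-h}$, $dv=e^{C_1\tau}d\tau$) yields
\begin{equation*}
\int_{s_0}^{s} e^{-C_1(s-\tau)}\tau^{-h}\,d\tau \;=\; \tfrac{1}{C_1}s^{-h}\;-\;\tfrac{1}{C_1}s_0^{-h}e^{-C_1(s-s_0)}\;+\;\tfrac{h}{C_1}\int_{s_0}^{s}e^{-C_1(s-\tau)}\tau^{-h-1}\,d\tau,
\end{equation*}
and iterating (or bounding the remainder crudely by $\tfrac{h}{C_1^{2}}s^{-h-1}$) gives
\begin{equation*}
C_2\int_{s_0}^{s} e^{-C_1(s-\tau)}\tau^{-h}\,d\tau\;\leq\;\tfrac{C_2}{C_1}s^{-h}\bigl(1+O(s^{-1})\bigr).
\end{equation*}
Since $s^{-h}=s^{-\epsilon}\cdot s^{-h+\epsilon}$ and $\epsilon>0$, choosing $s_0$ large enough so that $\tfrac{2 C_2}{C_1}\,s_0^{-\epsilon}\leq \delta^{2}$ ensures the Duhamel contribution is bounded by $\delta^{2}s^{-h+\epsilon}$ as well. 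Adding the two pieces yields $L^{2}(s)\leq 2\delta^{2}s^{-h+\epsilon}$, as claimed.

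The argument is essentially routine; the only mild subtlety is that the target bound $s^{-h+\epsilon}$ is \emph{weaker} than the natural Duhamel decay rate $s^{-h}$, and this is precisely what lets us absorb a possibly large constant $C_2/C_1$ by taking $s_0$ large. In particular, no sharp estimate of the Duhamel integral is needed — the margin $s^{\epsilon}$ does all the work. I would expect no essential obstacle beyond book-keeping the constants to guarantee a single choice of $s_0^{*}$ that makes both pieces fit inside $\delta^{2}s^{-h+\epsilon}$ simultaneously.
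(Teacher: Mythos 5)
Your proof is correct and reaches the same conclusion as the paper's, but it takes a somewhat different route. The paper uses the power-weighted integrating factor $s^{h}e^{C_1 s/2}$: the factor $s^{h}$ exactly cancels the $s^{-h}$ on the right-hand side, and deliberately using the rate $C_1/2$ rather than $C_1$ leaves room to absorb the $h s^{-1}L^{2}$ term coming from differentiating $s^{h}$ once $s\geq 2h/C_1$. This yields at once $\frac{d}{ds}\bigl(s^{h}L^{2}e^{C_1 s/2}\bigr)\leq C_2 e^{C_1 s/2}$, which integrates cleanly to $L^{2}\leq \frac{2C_2}{C_1}s^{-h}+\frac{s_0^{h}}{s^{h}}L^{2}(s_0)$ with no further analysis of a Duhamel integral. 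You instead use the plain integrating factor $e^{C_1 s}$ and then extract the $\sim C_1^{-1}s^{-h}$ asymptotics of $\int_{s_0}^{s}e^{-C_1(s-\tau)}\tau^{-h}\,d\tau$ by integration by parts and iteration (or, equivalently, by the self-improving bound $\int e^{-C_1(s-\tau)}\tau^{-h-1}\,d\tau\leq s_0^{-1}\int e^{-C_1(s-\tau)}\tau^{-h}\,d\tau$, which closes once $h/(C_1 s_0)<1/2$). Both routes then finish identically, using the $s^{\epsilon}$ slack to absorb the constant $C_2/C_1$ for $s_0$ large. The paper's choice of integrating factor is a little slicker because it removes the need to estimate the Duhamel convolution at all; your version is more elementary and transparent but requires the extra IBP step to control the remainder. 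Either way the conclusion and the mechanism --- that the target rate $s^{-h+\epsilon}$ is strictly weaker than the natural $s^{-h}$ decay of both pieces --- are the same.
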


\begin{proof}
   Wwe compute
    \begin{equation*}
        \begin{split}
            \frac{d}{ds}(s^h L^2 e^{C_1/2 s})&=s^h e^{C_1s/2}(\frac{d}{ds}L^2 +C_1/2 L^2+h s^{-1} L^2)\\
            &\leq s^h e^{C_1s/2}(\frac{d}{ds}L^2 +C_1 L^2)\\
            &\leq C_2 e^{C_1s/2},
        \end{split}
    \end{equation*}
    provided that $s$ is large enough. Integrate both hands gives
    \begin{equation*}
        \begin{split}
            L^2&\leq \frac{2C_2}{C_1}s^{-h}-\frac{2C_2}{C_1}s^{-h} e^\frac{C_1(s_0-s)}{2}+s_0^h s^{-h} L^2(s_0) e^\frac{C_1(s_0-s)}{2}\\
            &\leq \frac{2C_2}{C_1} s^{-h} +\frac{s_0^h}{s^h} L^2(s_0)\\
            &\leq \frac{2C_2}{C_1} s^{-h} +\delta^2 s^{-h} s^\epsilon\leq 2\delta^2 s^{-h+\epsilon},
        \end{split}
    \end{equation*}
    for $s$ large enough.\\
\end{proof}}

\begin{corollary}[Interior and Total Bootstrap Arguments]\label{smooth:lemma:interior:diffusive-nc}

Let $(\alpha,\eta_0,k,h_a,l,\epsilon_a,\epsilon_c)$ be in the range of \eqref{final-parameter-conditions-diffusive-nc}. For any $z^*\geq 1$ and $ \delta>0$, there exists a large self-similar time $s_0^*$ such that for any $s_0\geq s_0^*$, for any solution which is trapped on $[s_0,s_1]$, we have for $s\in [s_0,s_1]$:
\begin{eqnarray}\label{smooth:bd:int e1:diffusive-nc}
\mathcal I_a^2(s)  \leq 2\delta^2 s^{-h_a}, \quad   \mathcal T_{k\eta_0,2\eta_0}^{2\eta_0}(s) \leq \frac14 e^{-\eta_0 l s}.
\end{eqnarray}
\end{corollary}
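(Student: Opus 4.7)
The plan is to mirror the strategy that closed the non-diffusive case in Lemma \ref{smooth:lemma:interior:diffusive}, but now feeding the diffusive versions of the interior and total differential inequalities — namely Lemma \ref{smooth:lemma:interior:new idea:nc} for $\mathcal I_a$ and Lemma \ref{differential inequality of c-nc} (specialized to $\eta=\eta_0$) for $\mathcal T_{k\eta_0,2\eta_0}$ — into standard Gronwall-type arguments. Since the two estimates decouple (the $\mathcal T_{k\eta_0,2\eta_0}$ inequality does not see $\mathcal I_a$ except via a vanishing $O(s^{-h_a/2})$ term), I would treat them separately.

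For the bound on $\mathcal I_a^2$, I would start from the inequality
\begin{equation*}
\frac{d}{ds}\mathcal I_a^2 \leq \mathcal I_a^2 \left(-\alpha+1 + \tfrac{1}{\sqrt{\alpha+1}} + \tfrac{4}{\alpha+3}\sqrt{\tfrac{3}{8(\alpha+1)}} + O(C(z^*) s^{-h_a/2})\right) + C(z^*)\mathcal I_a\bigl(s^{-1}+e^{(-l/2+\epsilon)s}+e^{-ls/2}s^{-1}\bigr).
\end{equation*}
Setting $\mu_0 := \alpha-1-\tfrac{1}{\sqrt{\alpha+1}}-\tfrac{4}{\alpha+3}\sqrt{\tfrac{3}{8(\alpha+1)}}$, the range $\alpha>\alpha_0$ from \eqref{final-parameter-conditions-diffusive-nc} and the definition \eqref{definition-of_alpha_knot} of $\alpha_0$ make $\mu_0>0$. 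I would fix a small $\kappa<\mu_0$, pick $s_0^*$ so large that the $O(\cdot)$ coefficient is below $\kappa/2$, and apply Young's inequality to absorb the linear-in-$\mathcal I_a$ source (dominated by the $s^{-1}$ term) into $\tfrac{\kappa}{2}\mathcal I_a^2+C(z^*)s^{-2}$. This reduces the inequality to $\tfrac{d}{ds}\mathcal I_a^2+(\mu_0-\kappa)\mathcal I_a^2\leq C(z^*)s^{-2}$, whose Gronwall-with-source solution combined with the initial bound $\mathcal I_a^2(s_0)<\delta^2 s_0^{-h_a}$ from Definition \ref{def:ini crit-beta=0:diffusive-nc} gives $\mathcal I_a^2(s)\leq 2\delta^2 s^{-h_a}$ — provided $h_a<2$, which is exactly what \eqref{final-parameter-conditions-diffusive-nc} forces.

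For the bound on $\mathcal T_{k\eta_0,2\eta_0}^{2\eta_0}$, I would plug $\eta=\eta_0$ into Lemma \ref{differential inequality of c-nc} to get
\begin{equation*}
\frac{d}{ds}\mathcal T_{k\eta_0,2\eta_0}^{2\eta_0} \leq \mathcal T_{k\eta_0,2\eta_0}^{2\eta_0}\bigl(O(\eta_0 C(z^*)s^{-h_a/2})-k\eta_0+1\bigr).
\end{equation*}
Since \eqref{final-parameter-conditions-diffusive-nc} enforces $l<k-1/\eta_0$, equivalently $-k\eta_0+1<-l\eta_0$, I would choose $\kappa>0$ so small that $-k\eta_0+1+\kappa\leq -l\eta_0$ and then take $s_0^*$ large enough for the $O(\cdot)$ term to fall below $\kappa$. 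A direct one-line Gronwall integration, combined with the initial bound $\mathcal T_{k\eta_0,2\eta_0}^{2\eta_0}(s_0)<\tfrac14 e^{-\eta_0 l s_0}$ from Definition \ref{def:ini crit-beta=0:diffusive-nc}, then yields $\mathcal T_{k\eta_0,2\eta_0}^{2\eta_0}(s)\leq \tfrac14 e^{-\eta_0 l s}$ for every $s\in[s_0,s_1]$.

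There is essentially no new obstacle at this stage: the hard technical work has already been done in Lemma \ref{smooth:lemma:interior:new idea:nc} (handling the diffusive source term in the $\mathcal I_a$ estimate through the exponent condition \eqref{interior-parameter-a-nc} and Hölder in the weighted $L^{2\eta_0}$ norm) and in Lemma \ref{differential inequality of c-nc} (absorbing the second-derivative contribution via weighted Hardy's inequality under $k<2-1/\eta_0$). What remains here is purely bookkeeping: verifying that the admissible parameter window \eqref{final-parameter-conditions-diffusive-nc} leaves enough margin to perform both a Young absorption (needing $\mu_0-\kappa>0$ and $h_a<2$) and a strict Gronwall decay (needing $-k\eta_0+1+\kappa<-l\eta_0$) simultaneously, after which $s_0^*$ can be fixed universally in terms of $z^*$ and $\delta$.
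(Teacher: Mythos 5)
Your proposal reproduces the paper's own argument almost verbatim: absorb the linear-in-$\mathcal I_a$ source term via Young's inequality into a Gronwall-with-source ODE with coefficient $\mu_0-\kappa>0$ for $\alpha>\alpha_0$, yielding $\mathcal I_a^2\leq 2\delta^2 s^{-h_a}$ since $h_a<2$; then integrate the $\mathcal T_{k\eta_0,2\eta_0}^{2\eta_0}$ inequality directly with Gronwall, using $1-k\eta_0+\kappa<-\eta_0 l$ from the parameter window. This is precisely the route the paper takes (it mirrors the non-diffusive Lemma~\ref{smooth:lemma:interior:diffusive} and invokes the same Gronwall-with-source lemma), so there is nothing to add.
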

\ifthenelse{\boolean{isSimplified}}{}{\begin{proof}
    Let $0<\kappa\ll 1$ be fixed, for which whose value will be chosen later. In light of \eqref{interior-estimate-of-a-diffusive-nc}, we observe that when $s$ is sufficiently large,
    \begin{equation}\label{trivial-condition-nc}
        s^{-1}+e^{(-\frac{h_b}{2}-1+\epsilon)s}+ e^{(-\frac{l}{2}+\epsilon) s}+ se^{(-1-\frac{h_b}{2})s}+ e^{-\frac{l}{2}s}s^{-1}\lesssim  s^{-1},
    \end{equation}
    where we have used the choice of parameters \eqref{final-parameter-conditions-diffusive-nc}. Choose $s_0$ large enough so that $O(s^{-\frac{h_a}{2}})<\frac{\kappa}{2}$ and that \eqref{trivial-condition-nc} is satisfied. Combining \eqref{trivial-condition-nc} and Young's inequality, we see the linear source term can be bounded by
    \begin{equation}
    \begin{split}
          &C(z^*) I_a(  s^{-1}+e^{(-\frac{h_b}{2}-1+\epsilon)s}+ e^{(-\frac{l}{2}+\epsilon) s}+ e^{-\frac{l}{2}s}s^{-1})\\
          &\qquad\lesssim C(z^*)s^{-1}\\
          &\qquad\leq C(z^*) (\frac{s^{-2}}{2\frac{\kappa}{C(z^*)}}+\frac{\frac{\kappa}{C(z^*)}}{2}I_a^2)\\
          &\qquad\leq C(z^*)s^{-2}+\kappa/2 I_a^2,
    \end{split}
    \end{equation}
    for $s_0$ large enough. Now \eqref{interior-estimate-of-a-diffusive-nc} is reduced to
    \begin{equation}
         \frac{d}{ds}I_a^2+I_a^2({\alpha-1}-\frac{1}{\sqrt{\alpha+1}}-\frac{4}{(\alpha+3)}\sqrt{\frac{3}{8(\alpha+1)}}-\kappa)\leq C(z^*)s^{-2},
    \end{equation}
    from which Gronwall-type \eqref{Gronwall-with-source} of lemma can be applied (Recall \eqref{final-parameter-conditions-diffusive-nc}), provided that $\kappa$ is small enough such that ${\alpha-1}-\frac{1}{\sqrt{\alpha+1}}-\frac{4}{(\alpha+3)}\sqrt{\frac{3}{8(\alpha+1)}}-\kappa>0$.

    For the second and third inequality, again, we let $0<\kappa\ll 1$ be fixed and choose $s_0$ large enough so that $O(s^{-\frac{h_a}{2}})<\frac{\kappa}{2}$. Now, for $s$ large enough, with choice of \eqref{final-parameter-conditions-diffusive-nc}, \eqref{smooth:lemma:interior:c:new idea:generalized-nc} reads
    \begin{equation}
      \frac{d}{ds}T_{k\eta_0,2\eta_0}^{2\eta_0} \leq  T_{k\eta_0,2\eta_0}^{2\eta_0}(\kappa+1-k\eta_0), 
    \end{equation}
    and  Gronwall's lemma \ref{Gronwall-type Differential Inequality with Source} and \eqref{smooth:bd:etrap:diffusive-nc} gives
    \begin{equation}
    \begin{split}
         & T_{k\eta_0,2\eta_0}^{2\eta_0}(s)\leq T_{k\eta_0,2\eta_0}^{2\eta_0}(s_0) e^{(-k\eta_0+1+\kappa)(s-s_0)}\leq \frac{1}{4}e^{-\eta_0 ls_0} e^{-\eta_0 l(s-s_0)}=\frac{1}{4}e^{-\eta_0 ls},
    \end{split}
    \end{equation}
    provided that  $1-k\eta_0+\kappa<-\eta_0 l$.
\end{proof}}

\subsection{Exterior Estimate}
\begin{lemma}[Exterior Estimate-Exterior Bootstrap Argument]\label{smooth:lemma:main-critical:diffusive-nc}
Let $(\alpha,\eta_0,k,h_a,l,\epsilon_a,\epsilon_c)$ be in the range of \eqref{final-parameter-conditions-diffusive-nc}. There exists $\bar z^*\geq 1$, and for any $z^*\geq \bar z^*$, a $\delta^*>0$ such that for $0<\delta\leq \delta^*$ the following holds true. There exists $s_0^*$ large enough such that if a solution is trapped on $[s_0,s_1]$ with $s_0\geq s_0^*$, for any time $s_0\leq s\leq s_1$ we have

\begin{eqnarray}\label{bd:int e-critical2:diffusive-nc}
\mathcal E_a^2(s)\leq \frac14 s^{-{h_a}}.
\end{eqnarray}

\end{lemma}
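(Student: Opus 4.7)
The plan is to follow the blueprint of Lemma \ref{smooth:lemma:main-critical:diffusive} (the non-diffusive exterior estimate), with modifications that account for the diffusive rescaling of $c$ and the different trappedness definition. A crucial simplification is that only $\mathcal E_a^2$ needs to be improved here: in the diffusive setup the temperature perturbation $\tilde c$ is controlled globally by $\mathcal T_{k\eta_0,2\eta_0}$, so no exterior supersolution for $\tilde c$ is required and the argument reduces to a single maximum-principle estimate for $\tilde a$.

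First I would rewrite the evolution equation \eqref{raw-nc} for $\tilde a$ as $\tilde a_s + \mathcal L_a \tilde a = F_a$, with transport operator
\begin{equation*}
\mathcal L_a v = -\tfrac{\lambda_s}{\lambda} v - \tfrac{\nu_s}{\nu} z\, v_z - 2\phi v + (\partial_z^{-1}\phi)\, v_z - \tilde a\, v + (\partial_z^{-1}\tilde a)\, v_z,
\end{equation*}
and source
\begin{equation*}
F_a = -(\partial_z^{-1}\tilde a)\, \phi' + \left(\tfrac{\lambda_s}{\lambda}+1\right)\phi + \tfrac{\nu_s}{\nu} z\phi' - 2\nu\!\int_0^{1/\nu}(\phi+\tilde a)^2\, dz - \nu\, \partial_z^{-1}\tilde c + \!\int_0^{1/\nu}\nu^2 \partial_z^{-1}\tilde c\, dz.
\end{equation*}
The new feature is that, in the diffusive scaling, the two $\tilde c$-contributions in $F_a$ no longer carry the prefactor $\lambda\sim s e^{-s}$. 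Then I would take the supersolution $f(s,z)=\tfrac{1}{2}s^{-h_a/2}$ and verify that $(\partial_s+\mathcal L_a)f \geq \tfrac{1}{4} s^{-h_a/2}$ on $[z^*,\nu^{-1}]$: using $\tfrac{\lambda_s}{\lambda} = -1 + O(s^{-1})$ from Lemma \ref{lemma:smoothmodulation:diffusion-nc} together with $|\tilde a|\leq s^{-h_a/2}$, the coefficient of $\tfrac{1}{2}s^{-h_a/2}$ is $1 - 2e^{-z^*} - O(C(z^*)s^{-h_a/2}) - \tfrac{h_a}{2}s^{-1}$, which is bounded below by $\tfrac{1}{2}$ after taking $z^*$ then $s_0^*$ large.

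Next I would show $|F_a|\leq \kappa s^{-h_a/2}$ for arbitrarily small $\kappa$, by splitting $\partial_z^{-1}\tilde a\,\phi'$ into interior and exterior pieces: the improved interior bound from Corollary \ref{smooth:lemma:interior:diffusive-nc} combined with \eqref{smooth:bd:varepsilonLinfty:diffusive-nc} gives $|\tilde a|\leq C\delta (z^*)^{(\alpha+1)/2} s^{-h_a/2}$ on $[0,z^*]$, and in the exterior $|\tilde a|\leq s^{-h_a/2}$, so $|\partial_z^{-1}\tilde a\, \phi'|\leq (C\delta (z^*)^{(\alpha+1)/2}+1)z^* e^{-z^*} s^{-h_a/2}\leq \kappa s^{-h_a/2}$ for $z^*$ large and $\delta$ small. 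The $(\tfrac{\lambda_s}{\lambda}+1)\phi$, $\tfrac{\nu_s}{\nu}z\phi'$, and $-2\nu\!\int(\phi+\tilde a)^2$ terms are all $O(s^{-1})$ by Lemma \ref{lemma:smoothmodulation:diffusion-nc} and \eqref{smooth:bd:integral:diffusive-nc}. The possibly delicate point is the pair $-\nu\, \partial_z^{-1}\tilde c$ and $\int \nu^2\partial_z^{-1}\tilde c\, dz$, but \eqref{estimate-on-primitive-of-c-diffusive-1-nc}--\eqref{estimate-on-primitive-of-c-diffusive-2-nc} give $|\nu\, \partial_z^{-1}\tilde c|\leq Cs e^{-ls/2}$ and $|\int\nu^2\partial_z^{-1}\tilde c\, dz|\leq e^{(-l/2+\epsilon)s}$, both of which are $o(s^{-h_a/2})$ since $l>0$ and $h_a<2$.

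Finally I would apply the maximum principle to $f^\pm = \pm(f-\tilde a)$ on the parabolic domain $\{s_0\leq s\leq s_1,\, z^*\leq z\leq \nu^{-1}(s)\}$. At $z=z^*$ the inequalities $\pm\tilde a(s,z^*)\leq \tfrac{1}{2}s^{-h_a/2}$ follow from the refined pointwise interior bound once $\delta$ is small enough (depending on $z^*$). At $z=\nu^{-1}(s)$, the compatibility condition \eqref{equation:epsilon-bc} forces the moving endpoint to be a characteristic of $\mathcal L_a$ (exactly as in \eqref{partical-stay}), so no inflow boundary condition is required. At $s=s_0$, initial closeness \eqref{bd:eini-beta=0:diffusive-nc} yields $|\tilde a_0|\leq \tfrac{1}{4}s_0^{-h_a/2}$ on $[z^*,\nu_0^{-1}]$. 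With $(\partial_s+\mathcal L_a)f^\pm$ of the appropriate sign provided by the above steps, the maximum principle delivers $|\tilde a|\leq f$ on the strip, which is \eqref{bd:int e-critical2:diffusive-nc}. The main (mild) obstacle is therefore the loss of the $\lambda$ prefactor in the $\tilde c$-source terms, but this is absorbed by the fast exponential decay coming from the $\mathcal T_{k\eta_0,2\eta_0}$-trappedness in Lemma \ref{lemma:prelim-est:diffusive-nc}.
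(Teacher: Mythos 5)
Your proposal is correct and takes essentially the same approach as the paper: the paper's (full) proof introduces the same supersolution $f(s,z)=\tfrac12 s^{-h_a/2}$ for $\partial_s+\mathcal L_a$, derives the same source bound $|F_a|\leq\kappa s^{-h_a/2}$ using the interior improvement and the global $\mathcal T_{k\eta_0,2\eta_0}$-control to absorb the now-$\lambda$-free $\tilde c$-contributions, and applies the maximum principle on the moving domain with the same characteristic argument at $z=\nu^{-1}(s)$. You also correctly observed the structural simplification that no supersolution for $\tilde c$ is needed in the diffusive case.
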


\begin{proof}
\ifthenelse{\boolean{isSimplified}}{Mimick the non-diffusive case: Lemma \ref{smooth:lemma:main-critical:diffusive}.}{The proof relies on the maximum principle. We rewrite \eqref{equation:epsilon-a} as
\begin{equation}\label{smooth:equation:epsilon-2-nc}
    \begin{split}
        &\tilde a_s +\mathcal L_a \tilde a    = F_a.
    \end{split}
    \end{equation}
where the transport operator $\mathcal L_a$ (note that it has nonlinear parts) and the source terms $F_a$ and $F_b$ are:
\begin{align*}
&\mathcal L_a v=  - \frac{\lambda_s}{\lambda}v - \frac{\nu_s}{\nu} z v_z - 2\phi v + \partial_z^{-1} \phi v_z    - \tilde a v + \partial_z^{-1} \tilde a v_z ,\\
&F_a = -\partial_z^{-1} \tilde a \phi' +(\frac{\lambda_s}{\lambda} +1)\phi +  \frac{\nu_s}{\nu}z\phi' - 2\nu \int_0^{\frac{1}{\nu}} (\phi+ \tilde{a})^2(z) dz
        \\
         &\qquad  + \lambda\left( - \frac{\nu}{\lambda}\partial_z^{-1} \tilde{c}  + \int_0^{\frac{1}{\nu}} \frac{\nu^2}{\lambda} \partial_z^{-1} \tilde{c}(z) dz\right),\\
\end{align*}

\noindent \textbf{Step 1}. \emph{A supersolution for $\pa_s+\mathcal L_a$ and $\pa_s+\mathcal L_b$ on $[z^*,\nu^{-1}]$}. We introduce
$$
f(s,z)= \frac 12 s^{-h_a/2}
$$
and claim that there exists $z^*$ large enough such that for $s_0$ large enough, for all $ s_0\leq s \leq s_1$ and $z\geq z^*$:
\be \label{smooth:exterior:id:supersolution:diffusive-nc}
(\pa_s +\mathcal L_a) f\geq \frac{1}{4} s^{-h_a/2}.
\ee
To prove \eqref{smooth:exterior:id:supersolution:diffusive-nc}, we compute using \eqref{smoothmodulationequations:diffusive-nc} and \eqref{asy on lambda derivative-nc}:
\be
\begin{split}
    (\pa_s +\mathcal L_a) f &= (-\frac{h_a}{4}s^{-h_a/2-1})+(-\frac{\lambda_s}{\lambda}-2e^{-z}-\tilde{a})\frac{1}{2}s^{-h_a/2}\\
    &\geq (-\frac{h_a}{2}s^{-1}+1-Cs^{-1}-2e^{-z^*}-C(z^*)s^{-h_a/2})\frac{1}{2}s^{-h_a/2}\\
    &\geq \frac{1}{4} s^{-h_a/2},
\end{split}
\ee

which implies \eqref{smooth:exterior:id:supersolution:diffusive-nc} upon first taking $z^*$ large enough and then $s_0^*$ large enough. \\

\noindent \textbf{Step 2}. \emph{Estimate for the source term}. For any $\kappa,\epsilon>0$, we claim that for $z^*$ large enough and then for $\delta$ small enough, for all $s_0\leq s \leq s_1$ and  $z\in [z^*,\nu^{-1}]$:
\be \label{smooth:exterior:bd:source:diffusive-nc}
|F_a(s,z)|\leq \kappa{s^{-\frac{h_a}{2}}}.
\ee
Let $0<\kappa$ be fixed. We inject the improved bootstrap bound \eqref{smooth:bd:int e1:diffusive-nc} in the computation \eqref{smooth:bd:varepsiloninterior:diffusive} and get:
\be \label{smooth:exterior:bd:interiorvarepsilon:diffusive-nc}
\begin{split}
    |\tilde a (z)|&\leq \frac{z^\frac{\alpha+1}{2}}{(\alpha+1)^\frac{1}{2}} I_a(s)\leq C \delta s^{-h_a/2} {z^*}^\frac{\alpha+1}{2} \quad \mbox{for }z\in [0,z^*],\\
\end{split}
\ee
where we have used $h_b<2$ and taken $s$ (may depend on $z^*$ and $\delta$) large enough.
Combining with 
\begin{equation}\label{a-b-exterior:diffusive-nc}
\begin{split}
    |\tilde a(z)|&\leq s^{-\frac{h_a}{2}} \qquad\text{for $z\in [z^*,\nu^{-1}]$}\\
\end{split}  
\end{equation}
using $\phi(z)=e^{-z}$:
\begin{equation} \label{smooth:exterior:bd:inter1:diffusive}
\begin{split}
    |\partial_z^{-1} \tilde a \phi'| &\leq (\|\tilde a\|_{L^\infty[0,z^*]}+\|\tilde a\|_{L^\infty[z^*,\nu^{-1}]}) z |\phi'(z)|\\
    &\leq \Big(({z^*}^\frac{\alpha+1}{2}) \delta s^{-\frac {h_a}{2}}+s^{-\frac {h_a}{2}} \Big)z^*e^{-z^*} \leq  \kappa s^{-\frac{h_a}{2}} \qquad\text{for $z\in [z^*,\nu^{-1}]$},
\end{split}
\end{equation}
where $\kappa>0$ and we have taken $z^*$ large enough such that all of $z^*e^{-z^*}<\kappa/2$, and that $z^{1}e^{-z}$ is decreasing for $z>z^*$ are satisfied, and $\delta$ small enough such that $\delta {z^*}^\frac{\alpha+1}{2}<1$. Next, using \eqref{smooth:bd:integral:diffusive-nc}, \eqref{estimate-on-primitive-of-c-diffusive-1-nc}, \eqref{smoothmodulationequations:diffusive-nc}, and \eqref{a-b-exterior:diffusive-nc}, for all $z\geq z^*$ and $z^*$ large enough:
\be \label{smooth:exterior:bd:inter2:diffusive-nc}
\begin{split}
   &\left|(\frac{\lambda_s}\lambda +1)\phi \right|  \leq Cs^{-1}\leq \kappa s^{-\frac{h_a}{2}},
   \\
   &\left|\frac{\nu_s}\nu z\phi'  \right|\leq Cs^{-1}\cdot1 \leq Cs^{-1}\leq \kappa s^{-\frac{h_a}{2}},
   \\
   &\left|- 2\nu\Big(\int_0^{\frac{1}{\nu}} (\phi+\tilde a)^2(z)dz\Big)\right|\leq Cs^{-1}\leq \kappa s^{-\frac{h_a}{2}},\\
   &\left|\nu \pa_z^{-1} \tilde c\right|\leq C s^1 e^{-\frac{l}{2}s}  \leq Cs^{-1}\leq \kappa s^{-\frac{h_a}{2}},
   \\
   &\left|\int_0^{\frac{1}{\nu}} \nu^2 \partial_z^{-1} \tilde{c}(z) dz\right|\leq e^{(-\frac{l}{2}+\epsilon) s} \leq Cs^{-1}\leq \kappa s^{-\frac{h_a}{2}},
   \\
\end{split}
\ee
where we have taken $s_0^*$ to be large enough that may depend on $z^*$.
Combining \eqref{smooth:exterior:bd:inter1:diffusive-nc} and \eqref{smooth:exterior:bd:inter2:diffusive-nc} shows  \eqref{smooth:exterior:bd:source:diffusive-nc}.\\

\noindent \textbf{Step 3}. \emph{End of the proof}. We introduce
\be \label{smoothexterior:def:fpm:diffusive-nc}
f^\pm= \pm \left( f-\tilde a \right)
\ee
and try to apply maximum principle on them.
Using \eqref{smooth:equation:epsilon-2-nc}, \eqref{smooth:exterior:id:supersolution:diffusive-nc} and \eqref{smooth:exterior:bd:source:diffusive-nc} one obtains that for $s_0\leq s\leq s_1$ and $z\in [z^*,\nu^{-1}]$:
\be \label{smooth:exterior:inter1:diffusive-nc}
\begin{split}
  &(\pa_s+\mathcal L_a)f^+= (\pa_s+\mathcal L_a)f - F_a\geq (\frac{1}{4}-\kappa) s^{-\frac{h_a}{2}}\geq 0 \quad \mbox{and similarly} \quad (\pa_s+\mathcal L)f^-\leq 0,
  \\
\end{split}
\ee
provided that $\kappa \leq 1/4$. We estimate
\begin{equation}
\begin{split}
    -\frac{\nu_s}{\nu}z^* + \int_0^{z^*} (\phi+\tilde a)(\tilde z) d\tilde z &\geq -\frac{\nu_s}{\nu^2}z^* \nu + 1-e^{-z^*}-\norm{\tilde{a}}_{L^\infty[0,\nu^{-1}]}z^*\\
    &\geq (1-C(z^*)s^{-h_a+1})z^*(Cs^{-1})+1-e^{-z^*}-C(z^*)z^*s^{-h_a/2}\\
    &\geq 1-e^{-z^*}-(Cs^{-1}-Cs^{-1} C(z^*) s^{-h_a+1}-C(z^*) s^{-h_a/2})z^*\\
    &\geq 1-e^{-z^*}-Cs^{-1}z^*\geq 0,
\end{split}  
\end{equation}
where we have first taken $z^*$ sufficiently large, then $s$ sufficiently large. From this we know that the particles are always moving from region $0\leq z\leq z^*$ to $z^*\leq z \leq \frac1\nu$.
At the boundary $z=z^*$ one has using \eqref{smooth:exterior:bd:interiorvarepsilon:diffusive-nc} that:
\be \label{smooth:exterior:inter2-nc}
\begin{split}
   &f^+(s,z^*)= \frac{s^{-\frac{h_a}{2}}}{2}-\tilde a (s,z^*)\geq (\frac{1}{2}-C \delta  {z^*}^\frac{\alpha+1}{2}) s^{-h_a/2}\geq 0 \quad \mbox{and similarly} \quad f^-(s,z^*)\leq 0, 
   \\
\end{split}
\ee
provided $\delta$ is small enough depending on $z^*$. At initial time $s=s_0$, we have using \eqref{bd:eini-beta=0:diffusive-nc} that for all $z\in [z^*,\nu_0^{-1}]$:
\begin{equation}\label{smooth:exterior:inter3:diffusive-nc}
\begin{split}
&f^+(s_0,z)\geq \frac{s_0^{-\frac{h_a}{2}}}{2}-\| \tilde a_0\|_{L^\infty [z^*,\nu_0^{-1}]}\geq \frac{s_0^{-\frac{h_a}{2}}}{2}-\frac{s_0^{-\frac{h_a}{2}}}{4}\geq 0,  \quad \mbox{and similarly} \quad f^-(s_0,z)\leq 0,
\\
\end{split}
\end{equation}
At the point $z(s)=\frac1{\nu(s)}$, thanks to the boundary condition \eqref{raw-nc}, the characteristics of the full transport field stays on
the boundary (or the characteristics curves passing any points in the interior of the domain cannot be issued from the boundary $z=\nu^{-1}(s)$, so it can only be issued from $(s_0,\nu^{-1}(s_0))$) since
\begin{equation}\label{partical-stay-label}
    \frac{d}{ds}\frac1{\nu(s)}= -\frac{\nu_s}{\nu^2} = -\frac{\nu_s}{\nu}\nu^{-1} + \underbrace{\int_0^{\frac1\nu} (\phi+\varepsilon) dz}_{0}=\text{transport coefficient}.
\end{equation}
This together with \eqref{smooth:exterior:inter3:diffusive-nc} imply that $f^+(s,\frac1\nu)\geq 0$ and $f^-(s,\frac1\nu)\leq 0$.
Therefore, in view of \eqref{smooth:exterior:bd:inter1:diffusive-nc}, \eqref{smooth:exterior:bd:inter2:diffusive-nc} and \eqref{smooth:exterior:inter3:diffusive-nc} one can apply the maximum principle and obtain that $f^+(s,z)\geq 0$ and $f^-(s,z)\leq 0$ for all $s_0\leq s \leq s_1$ and $z^*\leq z \leq \nu^{-1}$. By the definition \eqref{smoothexterior:def:fpm:diffusive-nc}  of $f^\pm$, this implies the desired estimate \eqref{bd:int e-critical2:diffusive-nc} and completes the proof of the Lemma.
}

\end{proof}

\subsection{Conclusion of Bootstrap Argument-Improvement on the Bounds of the Temperature}
\ifthenelse{\boolean{isSimplified}}{}{In view of Lemma \ref{lemma:smoothmodulation:diffusion-nc}, Lemma \ref{smooth:lemma:interior:diffusive-nc} and Lemma \ref{smooth:lemma:main-critical:diffusive-nc}, Definition \ref{def:crit-beta=0:diffusive-nc} and Definition \ref{def:ini crit-beta=0:diffusive-nc}, the bootstrap argument is completed.

\begin{proposition} \label{smooth:pr:bootstrap:diffusive-nc}

Let $(\alpha,\eta_0,k,h_a,l,\epsilon_a,\epsilon_c)$ be in the range of \eqref{final-parameter-conditions-diffusive-nc}. There exist universal constants $z^*\geq 1$, $\delta(z^*)>0$ and $s_0^*(z^*,\delta)\geq 0$ such that the following holds true. For all $s_0\geq s_0^*$, any solution of \eqref{raw-nc} which is initially close to the blowup profile in the sense of Definition \ref{def:ini crit-beta=0:diffusive-nc} is trapped on $[s_0,+\infty)$ in the sense of Definition \ref{def:crit-beta=0:diffusive-nc}.
\end{proposition}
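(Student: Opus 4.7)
The plan is to run a standard continuity/bootstrap closure, exactly paralleling the non-diffusive Proposition \ref{smooth:pr:bootstrap:diffusive}, with all the technical work already isolated in the preceding lemmas. Given initial data $(a_0,c_0)$ satisfying Definition \ref{def:ini crit-beta=0:diffusive-nc}, local well-posedness of system \eqref{raw-nc} in the relevant analytic class plus the fact that the initial bounds in Definition \ref{def:ini crit-beta=0:diffusive-nc} are strict (strictly smaller than the trapped bounds of Definition \ref{def:crit-beta=0:diffusive-nc}) ensure that
\[
\mathcal S := \bigl\{ s_1 \geq s_0 : \text{the solution is trapped on } [s_0,s_1] \bigr\}
\]
is a nonempty subinterval of $[s_0,+\infty)$. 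Setting $s^{\star} := \sup \mathcal S$, the goal is to prove $s^{\star} = +\infty$.

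Suppose toward a contradiction that $s^{\star} < \infty$. By continuity in $s$ of $\lambda$, $\nu$, $\mathcal I_a$, $\mathcal E_a$ and $\mathcal T_{k\eta_0,2\eta_0}$, at least one of the trapped inequalities in Definition \ref{def:crit-beta=0:diffusive-nc} must be saturated at $s=s^{\star}$. I would then rule out each possibility in turn by invoking the three core estimates: first, Lemma \ref{lemma:smoothmodulation:diffusion-nc} yields $\lambda(s^{\star}) \in [\tfrac{1}{1+\epsilon} s^{\star} e^{-s^{\star}}, (1+\epsilon) s^{\star} e^{-s^{\star}}]$ and $\nu(s^{\star}) \in [\tfrac{1}{(N_0+\epsilon)s^{\star}}, \tfrac{N_0+\epsilon}{s^{\star}}]$, which strictly improve $M = 2$ and $N > N_0+\epsilon$ upon choosing $\epsilon$ small enough; second, Corollary \ref{smooth:lemma:interior:diffusive-nc} delivers $\mathcal I_a^2(s^{\star}) \leq 2\delta^2 (s^{\star})^{-h_a}$ and $\mathcal T_{k\eta_0,2\eta_0}^{2\eta_0}(s^{\star}) \leq \tfrac14 e^{-\eta_0 l s^{\star}}$, strict improvements over $(s^{\star})^{-h_a}$ and $e^{-\eta_0 l s^{\star}}$ once $\delta \leq 1/\sqrt{2}$; third, Lemma \ref{smooth:lemma:main-critical:diffusive-nc} gives $\mathcal E_a^2(s^{\star}) \leq \tfrac14 (s^{\star})^{-h_a}$, again strictly improving $(s^{\star})^{-h_a}$.

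Since every bootstrap inequality is strictly improved at $s=s^{\star}$, continuity would force the solution to remain trapped on a slightly larger interval $[s_0, s^{\star}+\eta)$ for some $\eta>0$, contradicting the maximality of $s^{\star}$. Hence $s^{\star} = +\infty$, which is exactly the statement of the proposition.

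The main difficulty is not the closure step itself---which is a purely formal continuity argument---but the fact that the closure is only available because the preceding technical estimates have all been pushed to a form that strictly beats the bootstrap thresholds. In the diffusive case, the critical new ingredient, compared with \cite{Collot2023}, is the total energy estimate Lemma \ref{differential inequality of c-nc}: using the weighted Hardy inequality \eqref{Corollary of Hardy's} one absorbs the second-order diffusion term $\tfrac{\lambda}{\nu^2}\tilde c_{zz}$ into a nonpositive contribution provided $k < 2 - 1/\eta_0$, which is precisely why the parameter range \eqref{final-parameter-conditions-diffusive-nc} is nonempty. Once this structural obstruction is resolved, and the compatibility of the singular weight $z^{-k\eta_0}$ with the time-invariant vanishing conditions of Corollary \ref{Invariance-of-vanishing speed} has been verified, the present proposition reduces to the contradiction argument sketched above.
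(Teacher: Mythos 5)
Your proposal is correct and follows exactly the same approach the paper uses: define $s^\star$ as the supremum of times for which the solution is trapped, and argue by contradiction that if $s^\star<\infty$ then Lemma \ref{lemma:smoothmodulation:diffusion-nc}, Corollary \ref{smooth:lemma:interior:diffusive-nc}, and Lemma \ref{smooth:lemma:main-critical:diffusive-nc} yield strict inequalities at $s=s^\star$, so by continuity the solution stays trapped past $s^\star$. The paper's own proof is given in identical form (mirroring the non-diffusive Proposition \ref{smooth:pr:bootstrap:diffusive}); your surrounding remarks about the role of the weighted Hardy inequality are accurate context but belong to Lemma \ref{differential inequality of c-nc} rather than to this closure step.
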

\begin{proof}
Similar to the non-diffusive case.
\end{proof}}

For convenience, let us now fix
\begin{equation}\label{particular-parameter-choice-nc}
    (\eta_0,k,h_a,l,\epsilon_a,\epsilon_c)=(4,\frac{\alpha+1}{2},\frac{3}{2},\frac{2\alpha+1}{8},\frac{10\alpha+3}{32},\frac{2\alpha+11}{16}),\, \alpha_0<\alpha<\frac{5}{2},
\end{equation}
while we allow
\begin{equation}
    \eta\geq \eta_0=4 \text{ to vary.}
\end{equation}
We can check that for each $1<\alpha<\frac{5}{2}$, it is indeed in the range of \eqref{final-parameter-conditions-diffusive-nc}, since most values are conveniently taken to be midpoints of the lower and higher ends.\par
\begin{remark}\label{continuous remark}
    In this case, since $\epsilon_c >\frac{k}{2}$, we see that $\tilde c(s,z)z^{-\frac{k}{2}}=\tilde c(s,z)z^{-\frac{\alpha+1}{4}}$ is continuous at zero.
\end{remark}

\begin{proposition}\label{l infinity corollary nc}
   Let $\alpha_0<\alpha<\frac{5}{2}$ and $  (\eta_0,k,h_a,l,\epsilon_a,\epsilon_c)$ be defined in \eqref{particular-parameter-choice-nc}.  There exist universal constants $z^*\geq 1$, $\delta(z^*)>0$ and $s_0^*(z^*,\delta)\geq 0$ such that the following holds true. For all $s_0\geq s_0^*$, any solution of \eqref{raw-nc} which is initially close to the blowup profile in the sense of Definition \ref{def:ini crit-beta=0:diffusive-nc} is trapped on $[s_0,+\infty)$ in the sense of Definition \ref{def:crit-beta=0:diffusive-nc}. Furthermore, $\norm{\tilde c}_{L^\infty(0,\nu^{-1}(s))}$ decays exponentially in time, provided that the initial condition is chosen accordingly:
   \begin{equation}\label{initial maximum norm estimate nc}
        \norm{\frac{\tilde c(s_0,z) }{z^{\frac{\alpha+1}{4}}}}_{L^{\infty}(0,\nu^{-1}(s_0))}\leq e^{(-\frac{\alpha+1}{16})(s_0)} \implies  \norm{\tilde c(s,z) }_{L^{\infty}(0,\nu^{-1}(s))}\leq C e^{(-\frac{\alpha+1}{16})s} s^{\frac{\alpha+1}{4}},
   \end{equation}
   for some universal constant $C$.
\end{proposition}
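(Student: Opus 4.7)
The plan is to split the argument into two parts: trappedness on $[s_0,+\infty)$ and the $L^\infty$ decay estimate for $\tilde c$. For trappedness I will first check that the parameters chosen in \eqref{particular-parameter-choice-nc} lie in the admissible range \eqref{final-parameter-conditions-diffusive-nc} whenever $\alpha_0<\alpha<5/2$; this is routine since each value is a midpoint of its allowed interval. On any interval $[s_0,s_1]$ on which the solution is trapped, Lemma \ref{lemma:smoothmodulation:diffusion-nc}, Corollary \ref{smooth:lemma:interior:diffusive-nc}, and Lemma \ref{smooth:lemma:main-critical:diffusive-nc} strictly sharpen every inequality defining Definition \ref{def:crit-beta=0:diffusive-nc}. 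The standard maximality argument at $s^{\ast}=\sup\{s_1:(\tilde a,\tilde c)\text{ trapped on }[s_0,s_1]\}$ then forces $s^{\ast}=+\infty$, exactly as in the non-diffusive case.

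For the $L^\infty$ estimate on $\tilde c$, the strategy is to apply Lemma \ref{differential inequality of c-nc} not only at $\eta_0=4$ but for every integer $\eta\geq\eta_0$, integrate via Gronwall, and finally let $\eta\to\infty$. The equivalent $L^{2\eta}$-form stated there,
\[
\frac{d}{ds}\mathcal T_{k\eta,2\eta} \leq \mathcal T_{k\eta,2\eta}\bigl(O(C(z^*)s^{-h_a/2}) - \tfrac{k}{2} + \tfrac{1}{2\eta}\bigr),
\]
has an error coefficient that is $\eta$-independent, and the choice $h_a=3/2$ makes the time-integral of that error grow only like $s^{1/4}$. Hence, for any small $\epsilon>0$ and $s$ sufficiently large, Gronwall yields
\[
\mathcal T_{k\eta,2\eta}(s) \leq C_\epsilon\, \mathcal T_{k\eta,2\eta}(s_0)\, \exp\!\bigl((-\tfrac{k}{2}+\tfrac{1}{2\eta}+\epsilon)(s-s_0)\bigr),
\]
with $C_\epsilon$ also $\eta$-independent. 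I will then bound the initial piece using the hypothesis on $\tilde c(s_0,\cdot)(\cdot)^{-(\alpha+1)/4}$:
\[
\mathcal T_{k\eta,2\eta}(s_0) \leq (\nu_0^{-1})^{1/(2\eta)}\Bigl\|\frac{\tilde c(s_0,\cdot)}{(\cdot)^{(\alpha+1)/4}}\Bigr\|_{L^\infty(0,\nu_0^{-1})} \leq (Cs_0)^{1/(2\eta)}\, e^{-\frac{\alpha+1}{16}s_0}.
\]
Passing to the limit $\eta\to\infty$ on both sides is legitimate because $\tilde c(s,z)z^{-k/2}$ is continuous on the closed interval $[0,\nu^{-1}(s)]$: at $z=0$ by Remark \ref{continuous remark} (since $\epsilon_c=(2\alpha+11)/16>k/2=(\alpha+1)/4$), at $z=\nu^{-1}(s)$ by the Dirichlet condition, and in between by the regularity of the trapped solution. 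The limit produces an $L^\infty$ bound on $\tilde c(s,\cdot)(\cdot)^{-(\alpha+1)/4}$; multiplying by $(\nu^{-1}(s))^{(\alpha+1)/4}\leq (Cs)^{(\alpha+1)/4}$ and choosing, say, $\epsilon=3(\alpha+1)/16$ absorbs the $s_0$-dependent exponential into a fresh constant and delivers precisely $\|\tilde c(s)\|_{L^\infty}\leq Ce^{-(\alpha+1)s/16}s^{(\alpha+1)/4}$.

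The main obstacle will be ensuring the $\eta$-uniformity of every constant on the right-hand side, without which the limit $\eta\to\infty$ would collapse. Two structural ingredients make this work: the reformulated $L^{2\eta}$ differential inequality in Lemma \ref{differential inequality of c-nc} has an error coefficient that no longer carries a factor of $\eta$, and the assumption $h_a>1$ keeps the time-integral of this error sub-linear in $s$, so that it can be absorbed into an arbitrarily small linear exponential of the form $e^{\epsilon(s-s_0)}$. Once these are in place, the $L^p \to L^\infty$ passage is the standard fact that $\|f\|_{L^{p}([0,L])}\to \|f\|_{L^\infty([0,L])}$ for continuous $f$ on a bounded interval.
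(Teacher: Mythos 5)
Your proposal is correct and follows essentially the same route as the paper: establish trappedness by the maximality argument, then run the $L^{2\eta}$ differential inequality of Lemma~\ref{differential inequality of c-nc} through Gronwall for every $\eta\geq\eta_0$, pass to $\eta\to\infty$ using continuity of $\tilde c(s,z)z^{-k/2}$ on the closed interval, and convert the weighted $L^\infty$ bound into the unweighted one by multiplying by $(\nu^{-1}(s))^{k/2}\lesssim s^{(\alpha+1)/4}$. The only cosmetic difference is that the paper freezes $\eta=\eta_0$ inside the exponent and absorbs the time-dependent error by enlarging $s_0^*$ (so no $C_\epsilon$ is needed), whereas you let $\eta\to\infty$ in the exponent too and absorb the sublinear error integral into an $\epsilon$-linear term; both deliver the stated rate.
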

\begin{proof}
\ifthenelse{\boolean{isSimplified}}{In a similar fashion as in Proposition \ref{smooth:pr:bootstrap:diffusive}, in view of Lemma \ref{lemma:smoothmodulation:diffusion-nc}, Lemma \ref{smooth:lemma:interior:diffusive-nc} and Lemma \ref{smooth:lemma:main-critical:diffusive-nc}, Definition \ref{def:crit-beta=0:diffusive-nc} and Definition \ref{def:ini crit-beta=0:diffusive-nc}, the first part of this proposition immediately follows.}{}

Applying Lemma \ref{differential inequality of c-nc} again with $\eta\geq \eta_0$:
\begin{equation}
      \frac{d}{ds}\mathcal T_{\frac{\alpha+1}{2}\eta,2\eta}\leq  \mathcal T_{\frac{\alpha+1}{2}\eta,2\eta} (O( C(z^*) s^{-\frac{1}{2}})-\frac{\frac{\alpha+1}{2}}{2}+\frac{1}{2\eta}),
\end{equation}
Choose $s$ large enough such that $1\gg\kappa>O(C(z^*) s^{-1/2})$, and this inequality gives
\begin{equation}\label{L 2eta estimate}
   \mathcal T_{\frac{\alpha+1}{2} \eta,2\eta}(s)\leq \mathcal T_{\frac{\alpha+1}{2}\eta,2\eta}(s_0) e^{(-\frac{\frac{\alpha+1}{2}}{2}+\frac{1}{2\eta}+\kappa)(s-s_0)}
\end{equation}
Therefore, since we can rewrite
\begin{equation}
    \mathcal T_{2\eta,\frac{\alpha+1}{2}\eta}=(\int_0^{\nu^{-1}(s)} \tilde c^{2\eta} z^{-\frac{\alpha+1}{2} \eta})^\frac{1}{2\eta}
    =\norm{\tilde c(s,\cdot) (\cdot)^{-\frac{\frac{\alpha+1}{2}}{2}}}_{L^{2\eta}(0,\nu^{-1}(s))}.
\end{equation}
Since $\tilde c(s,z)z^{-\frac{\alpha+1}{4}}$ is continuous at $z=0$ (Remark \ref{continuous remark}) and thus $\norm{\tilde c(s,z)z^{-\frac{\alpha+1}{4}}}_{L^\infty(0,\nu^{-1}(s))}$ exists for all $s\geq s_0$. Moreover, we observe that $\norm{\tilde c(s,z)z^{-\frac{\alpha+1}{4}}}_{L^{2\eta_0}(0,\nu^{-1}(s))}$ also exists for all $s\geq s_0$ and $e^{(-\frac{\frac{\alpha+1}{2}}{2}+\frac{1}{2\eta}+\kappa)(s-s_0)}\leq e^{(-\frac{\frac{\alpha+1}{2}}{2}+\frac{1}{8}+\kappa)(s-s_0)}$.  By results from measure theory, taking $\eta\rightarrow \infty$ in \eqref{L 2eta estimate} shows that
\begin{equation}
\resizebox{.93 \textwidth}{!}{$\displaystyle\norm{\tilde c(s,z) z^{-\frac{\alpha+1}{4}}}_{L^{\infty}(0,\nu^{-1}(s))}\leq \norm{\frac{\tilde c(s_0,z) }{z^{\frac{\alpha+1}{4}}}}_{L^{\infty}(0,\nu^{-1}(s_0))} e^{(-\frac{\alpha+1}{8}+\kappa)(s-s_0)}\leq \norm{\frac{\tilde c(s_0,z) }{z^{\frac{\alpha+1}{4}}}}_{L^{\infty}(0,\nu^{-1}(s_0))} e^{(-\frac{\alpha+1}{16})(s-s_0)}.$}
\end{equation}
Therefore, if initially
\begin{equation}
    \norm{\frac{\tilde c(s_0,z) }{z^{\frac{\alpha+1}{4}}}}_{L^{\infty}(0,\nu^{-1}(s_0))}\leq e^{(-\frac{\alpha+1}{16})(s_0)},
\end{equation}
then for $s\geq s_0$,
\begin{equation}
     \norm{\tilde c(s,z) z^{-\frac{\alpha+1}{4}}}_{L^{\infty}(0,\nu^{-1}(s))}\leq e^{(-\frac{\alpha+1}{16})s}\implies \norm{\tilde c(s,z) }_{L^{\infty}(0,\nu^{-1}(s))}\leq C e^{(-\frac{\alpha+1}{16})s} s^{\frac{\alpha+1}{4}}.
\end{equation}
\end{proof}

\section{Stability of Blowup Result}
\subsection{Re-decomposition of Zero-average Velocity and Temperature}
Given any initial condition that has the form of
\begin{equation}\label{arbitrary zero-average initial condition}
    \begin{split}
        a_0(Z)&=\frac{1}{\widetilde \lambda_0}\phi(\frac{Z}{\widetilde\nu_0})+\tilde a_0(Z)\\
        c_0(Z)&=\tilde c_0(Z),
    \end{split}
\end{equation}
{where the domain $Z\in[0,1]$, and $a_0$ has zero average on the whole domain:
\begin{equation}\label{definition of zero average}
    \int_0^1 a_0(Z)dZ=0,
\end{equation}
} we need to find a new decomposition $(\bar \lambda_0,\bar \nu_0,\bar a_0,\bar c_0)$ such that
\begin{equation}\label{redecomposition of initial condition}
\begin{split}
  &a_0(Z)=\frac{1}{\bar \lambda_0}\phi(\frac{Z}{\bar\nu_0})+\bar a_0(Z)\\
     & c_0(Z)=\frac{1}{\bar \lambda_0^{1+\sigma}} \bar c_0(Z)\\
     & Z=z\bar \nu_0,\, \bar \lambda_0=\bar s_0 e^{-\bar s_0}
\end{split}
\end{equation}
$\bar \lambda_0\bar a_0$ and $\bar c_0$ are initial close and \text{ satisfy Definition \ref{def:ini crit-beta=0:diffusive} or  Definition \ref{def:ini crit-beta=0:diffusive-nc} respectively,} and thus the propositions we established earlier can be applied to obtain some ``stability'' result.\par
To obtain such decomposition, in addition, we need to find an upper bound of $\tilde \lambda_0$, a range of $\tilde \nu_0$ depending on each $\tilde \lambda_0$, and suitable function spaces, from which the initial perturbations $\tilde a_0$ and $\tilde c_0$ are chosen. The proof of the following lemma is similar to the corresponding proof found in \cite{Collot2023}.

\begin{lemma}\label{decomposition lemma}
    For each fixed choices of parameters in the range of \eqref{final-parameter-conditions-diffusive} (or \eqref{particular-parameter-choice-nc}), let $z^*$, $\delta(z^*)$, $s_0^{**}(z^*,\delta(z^*))$ be given in Proposition \ref{smooth:pr:bootstrap:diffusive}  (or \ifthenelse{\boolean{isSimplified}}{Proposition}{Corollary} \ref{l infinity corollary nc}), where $s_0^{**}$ is the threshold self-similar time. There exists a $\lambda_0^*(z^*,\delta(z^*),s_0^{**}(z^*,\delta(z^*)))$ small enough, such that it can be written as $\lambda_0^*=s_0^* e^{-s_0^*}$ for some $s_0^*\geq \max(1,s_0^{**}(z^*,\delta(z^*)))$, and the following holds. For each $\tilde \lambda_0<\lambda_0^*/2$ and $\tilde \nu_0$ satisfies $\frac{3}{2N_0} \leq {\tilde \nu_0}\log(\tilde \lambda_0^{-1}) \leq \frac{1}{2} N_0$, if initial conditions is decomposed in \eqref{arbitrary zero-average initial condition} and satisfies \eqref{definition of zero average}, then there exists another  decomposition $(\bar \lambda_0,\bar \nu_0,\bar a_0,\bar c_0,\bar s_0)$ such that $\bar s_0\geq s_0^*\geq s_0^{**}$ and \eqref{redecomposition of initial condition} holds. Furthermore, there exists $\kappa^*=\kappa^*(\tilde \lambda_0,z^*,\delta(z^*))$, such that if $\kappa\leq \kappa^*$ and initial original perturbations satisfy 
    \begin{equation}
        \begin{split}   
  &\tilde a_0 \in X_{\tilde \epsilon_a,\kappa}=\set{f\in C^2[0,1]:{\norm{f}}_{C^{1,\tilde \epsilon_a}[0,1]}<\kappa,f'(Z=0)=0}\\
   &\tilde c_0 \in 
         X_{\tilde \epsilon_c,\kappa}=\set{f\in C^2[0,1]:\norm{f}_{C^{1,\tilde \epsilon_c}[0,1]}<\kappa,\, f(Z=0)=f'(Z=0)=0}\text{ if }\sigma=0\\
         &\left(\tilde c_0\in X_{\tilde \epsilon_c,\kappa}=\set{f\in C^3[0,1]:\norm{f}_{C^{0,\tilde \epsilon_c}[0,1]}<\kappa,\, f(Z=0)= f(Z=1)=0}\text{ if }\sigma=1\right)
\end{split}
    \end{equation}
    for some $1>\tilde \epsilon_a> \epsilon_a$ and $1>\tilde \epsilon_c> \epsilon_c$, then
    \begin{enumerate}
        \item The new decomposition $\bar\lambda_0\bar a_0,\bar c_0,\bar \lambda_0,\bar\nu_0$ satisfy the definition of initial closeness with additional parameters  $(z^*,\delta(z^*),\bar s_0;N_0)$: Definition \ref{def:crit-beta=0:diffusive} (Definition \ref{def:crit-beta=0:diffusive-nc}).
        \item The decaying speed conditions at $z=0$ Equation \eqref{decay-speed-condition} (or \eqref{decay speed condition nc}) are satisfied for $\bar\lambda_0\bar a_0,\bar c_0$ with parameters $\epsilon_a,\epsilon_c$.
        \item For diffusive case, the assumption of the implication of weighted $L^\infty$ estimate  \eqref{initial maximum norm estimate nc} is satisfied.
    \end{enumerate}
\end{lemma}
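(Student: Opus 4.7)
The plan is to construct the new decomposition by imposing the orthogonality conditions $\bar a_0(0)=\partial_z\bar a_0(0)=0$ required by Definition \ref{def:ini crit-beta=0:diffusive}. Using \eqref{redecomposition of initial condition} together with the hypothesis $\tilde a_0'(Z=0)=0$---which gives $a_0(0)=\tilde\lambda_0^{-1}+\tilde a_0(0)$ and $a_0'(0)=-(\tilde\lambda_0\tilde\nu_0)^{-1}$---these two conditions determine
\begin{equation*}
    \bar\lambda_0 = \frac{\tilde\lambda_0}{1+\tilde\lambda_0\tilde a_0(0)}, \qquad \bar\nu_0 = \tilde\nu_0\bigl(1+\tilde\lambda_0\tilde a_0(0)\bigr),
\end{equation*}
and in particular the exact identity $\bar\lambda_0\bar\nu_0=\tilde\lambda_0\tilde\nu_0$. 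For $\kappa$ small this makes $\bar\lambda_0,\bar\nu_0$ small multiplicative perturbations of $\tilde\lambda_0,\tilde\nu_0$, and since $x\mapsto xe^{-x}$ is strictly decreasing on $[1,\infty)$ the relation $\bar\lambda_0=\bar s_0 e^{-\bar s_0}$ defines $\bar s_0$ uniquely. Setting $\lambda_0^*=s_0^*e^{-s_0^*}$ with $s_0^*\geq\max(1,s_0^{**})$ and $\tilde\lambda_0\leq\lambda_0^*/2$ ensures $\bar\lambda_0\leq\lambda_0^*$, hence $\bar s_0\geq s_0^*$, provided $\kappa$ is small. Combining the asymptotic $\bar s_0=\log(\tilde\lambda_0^{-1})+O(1)$ with the hypothesis $3/(2N_0)\leq\tilde\nu_0\log(\tilde\lambda_0^{-1})\leq N_0/2$ then yields $1/(N_0\bar s_0)\leq\bar\nu_0\leq N_0/\bar s_0$.

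Once $\bar\lambda_0,\bar\nu_0$ are fixed, define $\bar a_0(z)=\bar\lambda_0 a_0(z\bar\nu_0)-\phi(z)$ and $\bar c_0(z)=\bar\lambda_0^{1+\sigma}c_0(z\bar\nu_0)$ on $[0,1/\bar\nu_0]$. Differentiating and exploiting $\bar\lambda_0\bar\nu_0=\tilde\lambda_0\tilde\nu_0$ produces the near-cancellation
\begin{equation*}
    \partial_z\bar a_0(z) = \bigl[\phi(z)-\phi\bigl(z\bar\nu_0/\tilde\nu_0\bigr)\bigr] + \tilde\lambda_0\tilde\nu_0\,\tilde a_0'(z\bar\nu_0),
\end{equation*}
where the bracket is $O(z\tilde\lambda_0\kappa)$ by the mean value theorem (since $|\bar\nu_0/\tilde\nu_0-1|\leq\tilde\lambda_0\kappa$) and the second term is $O\bigl(\tilde\lambda_0\tilde\nu_0\kappa(z\bar\nu_0)^{\tilde\epsilon_a}\bigr)$ thanks to $\tilde a_0'(0)=0$ and $\tilde a_0\in C^{1,\tilde\epsilon_a}$. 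Substituting into $\mathcal I_a^2$ and using $\tilde\epsilon_a>\epsilon_a>(\alpha-1)/2$ to guarantee integrability of the singular weight $z^{-\alpha}$ near the origin gives $\mathcal I_a^2(\bar s_0)\leq C(z^*)\kappa^2\tilde\lambda_0^2$, which falls below $\delta^2\bar s_0^{-h_a}$ once $\kappa\leq\kappa^*(\tilde\lambda_0,z^*,\delta)$. The exterior bound $\mathcal E_a^2$ is handled identically via the uniform estimate $|\bar a_0(z)|\leq C\tilde\lambda_0\kappa$ on $[z^*,1/\bar\nu_0]$, and the corresponding bounds on $\mathcal I_c^2,\mathcal E_c^2$ (non-diffusive) or $\mathcal T_{k\eta_0,2\eta_0}^{2\eta_0}$ (diffusive) follow by the same route using $|\bar c_0(z)|\lesssim\bar\lambda_0^{1+\sigma}\kappa(z\bar\nu_0)^{\tilde\epsilon_c}$ together with the integrability conditions on $\epsilon_c$ recorded in \eqref{final-parameter-conditions-diffusive}/\eqref{condition on k:diffusive-nc}.

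The vanishing-speed conditions \eqref{decay-speed-condition}/\eqref{decay speed condition nc} are inherited automatically. Indeed $\partial_z\bar a_0(0)=0$ and $\partial_z\bar a_0\in C^{0,\tilde\epsilon_a}$ near the origin give $\partial_z\bar a_0(z)=O(z^{\tilde\epsilon_a})=o(z^{\epsilon_a})$ by $\tilde\epsilon_a>\epsilon_a$; similarly $\bar c_0(0)=0$ with $\tilde\epsilon_c>\epsilon_c$ yields $\bar c_0(z)=o(z^{\epsilon_c})$, and in the non-diffusive case the extra condition $\tilde c_0'(0)=0$ built into the space $X_{\tilde\epsilon_c,\kappa}$ provides $\partial_z\bar c_0(0)=0$, whence $\partial_z\bar c_0(z)=o(z^{\epsilon_c})$ as well. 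For the diffusive weighted $L^\infty$ bound \eqref{initial maximum norm estimate nc}, the particular choice \eqref{particular-parameter-choice-nc} satisfies $\epsilon_c>k/2=(\alpha+1)/4$, hence $\tilde\epsilon_c>(\alpha+1)/4$; the map $z\mapsto(z\bar\nu_0)^{\tilde\epsilon_c}z^{-(\alpha+1)/4}$ is then nondecreasing on $(0,1/\bar\nu_0)$, so
\begin{equation*}
    \Bigl\|\tfrac{\bar c_0(z)}{z^{(\alpha+1)/4}}\Bigr\|_{L^\infty(0,1/\bar\nu_0)} \lesssim \kappa\bar\lambda_0^{2}\bar\nu_0^{(\alpha+1)/4} \lesssim \kappa\,\bar s_0^{\,2-(\alpha+1)/4}e^{-2\bar s_0},
\end{equation*}
which is dominated by $e^{-(\alpha+1)\bar s_0/16}$ for $\bar s_0$ large since $(\alpha+1)/16<2$.

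The main technical obstacle will be the joint bookkeeping of smallness thresholds: $\lambda_0^*$ must be fixed from $(z^*,\delta,s_0^{**})$ first, after which $\kappa^*$ is chosen in terms of $\tilde\lambda_0$ (which itself controls $\bar s_0$), $z^*$ and $\delta$, so that all the energy inequalities close simultaneously. The near-cancellation identity $\bar\lambda_0\bar\nu_0=\tilde\lambda_0\tilde\nu_0$ has to be invoked systematically to prevent loss of smallness whenever the two profile terms in the two decompositions appear in a difference, and the Hölder exponents $\tilde\epsilon_a,\tilde\epsilon_c$ must be taken strictly above $\epsilon_a,\epsilon_c$ at every step where the singular weights $z^{-\alpha}$ or $z^{-k\eta_0}$ are confronted with the Taylor remainders of $\bar a_0,\bar c_0$.
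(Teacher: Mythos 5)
Your proposal is correct and follows essentially the same route as the paper: you pin down $(\bar\lambda_0,\bar\nu_0)$ by imposing the orthogonality conditions $\bar\epsilon_0(0)=\partial_z\bar\epsilon_0(0)=0$ (the paper does this by demanding $\epsilon_1\equiv 0$ in its four-term split, which yields the same formulas and the same cancellation identity $\bar\lambda_0\bar\nu_0=\tilde\lambda_0\tilde\nu_0$), then estimate the new remainder via the mean-value theorem on the profile difference and the Hölder seminorm on $\tilde a_0',\tilde c_0$, and finally absorb everything into the choice of $\kappa^*(\tilde\lambda_0,z^*,\delta)$ and $\lambda_0^*$. One small inaccuracy that does not affect the conclusion: $\bar s_0=\log(\tilde\lambda_0^{-1})+O(\log\log(\tilde\lambda_0^{-1}))$, not $+O(1)$, but since $\tilde\nu_0\sim 1/\log(\tilde\lambda_0^{-1})$ the product $\bar\nu_0\bar s_0$ still converges to $\tilde\nu_0\log(\tilde\lambda_0^{-1})$ as $\tilde\lambda_0\to 0$, which is all that is needed to land in $[1/N_0,N_0]$ given the $3/(2N_0)\le\tilde\nu_0\log(\tilde\lambda_0^{-1})\le N_0/2$ hypothesis.
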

\begin{proof}
\noindent \textbf{Step 1}. \emph{Setup}.    Let $s_0^{**}\gg 1$ be the threshold self-similar time in Proposition \ref{smooth:pr:bootstrap:diffusive} or \ifthenelse{\boolean{isSimplified}}{Proposition}{Corollary} \ref{l infinity corollary nc} and parameters
$\widetilde\lambda_0,\tilde \nu_0$ satisfy
\begin{equation}\label{parameter-decomposition-restriction}
    s_0^*\geq \max(s_0^{**},1),\,e^{-1}>\lambda_0^*=s_0^* e^{-s_0^*}\leq s_0^{**} e^{-s_0^{**}},\,\widetilde\lambda_0\leq \frac{\lambda_0^*}{2},\, \frac{3/2}{N_0} \leq {\tilde \nu_0}\log(\tilde \lambda_0^{-1}) \leq N_0(1/2).
\end{equation} 
\ifthenelse{\boolean{isSimplified}}{}{We choose perturbation from the space
\begin{equation}
  \tilde a_0 \in X_{\tilde \epsilon_a,\kappa}=\set{f\in C^2[0,1]:\norm{f}_{C^{1,\tilde \epsilon_a}[0,1]}<\kappa,\, f'(Z=0)=0},\text{ for } 1>\tilde \epsilon_a> \epsilon_a.
\end{equation}
We shall find another decomposition of $a_0$, as well as $(\bar \lambda_0,\bar \nu_0, \bar a_0)$ such that
\begin{equation}
    a_0(Z)=\frac{1}{\bar \lambda_0}\phi(\frac{Z}{\bar\nu_0})+\bar a_0(Z),\text{ and }\bar \lambda_0\bar a_0\text{ satisfies Definition \ref{def:ini crit-beta=0:diffusive} or  Definition \ref{def:ini crit-beta=0:diffusive-nc} respectively.}
\end{equation}}
For simplicity, let us define $\bar \epsilon_0$ using the spatial self-similar coordinate system:
\begin{equation}
    \bar \epsilon_0 (z)=\bar \lambda_0\bar a_0(\bar \nu_0 z)=\bar \lambda_0\bar a_0(Z),\, Z=\bar \nu_0 z.
\end{equation}
\noindent \textbf{Step 2}. \emph{Another decomposition}. We first decompose $\bar \epsilon_0$ in two different ways: For $1<z\leq \frac{1}{\bar \nu_0}$,
\ifthenelse{\boolean{isSimplified}}{\begin{equation}
    \begin{split}
        \bar \epsilon_0(z)&=\bar \epsilon_1(z)+\bar \epsilon_2(z),\,\bar \epsilon_1(z)=\frac{\bar \lambda_0}{\widetilde \lambda_0}\phi(\frac{\bar \nu_0}{\widetilde \nu_0}z)-\phi(z),\,\bar \epsilon_2(z)=\bar \lambda_0 \tilde a_0(\bar \nu_0 z),
    \end{split}
\end{equation}}{\begin{equation}
    \begin{split}
        \bar \epsilon_0(z)&=\bar \epsilon_1(z)+\bar \epsilon_2(z)\\
        \bar \epsilon_1(z)&=\frac{\bar \lambda_0}{\widetilde \lambda_0}\phi(\frac{\bar \nu_0}{\widetilde \nu_0}z)-\phi(z)\\
        \bar \epsilon_2(z)&=\bar \lambda_0 \tilde a_0(\bar \nu_0 z),
    \end{split}
\end{equation}}
as well as for $0\leq z \leq 1$,
\ifthenelse{\boolean{isSimplified}}{\begin{alignat}{2}
     \bar \epsilon_0(z)&= \epsilon_1(z)+ \epsilon_2(z)+\epsilon_3(z)+ \epsilon_4(z)  \\
        \epsilon_1(z)&=\frac{\bar \lambda_0}{\widetilde \lambda_0}-1+\bar \lambda_0 \tilde a_0(0)+z(1-\frac{\bar \lambda_0}{\widetilde \lambda_0}\frac{\bar \nu_0}{\widetilde \nu_0})\quad&&\epsilon_2(z)=(\frac{\bar \lambda_0}{\widetilde \lambda_0}-1)(\phi(\frac{\bar \nu_0}{\widetilde \nu_0} z)-1+\frac{\bar \nu_0}{\widetilde \nu_0}z)\\
        \epsilon_3(z)&=\phi(\frac{\bar \nu_0}{\widetilde \nu_0} z)-\phi(z)+(\frac{\bar \nu_0}{\widetilde \nu_0}-1)z &&\epsilon_4(z)=\bar \lambda_0(\tilde a_0(z\bar \nu_0)-\tilde a_0(0)).
\end{alignat}}{\begin{equation}
    \begin{split}
        \bar \epsilon_0(z)&= \epsilon_1(z)+ \epsilon_2(z)+\epsilon_3(z)+ \epsilon_4(z)\\
        \epsilon_1(z)&=\frac{\bar \lambda_0}{\widetilde \lambda_0}-1+\bar \lambda_0 \tilde a_0(0)+z(1-\frac{\bar \lambda_0}{\widetilde \lambda_0}\frac{\bar \nu_0}{\widetilde \nu_0})\\
        \epsilon_2(z)&=(\frac{\bar \lambda_0}{\widetilde \lambda_0}-1)(\phi(\frac{\bar \nu_0}{\widetilde \nu_0} z)-1+\frac{\bar \nu_0}{\widetilde \nu_0}z)\\
        \epsilon_3(z)&=\phi(\frac{\bar \nu_0}{\widetilde \nu_0} z)-\phi(z)+(\frac{\bar \nu_0}{\widetilde \nu_0}-1)z\\
        \epsilon_4(z)&=\bar \lambda_0(\tilde a_0(z\bar \nu_0)-\tilde a_0(0)).
    \end{split}
\end{equation}}
Setting $z=0$ and use $\phi=e^{-z}$ in the previous two equations, we observe
\begin{equation*}
    \epsilon_1(0)= 0\iff \frac{\bar \lambda_0}{\widetilde \lambda_0}-1+\bar \lambda_0 \tilde a_0(0)=0.
\end{equation*}
Similarly, computing the derivatives of $\partial_z\epsilon_1(0), \partial_z\epsilon_2(0)$ and $\partial_z\epsilon_1(0)... \partial_z\epsilon_4(0)$ gives
\begin{equation*}
   \partial_z\epsilon_1(0)= 0\iff 1-\frac{\bar \lambda_0}{\tilde \lambda_0}\frac{\bar \nu_0}{\tilde \nu_0}=0.
\end{equation*}
Therefore, $\bar \lambda_0$ and $\bar \nu_0$ (and thus $\bar a_0$) are uniquely determined by
\begin{equation}\label{parameter-new-decomposition}
\begin{split}
     \bar \lambda_0&=(\tilde \lambda_0^{-1}+\tilde a_0(0))^{-1}=\tilde \lambda_0 (1+O(\kappa \tilde \lambda_0))\\
     \bar \nu_0&=\frac{\tilde \lambda_0}{\bar \lambda_0} \tilde \nu_0=\tilde \nu_0 (1+O(\kappa \tilde \lambda_0)),
\end{split}
\end{equation}
for $0<{\kappa\tilde \lambda_0 }\ll 1$, where we have used $(1+O(\kappa \tilde \lambda_0))^{-1}=1+O(\kappa \tilde \lambda_0) $. Notice that these two equations immediately imply that $\epsilon_1\equiv 0$. \par
\noindent \textbf{Step 3}. \emph{Computations and Estimates}. We now compute each term individually. For $z\geq 1$, 
\ifthenelse{\boolean{isSimplified}}{\begin{equation}\label{exterior decomposition}
    \begin{split}
          \abs{\bar \epsilon_1(z)}&\leq \abs{\frac{\bar \lambda_0}{\widetilde \lambda_0}\phi(\frac{\bar \nu_0}{\widetilde \nu_0}z)-\phi(\frac{\bar \nu_0}{\widetilde \nu_0}z)}+\abs{\phi(\frac{\bar \nu_0}{\widetilde \nu_0}z)-\phi(z)}\\
    &\leq \phi(\frac{\bar \nu_0}{\widetilde \nu_0}z) \abs{\frac{\bar \lambda_0}{\widetilde \lambda_0}-1}+\max_{z \wedge \frac{\bar \nu_0}{\widetilde \nu_0}z \leq x\leq z\vee \frac{\bar \nu_0}{\widetilde \nu_0}z} \abs{\phi'(x)}\cdot\abs{\frac{\bar \nu_0}{\widetilde \nu_0}-1}z\\
    &\leq O(\kappa \tilde \lambda_0)  \phi(\frac{\bar \nu_0}{\widetilde \nu_0}z)+ z\phi(z \wedge \frac{\bar \nu_0}{\widetilde \nu_0}z) O(\kappa \tilde \lambda_0) \\
    &\lesssim \kappa \tilde \lambda_0(1+e^{-1}\vee \frac{\tilde \nu_0}{\bar \nu_0} e^{-1})\lesssim \kappa \tilde \lambda_0 \\
     \abs{\partial_z\bar \epsilon_1(z)}&\leq \abs{-\frac{\bar \lambda_0}{\widetilde \lambda_0}\frac{\bar \nu_0}{\widetilde \nu_0}\phi(\frac{\bar \nu_0}{\widetilde \nu_0}z)+\phi(\frac{\bar \nu_0}{\widetilde \nu_0}z)}+\abs{\phi(z)-\phi(\frac{\bar \nu_0}{\widetilde \nu_0}z)}\\
     &\leq \abs{1-\frac{\bar \lambda_0}{\widetilde \lambda_0}\frac{\bar \nu_0}{\widetilde \nu_0}}\phi(\frac{\bar \nu_0}{\widetilde \nu_0}z)+ z\phi(z \wedge \frac{\bar \nu_0}{\widetilde \nu_0}z) O(\kappa \tilde \lambda_0)\\
     &=z\phi(z \wedge \frac{\bar \nu_0}{\widetilde \nu_0}z) O(\kappa \tilde \lambda_0)\text{ (by the definition of }\bar \nu_0)\lesssim \kappa \tilde \lambda_0\\ 
    \abs{\bar \epsilon_2(z)}&\leq \bar \lambda_0 \kappa=(1+O(\kappa \tilde\lambda_0))\tilde\lambda_0\kappa\lesssim \kappa \tilde \lambda_0 \\
    \abs{\partial_z\bar \epsilon_2(z)}&\leq \bar \lambda_0 \bar \nu_0 \kappa=(1+O(\kappa \tilde\lambda_0))\tilde\lambda_0(1+O(\kappa \tilde\lambda_0))\tilde\nu_0\kappa\lesssim \kappa \tilde \lambda_0 \tilde \nu_0 ,\\
     \abs{\bar \epsilon_0(z)}&\lesssim \kappa\tilde \lambda_0, \abs{\partial_z\bar \epsilon_0(z)}\lesssim \kappa \tilde \lambda_0(1+\tilde \nu_0),\text{ for }\kappa\tilde \lambda_0 \text{  small enough, and }1\leq z \leq \bar \nu_0^{-1}
    \end{split}
\end{equation}}{\begin{equation}\label{exterior decomposition}
\begin{split}
    \abs{\bar \epsilon_1(z)}&\leq \abs{\frac{\bar \lambda_0}{\widetilde \lambda_0}\phi(\frac{\bar \nu_0}{\widetilde \nu_0}z)-\phi(\frac{\bar \nu_0}{\widetilde \nu_0}z)}+\abs{\phi(\frac{\bar \nu_0}{\widetilde \nu_0}z)-\phi(z)}\\
    &\leq \phi(\frac{\bar \nu_0}{\widetilde \nu_0}z) \abs{\frac{\bar \lambda_0}{\widetilde \lambda_0}-1}+\max_{z \wedge \frac{\bar \nu_0}{\widetilde \nu_0}z \leq x\leq z\vee \frac{\bar \nu_0}{\widetilde \nu_0}z} \abs{\phi'(x)}\cdot\abs{\frac{\bar \nu_0}{\widetilde \nu_0}-1}z\\
    &\leq O(\kappa \tilde \lambda_0)  \phi(\frac{\bar \nu_0}{\widetilde \nu_0}z)+ z\phi(z \wedge \frac{\bar \nu_0}{\widetilde \nu_0}z) O(\kappa \tilde \lambda_0) \\
    &\lesssim \kappa \tilde \lambda_0(1+e^{-1}\vee \frac{\tilde \nu_0}{\bar \nu_0} e^{-1})\lesssim \kappa \tilde \lambda_0 \\
     \abs{\partial_z\bar \epsilon_1(z)}&\leq \abs{-\frac{\bar \lambda_0}{\widetilde \lambda_0}\frac{\bar \nu_0}{\widetilde \nu_0}\phi(\frac{\bar \nu_0}{\widetilde \nu_0}z)+\phi(\frac{\bar \nu_0}{\widetilde \nu_0}z)}+\abs{\phi(z)-\phi(\frac{\bar \nu_0}{\widetilde \nu_0}z)}\\
     &\leq \abs{1-\frac{\bar \lambda_0}{\widetilde \lambda_0}\frac{\bar \nu_0}{\widetilde \nu_0}}\phi(\frac{\bar \nu_0}{\widetilde \nu_0}z)+ z\phi(z \wedge \frac{\bar \nu_0}{\widetilde \nu_0}z) O(\kappa \tilde \lambda_0)\\
     &=z\phi(z \wedge \frac{\bar \nu_0}{\widetilde \nu_0}z) O(\kappa \tilde \lambda_0)\text{ (by the definition of }\bar \nu_0)\\ 
     &\lesssim \kappa \tilde \lambda_0\\
    \abs{\bar \epsilon_2(z)}&\leq \bar \lambda_0 \kappa=(1+O(\kappa \tilde\lambda_0))\tilde\lambda_0\kappa\lesssim \kappa \tilde \lambda_0 \\
    \abs{\partial_z\bar \epsilon_2(z)}&\leq \bar \lambda_0 \bar \nu_0 \kappa=(1+O(\kappa \tilde\lambda_0))\tilde\lambda_0(1+O(\kappa \tilde\lambda_0))\tilde\nu_0\kappa\lesssim \kappa \tilde \lambda_0 \tilde \nu_0 ,\\
     \abs{\bar \epsilon_0(z)}&\lesssim \kappa\tilde \lambda_0, \abs{\partial_z\bar \epsilon_0(z)}\lesssim \kappa \tilde \lambda_0(1+\tilde \nu_0),\text{ for }\kappa\tilde \lambda_0 \text{  small enough, and }1\leq z \leq \bar \nu_0^{-1}
\end{split}
\end{equation}}
where we have used $z\phi(z \wedge \frac{\bar \nu_0}{\widetilde \nu_0}z)$ is bounded on $\R^+$, mean-value theorem and \eqref{parameter-new-decomposition}. Next,\ifthenelse{\boolean{isSimplified}}{ in a similar manner, }{ }using $0\leq 1-e^{-x}\leq x $ on $[0,\infty)$ and $\phi'=-\phi$, we compute for all $0\leq z\leq 1$,
\ifthenelse{\boolean{isSimplified}}{\begin{equation}\label{interior decomposition}
    \abs{\epsilon_2 (z)}\lesssim \kappa \tilde \lambda_0 z,\quad\abs{\partial_z\epsilon_2 (z)}\lesssim \kappa \tilde \lambda_0 z,\quad \abs{\epsilon_3 (z)}\lesssim z \kappa \tilde\lambda_0\quad \abs{\partial_z\epsilon_3 (z)}\lesssim z \kappa \tilde\lambda_0,\text{ for }\kappa\tilde \lambda_0 \text{  small enough,}
\end{equation}}{\begin{equation}\label{interior decomposition}
    \begin{split}
        \abs{\epsilon_2 (z)}&\leq \abs{\frac{\bar \lambda_0}{\widetilde \lambda_0}-1} ( \abs{1-\phi(\frac{\bar \nu_0}{\widetilde \nu_0} z)}+\abs{\frac{\bar \nu_0}{\tilde \nu_0}}z)\\
        &\lesssim  \abs{\frac{\bar \lambda_0}{\widetilde \lambda_0}-1}\abs{\frac{\bar \nu_0}{\tilde \nu_0}}z\lesssim \kappa \tilde \lambda_0(1+\kappa \tilde \lambda_0)z\leq \kappa \tilde \lambda_0 z\\
        \abs{\partial_z\epsilon_2 (z)}&\leq \abs{\frac{\bar \lambda_0}{\widetilde \lambda_0}-1} \abs{\frac{\bar \nu_0}{\tilde \nu_0}}\abs{ \phi'(\frac{\bar \nu_0}{\tilde \nu_0}z)+1}\\
        &\leq \abs{\frac{\bar \lambda_0}{\widetilde \lambda_0}-1} \abs{\frac{\bar \nu_0}{\tilde \nu_0}}\abs{ -\phi(\frac{\bar \nu_0}{\tilde \nu_0}z)+1}\lesssim \kappa \tilde \lambda_0 z\\
        \abs{\epsilon_3 (z)}&\leq \abs{\phi(\frac{\bar \nu_0}{\widetilde \nu_0} z)-\phi(z)}+\abs{\frac{\bar \nu_0}{\widetilde \nu_0}-1} z\\
        &\leq z\phi(z \wedge \frac{\bar \nu_0}{\widetilde \nu_0}z) O(\kappa \tilde \lambda_0)+O(\kappa \tilde \lambda_0) z\lesssim z \kappa \tilde\lambda_0\\
        \abs{\partial_z\epsilon_3 (z)}&\leq \abs{\frac{\bar \nu_0}{\tilde \nu_0} \phi'(\frac{\bar \nu_0}{\tilde \nu_0} z)-\phi'(\frac{\bar \nu_0}{\tilde \nu_0} z)}+\abs{\phi'(\frac{\bar \nu_0}{\tilde \nu_0} z)-\phi'(z)}+\abs{\frac{\bar \nu_0}{\tilde \nu_0}-1}\\
        &=\abs{\frac{\bar \nu_0}{\tilde \nu_0}-1}\abs{1-\phi (\frac{\bar \nu_0}{\tilde \nu_0} z)}+\abs{\phi(\frac{\bar \nu_0}{\widetilde \nu_0}z)-\phi(z)}\\
        &\leq O(\kappa \tilde \lambda_0) \frac{\nu_0}{\tilde\nu_0}z+z\phi(z \wedge \frac{\bar \nu_0}{\widetilde \nu_0}z) O(\kappa \tilde \lambda_0)\lesssim z \kappa \tilde\lambda_0,\text{ for }\kappa\tilde \lambda_0 \text{  small enough,}
    \end{split}
\end{equation}}
where we have also used part of the estimate we did in \eqref{exterior decomposition}. \ifthenelse{\boolean{isSimplified}}{Next, combining with mean-value theorem and Hölder continuity, we observe 
\begin{alignat}{1}
    &\abs{\epsilon_4(z)}\leq \bar \lambda_0 {||\tilde a_0||}_{C^1[0,1]}\abs{z\bar \nu_0}\leq \tilde \lambda_0 (1+O(\kappa \tilde \lambda_0))\kappa \tilde \nu_0  (1+O(\kappa \tilde \lambda_0))z\lesssim  \kappa\tilde\lambda_0\tilde \nu_0 z,\label{special interior decomposition}\\
    &\abs{\partial_z\epsilon_4(z)}\leq \bar \lambda_0 \bar \nu_0 \kappa (z\bar \nu_0)^{\tilde \epsilon_a}\leq \tilde \lambda_0 (1+O(\kappa \tilde \lambda_0)) (\tilde\nu_0  (1+O(\kappa \tilde \lambda_0)))^{1+\tilde \epsilon_a} \kappa z^{\tilde\epsilon_a}\lesssim \kappa\tilde \lambda_0\tilde\nu_0^{1+\tilde \epsilon_a} z^{\tilde\epsilon_a}\label{special derivative interior decomposition}
\end{alignat}
}{Next, using mean-value theorem, we observe
\begin{equation}\label{special interior decomposition}
    \abs{\epsilon_4(z)}\leq \bar \lambda_0 \norm{\tilde a_0}_{C^1[0,1]}\abs{z\bar \nu_0}\leq \tilde \lambda_0 (1+O(\kappa \tilde \lambda_0))\kappa \tilde \nu_0  (1+O(\kappa \tilde \lambda_0))z\lesssim  \kappa\tilde\lambda_0\tilde \nu_0 z,
\end{equation}
by Hölder continuous property, we have
\begin{equation}\label{special derivative interior decomposition}
\begin{split}
      \abs{\partial_z\epsilon_4(z)}&\leq \bar \lambda_0 \bar \nu_0\abs{\tilde a_0'(z\bar \nu_0)-\tilde a_0'(0)}\\
      &\leq \bar \lambda_0 \bar \nu_0 \kappa (z\bar \nu_0)^{\tilde \epsilon_a}\\
      &\leq \tilde \lambda_0 (1+O(\kappa \tilde \lambda_0)) (\tilde\nu_0  (1+O(\kappa \tilde \lambda_0)))^{1+\tilde \epsilon_a} \kappa z^{\tilde\epsilon_a}\\
      &\lesssim \kappa\tilde \lambda_0\tilde\nu_0^{1+\tilde \epsilon_a} z^{\tilde\epsilon_a},
\end{split}
\end{equation}
for $0\leq z\leq 1$ and $\kappa \tilde \lambda_0$ small enough.} Since we can choose $\tilde\lambda_0<\lambda^*_0$  small enough such that
\begin{equation}\label{estimate on bar nu knot}
     {\tilde \nu_0}\log(\tilde \lambda_0^{-1}) \leq N_0\implies  {\tilde \nu_0}\leq N_0 \log(\tilde \lambda_0^{-1})^{-1}\leq N_0 \log( {\lambda_0^*}^{-1})^{-1}\lesssim 1.
\end{equation}
Combining this fact and\eqref{exterior decomposition}-\eqref{special derivative interior decomposition} we see that
\ifthenelse{\boolean{isSimplified}}{\begin{alignat}{2}\label{summary-of-a-decomposition}
    \abs{\bar \epsilon_0}&\lesssim \kappa \tilde \lambda_0 (1+\tilde \nu_0) z\lesssim \kappa \tilde \lambda_0 z \quad z\in [0,1]\quad &&\abs{\partial_z \bar \epsilon_0}\lesssim \kappa \tilde \lambda_0 (1+\tilde\nu_0^{1+\tilde \epsilon_a}) z^{\tilde\epsilon_a} \lesssim  \kappa \tilde \lambda_0  z^{\tilde\epsilon_a} \quad z\in [0,1]\\
        \abs{\partial_z \bar \epsilon_0}&\lesssim \kappa \tilde \lambda_0(1+\tilde \nu_0)\lesssim \kappa \tilde \lambda_0\quad z\in [1,z^*]\quad &&\abs{\bar \epsilon_0}\lesssim \kappa \tilde \lambda_0\quad z\in [z^*,\bar \nu_0^{-1}].
\end{alignat}}{\begin{equation}\label{summary-of-a-decomposition}
    \begin{split}
    \abs{\bar \epsilon_0}&\lesssim \kappa \tilde \lambda_0 (1+\tilde \nu_0) z\lesssim \kappa \tilde \lambda_0 z \quad z\in [0,1]\\
        \abs{\partial_z \bar \epsilon_0}&\lesssim \kappa \tilde \lambda_0 (1+\tilde\nu_0^{1+\tilde \epsilon_a}) z^{\tilde\epsilon_a} \lesssim  \kappa \tilde \lambda_0  z^{\tilde\epsilon_a} \quad z\in [0,1]\\
        \abs{\partial_z \bar \epsilon_0}&\lesssim \kappa \tilde \lambda_0(1+\tilde \nu_0)\lesssim \kappa \tilde \lambda_0\quad z\in [1,z^*]\\
         \abs{\bar \epsilon_0}&\lesssim \kappa \tilde \lambda_0\quad z\in [z^*,\bar \nu_0^{-1}].
    \end{split}
\end{equation}}

\noindent \textbf{Step 4}. \emph{Applying estimates to see conditions are satisfied for $\tilde a$}. We first define $\bar s_0\geq 1$ to be the larger root of
\begin{equation}
    \bar s_0 e^{-\bar s_0}=\bar \lambda_0=\tilde \lambda_0+O(\kappa \tilde \lambda_0).
\end{equation}
Combing \eqref{parameter-decomposition-restriction} and choose $\kappa\tilde \lambda_0$ small enough such that \begin{equation*}
    \bar s_0 e^{-\bar s_0} \leq \frac{4}{3}\tilde \lambda_0\leq \frac{2}{3}\lambda^*_0\leq \lambda^*_0=s_0^* e^{-s_0^*},
\end{equation*}
since the function $xe^{-x}$ is decreasing on $[1,\infty)$ and both $s_0^*,\bar s_0\geq 1$, we see $\bar s_0 \geq s_0^*$. Next, We check whether the definitions of initial trappedness is indeed partially satisfied for $\bar \lambda_0\bar a_0(z)=\bar \epsilon_0(z)$ and $\kappa$ is relatively small.
\begin{enumerate}
    \item The integral compatibility condition is automatically satisfied by zero-average assumption on $a_0$.
    \item The vanishing speed conditions of $\bar \epsilon_0$ are satisfied by the first and the second equation of \eqref{summary-of-a-decomposition} and the assumption of $1>\tilde \epsilon_a>\epsilon_a$.
     \item We show that the modular equations are satisfied for $\lambda_0^*$ small enough, as long as {$\kappa\tilde\lambda_0=o(1)$,} we compute
     \ifthenelse{\boolean{isSimplified}}{ \begin{equation}\begin{split}
    \bar \lambda_0&:=\bar s_0 e^{-\bar s_0}\implies \log(\bar \lambda_0^{-1})=\bar s_0-\log \bar s_0\simeq \bar s_0\implies \bar s_0=\log(\bar \lambda_0^{-1})+O(\log(\log(\bar\lambda_0^{-1})))\\
    \bar s_0&=\log(\tilde \lambda_0^{-1})+O(\log(\log(\tilde \lambda_0^{-1}))),\text{ for }\tilde\lambda_0\leq \tilde\lambda_0^*\text{ small enough.}\\
    {\bar \nu_0}{\bar s_0}&= \tilde \nu_0 (1+O(\kappa \tilde \lambda_0)) (\log(\tilde \lambda_0^{-1})+O(\log\log\tilde \lambda_0^{-1}))
    ={{\tilde \nu_0}\log(\tilde \lambda_0^{-1})(1+O(\kappa\tilde \lambda_0)+{O(\frac{\log\log \tilde \lambda_0^{-1}}{  \log \tilde \lambda_0^{-1}})})}.
    \end{split}
    \end{equation}
    where we have also used \eqref{parameter-decomposition-restriction} and \eqref{parameter-new-decomposition}. 
    }{\begin{equation}
    \begin{split}
    \bar \lambda_0&:=\bar s_0 e^{-\bar s_0}\implies \log(\bar \lambda_0^{-1})=\bar s_0-\log \bar s_0\simeq \bar s_0\implies \bar s_0=\log(\bar \lambda_0^{-1})+O(\log(\log(\bar\lambda_0^{-1})))\\
    \bar s_0&= \log(\tilde \lambda_0^{-1})+\log(1+O(\kappa \tilde \lambda_0))+O(\log(\log(\tilde \lambda_0^{-1})+\log(1+O(\kappa \tilde \lambda_0))))\\
    &=\log(\tilde \lambda_0^{-1})+O(\kappa \tilde \lambda_0)+O(\log(\log(\tilde \lambda_0^{-1})+O(\kappa \tilde \lambda_0)))\\
    &=\log(\tilde \lambda_0^{-1})+O(\log(\log(\tilde \lambda_0^{-1}))),\text{ for }\tilde\lambda_0\leq \tilde\lambda_0^*\text{ small enough.}\\
    \end{split}
    \end{equation}
    Therefore, since $ \frac{1+1/2}{N_0} \leq {\tilde \nu_0}\log(\tilde \lambda_0^{-1}) \leq N_0(1-1/2)$, we also have
    \begin{equation}
        \begin{split}
        {\bar \nu_0}{\bar s_0}=\frac{\tilde \lambda_0}{\bar \lambda_0} \tilde \nu_0\bar s_0&= \tilde \nu_0 (1+O(\kappa \tilde \lambda_0)) (\log(\tilde \lambda_0^{-1})+O(\log\log\tilde \lambda_0^{-1}))\\
        &={\tilde \nu_0}\log(\tilde \lambda_0^{-1})+O({\tilde \nu_0}\kappa \tilde \lambda_0 \log(\tilde \lambda_0^{-1}))\\
        &= {\tilde \nu_0}\log(\tilde \lambda_0^{-1})+O(\tilde \nu_0  \log \tilde \lambda_0^{-1}\frac{\log\log \tilde \lambda_0^{-1}}{  \log \tilde \lambda_0^{-1}})\\
        &= {\tilde \nu_0}\log(\tilde \lambda_0^{-1})+O(\frac{\log\log \tilde \lambda_0^{-1}}{  \log \tilde \lambda_0^{-1}}),
        \end{split}
    \end{equation}
    as $O(\frac{\log\log \tilde \lambda_0^{-1}}{  \log \tilde \lambda_0^{-1}})=o(1)$ as $\tilde \lambda_0^{-1}\rightarrow\infty$, we can choose $\lambda_0^*\ll 1$ small enough, such that applying the bound on ${\tilde \nu_0}\log(\tilde \lambda_0^{-1})$ guarantees
    \begin{equation}
      \frac{1}{N_0}  \leq \bar \nu_0 \bar s_0\leq N_0.
    \end{equation}}
    \item The weighted interior $L^2$ norm and the exterior $L^\infty$ norm can be estimated,
    \ifthenelse{\boolean{isSimplified}}{by using \eqref{summary-of-a-decomposition}\begin{equation}
        \begin{split}
           \mathcal I_a^2(s_0)&\lesssim \kappa^2  \tilde \lambda_0^2 (1+\tilde\nu_0^{1+\tilde \epsilon_a})^2 \int_0^1 z^{-\alpha+2\tilde \epsilon_a} dz+(z^*-1)\cdot 1\cdot \kappa^2  \tilde \lambda_0^2 (1+\tilde\nu_0)^2\lesssim \kappa^2  \tilde \lambda_0^2 (z^*+1)\\ 
          \mathcal  E_a^2(s_0)&= \max_{[z^*,\bar \nu_0^{-1}]} \bar \epsilon_0^2(z)\leq \kappa^2 \tilde \lambda_0^2,
        \end{split}
    \end{equation}}{\begin{equation}
        \begin{split}
           \mathcal I_a^2(s_0)&=\left(\int_0^1+\int_{1}^{z^*}\right) z^{-\alpha}\partial_z\bar \epsilon_0^2\, dz\\
            &\lesssim \kappa^2  \tilde \lambda_0^2 \int_0^1 z^{-\alpha+2\tilde \epsilon_a} dz+(z^*-1)\cdot 1\cdot \kappa^2  \tilde \lambda_0^2 (1+\tilde\nu_0)^2\\
            &\lesssim  \kappa^2  \tilde \lambda_0^2( (1+\tilde\nu_0^{1+\tilde \epsilon_a})^2+z^*    (1+\tilde\nu_0)^2),\\
            &\lesssim \kappa^2  \tilde \lambda_0^2 (z^*+1)\\ 
          \mathcal  E_a^2(s_0)&= \max_{[z^*,\bar \nu_0^{-1}]} \bar \epsilon_0^2(z)\leq \kappa^2 \tilde \lambda_0^2,
        \end{split}
    \end{equation}
    where we have used the fact that $-\alpha+2\epsilon_a>-1\implies -\alpha+2\tilde \epsilon_a>-1$ and \eqref{summary-of-a-decomposition}.}
    By the previous computations, we know that
    \begin{equation}\label{bar s_0 estimate}
    \begin{split}
             &{\log(\bar \lambda_0^{-1})=\bar s_0-\log \bar s_0 \geq \frac{1}{2}\bar s_0 \quad \forall \bar s_0>0}\\
             &\implies 4\log(\tilde \lambda_0^{-1})\geq \bar s_0 \implies \log(\tilde \lambda_0^{-1})^{-h_a}\lesssim \bar s_0^{-h_a}\\
    \end{split}
    \end{equation}
    therefore, we can choose $\kappa(z^*,\delta,\tilde\lambda_0)\lesssim \min(\delta (\frac{1}{z^*+1})^\frac{1}{2} \tilde \lambda_0^{-1} \log(\tilde \lambda_0^{-1})^{-h_a/2}, \log(\tilde \lambda_0^{-1})^{-h_a/2} \tilde\lambda_0^{-1})$ such that the definitions are satisfied.
   
\end{enumerate}
\noindent \textbf{Step 5}. \emph{Applying estimates to see conditions are satisfied for $\tilde c$. }\ifthenelse{\boolean{isSimplified}}{}{We define $\bar c_0$ such that
\begin{equation}
    \tilde c_0(Z)=\frac{1}{\bar \lambda_0^{1+\sigma}} \bar c_0(Z),\, Z=z\bar \nu_0
\end{equation}
where $\bar \lambda_0$ and $\bar \nu_0$ are defined in \eqref{parameter-new-decomposition}. We choose $\tilde c_0$ from the space:
\begin{equation}
     \tilde c_0 \in \begin{cases}
         X_{\tilde \epsilon_c,\kappa}=\set{f\in C^2[0,1]:\norm{f}_{C^{1,\tilde \epsilon_c}[0,1]}<\kappa,\, f(Z=0)=f'(Z=0)=0,\text{ for } 1>\tilde \epsilon_c> \epsilon_c}\text{ if }\sigma=0\\
         X_{\tilde \epsilon_c,\kappa}=\set{f\in C^3[0,1]:\norm{f}_{C^{0,\tilde \epsilon_c}[0,1]}<\kappa,\, f(Z=0)= f(Z=1)=0,\text{ for } 1>\tilde \epsilon_c> \epsilon_c}\text{ if }\sigma=1
     \end{cases}
\end{equation}} 
For simplicity, we introduce
\begin{equation}
    r(z)=\bar c_0(z\bar \nu_0)=\bar \lambda_0^{1+\sigma}\tilde c_0(z\bar\nu_0),
\end{equation}
and we shall show $r$ satisfies the definition of initial closeness.
\begin{enumerate}
    \item For non-diffusive case, the proof is similar to the case of $\tilde a$. 
    \ifthenelse{\boolean{isSimplified}}{}{\begin{enumerate}
        \item We compute, using \eqref{estimate on bar nu knot} and mean value theorem, that for each $0\leq z \leq \bar \nu_0^{-1}$
        \begin{equation}
            \begin{split}
               \abs{ r'(z)}&=\bar\lambda_0\bar\nu_0\abs{\tilde c_0'(z \bar \nu_0)-\tilde c_0'(0)}\leq\tilde \lambda_0 (1+O(\kappa \tilde \lambda_0)) \bar\nu_0\kappa (z\bar \nu_0)^{\tilde \epsilon_c}\lesssim \kappa z^{\tilde \epsilon_c}\tilde \lambda_0 \\
               \abs{r(z)}&=\bar\lambda_0 \abs{\tilde c_0(z \bar \nu_0)-\tilde c_0(0)}\lesssim z\bar \nu_0\bar\lambda_0\kappa\leq z\bar \nu_0\tilde\lambda_0\kappa,
            \end{split}
        \end{equation}
        therefore, we have $r(z)=o(z^{\epsilon_c})$ and $r'(z)=o(z^{\epsilon_c})$.
        \item From the same computation above, we have, by the choice of $\tilde \epsilon_c>\epsilon_c$,
        \begin{equation}
            \begin{split}
                I_c^2(\bar s_0)&=\int_0^{z^*} z^{-\gamma} r_z^2\lesssim \kappa^2\tilde \lambda_0^2\int_0^{z^*} z^{-\gamma+2\tilde\epsilon_c}\leq C(z^*)\kappa^2\tilde \lambda_0^2\\
                E_c^2(\bar s_0)&=\max_{[z^*,\bar \nu_0^{-1}]} r^2(z)\leq \max_{[z^*,\bar \nu_0^{-1}]} z^2\bar \nu_0^2\tilde\lambda_0^2\kappa^2 \lesssim \tilde\lambda_0^2\kappa^2,
            \end{split}
        \end{equation}
        and on the other hand, since \eqref{bar s_0 estimate}, $$e^{-h_c \bar s_0}\geq e^{-h_c 2 \log(\bar \lambda_0^{-1})}=\bar \lambda_0^{2h_c}=\tilde \lambda_0^{2h_c}(1+O(\kappa\tilde \lambda_0))\gtrsim \tilde\lambda_0^{2h_c},$$
        we see we can choose $\kappa(z^*,\delta,\tilde \lambda_0)$ small enough such that $I_c^2(\bar s_0), E_c^2(\bar s_0)\lesssim e^{-h_c \bar s_0}$.
    \end{enumerate}}
    \item For diffusive case, we proceed by a similar reasoning.
    \begin{enumerate}
        \item We compute, using \eqref{estimate on bar nu knot} and mean value theorem, that for each $0\leq z \leq \bar \nu_0^{-1}$
        \begin{equation}\label{estimate diffusive decomposition}
            \begin{split}
               \abs{r(z)}&=\bar\lambda_0^2 \abs{\tilde c_0(z \bar \nu_0)-\tilde c_0(0)}\lesssim (z\bar \nu_0)^{\tilde \epsilon_c}\bar\lambda_0^2\kappa\lesssim z^{\tilde \epsilon_c} \tilde \nu_0^{\tilde \epsilon_c}\tilde \lambda_0^2\kappa,
            \end{split}
        \end{equation}
        therefore, we have $r(z)=o(z^{\epsilon_c})$.
        \item From the same computation above, we have, by the choice of $\tilde \epsilon_c>\epsilon_c$ and $k,\eta_0$:
        \ifthenelse{\boolean{isSimplified}}{\begin{equation}
            \begin{split}
              \mathcal  T_{k\eta_0,2\eta_0}(\bar s_0)&=(\int_0^{z^*} z^{-k\eta_0} r^{2\eta_0})^{\frac{1}{2\eta_0}}\lesssim  \tilde \nu_0^{\tilde \epsilon_c}\tilde \lambda_0^2\kappa (\int_0^{\bar \nu_0^{-1}} z^{-k\eta_0+2\eta_0\tilde\epsilon_c})^\frac{1}{2\eta_0} \\
                &\simeq \tilde \nu_0^{\tilde \epsilon_c}\tilde \lambda_0^2\kappa (\bar\nu_0^{-1})^{\frac{1}{2\eta_0}-\frac{k}{2}+\tilde\epsilon_c}=\bar\nu_0^{\frac{k}{2}-\frac{1}{2\eta_0}}\tilde \lambda_0^2\kappa\lesssim \tilde \lambda_0^2\kappa
            \end{split}
        \end{equation}}{ \begin{equation}
            \begin{split}
              \mathcal  T_{k\eta_0,2\eta_0}(\bar s_0)&=(\int_0^{z^*} z^{-k\eta_0} r^{2\eta_0})^{\frac{1}{2\eta_0}}\\
               & \lesssim  \tilde \nu_0^{\tilde \epsilon_c}\tilde \lambda_0^2\kappa (\int_0^{\bar \nu_0^{-1}} z^{-k\eta_0+2\eta_0\tilde\epsilon_c})^\frac{1}{2\eta_0}\\
               &\simeq \tilde \nu_0^{\tilde \epsilon_c}\tilde \lambda_0^2\kappa (\bar\nu_0^{-1})^{\frac{1}{2\eta_0}-\frac{k}{2}+\tilde\epsilon_c} \\
               &=\bar\nu_0^{\frac{k}{2}-\frac{1}{2\eta_0}}\tilde \lambda_0^2\kappa\lesssim \tilde \lambda_0^2\kappa
            \end{split}
        \end{equation}}
        and on the other hand, by \eqref{bar s_0 estimate}, $$e^{-l\bar s_0/2}\geq e^{-l\log(\bar \lambda_0^{-1})}=\bar \lambda_0^{l}=\tilde \lambda_0^{l}(1+O(\kappa\tilde \lambda_0))\gtrsim \tilde\lambda_0^{l},$$
        we see we can choose $\kappa(\tilde \lambda_0)$ small such that $\mathcal I_c^2(\bar s_0),\mathcal E_c^2(\bar s_0)\lesssim e^{-h_c \bar s_0}$.
        \item Finally, we check whether we can choose the initial weighted maximum norm to be as small as possible. Let $0<\xi<\epsilon_c<\tilde \epsilon_c$ and $p>0$, upon using \eqref{estimate diffusive decomposition}, we see
        \begin{equation}
            \abs{r z^{-\xi}}\lesssim z^{\tilde \epsilon_c-\xi}  \bar \nu_0^{\tilde \epsilon_c}\bar\lambda_0^2\kappa\leq ((\bar \nu_0)^{-1})^{\tilde \epsilon_c-\xi}  \bar \nu_0^{\tilde \epsilon_c}\bar\lambda_0^2\kappa= \bar \nu_0^\xi \bar\lambda_0^2\kappa \lesssim \tilde\nu_0^\xi \tilde\lambda_0^2\kappa \lesssim  \tilde\lambda_0^2\kappa,
        \end{equation}
            and on the other hand, by \eqref{bar s_0 estimate},\ifthenelse{\boolean{isSimplified}}{by the same reasoning above, $e^{-p\bar s_0/2}\gtrsim \tilde\lambda_0^{p},$ }{$$e^{-p\bar s_0/2}\geq e^{-p\log(\bar \lambda_0^{-1})}=\bar \lambda_0^{p}=\tilde \lambda_0^{p}(1+O(\kappa\tilde \lambda_0))\gtrsim \tilde\lambda_0^{p},$$} 
            we see we can choose $\kappa(\tilde \lambda_0)$ small, such that $\norm{r z^{-\xi}}_{L^\infty[0,\bar\nu_0^{-1}]}<e^{-p\bar s_0/2}.$
    \end{enumerate}
\end{enumerate}
\end{proof}

\subsection{Conversion to the Original Coordinate System}
Assume that the initial condition has the form of
\begin{equation}\label{initial decomp convert}
\begin{split}
  &a_0(Z)=\frac{1}{\bar \lambda_0}\phi(\frac{Z}{\bar\nu_0})+\bar a_0(Z)=\frac{1}{\bar \lambda_0}(\phi(z)+\underbrace{\bar \lambda_0\bar a_0(z\bar\nu_0)}_{\bar \epsilon_0(z)})\\
     & c_0(Z)=\frac{1}{\bar \lambda_0^{1+\sigma}} \bar c_0(Z)=\frac{1}{\bar \lambda_0^{1+\sigma}} \bar c_0(z\bar\nu_0)\\
     & Z=z\bar \nu_0,\, \bar \lambda_0=\bar s_0 e^{-\bar s_0}
\end{split}
\end{equation}
and that it leads to solution 
\begin{equation}\label{decomp convert}
\begin{split}
  &a(t,Z)=\frac{1}{\bar \lambda(t)}\phi(\frac{Z}{\bar\nu(t)})+\bar a(t,Z)=\frac{1}{\bar \lambda(t)}(\phi(z)+\bar \lambda(t)\bar a(t,z\bar\nu(t)))=\frac{1}{\bar \lambda(s)}(\phi(z)+\underbrace{\bar \lambda(s)\bar a(s,z)}_{\bar\epsilon(s,z)})\\
     & c(t.Z)=\frac{1}{\bar \lambda(t)^{1+\sigma}} \bar c(t,Z)=\frac{1}{\bar \lambda(t)^{1+\sigma}} \bar c(t,z\bar\nu(t))=\frac{1}{\bar \lambda(s)^{1+\sigma}} \bar c(s,z)\\
     &Z=z\bar\nu(t)=z\bar\nu(s),\,\frac{d s}{dt}=\frac{1}{\bar\lambda(t)},\,  s(t=0)=\bar s_0.
\end{split}
\end{equation}
\ifthenelse{\boolean{isSimplified}}{\begin{corollary}\label{conversion corollary}
     For each fixed choices of parameters in the range of \eqref{final-parameter-conditions-diffusive} (or \eqref{particular-parameter-choice-nc}), let the initial condition and its corresponding solution be defined in \eqref{initial decomp convert} and \eqref{decomp convert}. Suppose that $\bar \epsilon,\bar c,\bar\nu,\bar\lambda, s$ is initially close (Definition \ref{def:ini crit-beta=0:diffusive} or \ref{def:ini crit-beta=0:diffusive-nc}), and trapped (Definition \ref{def:crit-beta=0:diffusive} or \ref{def:crit-beta=0:diffusive-nc}) for all time $\infty\geq s\geq s(0)=\bar s_0$, then there exists some time $T$ such that the following asymptotic relations hold as $t\rightarrow T^-$. (Notice that $1<h_a<2$, by choice \eqref{final-parameter-conditions-diffusive} and \eqref{particular-parameter-choice-nc}.)
        \begin{alignat}{2}
              &\bar\lambda=T-t+O((T-t)\abs{\log (T-t)}^{-h_a+1})\\
              &\bar\nu=\abs{\log (T-t)}^{-1}+O(\abs{\log (T-t)}^{-h_a})\\
            &\norm{\bar \epsilon(t,Z)}_{L^\infty[0,1]}=O\left(\abs{\log (T-t)}^{-\frac{h_a}{2}}\right).
        \end{alignat}
     For non-diffusive case, we have (Notice $h_c>0$)
     \begin{equation}
         \begin{split}
             \norm{\bar c(t,Z)}_{L^\infty[0,1]}\leq C(T-t)^{\frac{h_c}{2}}.
         \end{split}
     \end{equation}
     For diffusive case, if in addition, we have $\norm{{\bar c_0(Z) } (\frac{Z}{\bar \nu_0})^{-\xi}}_{L^{\infty}(0,1)}\leq e^{-p\bar s_0}$, then we also have
     \begin{equation}
         \begin{split}
             \norm{\bar c(t,Z)}_{L^\infty[0,1]}\leq (T-t)^{p} \abs{\log(T-t)}^\xi,
         \end{split}
     \end{equation}
     for  $\xi=\frac{\alpha+1}{4}, p=\frac{\alpha+1}{16}$.
\end{corollary}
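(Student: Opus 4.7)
The plan is to combine the sharp self-similar asymptotics already proved in Lemma~\ref{lemma:smoothmodulation:diffusion} (resp.~Lemma~\ref{lemma:smoothmodulation:diffusion-nc}) with a careful change of variables driven by $dt=\bar\lambda(s)\,ds$. First I would note that Lemma~\ref{decomposition lemma} combined with Proposition~\ref{smooth:pr:bootstrap:diffusive} (resp.~Corollary~\ref{l infinity corollary nc}) shows that $(\bar\epsilon,\bar c)$ remains trapped on $[\bar s_0,\infty)$, so the $s_1=\infty$ branch of the modulation lemma applies and yields $\tilde\lambda_\infty>0$ with
\begin{equation*}
    \bar\lambda(s)=se^{-s}\bigl(\tilde\lambda_\infty+O(s^{-h_a+1})\bigr),\qquad \bar\nu(s)=s^{-1}+O(s^{-h_a}).
\end{equation*}
The exponential integrability of $\bar\lambda$ at infinity then lets me define the blowup time $T<\infty$ via $T=\int_{\bar s_0}^\infty\bar\lambda(s)\,ds$ (up to an additive constant accounting for the initial time).

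The core computation is the inversion relating $s$ and $T-t$. Since $T-t=\int_{s(t)}^\infty\bar\lambda(s')\,ds'$, the identity $\int_s^\infty s'e^{-s'}\,ds'=(s+1)e^{-s}$ and the asymptotic for $\bar\lambda$ give
\begin{equation*}
    T-t=\tilde\lambda_\infty\,se^{-s}\bigl(1+O(s^{-h_a+1})\bigr),
\end{equation*}
so that $\bar\lambda(s)/(T-t)=1+O(s^{-h_a+1})$. Taking logarithms and iterating once produces the expansion $s=|\log(T-t)|+\log|\log(T-t)|+O(1)$. Substituting this into the formulas for $\bar\lambda$ and $\bar\nu$ yields the first two claims, provided I can verify that the double-logarithmic error from the inversion is dominated by $|\log(T-t)|^{-h_a}$; this is precisely where the parameter restriction $1<h_a<2$ forced by \eqref{final-parameter-conditions-diffusive} becomes crucial.

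For the sup-norm bounds, the bootstrap control $\|\bar\epsilon(s,\cdot)\|_{L^\infty}\lesssim s^{-h_a/2}$ supplied by Lemma~\ref{lemma:prelim-est:diffusive} (resp.~Lemma~\ref{lemma:prelim-est:diffusive-nc}) translates via $s\sim|\log(T-t)|$ directly into $\|\bar\epsilon(t,\cdot)\|_{L^\infty[0,1]}=O(|\log(T-t)|^{-h_a/2})$. In the non-diffusive case, the trappedness inequality $\|\bar c(s,\cdot)\|_{L^\infty}\leq e^{-h_c s/2}$ from Definition~\ref{def:crit-beta=0:diffusive} combined with $e^{-s}\sim (T-t)/s$ yields $\|\bar c\|_{L^\infty}\leq C(T-t)^{h_c/2}|\log(T-t)|^{-h_c/2}\leq C(T-t)^{h_c/2}$. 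In the diffusive case, the smallness hypothesis on the weighted initial datum activates the improved decay of Proposition~\ref{l infinity corollary nc}, giving $\|\bar c(s,\cdot)\|_{L^\infty}\leq Ce^{-ps}s^{\xi}$, which the same substitution converts into $C(T-t)^p|\log(T-t)|^{\xi-p}\leq C(T-t)^p|\log(T-t)|^\xi$.

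The hard part will be the implicit asymptotic inversion of $T-t=\tilde\lambda_\infty se^{-s}(1+O(s^{-h_a+1}))$: one has to track how the leading $\log|\log(T-t)|$ correction and the trailing $O(s^{-h_a+1})$ propagate through every substitution, and check that each resulting error is swallowed by the claimed polylogarithmic $O$-terms (crucially using $h_a<2$). Once this calibration is in place, the remaining steps are direct substitutions of the self-similar estimates already in hand.
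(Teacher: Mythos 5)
Your proposal is correct and follows essentially the same route as the paper: define $T=\int_{\bar s_0}^\infty\bar\lambda\,ds$, invert $T-t=\tilde\lambda_\infty se^{-s}(1+O(s^{-h_a+1}))$ to get $s\simeq|\log(T-t)|$, substitute into the modulation asymptotics of Lemma \ref{lemma:smoothmodulation:diffusion} (resp. \ref{lemma:smoothmodulation:diffusion-nc}), and then push the $L^\infty$ bounds of Lemma \ref{lemma:prelim-est:diffusive} (resp. Proposition \ref{l infinity corollary nc}) through $e^{-s}\sim(T-t)/s$. Your explicit tracking of the $\log|\log(T-t)|$ correction and the observation that $h_a<2$ makes it negligible against $O(|\log(T-t)|^{-h_a})$ in the $\bar\nu$ expansion is precisely the point the paper leaves implicit under its $s\simeq|\log(T-t)|$ shorthand.
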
}{\begin{corollary}\label{conversion corollary}
     For each fixed choices of parameters in the range of \eqref{final-parameter-conditions-diffusive} (or \eqref{particular-parameter-choice-nc}), let the initial condition and its corresponding solution be defined in \eqref{initial decomp convert} and \eqref{decomp convert}. Suppose that $\bar \epsilon,\bar c,\bar\nu,\bar\lambda, s$ is initially close (Definition \ref{def:ini crit-beta=0:diffusive} or \ref{def:ini crit-beta=0:diffusive-nc}), and trapped (Definition \ref{def:crit-beta=0:diffusive} or \ref{def:crit-beta=0:diffusive-nc}) for all time $\infty\geq s\geq s(0)=\bar s_0$, then there exists some time $T$ such that the following asymptotic relations hold as $t\rightarrow T^-$. (Notice $1<h_a<2$)
     \begin{equation}
     \begin{split}
           a(t,Z)&=\frac{1}{\bar \lambda(t)}\phi(\frac{Z}{\bar \nu(t)})+\bar a(t,Z)=\frac{1}{\bar \lambda(t)}(\phi(\frac{Z}{\bar \nu(t)})+\bar \epsilon(t,Z)),\\
             \bar\lambda&=T-t+O((T-t)\abs{\log (T-t)}^{-h_a+1})\\
            \bar\nu&=\abs{\log (T-t)}^{-1}+O(\abs{\log (T-t)}^{-h_a})\\
            \norm{\bar a(t,Z)}_{L^\infty[0,1]}&=O\left(\frac{1}{T-t}\left(\abs{\log (T-t)}^{-\frac{h_a}{2}}\right)\right)\\
            \norm{\bar \epsilon(t,Z)}_{L^\infty[0,1]}&=O\left(\abs{\log (T-t)}^{-\frac{h_a}{2}}\right)\\
            a(t,Z)&=\frac{1}{T-t}\phi\left(Z(\abs{\log(T-t)})\right)+O\left(\frac{1}{T-t}\abs{\log (T-t)}^{-h_a+1}\right).
     \end{split}
     \end{equation}
     For non-diffusive case, we have (Notice $h_c>0$)
     \begin{equation}
         \begin{split}
             c(t,Z)=\frac{1}{\bar \lambda(t)}\bar c(t,Z),\quad \norm{\bar c(t,Z)}_{L^\infty[0,1]}\leq C(T-t)^{\frac{h_c}{2}}.
         \end{split}
     \end{equation}
     For diffusive case, if in addition, we have $\norm{{\bar c_0(Z) } (\frac{Z}{\bar \nu_0})^{-\xi}}_{L^{\infty}(0,1)}\leq e^{-p\bar s_0}$, then we also have
     \begin{equation}
         \begin{split}
             c(t,Z)=\frac{1}{\bar \lambda(t)^2}\bar c(t,Z),\quad \norm{\bar c(t,Z)}_{L^\infty[0,1]}\leq (T-t)^{p} \abs{\log(T-t)}^\xi,
         \end{split}
     \end{equation}
     for  $\xi=\frac{\alpha+1}{4}, p=\frac{\alpha+1}{16}$.
\end{corollary}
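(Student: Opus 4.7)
The plan is to translate the self-similar asymptotics from Lemma \ref{lemma:smoothmodulation:diffusion} (resp.\ Lemma \ref{lemma:smoothmodulation:diffusion-nc}) and the trapped bounds (Definitions \ref{def:crit-beta=0:diffusive} and \ref{def:crit-beta=0:diffusive-nc}) into physical time. The starting point is that, since the solution is trapped on $[\bar s_0,\infty)$, Lemma \ref{lemma:smoothmodulation:diffusion} furnishes $\bar\lambda(s)=se^{-s}(\tilde\lambda_\infty+O(s^{-h_a+1}))$ and $\bar\nu(s)=s^{-1}+O(s^{-h_a})$ for $s\geq \bar s_0$, with $\tilde\lambda_\infty>0$. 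The time change $\frac{ds}{dt}=\frac{1}{\bar\lambda(s)}$ then allows one to \emph{define} the (necessarily finite) blowup time $T$ by
\[
T-t \;=\; \int_{s(t)}^\infty \bar\lambda(\sigma)\,d\sigma,
\]
which is well defined because $\bar\lambda(\sigma)\sim \tilde\lambda_\infty \sigma e^{-\sigma}$ is integrable at $\infty$. Since $\sigma\mapsto\sigma e^{-\sigma}$ is eventually decreasing, an integration by parts (or direct Laplace-type expansion) gives
\[
T-t \;=\; \tilde\lambda_\infty\,(s+1)e^{-s}\bigl(1+O(s^{-h_a+1})\bigr)\;=\;\bar\lambda(s)\bigl(1+O(s^{-h_a+1})\bigr).
\]

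From this identity I will invert and obtain $s(t)=|\log(T-t)|+O(\log|\log(T-t)|)$ as $t\to T^-$, which in particular gives $s^{-1}=|\log(T-t)|^{-1}+O(|\log(T-t)|^{-2}\log|\log(T-t)|)$. Substituting in the formulas above yields
\[
\bar\lambda(t)=(T-t)+O\bigl((T-t)|\log(T-t)|^{-h_a+1}\bigr),\qquad \bar\nu(t)=|\log(T-t)|^{-1}+O(|\log(T-t)|^{-h_a}),
\]
using that $s^{-h_a}\ll s^{-1}$ is absorbed into the same error. For the $L^\infty$ bound on $\bar\epsilon$, I will combine the interior Sobolev embedding $\|\tilde a(s,\cdot)\|_{L^\infty([0,z^*])}\leq C(z^*)\mathcal I_a(s)$ from Lemma \ref{lemma:prelim-est:diffusive} (resp.\ \ref{lemma:prelim-est:diffusive-nc}) with the exterior bootstrap bound $\mathcal E_a(s)\leq s^{-h_a/2}$, yielding $\|\bar\epsilon(s,\cdot)\|_{L^\infty([0,\bar\nu^{-1}(s)])}=O(s^{-h_a/2})=O(|\log(T-t)|^{-h_a/2})$.

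For the temperature, in the non-diffusive case the corresponding $L^\infty$ estimate for $\tilde c$ in Lemma \ref{lemma:prelim-est:diffusive} together with $\mathcal E_c\leq e^{-h_c s/2}$ gives $\|\bar c(t,\cdot)\|_{L^\infty}\leq C e^{-h_c s/2}$, and substituting $s\sim |\log(T-t)|$ converts $e^{-h_c s/2}$ into $(T-t)^{h_c/2}$. In the diffusive case, Proposition \ref{l infinity corollary nc} yields the weighted bound $\|\tilde c(s,\cdot)(\cdot)^{-\xi}\|_{L^\infty}\leq C e^{-p s}$, which then gives $\|\tilde c(s,\cdot)\|_{L^\infty}\leq C(\bar\nu^{-1}(s))^\xi e^{-ps}\leq C s^\xi e^{-ps}$; inserting $s\sim |\log(T-t)|$ produces $(T-t)^p|\log(T-t)|^\xi$ with $(\xi,p)=(\tfrac{\alpha+1}{4},\tfrac{\alpha+1}{16})$ as required.

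The main obstacle is the precise control of error terms when inverting the relation between $s$ and $T-t$: one must keep track of sub-leading powers so that the remainder in $\bar\lambda$ is still $O((T-t)|\log(T-t)|^{-h_a+1})$ and not something worse after composing with $s(t)$. The cleanest way is to verify by substitution that the ansatz $s(t)=|\log(T-t)|+O(\log|\log(T-t)|)$ is consistent with the identity $T-t=\bar\lambda(s)(1+O(s^{-h_a+1}))$, using that $h_a>1$ so the correction terms remain integrable/asymptotically negligible. Once the modulation asymptotics are established, the $L^\infty$ estimates are immediate substitutions, and the corollary follows.
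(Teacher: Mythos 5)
Your proposal is correct and follows essentially the same route as the paper: define $T$ via $T-t=\int_{s(t)}^\infty\bar\lambda$, use the modulation asymptotics from Lemma \ref{lemma:smoothmodulation:diffusion} (resp.\ \ref{lemma:smoothmodulation:diffusion-nc}) to get $T-t=\tilde\lambda_\infty se^{-s}(1+O(s^{-h_a+1}))$, invert to obtain $s\simeq|\log(T-t)|$, and substitute into the trapped and a-priori $L^\infty$ bounds. The only omission is the explicit rewriting of $a(t,Z)$ as $\frac{1}{T-t}\phi(Z|\log(T-t)|)+O(\cdot)$, which follows directly from the $\bar\lambda,\bar\nu$ asymptotics and the identity $\phi(x+y)=\phi(x)\phi(y)$ for $\phi=e^{-x}$; everything else matches the paper's argument.
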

}
\begin{proof}
    We invert the self-similar transformation into the original coordinate system. Fix choices of parameters to be in the range of \eqref{final-parameter-conditions-diffusive} (or \eqref{particular-parameter-choice-nc}). By A-priori estimate Lemma \ref{lemma:smoothmodulation:diffusion} (or Lemma \ref{lemma:smoothmodulation:diffusion-nc}), \ifthenelse{\boolean{isSimplified}}{the term $T=\int_{\bar s_0}^\infty \bar\lambda(s)ds$ is well-defined and}{we are able to define
\begin{equation}
    T=\int_{\bar s_0}^\infty \bar\lambda(s)ds,
\end{equation}
and we can express $t$ in terms of self-similar time $s$:} 
\ifthenelse{\boolean{isSimplified}}{\begin{equation}
\begin{split}
      t(s)&=\int_{\bar s_0}^s \lambda(s)ds=T-\int_{ s}^\infty  se^{-s}(\tilde\lambda_\infty+O(s^{-h_a+1}) ) ds=T-\tilde\lambda_\infty s e^{-s}+O(s^{-h_a+2} e^{-s}),
\end{split}
\end{equation}}{\begin{equation}
\begin{split}
      t(s)&=\int_{\bar s_0}^s \lambda(s)ds\\
      &=\left(\int_{\bar s_0}^\infty-\int_{ s}^\infty\right) \lambda(s)ds\\
      &=T-\int_{ s}^\infty  se^{-s}(\tilde\lambda_\infty+O(s^{-h_a+1}) ) ds\\
      &=T-\tilde\lambda_\infty s e^{-s}+\tilde\lambda_\infty e^{-s}+O(\Gamma(-h_a+3,s))\\
      &=T-\tilde\lambda_\infty s e^{-s}+O(s^{-h_a+2} e^{-s}),
\end{split}
\end{equation}}
where we have used $1<h_a<2$. Therefore, we see
\begin{equation}
\resizebox{.93 \textwidth}{!}{$\displaystyle
    T-t=\tilde\lambda_\infty s e^{-s}(1+O(s^{-h_a+1} ))\implies \log (T-t)=\log(\tilde\lambda_\infty)+\log(s)-s+O(s^{-h_a+1})\implies s\simeq \abs{\log (T-t)},$}
\end{equation}
and plugging this back to the first equation above,
\begin{equation}
    \tilde\lambda_\infty s e^{-s}\ifthenelse{\boolean{isSimplified}}{}{=(T-t)(1+O(s^{-h_a+1} ))^{-1}=(T-t)(1+O(s^{-h_a+1} ))}=T-t+O((T-t)\abs{\log (T-t)}^{-h_a+1}).
\end{equation}
Therefore, combining the previous two equations and Lemma \ref{lemma:smoothmodulation:diffusion} (or Lemma \ref{lemma:smoothmodulation:diffusion-nc}), we see
\begin{equation}
    \begin{split}
        \bar\lambda&= s e^{-s}\big(\tilde \lambda_\infty + O(s^{-h_a+1}) \big)=T-t+O((T-t)\abs{\log (T-t)}^{-h_a+1})\\
\bar\nu&=\frac{1}{s}+O(s^{-h_a})=\abs{\log (T-t)}^{-1}+O(\abs{\log (T-t)}^{-h_a}).
    \end{split}
\end{equation}
For non-diffusive case, applying Lemma \ref{lemma:prelim-est:diffusive} gives
\begin{equation}
    \begin{split}
       & \norm{\bar \epsilon(s,z)}_{L^\infty(0,\nu^{-1}(s))}\leq C(z^*) s^{-\frac{h_a}{2}}\lesssim_{z^*} \abs{\log (T-t)}^{-\frac{h_a}{2}}\\
       & \norm{\bar c(s,z)}_{L^\infty(0,\nu^{-1}(s))}\leq C(z^*) e^{-\frac{h_c}{2}s}\lesssim_{z^*} (T-t)^{\frac{h_c}{2}}.
    \end{split}
\end{equation}
For diffusive case, similarly, applying Lemma \ref{lemma:prelim-est:diffusive-nc} gives
\begin{equation}
    \begin{split}
      & \norm{\bar \epsilon(s,z)}_{L^\infty(0,\nu^{-1}(s))}\leq C(z^*) s^{-\frac{h_a}{2}}\lesssim_{z^*} \abs{\log (T-t)}^{-\frac{h_a}{2}}\\
    \end{split}
\end{equation}
and \ifthenelse{\boolean{isSimplified}}{the second part of Proposition }{Corollary }\ref{l infinity corollary nc} reads
\begin{equation}
       \norm{\frac{\bar c(\bar s_0,z) }{z^{\xi}}}_{L^{\infty}(0,\nu^{-1}(s_0))}\leq e^{-p\bar s_0}\implies \norm{\bar c(s,z) }_{L^{\infty}(0,\nu^{-1}(s))}\leq C e^{-ps} s^{\xi},\\
\end{equation}
where $\xi=\frac{\alpha+1}{4}, p=\frac{\alpha+1}{16}$, which can be converted as follows:
\begin{equation}
    \begin{split}
         &\norm{\frac{\bar c(\bar s_0,z) }{z^{\xi}}}_{L^{\infty}(0,\nu^{-1}(\bar s_0))}=\norm{{\bar c_0(Z) } (\frac{Z}{\bar \nu_0})^{-\xi}}_{L^{\infty}(0,1)}\leq e^{-p\bar s_0}\\
         &\implies\norm{{\bar c(s,z) }}_{L^{\infty}(0,\nu^{-1}(s))}=\norm{{\bar c(t,Z) }}_{L^{\infty}(0,1)}\leq (T-t)^{p} \abs{\log(T-t)}^\xi.
    \end{split}
\end{equation}
\ifthenelse{\boolean{isSimplified}}{}{Now, since $\phi(x+y)=\phi(x)\phi(y)$ and $1<h_a<2$, we have
     \begin{equation}
     \begin{split}
          a(t,Z)&=\frac{1}{\bar \lambda(t)}\phi(\frac{Z}{\bar \nu(t)})+\bar a(t,Z)\\
          &=\frac{1}{T-t}\left(1+O(\abs{\log (T-t)}^{-h_a+1})\right)\phi\left(Z(\abs{\log(T-t)}+O(\abs{\log (T-t)}^{-h_a+1}))\right)\\
          &\qquad+O\left(\frac{1}{T-t}\left(\abs{\log (T-t)}^{-\frac{h_a}{2}}+O(\abs{\log (T-t)}^{-\frac{3}{2}h_a+1})\right)\right)\\
          &=\frac{1}{T-t}\left(1+O(\abs{\log (T-t)}^{-h_a+1})\right)\phi\left(Z(\abs{\log(T-t)})\right)\underbrace{\phi(O(\abs{\log (T-t)}^{-h_a+1})Z)}_{1+ O(\abs{\log (T-t)}^{-h_a+1})Z}\\
          &\qquad+O\left(\frac{1}{T-t}\left(\abs{\log (T-t)}^{-\frac{h_a}{2}}\right)\right)\\
          &=\frac{1}{T-t}\phi\left(Z(\abs{\log(T-t)})\right)+O\left(\frac{1}{T-t}\abs{\log (T-t)}^{-h_a+1}\right),
     \end{split}
     \end{equation}}
\end{proof}
\ifthenelse{\boolean{isSimplified}}{\begin{remark}
    For both cases, $a(t,Z)$ can be more explicitly written as
    \begin{equation}
        a(t,Z)=(T-t)^{-1+Z}+O\left(\frac{1}{T-t}\abs{\log (T-t)}^{-h_a+1}\right).
    \end{equation}
\end{remark}}{}
Finally, in view of Proposition \ref{smooth:pr:bootstrap:diffusive} or Proposition \ref{l infinity corollary nc}, Corollary \ref{conversion corollary}, and Lemma \ref{decomposition lemma}, Theorem \ref{theorem:critical} and Theorem \ref{theorem:critical:diffusive} are proved.

\section*{Data availability statements}
Data sharing is not applicable to this article, as no datasets were generated or analyzed during the current study.

{
\section*{Acknowledgment}
S. I. and L. Q are partially supported by NSERC grant No. 371637-2025. E.S.T. have benefited from the inspiring environment of the CRC 1114 “Scaling Cascades in Complex Systems”, Project Number 235221301, Project C09, funded
by the Deutsche Forschungsgemeinschaft (DFG). Moreover, this work was also supported in part by the DFG Research Unit FOR 5528 on Geophysical Flows and was supported by the National Science Center (Poland), project 2021/43/B/ST1/02851. Q.L. was partially supported by the Simons Foundation (SFI-MPS-TSM-00013384).
}
\bibliographystyle{plain}   
\bibliography{manuscript}
\end{document}